\documentclass{amsart}
\usepackage{amssymb}

\usepackage{amscd}



\input{tcilatex}
\input tcilatex

\begin{document}
\author{Marek Kara\'{s}}
\title{Multidegrees of tame automorphisms of $\Bbb{C}^{n}$}
\date{}
\maketitle

\begin{abstract}
Let $F=\left( F_{1},\ldots ,F_{n}\right) :\Bbb{C}^{n}\rightarrow \Bbb{C}^{n}$
be a polynomial mapping. By the multidegree of $F$ we mean $\limfunc{mdeg}%
F=\left( \deg F_{1},\ldots ,\deg F_{n}\right) \in \Bbb{N}^{n}.$

The aim of this paper is to study the following problem (especially for $%
n=3) $: \textit{for which sequence }$\left( d_{1},\ldots ,d_{n}\right) \in 
\Bbb{N}^{n}$\textit{\ is there a tame automorphism }$F$ \textit{of }$\Bbb{C}%
^{n}$\textit{\ such that }$\limfunc{mdeg}F=\left( d_{1},\ldots ,d_{n}\right)
?$ In other words we investigate the set $\limfunc{mdeg}\left( \limfunc{Tame}%
\left( \Bbb{C}^{n}\right) \right) ,$ where $\limfunc{Tame}\left( \Bbb{C}%
^{n}\right) $ denotes the group of tame automorphisms of $\Bbb{C}^{n}.$

Since $\allowbreak \allowbreak \limfunc{mdeg}\left( \limfunc{Tame}\left( 
\Bbb{C}^{n}\right) \right) $ is invariant under permutations of coordinates,
we may focus on the set $\left\{ \left( d_{1},\ldots ,d_{n}\right)
:d_{1}\leq \ldots \leq d_{n}\right\} \cap \limfunc{mdeg}\left( \limfunc{Tame}%
\left( \Bbb{C}^{n}\right) \right) .$

Obviously, we have $\left\{ \left( 1,d_{2},d_{3}\right) :1\leq d_{2}\leq
d_{3}\right\} \cap \allowbreak \limfunc{mdeg}\left( \limfunc{Tame}\left( 
\Bbb{C}^{3}\right) \right) =$\newline
$\left\{ \left( 1,d_{2},d_{3}\right) :\allowbreak 1\leq d_{2}\leq
d_{3}\right\} .$ Not obvious, but still easy to prove is the equality $%
\left\{ \left( 2,d_{2},d_{3}\right) :2\leq d_{2}\leq d_{3}\right\} \cap
\allowbreak \limfunc{mdeg}\left( \limfunc{Tame}\left( \Bbb{C}^{3}\right)
\right) =\allowbreak \left\{ \left( 2,d_{2},d_{3}\right) :2\leq d_{2}\leq
d_{3}\right\} .$

In the paper, among other things, we give a complete description of the sets 
$\left\{ \left( 3,d_{2},d_{3}\right) :3\leq d_{2}\leq d_{3}\right\} \cap
\allowbreak \limfunc{mdeg}\left( \limfunc{Tame}\left( \Bbb{C}^{3}\right)
\right) $ and $\left\{ \left( 5,d_{2},d_{3}\right) :5\leq d_{2}\leq
d_{3}\right\} \cap \allowbreak \limfunc{mdeg}\left( \limfunc{Tame}\left( 
\Bbb{C}^{3}\right) \right) .$ In the examination of the last set the most
difficult part is to prove that $\left( 5,6,9\right) \notin \limfunc{mdeg}%
\left( \limfunc{Tame}\left( \Bbb{C}^{3}\right) \right) .$ To do this, we use
the two dimensional Jacobian Conjecture (which is true for low degrees) and
the Jung-van der Kulk Theorem.

As a surprising consequence of the method used in proving that $\left(
5,6,9\right) \notin \limfunc{mdeg}\left( \limfunc{Tame}\left( \Bbb{C}%
^{3}\right) \right) ,$ we show that the existence of a tame automorphism $F$
of $\Bbb{C}^{3}$ with $\limfunc{mdeg}F=\left( 37,70,105\right) $ implies
that the two dimensional Jacobian Conjecture is not true.

Also, we give a complete description of the following sets:\newline
$\left\{ \left( p_{1},p_{2},d_{3}\right) :2<p_{1}<p_{2}\leq d_{3},\text{ }%
p_{1},p_{2}\text{ prime numbers}\right\} \cap \allowbreak \limfunc{mdeg}%
\left( \limfunc{Tame}\left( \Bbb{C}^{3}\right) \right) ,$ $\left\{ \left(
d_{1},d_{2},d_{3}\right) :d_{1}\leq d_{2}\leq d_{3},\text{ }d_{1},d_{2}\in 2%
\Bbb{N}+1,\text{ }\gcd \left( d_{1},d_{2}\right) =1\right\} \cap \allowbreak 
\limfunc{mdeg}\left( \limfunc{Tame}\left( \Bbb{C}^{3}\right) \right)
.\allowbreak $ Using the description of the last set we show that $\limfunc{%
mdeg}\left( \limfunc{Aut}\left( \Bbb{C}^{3}\right) \right) \backslash $%
\linebreak $\limfunc{mdeg}\left( \limfunc{Tame}\left( \Bbb{C}^{3}\right)
\right) $ is infinite.

We also obtain a (still incomplete) description of the set\linebreak $%
\left\{ \left( 4,d_{2},d_{3}\right) :4\leq d_{2}\leq d_{3}\right\} \cap
\allowbreak \limfunc{mdeg}\left( \limfunc{Tame}\left( \Bbb{C}^{3}\right)
\right) $ and we give complete information about $\limfunc{mdeg}F^{-1}$ for $%
F\in \limfunc{Aut}\left( \Bbb{C}^{2}\right) .$
\end{abstract}

\setcounter{section}{-1}

\section{Introduction}

The object of principal interest in this paper is the multidegree (i.e. the
sequence of the degrees of the coordinate functions) of a polynomial
automorphism of the vector space $\Bbb{C}^{n}.$ Let us mention that in the
Scotish Book (\cite{scotishbook}, Problem 79) Mazur and Orlicz posed the
following question: ``If $F=\left( F_{1},\ldots ,F_{n}\right) :\Bbb{C}%
^{n}\rightarrow \Bbb{C}^{n}$ is a one-to-one polynomial map whose inverse is
also a polynomial map, is each $F_{i}$ of degree one?'' In other words, they
asked whether every polynomial automorphism of $\Bbb{C}^{n}$ has multidegree 
$\left( 1,\ldots ,1\right) .$ The answer to this question is obviously
``no'', and in the Scotish Book itself one can find the following example:
let $1\leq i\leq n$ and $a=a\left( X_{1},\ldots ,X_{i-1},X_{i+1},\ldots
,X_{n}\right) \in \Bbb{C}\left[ X_{1},\ldots ,X_{i-1},X_{i+1},\ldots
,X_{n}\right] .$ Then 
\begin{equation*}
E:\Bbb{C}^{n}\ni \left( x_{1},\ldots ,x_{n}\right) \,\mapsto \left(
x_{1},\ldots ,x_{i-1},x_{i}+a,x_{i+1},\ldots ,x_{n}\right) \in \Bbb{C}^{n}
\end{equation*}
is a polynomial automorphism with multidegree $\left( 1,\ldots ,1,\deg
a,1,\ldots ,1\right) .$ A map as above is called an \textit{elementary}
polynomial map. Taking finite compositions of such elementary maps and
elements of the affine subgroup $\limfunc{Aff}\left( \Bbb{C}^{n}\right) ,$
i.e. the group of polynomial automorphisms $F=\left( F_{1},\ldots
,F_{n}\right) :\Bbb{C}^{n}\rightarrow \Bbb{C}^{n}$ such that $\deg F_{i}=1$
for all $i,$ we get automorphisms called \textit{tame.}

In 1942 Jung \cite{Jung} proved that each polynomial automorphism of $k^{2},$
where $k$ is a field of characteristic zero, is tame. Later, in 1953, van
der Kulk extended Jung's result to fields of arbitrary characteristic. Since
then several authors have given other proofs of that result: Gutwirth \cite
{Gutwirth} in 1961, Shafarevich \cite{Shafarevich} in 1966, Rentschler \cite
{Rentschler} in 1968, Makar-Limanov \cite{Makar-Lim} in 1970, Nagata \cite
{Nagata} in 1972, Abhyankar and Moh \cite{Abhyankar-Moh} in 1975, Dicks \cite
{Dicks} in 1983, McKay and Wang \cite{MacKayWng2} in 1988. The stronger
statement, also called the Shafarevich-Nagata-Kombayashi theorem, saying
that the group of all polynomial automorphisms of $k^{2}$ is the amalgamated
product of the affine subgroup and the subgroup of de Jonqui\`{e}res
automorphisms over their intersection, can be found in \cite{Kulk}, \cite
{Kambayashi}, \cite{Nagata}, \cite{Dicks}, \cite{Alperin} and without proof
in \cite{Shafarevich}.

From the result of Jung and van der Kulk it also follows that if $\left(
d_{1},d_{2}\right) $ is the multidegree of an automorphism of $\Bbb{C}^{2},$
then $d_{1}|d_{2}$ or $d_{2}|d_{1}$ (see subsection \ref
{subsection)jung_kulk}).

Tame automorphisms are closely related to the problem of embedding of affine
algebraic varieties. For example, in the proof of the famous
Abhyankar-Moh-Suzuki theorem, saying that every embedding of a line in $\Bbb{%
C}^{2}$ is rectifiable (i.e. a composition of the standard embedding $\Bbb{C}%
\ni x\mapsto \left( x,0\right) \in \Bbb{C}^{2}$ and an automorphism of $\Bbb{%
C}^{2}$), tame automorphisms play a prominent role. This result, formulated
in algebraic terms as follows: if $f(T),g(T)\in k\left[ T\right] $ and $%
k\left[ f(T),g(T)\right] =k\left[ T\right] ,$ then either $\deg f(T)|\deg
g(T)$ or $\deg g(T)|\deg f(T),$ was used by Segree \cite{Segree} to
``prove'' the Jacobian Conjecture. The problem of embeddings of affine
algebraic varieties was also considered by Jelonek \cite{jel1,jel2,jel3},
Kaliman \cite{kalim}, Srinivas \cite{srinivas} and Craighero \cite{Craighero}%
.

Since Jung and van der Kulk proved their theorem, many authors have tried to
prove or disprove the similar result for dimension $n\geq 3,$ but without
any results. The most famous candidate for a so-called wild automorphism
(i.e. one that is not tame) was proposed by Nagata in 1972. It took more
than thirty years to prove that the Nagata automorphism 
\begin{equation*}
\sigma :\Bbb{C}^{3}\ni (x,y,z)\mapsto
(x+2y(y^{2}+zx)-z(y^{2}+zx)^{2},y-z(y^{2}+zx),z)\in \Bbb{C}^{3}
\end{equation*}
is indeed wild. This remarkable result was obtained by Shestakov and
Umirbaev \cite{sh umb1}. The two main ingredients in the proof of the above
result are recalled as Theorems \ref{tw_deg_g_fg} and \ref{tw_type_1-4} (see
subsections \ref{subsection_Poisson_bracket} and \ref
{subsection_reductions_I_IV}). These two theorems are also basic tools in
our considerations concerning multidegrees of tame automorphisms of $\Bbb{C}%
^{3}.$

The paper is organized as follows. In section \ref{section_notations_and} we
fix notation, recall basic definitions, and discuss the multidegree of
polynomial automorphisms of $\Bbb{C}^{2}$ (see subsection \ref
{subsection)jung_kulk}). The discussion is based on the Jung-van der Kulk
result. In section \ref{section_main_tools} we recall the notion of a
Poisson bracket of two polynomials, and two theorems due to Shestakov and
Umirbaev (Theorems \ref{tw_deg_g_fg} and \ref{tw_type_1-4}). They are the
main tools used in the paper. We also prove that the degree of the Poisson
bracket is an invariant of a linear change of coordinates (Lemma \ref
{lem_degree_linear_change}). This is a new result. In this section we also
explain in detail that an example of a polynomial automorphism (Example \ref
{exmple_red_type_I}) due to Shestakov and Umirbaev does not admit an
elementary reduction, and recall a theorem from number theory (Theorem \ref
{tw_sywester}) that will be useful in some parts of the paper.

In section \ref{section_useful_results} we collect some general results
about multidegrees. Some of them were already published by the author:
Proposition \ref{prop_m<n}, Proposition \ref{prop_sum_d_i} and Corollary \ref
{cor_dim2_tame_d1_d2} \cite{Karas}. The other results in that section
(except Theorem \ref{tw_Kuroda} due to Kuroda) are new. The most important
results of that section are Proposition \ref{prop_sum_d_i}, Theorem \ref
{tw_reduc_type_4} and Lemma \ref{lem_deg_poiss_2}.

In section \ref{section_ppd_and_general} we discuss tame automorphisms of $%
\Bbb{C}^{3}$ with multidegree of the form $(p_{1},p_{2},d_{3}),$ $%
2<p_{1}<p_{2}\leq d_{3},$ where $p_{1}$ and $p_{2}$ are prime numbers, and
more generaly, coprime odd numbers. In both cases we give a necessary and
sufficient numerical condition on $(p_{1},p_{2},d_{3})$ to be the
multidegree of tame automorphism of $\Bbb{C}^{3}.$ The results of that
section were already published by the author \cite{Karas2}, and by the
author and J. Zygad\l o \cite{Karas Zygad}.

Section \ref{section_3dd} presents results due to the author \cite{Karas3}.
They concern tame automorphisms with multidegeree $(3,d_{2},d_{3}),$ $3\leq
d_{2}\leq d_{3}.$

The results of sections \ref{section_4dd} and \ref{section_pdd_5dd} are new
and concern tame automorphisms with multidegree $(4,d_{2},d_{3}),$ $4\leq
d_{2}\leq d_{3}$ (section \ref{section_4dd}), and $(p,d_{2},d_{3}),$ $5\leq
p\leq d_{2}\leq d_{3},$ where $p$ is a prime (section \ref{section_pdd_5dd}%
). It is of interest that in showing that there is no tame automorphism of $%
\Bbb{C}^{3}$ with multidegree $(5,6,9),$ we use the Jacobian Conjecture
(actually the Moh theorem). On the other hand, it is very surprising that
the existence of a tame automorphism of $\Bbb{C}^{3}$ with multidegree $%
(37,70,105)$ implies that the two-dimensional Jacobian Conjecture is false
(this is proved in section \ref{section_pdd_5dd}).

In section \ref{section_ab_skonczone} we present a result due to J. Zygad\l
o \cite{Zygadlo}, and in the last section we give new results on the
multidegree of the inverse of a polynomial automorphism of $\Bbb{C}^{2}.$

\section{Notations, basic definitions and two-dimensional case\label%
{section_notations_and}}

\subsection{Notation}

We assume that $0\in \Bbb{N},$ and we denote by $\Bbb{N}^{*},\Bbb{Z}^{*},%
\Bbb{C}^{*},$ respectively, $\Bbb{N}\backslash \left\{ 0\right\} ,\Bbb{Z}%
\backslash \left\{ 0\right\} ,\Bbb{C}\backslash \left\{ 0\right\} .$ By $%
\Bbb{C}\left[ X_{1},\ldots ,X_{n}\right] $ we denote the polynomial ring in $%
n$ variables over $\Bbb{C}.$ In particular, $X_{1},\ldots ,X_{n}$ denote
variables, and $x_{1},\ldots ,x_{n}$ denote coordinates in $\Bbb{C}^{n}.$ We
will work over the complex field $\Bbb{C},$ but all results remain valid
over any algebraically closed field of characteristic zero.

For any $f\in \Bbb{C}\left[ X_{1},\ldots ,X_{n}\right] ,$ $\deg f$ denotes
the usual total degree of $f.$ We say that $f$ is homogeneous if $f$ is a
sum of monomials of the same degree. We denote by $\overline{f}$ the leading
form of $f,$ i.e. the homogeneous part of $f$ of the maximal degree. Of
course, $\deg f=\deg \overline{f}.$

Moreover, $\gcd (d_{1},\ldots ,d_{n})$ and $\func{lcm}(d_{1},\ldots ,d_{n})$
denote the greatest common divisor of $d_{1},\ldots ,d_{n}$ and least commom
multiply of $d_{1},\ldots ,d_{n},$ respectively.

\subsection{Examples of polynomial automorphisms}

First of all, recall that a \textit{polynomial mapping} $F:\Bbb{C}%
^{n}\rightarrow \Bbb{C}^{n}$ is a mapping whose coordinate functions $F_{i},$
where $F=(F_{1},\ldots ,F_{n}),$ are polynomials. By a \textit{polynomial
automorphism} of $\Bbb{C}^{n}$ (later, just \textit{automorphism}) we mean a
polynomial mapping $F:\Bbb{C}^{n}\rightarrow \Bbb{C}^{n}$ such that there
exists a polynomial mapping $G:\Bbb{C}^{n}\rightarrow \Bbb{C}^{n}$ with $%
F\circ G=G\circ F=\limfunc{id}_{\Bbb{C}^{n}}.$ We then also say that $F$ is
invertible. The group of all polynomial automorphisms of $\Bbb{C}^{n}$ is
denoted by $\limfunc{Aut}\left( \Bbb{C}^{n}\right) .$

Polynomial automorphisms play a prominent role in affine algebraic geometry 
\cite{scotishbook, smale}. Typical problems are the Jacobian Problem \cite
{Applegate, BabaNakai, Jung, Kulk, Nagata, Nagata1, Nagata2, Nagata3,
Pinchuk}, existence of wild automorphisms \cite{EssMak-Lim, sh umb1, sh
umb2, sh umb3}, the inverse formula \cite{MacKayWng1, MacKayWng2,
MacKayWng3, McKayWngMoh} or stable tameness \cite{smith}.

There are some special kinds of polynomial automorphisms of $\Bbb{C}^{n}$:

\begin{itemize}
\item  Affine polynomial automorphisms, i.e. polynomial automorphisms $%
F=\left( F_{1},\ldots ,F_{n}\right) $ such that $\deg F_{i}=1$ for $%
i=1,\ldots ,n.$ The set of all such automorphisms will be denoted $\limfunc{%
Aff}\left( \Bbb{C}^{n}\right) ;$ it is a subgroup of $\limfunc{Aut}\left( 
\Bbb{C}^{n}\right) .$

\item  Linear automorphisms, i.e. affine automorphisms $F:\Bbb{C}%
^{n}\rightarrow \Bbb{C}^{n}$ such that $F\left( 0,\ldots ,0\right) =\left(
0,\ldots ,0\right) .$\newline
This is of course the same as the general linear group, denoted $%
GL_{n}\left( \Bbb{C}\right) .$

\item  Elementary automorphisms, i.e. maps of the form 
\begin{equation*}
F:\Bbb{C}^{n}\ni \left( x_{1},\ldots ,x_{n}\right) \mapsto \left(
x_{1},\ldots ,x_{i}+f(x_{1},\ldots ,x_{i-1},x_{i+1},\ldots ,x_{n})\ldots
,x_{n}\right) \in \Bbb{C}^{n}
\end{equation*}
for some $i\in \left\{ 1,\ldots ,n\right\} $ and $f\in \Bbb{C}\left[
X_{1},\ldots ,X_{i-1},X_{i+1},\ldots ,X_{n}\right] .$\newline
One can easily see that 
\begin{equation*}
F^{-1}\left( x_{1},\ldots ,x_{n}\right) =\left( x_{1},\ldots
,x_{i}-f(x_{1},\ldots ,x_{i-1},x_{i+1},\ldots ,x_{n}),\ldots ,x_{n}\right) .
\end{equation*}

\item  Triangular automorphisms, i.e. maps of the form 
\begin{equation}
F:\Bbb{C}^{n}\ni \left( x_{1},\ldots ,x_{n}\right) \mapsto \left(
x_{1},x_{2}+f_{1}(x_{1}),\ldots ,x_{n}+f_{n-1}(x_{1},\ldots ,x_{n-1})\right)
\in \Bbb{C}^{n},  \label{introd_def_tame}
\end{equation}
where $f_{1}\in \Bbb{C}\left[ X_{1}\right] ,f_{2}\in \Bbb{C}\left[
X_{1},X_{2}\right] ,\ldots ,f_{n-1}\in \Bbb{C}\left[ X_{1},\ldots
,X_{n-1}\right] .$\newline
One can check that $F$ is invertible and 
\begin{equation*}
F^{-1}\left( \left\{ 
\begin{array}{l}
x_{1} \\ 
x_{2} \\ 
x_{3} \\ 
\vdots
\end{array}
\right\} \right) =\left\{ 
\begin{array}{l}
x_{1} \\ 
x_{2}-f_{1}\left( x_{1}\right) \\ 
x_{3}-f_{2}\left( x_{1},x_{2}-f_{1}\left( x_{1}\right) \right) \\ 
\vdots
\end{array}
\right\} .
\end{equation*}
We will also say that $F$ is triangular if $F$ is of the form (\ref
{introd_def_tame}) after some permutation of variables.

\item  De Jonqui\`{e}res automorphisms, i.e. mappings of the form 
\begin{equation}
F:\Bbb{C}^{n}\ni \left\{ 
\begin{array}{l}
x_{1} \\ 
x_{2} \\ 
\vdots \\ 
x_{n}
\end{array}
\right\} \mapsto \left\{ 
\begin{array}{l}
a_{1}x_{1}+f_{1}\left( x_{2},\ldots ,x_{n}\right) \\ 
a_{2}x_{2}+f_{2}\left( x_{3},\ldots ,x_{n}\right) \\ 
\vdots \\ 
a_{n}x_{n}+f_{n}
\end{array}
\right\} \in \Bbb{C}^{n},  \label{row_de_Jonquieres}
\end{equation}
where $a_{i}\in \Bbb{C}^{*},$ $f_{i}\in \Bbb{C}\left[ X_{i+1},\ldots
,X_{n}\right] $ for all $1\leq i\leq n-1$ and $f_{n}\in \Bbb{C}.$ We then
write $F\in J\left( \Bbb{C}^{n}\right) .$\newline
As for triangular mappings, one can check that if $F\in J\left( \Bbb{C}%
^{n}\right) ,$ then $F$ is invertible. Also, one can verify that $J\left( 
\Bbb{C}^{n}\right) $ is a subgroup of $\limfunc{Aut}\left( \Bbb{C}%
^{n}\right) .$

\item  Tame automorphisms, i.e. compositions of a finite number of affine
and triangular automorphisms. Sometimes a tame automorphism is defined as a
composition of a finite number of affine and elementary automorphisms, or as
a composition of a finite number of affine and de Jonqui\`{e}res
automorphisms. One can check that all these definitions are equivalent.
\end{itemize}

To end this section, recall that for any polynomial mapping $F:\Bbb{C}%
^{n}\rightarrow \Bbb{C}^{n}$ we have the $\Bbb{C}$-homomorphism $F^{*}:\Bbb{C%
}\left[ X_{1},\ldots ,X_{n}\right] \rightarrow \Bbb{C}\left[ X_{1},\ldots
,X_{n}\right] $ defined by 
\begin{equation*}
F^{*}:\Bbb{C}\left[ X_{1},\ldots ,X_{n}\right] \ni h\mapsto h\circ F\in \Bbb{%
C}\left[ X_{1},\ldots ,X_{n}\right] ,
\end{equation*}
and for any $\Bbb{C}$-homomorphism $\Phi :\Bbb{C}\left[ X_{1},\ldots
,X_{n}\right] \rightarrow \Bbb{C}\left[ X_{1},\ldots ,X_{n}\right] $ we have
the polynomial mapping $\Phi _{*}:\Bbb{C}^{n}\rightarrow \Bbb{C}^{n}$
defined as 
\begin{equation*}
\Phi _{*}:\Bbb{C}^{n}\ni \left( x_{1},\ldots ,x_{n}\right) \mapsto \left(
F_{1}\left( x_{1},\ldots ,x_{n}\right) ,\ldots ,F_{n}\left( x_{1},\ldots
,x_{n}\right) \right) \in \Bbb{C}^{n},
\end{equation*}
where $F_{i}=\Phi \left( X_{i}\right) .$ Moreover, recall that $\left(
F^{*}\right) _{*}=F,\left( \Phi _{*}\right) ^{*}=\Phi ,$ and $F\,$is an
automorphism if and only if $F^{*}$ is a $\Bbb{C}$-automorphism of $\Bbb{C}%
\left[ X_{1},\ldots ,X_{n}\right] .$ Thus one can translate the notions of
affine, linear, elementary, triangular and tame automorphisms of $\Bbb{C}%
^{n} $ into the language of $\Bbb{C}$-automorphisms of $\Bbb{C}\left[
X_{1},\ldots ,X_{n}\right] .$

\subsection{Degree, bidegree and multidegree}

Let $F=\left( F_{1},\ldots ,F_{n}\right) :\Bbb{C}^{n}\rightarrow \Bbb{C}^{n}$
be any polynomial map. By the \textit{degree} of $F,$ denoted $\deg F,$ we
mean the number 
\begin{equation*}
\deg F=\max \left\{ \deg F_{1},\ldots ,\deg F_{n}\right\} ,
\end{equation*}
and by the \textit{multidegree} of $F,$ denoted $\limfunc{mdeg}F,$ we mean
the sequence of natural numbers 
\begin{equation*}
\limfunc{mdeg}F=\left( \deg F_{1},\ldots ,\deg F_{n}\right) .
\end{equation*}
For $n=2$ the multidegree is called bidegree, and denoted $\limfunc{bideg}.$
(see e.g. \cite{van den Essen}).

For a fixed $n\in \Bbb{N},$ we will also consider the mappings 
\begin{equation*}
\deg :\limfunc{End}\left( \Bbb{C}^{n}\right) \ni F\mapsto \deg F\in \Bbb{N}
\end{equation*}
and 
\begin{equation*}
\limfunc{mdeg}:\limfunc{End}\left( \Bbb{C}^{n}\right) \ni F\mapsto \limfunc{%
mdeg}F\in \Bbb{N}^{n},
\end{equation*}
where $\limfunc{End}\left( \Bbb{C}^{n}\right) $ denotes the set of all
polynomial mappings $\Bbb{C}^{n}\rightarrow \Bbb{C}^{n}.$

One of the main goals of this paper is to obtain a description of the sets 
\begin{equation*}
\limfunc{mdeg}\left( \limfunc{Aut}\left( \Bbb{C}^{n}\right) \right) ,%
\limfunc{mdeg}\left( \limfunc{Tame}\left( \Bbb{C}^{n}\right) \right) \subset 
\Bbb{N}^{n}.
\end{equation*}
If $n=1$ the answer is 
\begin{equation*}
\limfunc{mdeg}\left( \limfunc{Aut}\left( \Bbb{C}^{1}\right) \right) =%
\limfunc{mdeg}\left( \limfunc{Tame}\left( \Bbb{C}^{1}\right) \right)
=\left\{ 1\right\} .
\end{equation*}
The description for $n=2,$ based on a theorem of Jung and van der Kulk, will
be given in the next subsection. The answer for $n\geq 3$ is much more
complicated, and will be investigated in the rest of the paper. The very
first result in this direction says that $\left( 3,4,5\right) \notin 
\limfunc{mdeg}\left( \limfunc{Tame}\left( \Bbb{C}^{3}\right) \right) $ \cite
{Karas}. The next results obtained by the author \cite{Karas2, Karas3, Karas
Zygad} are also included.

Since for any $\left( F_{1},\ldots ,F_{n}\right) \in \limfunc{Aut}\left( 
\Bbb{C}^{n}\right) $ we have $\deg F_{i}\ge 1,$ $i=1,\ldots ,n,$ and since
for any permutation $\sigma $ of $\left\{ 1,\ldots ,n\right\} $ and any
sequence $\left( d_{1},\ldots ,d_{n}\right) \in \Bbb{N}^{n}$ we have 
\begin{equation*}
\left( d_{1},\ldots ,d_{n}\right) \in \limfunc{mdeg}\left( \limfunc{Tame}%
\left( \Bbb{C}^{n}\right) \right) \Longleftrightarrow \left( d_{\sigma
(1)},\ldots ,d_{\sigma (n)}\right) \in \limfunc{mdeg}\left( \limfunc{Tame}%
\left( \Bbb{C}^{n}\right) \right)
\end{equation*}
and 
\begin{equation*}
\left( d_{1},\ldots ,d_{n}\right) \in \limfunc{mdeg}\left( \limfunc{Aut}%
\left( \Bbb{C}^{n}\right) \right) \Longleftrightarrow \left( d_{\sigma
(1)},\ldots ,d_{\sigma (n)}\right) \in \limfunc{mdeg}\left( \limfunc{Aut}%
\left( \Bbb{C}^{n}\right) \right) ,
\end{equation*}
in our considerations we can always assume that $1\leq d_{1}\leq \ldots \leq
d_{n}.$ In other words, we will consider the sets 
\begin{equation*}
\limfunc{mdeg}\left( \limfunc{Tame}\left( \Bbb{C}^{n}\right) \right) \cap
\left\{ \left( d_{1},\ldots ,d_{n}\right) :1\leq d_{1}\leq \ldots \leq
d_{n}\right\} \subset \Bbb{N}^{n}
\end{equation*}
and 
\begin{equation*}
\limfunc{mdeg}\left( \limfunc{Aut}\left( \Bbb{C}^{n}\right) \right) \cap
\left\{ \left( d_{1},\ldots ,d_{n}\right) :1\leq d_{1}\leq \ldots \leq
d_{n}\right\} \subset \Bbb{N}^{n}.
\end{equation*}

\subsection{Jung and van der Kulk result\label{subsection)jung_kulk}}

Before giving a description of the set $\limfunc{mdeg}\left( \limfunc{Tame}%
\left( \Bbb{C}^{2}\right) \right) ,$ we recall the following two classical
results.

\begin{proposition}
\textit{(\cite{van den Essen}, Corollary 5.1.3) }$\limfunc{Tame}\left( \Bbb{C%
}^{2}\right) $ is the amalgamated product of $\limfunc{Aff}\left( \Bbb{C}%
^{2}\right) $ and $J\left( \Bbb{C}^{2}\right) $ over their intersection,
i.e. $\limfunc{Tame}\left( \Bbb{C}^{2}\right) $ is generated by these two
groups and if $\tau _{i}\in J\left( \Bbb{C}^{2}\right) \backslash \limfunc{%
Aff}\left( \Bbb{C}^{2}\right) $ and $\lambda _{i}\in \limfunc{Aff}\left( 
\Bbb{C}^{2}\right) \backslash J\left( \Bbb{C}^{2}\right) ,$ then $\tau
_{1}\circ \lambda _{1}\circ \cdots \circ \tau _{n}\circ \lambda _{n}\circ
\tau _{n+1}$ does not belong to $\limfunc{Aff}\left( \Bbb{C}^{2}\right) .$
\end{proposition}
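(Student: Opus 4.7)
The plan is to establish the two assertions of the proposition in succession. The first one, that $\limfunc{Tame}(\Bbb{C}^{2})$ is generated by $\limfunc{Aff}(\Bbb{C}^{2})$ and $J(\Bbb{C}^{2})$, is essentially immediate from the definitions recalled earlier in the paper. By definition, a tame automorphism is a composition of affine and triangular maps, and any triangular map $(x,y)\mapsto(x,\,y+f(x))$ of $\Bbb{C}^{2}$ becomes a de Jonqui\`eres map after the coordinate swap $(x,y)\mapsto(y,x)$, which itself lies in $\limfunc{Aff}(\Bbb{C}^{2})$. Conversely every element of $J(\Bbb{C}^{2})$ is a product of a linear map and a triangular map, so the two generating sets produce the same subgroup.

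For the non-trivial second assertion I would argue by induction on $n$, proving that every ``reduced'' composition $F=\tau_{1}\circ\lambda_{1}\circ\cdots\circ\tau_{n}\circ\lambda_{n}\circ\tau_{n+1}$, with $\tau_{i}\in J(\Bbb{C}^{2})\setminus\limfunc{Aff}(\Bbb{C}^{2})$ and $\lambda_{i}\in\limfunc{Aff}(\Bbb{C}^{2})\setminus J(\Bbb{C}^{2})$, satisfies $\deg F\ge 2$. The induction has to be strengthened in order to close up: the invariant I would carry is that there exists a linear form $L\in\Bbb{C}[X,Y]$ and integers $a\le b$ with $b\ge 2$ such that, after possibly interchanging the two components, the leading forms of $F=(F_{1},F_{2})$ are $\overline{F_{1}}=c_{1}L^{a}$ and $\overline{F_{2}}=c_{2}L^{b}$ for some $c_{1},c_{2}\in\Bbb{C}^{*}$. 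The base case $n=0$ amounts to a single $\tau\in J(\Bbb{C}^{2})\setminus\limfunc{Aff}(\Bbb{C}^{2})$ of the form $(\alpha X+f(Y),\,\beta Y+\gamma)$ with $\deg f\ge 2$, for which one reads off $L=Y$, $a=1$, $b=\deg f$.

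For the inductive step I would peel off $\lambda_{n}\circ\tau_{n+1}$ from the right, write the current word as $G\circ\lambda\circ\tau$ where the shape claim already holds for $G$, and compute the leading forms of $G(\lambda(\tau(X,Y)))$. Substituting components whose leading forms are $c_{1}L^{a}$ and $c_{2}L^{b}$ into the first component of a non-affine de Jonqui\`eres map $\alpha X+f(Y)$ gives a polynomial whose leading form is, since $\Bbb{C}[L]$ is a polynomial ring in one variable, a non-zero scalar multiple of $L^{b\deg f}$; the fact that $b\deg f>a$ and that $f(c_{2}L^{b})$ is non-zero guarantees that no cancellation with the summand $\alpha c_{1}L^{a}$ takes place. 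After this the linear part of $\lambda$ turns $L$ into a new linear form $L'$, and the updated pair of exponents is easily read off.

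The main obstacle will be verifying that this strengthened invariant really survives across every junction of a non-de Jonqui\`eres $\lambda$ and a non-affine $\tau$. The subtlety sits in the interaction of the two alternating types: one has to check that the new linear form $L'$ produced by $\lambda$ is not proportional to the distinguished variable $Y$ used by the following $\tau$, for otherwise the next substitution could collapse degrees. This is exactly where the hypothesis $\lambda\notin J(\Bbb{C}^{2})$ enters, because $J(\Bbb{C}^{2})$ is precisely the stabiliser of the line $\{Y=0\}$ up to scalars among affine maps. Once the bookkeeping is set up so that the induction hypothesis records both the shape $(L,a,b)$ and the fact that $L$ is not the next-used variable, the step closes, and the resulting bound $\deg F\ge b\ge 2$ forces $F\notin\limfunc{Aff}(\Bbb{C}^{2})$, completing the proof.
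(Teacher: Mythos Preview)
The paper does not give its own proof of this proposition; it is simply quoted from van den Essen as background. So the relevant question is whether your sketch is internally sound.

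Your overall plan---induction on the word length, carrying the invariant that both components of the partial composite have leading forms $c_{1}L^{a},\,c_{2}L^{b}$ for a common linear form $L$---is the right one, and your identification of $\lambda\notin J(\Bbb{C}^{2})$ (equivalently $p_{21}\neq 0$ in the linear part of $\lambda$) as the mechanism preventing degree collapse is exactly the point.

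There is, however, a genuine directional confusion in the inductive step that prevents the argument from closing as written. You declare that you peel from the right, writing $F=G\circ\lambda\circ\tau$ with the shape holding for $G$, but the computation you then describe---``substituting components whose leading forms are $c_{1}L^{a}$ and $c_{2}L^{b}$ into the first component $\alpha X+f(Y)$ of a de Jonqui\`eres map''---is the calculation of $\tau\circ(\text{shaped map})$, i.e.\ composing $\tau$ on the \emph{left}. In $G\circ\lambda\circ\tau$ one substitutes the components of $\lambda\circ\tau$ into $G$, not the reverse. Your subsequent remark that ``$\lambda$ turns $L$ into a new linear form $L'$'' is a right-composition effect ($L\mapsto L\circ\overline{\lambda}$) and again does not match the substitution you performed. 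As it stands, the two halves of the step describe different computations and neither one is carried through.

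The cleanest repair is to peel from the left: write $F=\tau_{1}\circ\lambda_{1}\circ G'$ with $G'=\tau_{2}\circ\lambda_{2}\circ\cdots\circ\tau_{n+1}$. The invariant becomes simply that $\overline{G'_{1}}$ and $\overline{G'_{2}}$ are nonzero scalar multiples of powers of $Y$ with $\deg G'_{1}>\deg G'_{2}$; no general $L$ or swapping is needed. Since $\lambda_{1}\notin J(\Bbb{C}^{2})$ forces $p_{21}\neq 0$, the second component of $\lambda_{1}\circ G'$ has degree $\deg G'_{1}$ with leading form $p_{21}\overline{G'_{1}}$; applying $\tau_{1}=(\alpha X+f_{1}(Y),\beta Y+\gamma)$ then gives $\deg F_{1}=(\deg f_{1})(\deg G'_{1})>\deg G'_{1}=\deg F_{2}$, and both leading forms are visibly powers of $Y$ again. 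This yields $\deg F=\prod_{i}\deg f_{i}\geq 2^{n+1}>1$, hence $F\notin\limfunc{Aff}(\Bbb{C}^{2})$.
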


Let us here recall the definition of an amalgamated product, following \cite
{Rusek}.

\begin{definition}
Let $G$ be a group and let $A,B$ be two subgroups with $C=A\cap B.$ We
denote by $\Phi $ (resp. $\Psi $) a complete set of representatives of the
left coset space $A/C$ (resp. $B/C$) subject only to the restriction that
the representative of $C$ itself is the neutral element of $G.$ We say that $%
G$ is an amalgamated product of $A$ and $B\,$over $C$ if every element $g\in
G$ can be written uniquely as $g=\varphi _{0}\psi _{1}\varphi _{1}\psi
_{2}\cdots \varphi _{n-1}\psi _{n}\varphi _{n}\gamma $ for suitable $n\in 
\Bbb{N},$ $\varphi _{0},\ldots ,\varphi _{n}\in \Phi ,$ $\psi _{1},\ldots
,\psi _{n}\in \Psi ,$ $\gamma \in C,$ where only $\varphi _{0},\varphi _{n}$
and $\gamma $ may be the neutral element.
\end{definition}

The second result is the following

\begin{corollary}
\label{cor_dim2_tame_d1_d2}\textit{(\cite{van den Essen}, Corollary 5.1.6) }%
Let $F=\left( F_{1},F_{2}\right) \in \limfunc{Tame}\left( \Bbb{C}^{2}\right) 
$ with $\limfunc{bideg}F=\left( d_{1},d_{2}\right) .$ Let $h_{i}$ denote the
homogeneous component of $F_{i}$ of degree $d_{i}.$ Then:\newline
a) $d_{1}|d_{2}$ or $d_{2}|d_{1}.$\newline
b) If $\deg F>1,$ then we have:\newline
\qquad i) if $d_{1}<d_{2},$ then $h_{2}=ch_{1}^{\frac{d_{2}}{d_{1}}}$ for
some $c\in \Bbb{C},$\newline
\qquad ii) if $d_{2}<d_{1},$ then $h_{1}=ch_{2}^{\frac{d_{1}}{d_{2}}}$ for
some $c\in \Bbb{C},$\newline
\qquad iii) if $d_{1}=d_{2},$ then there exists $\lambda \in \limfunc{Aff}(%
\Bbb{C}^{2})$ such that $\deg \widetilde{F}_{1}>\deg \widetilde{F}_{2},$
where $\widetilde{F}=\left( \widetilde{F}_{1},\widetilde{F}_{2}\right)
=\lambda \circ F.$
\end{corollary}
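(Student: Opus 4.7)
The plan is to prove (a) and (b) simultaneously by induction on the length $k$ of a reduced amalgamated-product decomposition of $F$ as an alternating product of affine and de Jonqui\`{e}res factors, as furnished by the preceding proposition.

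For the base case, if $F\in \limfunc{Aff}(\Bbb{C}^{2})$ then $\deg F=1$, contrary to hypothesis, so I may assume $F\in J(\Bbb{C}^{2})\setminus \limfunc{Aff}(\Bbb{C}^{2})$, i.e.\ $F(x,y)=(a_{1}x+f(y),\,a_{2}y+c)$ with $\deg f\geq 2$. A direct computation then gives $d_{1}=\deg f$, $d_{2}=1$, so $d_{2}\mid d_{1}$, while $h_{1}=\overline{f}(y)=\gamma y^{d_{1}}$ and $h_{2}=a_{2}y$; the constant in case (b)(ii) is $\gamma/a_{2}^{d_{1}}$.

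For the inductive step I write $F=F'\circ \psi$ with $\psi $ the outermost factor on the right and $F'$ of strictly shorter length, so $F'$ satisfies the conclusion by the inductive hypothesis. If $\psi \in \limfunc{Aff}(\Bbb{C}^{2})$, the effect is an affine substitution in the source variables of $F'$, which preserves the bidegree and pulls back the leading-form relation verbatim. If $\psi \in J(\Bbb{C}^{2})\setminus \limfunc{Aff}(\Bbb{C}^{2})$, a substitution computation — which tracks how the highest power of $x$ in each homogeneous component of $\overline{F'_{1}},\overline{F'_{2}}$ interacts with the degree of $f$ in $\psi$ — combined with the inductive power relation, shows that the new bidegree still satisfies mutual divisibility and that the leading-form identity propagates; the scalar $c$ in (b) is a bona fide element of $\Bbb{C}$ because it is forced to be the ratio of two homogeneous polynomials of equal degree.

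The main obstacle is case (b)(iii), the equal-degree case $d_{1}=d_{2}=d>1$. The content is that the leading forms $h_{1},h_{2}$ must be linearly dependent over $\Bbb{C}$; granted this, I pick a nontrivial relation $\alpha h_{1}+\beta h_{2}=0$ and set $\lambda (u,v)=(\alpha u+\beta v,\,v)$, composed with a coordinate swap if necessary, so that $\widetilde{F}=\lambda \circ F$ has $\deg \widetilde{F}_{1}>\deg \widetilde{F}_{2}$ as required. Linear dependence follows because $\det J(F)\in \Bbb{C}^{*}$, so its homogeneous component of degree $2(d-1)$ vanishes; that component coincides with $\det J(h_{1},h_{2})$, a homogeneous polynomial of positive degree $2(d-1)$, which must therefore be identically zero. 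This forces $h_{1},h_{2}$ to be algebraically dependent, and for two homogeneous polynomials of the same positive degree in two variables algebraic dependence is equivalent to linear dependence over $\Bbb{C}$.
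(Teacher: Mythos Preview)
The paper does not prove this corollary; it is quoted from van den Essen as a classical result, so there is no in-paper proof to compare against. I assess your argument on its own merits.

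Your treatment of (b)(iii) is correct and is in fact the standard mechanism (and the one the paper itself invokes later, in Lemma~\ref{lem_degP_degQ}): from $\det J(F)\in\Bbb C^{*}$ and $d>1$ one gets $\det J(h_1,h_2)=0$, and then Lemma~\ref{lem_f_g_alg_zalez} forces $h_1,h_2$ to be scalar multiples of a common power, hence linearly dependent.

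The genuine gap is in your inductive step for (a), (b)(i)--(ii) when $\psi\in J(\Bbb C^{2})\setminus\limfunc{Aff}(\Bbb C^{2})$. You peel $\psi$ off on the \emph{right}, writing $F=F'\circ\psi$ with $\psi(x,y)=(a_1x+f(y),\,a_2y+c)$, $m=\deg f\ge 2$. Then $F_i(x,y)=F'_i(a_1x+f(y),a_2y+c)$, and the degree of this is governed by the $(m,1)$-\emph{weighted} degree of $F'_i$, namely $\max\{am+b:\ x^ay^b\text{ occurs in }F'_i\}$, not by the ordinary degree $d'_i$. Your inductive hypothesis controls only the ordinary leading form $h'_i$; if $h'_i$ happens to have vanishing $x^{d'_i}$-coefficient (which is not excluded by the corollary's conclusion), the dominant contribution after substitution comes from lower-order monomials of $F'_i$ about which you have assumed nothing. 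So the ``substitution computation'' you invoke is not justified by the hypothesis you carry, and the sentence claiming that ``the new bidegree still satisfies mutual divisibility and the leading-form identity propagates'' is exactly the nontrivial content that remains to be proved.

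The clean fix is to run the induction in the other direction: build up $G^{(k)}=T_k\circ\cdots\circ T_1\circ L_1$ with the $T_i$ alternating triangular of the two types and $\deg f_i\ge 2$. Then one step gives, say, $G^{(k)}_2=G^{(k-1)}_2+f_k(G^{(k-1)}_1)$ with $\deg f_k(G^{(k-1)}_1)=\deg f_k\cdot\deg G^{(k-1)}_1$ strictly larger than both previous degrees, so there is no cancellation; the bidegrees are $\bigl(\prod_{j\le k}\deg f_j,\ \prod_{j<k}\deg f_j\bigr)$ up to swap, giving divisibility immediately, and $\overline{G^{(k)}_2}$ is visibly a power of $\overline{G^{(k-1)}_1}$. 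A final left-composition with $L_2\in\limfunc{Aff}(\Bbb C^{2})$ only shuffles among $(d_1,d_2),(d_2,d_1),(d_2,d_2)$. This is precisely the computation the paper performs much later in Theorem~\ref{tw_length_ndeg} (there with the present corollary already assumed, via Lemma~\ref{lem_degP_degQ}); carried out in this order it yields the corollary without circularity.
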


From the above corollary we obtain 
\begin{equation*}
\limfunc{mdeg}\left( \limfunc{Tame}\left( \Bbb{C}^{2}\right) \right) \cap
\left\{ \left( d_{1},d_{2}\right) :1\leq d_{1}\leq d_{2}\right\} \subset
\left\{ \left( d_{1},d_{2}\right) \in \left( \Bbb{N}^{*}\right)
^{2}:d_{1}|d_{2}\right\} .
\end{equation*}
Since, for $d_{1}|d_{2},$ and 
\begin{eqnarray*}
F_{1} &:&\Bbb{C}^{2}\ni \left( x,y\right) \mapsto \left(
x+y^{d_{1}},y\right) \in \Bbb{C}^{2}, \\
F_{2} &:&\Bbb{C}^{2}\ni \left( u,v\right) \mapsto \left( u,v+u^{\frac{d_{2}}{%
d_{1}}}\right) \in \Bbb{C}^{2},
\end{eqnarray*}
$F_{2}\circ F_{1}$ is a tame automorphism of $\Bbb{C}^{2}$ with $\limfunc{%
mdeg}\left( F_{2}\circ F_{1}\right) =\left( d_{1},d_{2}\right) ,$ we see
that 
\begin{equation*}
\limfunc{mdeg}\left( \limfunc{Tame}\left( \Bbb{C}^{2}\right) \right) \cap
\left\{ \left( d_{1},d_{2}\right) :1\leq d_{1}\leq d_{2}\right\} =\left\{
\left( d_{1},d_{2}\right) \in \left( \Bbb{N}^{*}\right)
^{2}:d_{1}|d_{2}\right\} .
\end{equation*}

To obtain a description of the set $\limfunc{mdeg}\left( \limfunc{Aut}\left( 
\Bbb{C}^{2}\right) \right) ,$ we also need the following result due to Jung 
\cite{Jung} and van der Kulk \cite{Kulk}.

\begin{theorem}
\label{tw_jung_van_der_Kulk}\textit{(Jung-van der Kulk, see e.g. \cite{van
den Essen}, Theorem 5.1.11) }We have $\limfunc{Aut}\left( \Bbb{C}^{2}\right)
=\limfunc{Tame}\left( \Bbb{C}^{2}\right) .$ More precisely, $\limfunc{Aut}%
\left( \Bbb{C}^{2}\right) $ is the amalgamated product of $\limfunc{Aff}%
\left( \Bbb{C}^{2}\right) $ and $J\left( \Bbb{C}^{2}\right) $ over their
intersection.
\end{theorem}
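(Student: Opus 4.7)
The preceding proposition already gives the amalgamated product structure for $\limfunc{Tame}(\Bbb{C}^{2})$, so the entire remaining content of the theorem is the equality $\limfunc{Aut}(\Bbb{C}^{2}) = \limfunc{Tame}(\Bbb{C}^{2})$. I would prove this by induction on $\deg F$. The base case $\deg F = 1$ is immediate since then $F \in \limfunc{Aff}(\Bbb{C}^{2}) \subset \limfunc{Tame}(\Bbb{C}^{2})$. For the inductive step, given $F = (F_{1},F_{2}) \in \limfunc{Aut}(\Bbb{C}^{2})$ with $d := \deg F \geq 2$, and writing $d_{1} = \deg F_{1} \leq d_{2} = \deg F_{2} = d$ (after renaming), the aim is to produce a tame $\sigma$ such that $\sigma \circ F$ is still an automorphism but has $\deg(\sigma \circ F) < d$; the inductive hypothesis then yields $\sigma \circ F \in \limfunc{Tame}(\Bbb{C}^{2})$, hence $F = \sigma^{-1} \circ (\sigma \circ F)$ is tame as well.

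The construction of $\sigma$ is driven by the following leading-form assertion: after replacing $F$ by $\lambda \circ F$ for a suitable $\lambda \in \limfunc{Aff}(\Bbb{C}^{2})$ (an operation that cannot increase $d$), one has $d_{1} \mid d_{2}$ and $\overline{F_{2}} = c\,\overline{F_{1}}^{\,d_{2}/d_{1}}$ for some $c \in \Bbb{C}^{*}$. Granting this, the elementary (hence tame) automorphism $\sigma : (x,y) \mapsto (x,\, y - c\,x^{d_{2}/d_{1}})$ satisfies $\sigma \circ F = (F_{1},\, F_{2} - c F_{1}^{d_{2}/d_{1}})$, whose second coordinate has had its leading form cancelled; so $\deg(\sigma \circ F) < d$ and the induction closes.

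The main obstacle is the leading-form assertion. I would derive it from the fact that the Jacobian $\det(\partial F_{i}/\partial X_{j})$ is a nonzero constant: its top-degree homogeneous component equals the ordinary Jacobian $J(\overline{F_{1}}, \overline{F_{2}})$ of the leading forms, a homogeneous form of degree $d_{1}+d_{2}-2 \geq 1$, which must therefore vanish. This forces $\overline{F_{1}}$ and $\overline{F_{2}}$ to be algebraically dependent homogeneous polynomials in two variables; a standard computation with linear factorizations then writes $\overline{F_{1}} = \alpha h^{a}$ and $\overline{F_{2}} = \beta h^{b}$ for a homogeneous polynomial $h$, integers $a \leq b$, and scalars $\alpha, \beta \in \Bbb{C}^{*}$.

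The subtle point is upgrading this to $a \mid b$, so that $d_{1} \mid d_{2}$ and $\overline{F_{2}}$ is literally a scalar multiple of a power of $\overline{F_{1}}$. The case $d_{1} = d_{2}$ is handled by a generic affine precomposition that breaks the top-degree symmetry and strictly lowers one of the two degrees; in the remaining case $d_{1} < d_{2}$, one combines the relation $\overline{F_{1}}^{\,b} = (\alpha^{b}/\beta^{a})\,\overline{F_{2}}^{\,a}$ with the automorphism hypothesis $\Bbb{C}[F_{1},F_{2}] = \Bbb{C}[X_{1},X_{2}]$ to rule out $a \nmid b$ via a weighted-degree / Newton-polygon argument in the spirit of Abhyankar--Moh. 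Once this leading-form lemma is in place, the inductive reduction described above gives $\limfunc{Aut}(\Bbb{C}^{2}) = \limfunc{Tame}(\Bbb{C}^{2})$, and the theorem's amalgamated product statement then follows from the preceding proposition.
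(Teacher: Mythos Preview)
The paper does not prove this theorem; it is quoted as a classical result with a reference to van den Essen, and the paper only remarks that the key ingredients are Lemma~\ref{lem_f_g_alg_zalez} together with the notion of elementary reduction. Your inductive scheme---use the vanishing of $Jac(\overline{F_1},\overline{F_2})$ to invoke Lemma~\ref{lem_f_g_alg_zalez}, then cancel the leading form of the larger component by an elementary map---is exactly the standard route and uses precisely those ingredients. You have also correctly located the only nontrivial step: passing from $\overline{F_1}=\alpha h^{a}$, $\overline{F_2}=\beta h^{b}$ to $a\mid b$ (equivalently $d_1\mid d_2$), without which no elementary reduction exists.

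The gap is that your treatment of this step is a placeholder rather than an argument: ``a weighted-degree / Newton-polygon argument in the spirit of Abhyankar--Moh'' names a genre, not a proof, and this divisibility is essentially the whole content of the theorem beyond Lemma~\ref{lem_f_g_alg_zalez}. A clean way to close it, using only machinery already in the paper, is as follows. After reducing (by a left composition $\lambda\circ F$, not a precomposition) to $2\le d_1<d_2$, suppose for contradiction that neither $\overline{F_1}\in\Bbb{C}[\overline{F_2}]$ nor $\overline{F_2}\in\Bbb{C}[\overline{F_1}]$. Then $F_1,F_2$ is a $p$-reduced pair with $p\ge 2$, and since $Jac(F_1,F_2)\in\Bbb{C}^{*}$ we have $\deg[F_1,F_2]=2$. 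Theorem~\ref{tw_deg_g_fg} now forces $\deg G(F_1,F_2)\ge 2$ for every $G$ with $\deg_Y G\ge 1$, while for $G=G(X)$ one gets $\deg G(F_1)\in d_1\Bbb{N}$; in either case $\deg G(F_1,F_2)\neq 1$, contradicting $X_1\in\Bbb{C}[F_1,F_2]$. Hence $\overline{F_2}=c\,\overline{F_1}^{\,d_2/d_1}$ and your induction goes through.
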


Using Theorem \ref{tw_jung_van_der_Kulk}, we of course obtain 
\begin{equation*}
\limfunc{mdeg}\left( \limfunc{Aut}\left( \Bbb{C}^{2}\right) \right) =%
\limfunc{mdeg}\left( \limfunc{Tame}\left( \Bbb{C}^{2}\right) \right) ,
\end{equation*}
and so 
\begin{equation*}
\limfunc{mdeg}\left( \limfunc{Aut}\left( \Bbb{C}^{2}\right) \right) \cap
\left\{ \left( d_{1},d_{2}\right) :1\leq d_{1}\leq d_{2}\right\} =\left\{
\left( d_{1},d_{2}\right) \in \left( \Bbb{N}^{*}\right)
^{2}:d_{1}|d_{2}\right\} .
\end{equation*}

A crucial result, used in the proof of the Jung-van der Kulk result, is the
following lemma and the notion of elementary reduction.

\begin{lemma}
\label{lem_f_g_alg_zalez}\textit{(see e.g. \cite{van den Essen}, Lemma
10.2.4) }Let $f,g\in \Bbb{C}\left[ X,Y\right] $ be homogeneous polynomials
such that $Jac\left( f,g\right) =0.$ Then there exists a homogeneous
polynomial $h$ such that:\newline
i) $f=c_{1}h^{n_{1}}$ and $g=c_{2}h^{n_{2}}$ for some integers $%
n_{1},n_{2}\geq 0$ and $c_{1},c_{2}\in \Bbb{C}^{*}.$\newline
ii) $h$ is not of the form $ch_{0}^{s}$ for any $c\in k^{*},$ any $h_{0}\in
k\left[ x,y\right] $ and any integer $s>1.$
\end{lemma}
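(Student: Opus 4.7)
The plan is to reduce the Jacobian hypothesis to a clean multiplicative relation $g^{d_{1}}=Cf^{d_{2}}$ (where $d_{1}=\deg f$, $d_{2}=\deg g$) and then read off the structure of $f,g$ from unique factorization in $\Bbb{C}[X,Y]$. The degenerate cases are handled separately: we may assume $f,g\neq 0$ by the requirement $c_{i}\in \Bbb{C}^{*}$; if one of them, say $f$, is a nonzero constant, the hypothesis is automatic, and one simply takes $h$ to be the primitive form of $g$ (available because $\Bbb{C}$ is algebraically closed), with $n_{1}=0$.

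Assume now $d_{1},d_{2}\geq 1$. I would combine Euler's identity $Xf_{X}+Yf_{Y}=d_{1}f$ (and analogously for $g$) with $Jac(f,g)=0$. Multiplying the Jacobian identity first by $X$ and substituting Euler's relation (and then doing the analogous thing with $Y$) gives
\[
d_{1}\,f\,g_{Y}=d_{2}\,g\,f_{Y}\quad\text{and}\quad d_{1}\,f\,g_{X}=d_{2}\,g\,f_{X}.
\]
A direct computation of $\partial_{X}(g^{d_{1}}/f^{d_{2}})$ and $\partial_{Y}(g^{d_{1}}/f^{d_{2}})$ then shows that both vanish, so the rational function $g^{d_{1}}/f^{d_{2}}\in \Bbb{C}(X,Y)$ is a constant $C\in \Bbb{C}^{*}$, and we get the polynomial identity $g^{d_{1}}=Cf^{d_{2}}$.

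Next I exploit unique factorization. Let $e=\gcd(d_{1},d_{2})$, $d_{1}=em$, $d_{2}=en$ with $\gcd(m,n)=1$. Since $\Bbb{C}$ is algebraically closed, $f$ and $g$ split as products of linear forms; the equality $g^{d_{1}}=Cf^{d_{2}}$ forces them to involve the same linear factors $L_{1},\ldots,L_{r}$, and comparing exponents in $f=c_{1}\prod L_{i}^{a_{i}}$, $g=c_{2}\prod L_{i}^{b_{i}}$ yields $m b_{i}=n a_{i}$. Coprimality of $m,n$ then gives $a_{i}=m k_{i}$ and $b_{i}=n k_{i}$ for nonnegative integers $k_{i}$.

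To produce the primitive $h$ required by (ii), set $d=\gcd_{i}k_{i}$ and $h:=\prod_{i}L_{i}^{k_{i}/d}$; then $f=c_{1}h^{dm}$ and $g=c_{2}h^{dn}$, so $n_{1}=dm$, $n_{2}=dn$. If $h=c\,h_{0}^{s}$ for some $s>1$, unique factorization in $\Bbb{C}[X,Y]$ forces $s$ to divide every exponent $k_{i}/d$, contradicting $\gcd_{i}(k_{i}/d)=1$. The main obstacle is the first step, turning $Jac(f,g)=0$ into the rigid multiplicative identity $g^{d_{1}}=Cf^{d_{2}}$: this is exactly where homogeneity is indispensable, since without it one would only obtain algebraic dependence of $f$ and $g$, not such an explicit closed-form relation.
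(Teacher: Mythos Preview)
Your argument is correct. Note, however, that the paper does not supply its own proof of this lemma: it is quoted as Lemma~10.2.4 from van den Essen's book and used as a black box. So there is no ``paper's proof'' to compare against here.

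On the substance: your reduction via Euler's identity is clean. The chain
\[
X(f_{X}g_{Y}-f_{Y}g_{X})=0 \;\Longrightarrow\; (d_{1}f-Yf_{Y})g_{Y}=Xf_{Y}g_{X}
\;\Longrightarrow\; d_{1}fg_{Y}=f_{Y}(Xg_{X}+Yg_{Y})=d_{2}gf_{Y}
\]
is valid, and together with the companion identity in $X$ it kills both partials of $g^{d_{1}}/f^{d_{2}}$, yielding $g^{d_{1}}=Cf^{d_{2}}$. The factorization step is also fine: homogeneity in two variables over $\Bbb{C}$ guarantees a splitting into linear forms, and the coprimality argument extracting $a_{i}=mk_{i}$, $b_{i}=nk_{i}$ is exactly right. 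One small point worth making explicit is that the identity $g^{d_{1}}=Cf^{d_{2}}$ forces $f$ and $g$ to have the \emph{same} set of distinct linear factors (each side is a $d_{1}d_{2}$-form, and a linear factor appearing on one side must appear on the other by unique factorization); you assert this but could spell it out. The primitivity check for $h$ via $\gcd_{i}(k_{i}/d)=1$ is correct as stated.
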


Recall that an automorphism $F=\left( F_{1},\ldots ,F_{n}\right) $ admits an 
\textit{elementary reduction} if there exists an elementary automorphism $%
\tau :\Bbb{C}^{n}\rightarrow \Bbb{C}^{n}$ such that for $G=\left(
G_{1},\ldots ,G_{n}\right) =\tau \circ F$ we have 
\begin{equation*}
\limfunc{mdeg}G<\limfunc{mdeg}F,
\end{equation*}
i.e. 
\begin{equation*}
\deg G_{i}\leq \deg F_{i}\qquad \text{for all }i=1,\ldots ,n
\end{equation*}
and 
\begin{equation*}
\deg G_{i}<\deg F_{i}\qquad \text{for some }i\in \left\{ 1,\ldots ,n\right\}
.
\end{equation*}
We then say that $G$ is an elementary reduction of $F.$ One can easily
notice that $F$ admits an elementary reduction if there exists $i\in \left\{
1,\ldots ,n\right\} $ and a polynomial $g\in \Bbb{C}\left[ Y_{1},\ldots
,Y_{n-1}\right] $ such that 
\begin{equation*}
\deg \left( F_{i}-g\left( F_{1},\ldots ,F_{i-1},F_{i+1},\ldots ,F_{n}\right)
\right) <\deg F_{i}.
\end{equation*}

We will also need the following generalization of the above lemma.

\begin{proposition}
\label{prop_fg_alg_zalez}Let $f,g\in \Bbb{C}\left[ X_{1},\ldots
,X_{n}\right] $ be homogeneous, algebraically dependent polynomials. Then
there exists a homogeneous polynomial $h\in \Bbb{C}\left[ X_{1},\ldots
,X_{n}\right] $ such that:\newline
i) $f=c_{1}h^{n_{1}}$ and $g=c_{2}h^{n_{2}}$ for some integers $%
n_{1},n_{2}\geq 0$ and $c_{1},c_{2}\in \Bbb{C}^{*}.$\newline
ii) $h$ is not of the form $ch_{0}^{s}$ for any $c\in \Bbb{C}^{*},$ any $%
h_{0}\in \Bbb{C}\left[ X_{1},\ldots ,X_{n}\right] $ and any integer $s>1.$
\end{proposition}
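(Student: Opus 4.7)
The plan is to produce a specific weighted-homogeneous polynomial relation between $f$ and $g$, and then read off $h$ via unique factorization in $\Bbb{C}[X_{1},\ldots,X_{n}]$. First I would dispose of the trivial cases: the conclusion forces $f,g\neq 0$, and if one of them is a nonzero constant the result follows at once by writing the other as a constant times a suitable power of one of its primitive (non-power) factors. So from now on assume $d:=\deg f\geq 1$ and $e:=\deg g\geq 1$, and set $m=\gcd(d,e)$, $d=md'$, $e=me'$, so that $\gcd(d',e')=1$.

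Since $f,g$ are algebraically dependent, there exists a nonzero $Q(T_{1},T_{2})\in\Bbb{C}[T_{1},T_{2}]$ with $Q(f,g)=0$. I would assign $T_{1}$ weight $d$ and $T_{2}$ weight $e$ and decompose $Q=\sum_{N}Q_{N}$ into weighted-homogeneous parts. Each $Q_{N}(f,g)$ is then usual-homogeneous of degree $N$ in $\Bbb{C}[X_{1},\ldots,X_{n}]$, so distinct weights live in distinct graded pieces and $Q(f,g)=0$ forces $Q_{N}(f,g)=0$ for every $N$. Pick any $N$ with $Q_{N}\neq 0$; an easy top/bottom weighted-component argument shows that irreducible factors in $\Bbb{C}[T_{1},T_{2}]$ of a weighted-homogeneous polynomial are weighted homogeneous, so at least one such irreducible factor $P$ satisfies $P(f,g)=0$. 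The crux of the proof is the classification of such a $P$: weighted homogeneity together with $\gcd(d',e')=1$ implies that all monomials of $P$ lie in a coset $(i_{0},j_{0})+\Bbb{Z}(e',-d')$ inside $\Bbb{N}^{2}$, so after pulling out a common monomial factor (which, by irreducibility of $P$ together with $f,g\neq 0$, must be trivial) one obtains $P(T_{1},T_{2})=R(T_{1}^{e'},T_{2}^{d'})$ for some ordinary homogeneous $R(U,V)\in\Bbb{C}[U,V]$. Factoring $R$ over $\Bbb{C}$ into linear forms and using the classical fact that $\lambda T_{1}^{e'}-\mu T_{2}^{d'}$ is irreducible in $\Bbb{C}[T_{1},T_{2}]$ whenever $\gcd(d',e')=1$ and $\lambda,\mu\in\Bbb{C}^{*}$, the irreducibility of $P$ will force $R$ to be linear. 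Hence $P=\lambda T_{1}^{e'}-\mu T_{2}^{d'}$ with $\lambda,\mu\in\Bbb{C}^{*}$, yielding the key identity $f^{e'}=c\,g^{d'}$ for some $c\in\Bbb{C}^{*}$.

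To finish, I would apply unique factorization: write $f=c_{1}\prod_{i}p_{i}^{a_{i}}$ and $g=c_{2}\prod_{j}q_{j}^{b_{j}}$ as products of irreducibles. Comparing $f^{e'}=c\,g^{d'}$ and using $\gcd(d',e')=1$ forces, after reindexing, $\{p_{i}\}=\{q_{j}\}$ together with integers $k_{i}\geq 0$ such that $a_{i}=d'k_{i}$ and $b_{i}=e'k_{i}$. Setting $h_{0}:=\prod_{i}p_{i}^{k_{i}}$ yields $f=c_{1}h_{0}^{d'}$ and $g=c_{2}h_{0}^{e'}$. Finally, let $s$ be the largest integer with $h_{0}=h^{s}$ for some polynomial $h$; then $h$ satisfies (ii) by maximality of $s$, and (i) holds with $n_{1}=sd'$ and $n_{2}=se'$. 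Homogeneity of $h$ is automatic from $h^{sd'}=c_{1}^{-1}f$. I expect the main obstacle to be the classification of irreducible weighted-homogeneous binary polynomials in the middle paragraph; everything else is routine graded-ring and UFD bookkeeping.
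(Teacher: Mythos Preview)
Your argument is correct. The reduction to an irreducible weighted-homogeneous relation $\lambda T_{1}^{e'}-\mu T_{2}^{d'}$ is clean, and the final UFD step and the homogeneity of $h$ (via the leading/next-to-leading component of $h^{N}$) are sound. One small remark: in the middle paragraph you do not actually need the irreducibility of $\lambda T_{1}^{e'}-\mu T_{2}^{d'}$; once you know $P=R(T_{1}^{e'},T_{2}^{d'})$ with $R$ homogeneous of degree $K$, the factorization $R=\prod_{k}(\lambda_{k}U-\mu_{k}V)$ already exhibits $P$ as a product of $K$ non-units, so irreducibility of $P$ forces $K=1$ directly.

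As for comparison with the paper: the paper does not give its own proof of this proposition at all. It simply records that the result follows from Lemma~2 of Umirbaev--Yu \cite{Umirbaev Yu} and moves on. Your write-up therefore supplies a genuinely self-contained argument where the paper only gives a pointer to the literature; the weighted-homogeneous decomposition plus UFD route you chose is the standard elementary proof and is entirely appropriate here.
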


One can obtain the above result using Lemma 2 in \cite{Umirbaev Yu}.

\section{Main tools\label{section_main_tools}}

\subsection{Poisson bracket and degree of polynomials\label%
{subsection_Poisson_bracket}}

In this section we present the first main tool which we will use in our
considerations: the Poisson bracket of two polynomials and a theorem that
estimates from below the degree of a polynomial of the form $h\left(
f,g\right) ,$ where $f,g\in \Bbb{C}[X_{1},\ldots ,X_{n}]$ and $h\in \Bbb{C}%
[X,Y].$

We start with the definition of a *-reduced pair.

\begin{definition}
\textit{(\cite{sh umb1}, Definition 1) }\label{def_*-red}A pair $f,g\in \Bbb{%
C}[X_{1},\ldots ,X_{n}]$ is called *-reduced if\newline
(i) $f,g$ are algebraically independent;\newline
(ii) $\overline{f},\overline{g}$ are algebraically dependent;\newline
(iii) $\overline{f}\notin \Bbb{C}[\overline{g}]$ and $\overline{g}\notin 
\Bbb{C}[\overline{f}].$\newline
Moreover, we say that $f,g$ is a $p$-reduced pair$\ $if $f,g$ is a *-reduced
pair with $\deg f<\deg g$ and $p=\frac{\deg f}{\gcd (\deg f,\deg g)}.$
\end{definition}

One may ask whether $p$ can be equal to $1$ for a $p$-reduced pair $f,g.$
The answer is given by the following

\begin{proposition}
If $f,g$ is a $p$-reduced pair, then $p>1.$
\end{proposition}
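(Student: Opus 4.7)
The plan is to argue by contradiction: assume $p=1$, i.e.\ $\deg f \mid \deg g$, and derive a violation of condition (iii) of the $*$-reduced pair definition. This will exploit Proposition \ref{prop_fg_alg_zalez}, which is the natural tool here since $\overline{f},\overline{g}$ are homogeneous and algebraically dependent.

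First I would apply Proposition \ref{prop_fg_alg_zalez} to the pair $\overline{f},\overline{g}$ to obtain a homogeneous polynomial $h\in\Bbb{C}[X_{1},\ldots,X_{n}]$ and integers $n_{1},n_{2}\geq 0$, constants $c_{1},c_{2}\in\Bbb{C}^{*}$, with
\[
\overline{f}=c_{1}h^{n_{1}},\qquad \overline{g}=c_{2}h^{n_{2}}.
\]
Taking degrees gives $\deg f=n_{1}\deg h$ and $\deg g=n_{2}\deg h$. Note that $n_{1}\geq 1$: if $n_{1}=0$ then $\overline{f}$ is constant, so $f$ itself is constant, but then the polynomial $P(X,Y)=X-f$ witnesses algebraic dependence of $f,g$, contradicting (i) of Definition \ref{def_*-red}. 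Similarly $\deg h\geq 1$, since otherwise both $\overline{f}$ and $\overline{g}$ are constant.

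Now, under the assumption $p=1$ we have $\deg f\mid\deg g$, say $\deg g=k\deg f$ with $k\geq 2$ (the inequality $k\geq 2$ comes from $\deg f<\deg g$). Substituting, $n_{2}\deg h=k\,n_{1}\deg h$, so $n_{2}=k n_{1}$. Hence
\[
\overline{g}=c_{2}h^{kn_{1}}=c_{2}(h^{n_{1}})^{k}=\frac{c_{2}}{c_{1}^{k}}\,\overline{f}^{\,k}\in\Bbb{C}[\overline{f}],
\]
contradicting condition (iii) of Definition \ref{def_*-red}. Therefore $p>1$.

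I do not foresee a real obstacle here; the only care needed is the easy verification that $n_{1}\geq 1$ and $\deg h\geq 1$, which rules out the degenerate cases of the extraction from Proposition \ref{prop_fg_alg_zalez}. Everything else is a degree-comparison computation.
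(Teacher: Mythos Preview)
Your proof is correct and follows essentially the same route as the paper: apply Proposition \ref{prop_fg_alg_zalez} to write $\overline{f}=c_{1}h^{n_{1}}$, $\overline{g}=c_{2}h^{n_{2}}$, use $p=1$ (i.e.\ $\deg f\mid\deg g$) to force $n_{1}\mid n_{2}$, and conclude $\overline{g}\in\Bbb{C}[\overline{f}]$, contradicting (iii). Your additional care in ruling out $n_{1}=0$ and $\deg h=0$ is a minor refinement the paper leaves implicit.
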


\begin{proof}
If $f,g$ is $p$-reduced, then $\overline{f}$ and $\overline{g}$ are
algebraically dependent. This means, by Proposition \ref{prop_fg_alg_zalez},
that there is a homogeneous polynomial $h$ such that 
\begin{equation*}
\overline{f}=\alpha h^{l}\qquad \text{and\qquad }\overline{g}=\beta h^{m}
\end{equation*}
for some $\alpha ,\beta \in \Bbb{C}^{*}$ and $l,m\in \Bbb{N}.$ Assume that $%
p=\frac{\deg f}{\gcd \left( \deg f,\deg g\right) }=1.$ Then $l|m,$ and so $%
\overline{g}=\gamma \overline{f}^{r}$ for $r=\frac{m}{l}$ and $\gamma \in 
\Bbb{C}^{*}.$ This contradicts condition (iii) of Definition \ref{def_*-red}.
\end{proof}

For any $f,g\in \Bbb{C}[X_{1},\ldots ,X_{n}]$ we denote by $\left[
f,g\right] $ the Poisson bracket of $f$ and $g$, i.e. the following formal
sum: 
\begin{equation*}
\sum_{1\leq i<j\leq n}\left( \frac{\partial f}{\partial X_{i}}\frac{\partial
g}{\partial X_{j}}-\frac{\partial f}{\partial X_{j}}\frac{\partial g}{%
\partial X_{i}}\right) \left[ X_{i},X_{j}\right] ,
\end{equation*}
where $[X_{i},X_{j}]$ are formal objects satisfying the condition 
\begin{equation*}
\lbrack X_{i},X_{j}]=-[X_{j},X_{i}]\qquad \text{for all }i,j.
\end{equation*}
We also define 
\begin{equation*}
\deg \left[ X_{i},X_{j}\right] =2\qquad \text{for all }i\neq j,
\end{equation*}
$\deg 0=-\infty $ and 
\begin{equation*}
\deg \left[ f,g\right] =\max_{1\leq i<j\leq n}\deg \left\{ \left( \frac{%
\partial f}{\partial X_{i}}\frac{\partial g}{\partial X_{j}}-\frac{\partial f%
}{\partial X_{j}}\frac{\partial g}{\partial X_{i}}\right) \left[
X_{i},X_{j}\right] \right\} .
\end{equation*}
Since $2-\infty =-\infty ,$ we have 
\begin{equation*}
\deg [f,g]=2+\underset{1\leq i<j\leq n}{\max }\deg \left( \frac{\partial f}{%
\partial X_{i}}\frac{\partial g}{\partial X_{j}}-\frac{\partial f}{\partial
X_{j}}\frac{\partial g}{\partial X_{i}}\right) ,
\end{equation*}
and hence 
\begin{equation}
\deg \left[ f,g\right] \leq \deg f+\deg g.
\label{deg_Poisson_bracket_deg_f_g}
\end{equation}

Another inequality involving the degree of a Poisson bracket will be a
consequence of Proposition \ref{cor_rank_trdeg} below, in which $\frac{%
\partial \left( F_{1},\ldots ,F_{r}\right) }{\partial \left( X_{1},\ldots
,X_{n}\right) }$ means the Jacobian matrix (not necessarily quadratic) of
the mapping $\left( F_{1},\ldots ,F_{r}\right) :\Bbb{C}^{n}\rightarrow \Bbb{C%
}^{r}.$

\begin{proposition}
\label{cor_rank_trdeg}If $F_{1},\ldots ,F_{r}\in \Bbb{C}\left[ X_{1},\ldots
,X_{n}\right] ,$ then 
\begin{equation*}
\limfunc{rank}\frac{\partial \left( F_{1},\ldots ,F_{r}\right) }{\partial
\left( X_{1},\ldots ,X_{n}\right) }=\limfunc{trdeg}\nolimits_{\Bbb{C}}\Bbb{C}%
\left( F_{1},\ldots ,F_{r}\right) .
\end{equation*}
\end{proposition}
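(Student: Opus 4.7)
The plan is to prove both inequalities $\operatorname{rank} J \le \operatorname{trdeg}_{\mathbb{C}}\mathbb{C}(F_1,\ldots,F_r)$ and $\operatorname{rank} J \ge \operatorname{trdeg}_{\mathbb{C}}\mathbb{C}(F_1,\ldots,F_r)$, where $J = \partial(F_1,\ldots,F_r)/\partial(X_1,\ldots,X_n)$. Write $d$ for the transcendence degree and $s$ for the rank (computed over the field $\mathbb{C}(X_1,\ldots,X_n)$).

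For the inequality $s \le d$, I would argue that any $d+1$ of the rows of $J$ are $\mathbb{C}(X_1,\ldots,X_n)$-linearly dependent. Without loss of generality look at $F_1,\ldots,F_{d+1}$; they are algebraically dependent over $\mathbb{C}$, so the kernel $I = \{P \in \mathbb{C}[Y_1,\ldots,Y_{d+1}] : P(F_1,\ldots,F_{d+1})=0\}$ is nonzero. Since $\mathbb{C}[Y_1,\ldots,Y_{d+1}]/I \cong \mathbb{C}[F_1,\ldots,F_{d+1}]$ is a domain of transcendence degree $d$, the prime ideal $I$ has height one, hence is principal: $I=(P)$ for some irreducible $P$. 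Differentiating $P(F_1,\ldots,F_{d+1}) = 0$ with respect to $X_j$ yields
\begin{equation*}
\sum_{i=1}^{d+1} \frac{\partial P}{\partial Y_i}(F_1,\ldots,F_{d+1}) \cdot \frac{\partial F_i}{\partial X_j} = 0 \qquad (j = 1,\ldots,n),
\end{equation*}
which is a linear relation among the rows of $J$ corresponding to $F_1,\ldots,F_{d+1}$. The key point is that this relation is nontrivial: since $\operatorname{char} \mathbb{C} = 0$ and $P$ is not constant, some $\partial P/\partial Y_i$ is a nonzero polynomial of strictly smaller degree than $P$, hence does not lie in $I = (P)$, so $(\partial P/\partial Y_i)(F_1,\ldots,F_{d+1}) \ne 0$.

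For the reverse inequality $s \ge d$, I would use the contrapositive of the previous step: select $s$ rows of $J$ that are $\mathbb{C}(X_1,\ldots,X_n)$-linearly independent, say those indexed by $i_1,\ldots,i_s$. If $F_{i_1},\ldots,F_{i_s}$ were algebraically dependent over $\mathbb{C}$, the argument just given (applied to these $s$ polynomials in place of $F_1,\ldots,F_{d+1}$) would produce a nontrivial linear relation among their Jacobian rows, contradicting the choice. Hence $F_{i_1},\ldots,F_{i_s}$ are algebraically independent, so $d \ge s$.

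The main technical obstacle is the nontriviality of the linear combination in the first step, where one must rule out the pathological possibility that \emph{every} $\partial P/\partial Y_i$ evaluates to zero on $(F_1,\ldots,F_{d+1})$. This is where both $\operatorname{char} \mathbb{C} = 0$ and the primality of $I$ (which forces $I = (P)$ with $P$ irreducible) enter in an essential way; without them the partial derivatives of a minimal relation could themselves vanish identically on the $F_i$'s, breaking the argument. Everything else is bookkeeping: the chain rule, the principal-ideal structure of height-one primes in $\mathbb{C}[Y_1,\ldots,Y_{d+1}]$, and the symmetric reduction in the second step.
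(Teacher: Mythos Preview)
The paper does not actually prove this proposition; it simply cites Lang's \emph{Algebra} (Chap.~X, Prop.~10) and van den Essen (Prop.~1.2.9). Your direct argument via the chain rule is the standard proof one finds in those references, and the overall strategy is sound.

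There is, however, a gap in your first step. You assert that $\Bbb{C}[F_1,\ldots,F_{d+1}]$ has transcendence degree exactly $d$, so that $I$ has height one and is therefore principal. This is not justified: while the full family $F_1,\ldots,F_r$ has transcendence degree $d$, an arbitrary subfamily of $d+1$ of them may well have transcendence degree strictly less than $d$ (for instance if two of the chosen $F_i$ coincide), and then $I$ need not be principal. The repair is immediate: instead of a generator of $I$, take any nonzero $P\in I$ of \emph{minimal total degree}. In characteristic zero some $\partial P/\partial Y_i$ is a nonzero polynomial of strictly smaller degree, hence cannot lie in $I$ by minimality of $\deg P$, so its value at $(F_1,\ldots,F_{d+1})$ is nonzero. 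With this change your argument goes through verbatim, and the second step (which only uses the contrapositive of ``algebraically dependent $\Rightarrow$ rows dependent'') is unaffected.
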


One can deduce the above result from \cite[Chap. X, Prop. 10]{S.Lang}. The
version for $r=n$ can also be found in \cite[Prop. 1.2.9]{van den Essen}.

By Proposition \ref{cor_rank_trdeg} and the definition of the degree of a
Poisson bracket we obtain the following remark.

\begin{remark}
$f,g\in \Bbb{C}[X_{1},\ldots ,X_{n}]$ are algebraically independent if and
only if $\deg \left[ f,g\right] \geq 2.$
\end{remark}

We also have the following

\begin{remark}
For any $f,g\in \Bbb{C}[X_{1},\ldots ,X_{n}]$ the following conditions are
equivalent:\newline
(1) $\deg \left[ f,g\right] =\deg f+\deg g,$\newline
(2) $\overline{f},\overline{g}$ are algebraically independent.
\end{remark}

\begin{proof}
Let 
\begin{equation*}
f=f_{0}+\cdots +f_{d},\qquad g=g_{0}+\cdots +g_{m}
\end{equation*}
be the homogeneous decompositions of $f$ and $g.$ Since 
\begin{equation*}
\left[ f,g\right] =\sum_{i,j}\left[ f_{i},g_{j}\right] =\left[
f_{d},g_{m}\right] +\sum_{i<d\text{ or }j<m}\left[ f_{i},g_{j}\right]
\end{equation*}
and 
\begin{equation*}
\deg \left[ f_{i},g_{j}\right] \leq \deg f_{i}+\deg g_{j}=i+j<d+m,
\end{equation*}
for $i<d$ or $j<m,$ it follows that 
\begin{equation*}
\deg \left[ f,g\right] =d+m\Longleftrightarrow \deg \left[
f_{d},g_{m}\right] =d+m.
\end{equation*}
But, since $f_{d}$ and $g_{m}$ are homogeneous polynomials of degrees $d$
and $m,$ respectively, by the definition of Poisson bracket we have 
\begin{equation*}
\deg \left[ f_{d},g_{m}\right] =d+m\Longleftrightarrow \left[
f_{d},g_{m}\right] \neq 0.
\end{equation*}
The last condition, by Proposition \ref{cor_rank_trdeg}, is equivalent to $%
f_{d},g_{m}$ being algebraically independent.
\end{proof}

Recall the following theorem due to Shestakov and Umirbaev.

\begin{theorem}
\textit{(\cite{sh umb1}, Theorem 2)}\label{tw_deg_g_fg} Let $f,g\in \Bbb{C}%
[X_{1},\ldots ,X_{n}]$ be a $p$-reduced pair, and let $G(X,Y)\in k[X,Y]$
with $\deg _{Y}G(X,Y)=pq+r,0\leq r<p.$ Then 
\begin{equation*}
\deg G(f,g)\geq q\left( p\deg g-\deg g-\deg f+\deg [f,g]\right) +r\deg g.
\end{equation*}
\end{theorem}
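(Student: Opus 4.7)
The plan is to unpack the algebraic dependence of $\overline{f},\overline{g}$ into a concrete identity, extract a ``defect'' polynomial $u = g^{p} - c f^{m}$, estimate $\deg u$ from below via the Poisson bracket, and then induct on $q$.

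First I would apply Proposition~\ref{prop_fg_alg_zalez} to the $*$-reduced pair $f,g$: writing $\overline{f} = c_{1} h^{n_{1}}$ and $\overline{g} = c_{2} h^{n_{2}}$, and setting $m := \deg g/\gcd(\deg f,\deg g)$, one gets $p = n_{1}/\gcd(n_{1},n_{2})$, $m = n_{2}/\gcd(n_{1},n_{2})$, hence $\gcd(p,m) = 1$ and $p\deg g = m\deg f$; in particular $\overline{g}^{p}$ and $\overline{f}^{m}$ are scalar multiples of the same monomial $h^{n_{1}n_{2}/\gcd(n_{1},n_{2})}$. Picking $c \in \Bbb{C}^{*}$ so that these leading forms coincide, define
\[
u := g^{p} - c f^{m}, \qquad \deg u < p\deg g,
\]
with $u \neq 0$ because $f,g$ are algebraically independent.

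Next I would derive the key lower bound $\deg u \geq A$, where $A := p\deg g - \deg g - \deg f + \deg[f,g]$ is the per-$q$ increment appearing in the theorem. Using $[g^{p},g] = 0$ and the Leibniz rule, $[u, g] = -\,c\,m\,f^{m-1}\,[f,g]$, which is nonzero because $[f,g] \neq 0$ by algebraic independence. Hence $\deg[u,g] = (m-1)\deg f + \deg[f,g] = p\deg g - \deg f + \deg[f,g]$; combining with $\deg[u,g] \leq \deg u + \deg g$ from~(\ref{deg_Poisson_bracket_deg_f_g}) gives $\deg u \geq A$. Then I would induct on $q$. For the base case $q = 0$, writing $G(X,Y) = \sum_{j=0}^{r} a_{j}(X) Y^{j}$ with $r < p$, the leading form of $a_{j}(f) g^{j}$ is a scalar multiple of $h^{n_{1}\deg a_{j}+n_{2}j}$; since $\gcd(p,m)=1$ and $0 \leq j < p$, these exponents lie in pairwise distinct residue classes modulo $n_{1}$ as $j$ varies, so no cancellation among distinct $j$-blocks can occur, and the $j=r$ term forces $\deg G(f,g) \geq r\deg g$. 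For the inductive step, Euclidean division of $G$ by $Y^{p}-c X^{m}$ in $Y$ yields $G(X,Y) = Q(X,Y)(Y^{p}-c X^{m}) + R(X,Y)$ with $\deg_{Y}R < p$ and $\deg_{Y}Q = p(q-1)+r$, so that $G(f,g) = Q(f,g)\cdot u + R(f,g)$; by the inductive hypothesis together with the bound $\deg u \geq A$, the first summand has degree at least $qA + r\deg g$, while the second has degree at least $r\deg g$ by the base case applied to $R$.

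The main obstacle is ruling out a cancellation between the top homogeneous parts of $Q(f,g)\cdot u$ and $R(f,g)$ in that last identity, since both pieces are only bounded below. The delicate point will be to verify that the leading form of $Q(f,g)\cdot u$ lies in a ``mod~$p$ stratum'' (in the $h$-grading used in the base case) disjoint from the strata that can carry the leading form of $R(f,g)$, whose $Y$-degree stays strictly below $p$; this separation, which rests on the full strength of the three $*$-reduced conditions and on the structural statement of Proposition~\ref{prop_fg_alg_zalez}, is precisely what keeps the inductive bound sharp.
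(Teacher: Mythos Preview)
The paper does not prove Theorem~\ref{tw_deg_g_fg}: it is merely quoted from \cite{sh umb1} (the detailed argument appears in \cite{sh umb3}), and only the trivial extension to pairs violating condition~(ii) of Definition~\ref{def_*-red} is worked out in the text. So there is no in-paper proof to compare against.

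On its own merits, your construction of $u=g^{p}-cf^{m}$ and the Poisson-bracket bound $\deg u\ge A$ via $[u,g]=-cm\,f^{m-1}[f,g]$ are correct and are indeed the core estimate of the Shestakov--Umirbaev approach; the base case $q=0$ is also fine. But the cancellation obstacle you flag is real and your ``mod~$p$ stratum'' resolution fails. Take the $2$-reduced pair $f=X_{1}^{4}+X_{1}^{2}$, $g=X_{1}^{6}+X_{2}$ (so $h=X_{1}$, $p=2$, $m=3$, $A=7$) and $G(X,Y)=Y^{2}-X^{3}+3XY$: Euclidean division by $Y^{2}-X^{3}$ gives $Q=1$, $R=3XY$, and one computes $\overline{u}=-3X_{1}^{10}$ while $\overline{R(f,g)}=3X_{1}^{10}$. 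These lie in the \emph{same} $h$-stratum and cancel exactly, so $\deg G(f,g)$ drops from $10$ down to $7$ (which happens to equal the bound $qA+r\deg g$, so the theorem survives---but not by your mechanism). Nothing in the $h$-grading prevents such a collision, because $\overline{u}$ can itself lie in $\Bbb C[\overline f,\overline g]$. Closing the gap requires more than a grading trick; the argument in \cite{sh umb3} carries finer information through the induction than a single fixed defect element $u$ inserted via Euclidean division.
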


Notice that the estimate from Theorem \ref{tw_deg_g_fg} is true even if the
condition (ii) of Definition \ref{def_*-red} is not satisfied. Indeed, if $%
G=\sum_{i,j}a_{i,j}X^{i}Y^{j},$ we then have, by the algebraic independence
of $\overline{f}$ and $\overline{g},$ 
\begin{eqnarray*}
\deg G(f,g) &=&\underset{i,j}{\max }\deg (a_{i,j}f^{i}g^{j})\geq \deg
_{Y}G(X,Y)\cdot \deg g \\
&=&(qp+r)\deg g\geq q(p\deg g-\deg f-\deg g+\deg [f,g])+r\deg g.
\end{eqnarray*}
The last inequality is a consequence of the fact that $\deg [f,g]\leq \deg
f+\deg g.$

Notice that the above calculations are also valid for $p=1$ (when the pair $%
f,g$ does not satisfy the condition (ii) of Definition \ref{def_*-red}, $p$
may be equal to one).

Thus we have the following proposition.

\begin{proposition}
\label{prop_deg_g_fg}Let $f,g\in \Bbb{C}[X_{1},\ldots ,X_{n}]$ satisfy
conditions (i) and (iii) of Definition \ref{def_*-red}. Assume that $\deg
f<\deg g,$ put 
\begin{equation*}
p=\frac{\deg f}{\gcd \left( \deg f,\deg g\right) },
\end{equation*}
and let $G(X,Y)\in \Bbb{C}[X,Y]$ with $\deg _{Y}G(X,Y)=pq+r,0\leq r<p.$ Then 
\begin{equation*}
\deg G(f,g)\geq q\left( p\deg g-\deg g-\deg f+\deg [f,g]\right) +r\deg g.
\end{equation*}
\end{proposition}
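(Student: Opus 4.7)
The plan is to split according to whether condition (ii) of Definition \ref{def_*-red} is satisfied, since this is the only hypothesis of Theorem \ref{tw_deg_g_fg} that the statement drops.

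First, suppose (ii) holds. Then $f,g$ satisfy all three conditions of a $*$-reduced pair, and since $\deg f<\deg g$ is assumed, they form a $p$-reduced pair with $p=\deg f/\gcd(\deg f,\deg g)$. The earlier proposition (``if $f,g$ is $p$-reduced then $p>1$'') guarantees $p>1$, so Theorem \ref{tw_deg_g_fg} applies verbatim and delivers the stated bound with no further work.

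Second, suppose (ii) fails, so $\overline{f}$ and $\overline{g}$ are algebraically independent. Writing $G=\sum_{i,j}a_{i,j}X^{i}Y^{j}$, the leading form of each summand $a_{i,j}f^{i}g^{j}$ is $a_{i,j}\overline{f}^{i}\overline{g}^{j}$, of degree $i\deg f+j\deg g$. By algebraic independence of $\overline{f},\overline{g}$ these leading forms are linearly independent over $\Bbb{C}$ and hence cannot cancel in the sum, giving
\[
\deg G(f,g)=\max_{a_{i,j}\neq 0}\bigl(i\deg f+j\deg g\bigr)\geq \deg_{Y}G\cdot \deg g=(pq+r)\deg g.
\]
It then suffices to observe the purely numerical inequality
\[
(pq+r)\deg g\geq q\bigl(p\deg g-\deg f-\deg g+\deg[f,g]\bigr)+r\deg g,
\]
which after cancellation reduces to $\deg[f,g]\leq \deg f+\deg g$, precisely inequality (\ref{deg_Poisson_bracket_deg_f_g}).

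The edge case $p=1$ falls under this second branch, again by the proposition about $p$-reduced pairs, so every configuration allowed by the hypotheses is covered. I anticipate no real obstacle here: the substantive content lies in the Shestakov--Umirbaev estimate, which handles the case that (ii) holds, while the case (ii) fails is essentially a direct computation resting on algebraic independence of the leading forms together with the elementary bound $\deg[f,g]\leq \deg f+\deg g$.
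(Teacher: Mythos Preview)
Your proof is correct and follows essentially the same approach as the paper: split on whether condition (ii) holds, invoke Theorem \ref{tw_deg_g_fg} when it does, and in the remaining case use algebraic independence of $\overline{f},\overline{g}$ to get $\deg G(f,g)\geq (pq+r)\deg g$, then finish with $\deg[f,g]\leq \deg f+\deg g$. Your remark that $p=1$ necessarily lands in the second branch is also exactly what the paper observes.
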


\subsection{Degree of a Poisson bracket and a linear change of coordinates}

This section is devoted to showing the following lemma saying that the
degree of a Poisson bracket is invariant under a linear change of
coordinates.

\begin{lemma}
\label{lem_degree_linear_change}If $f,g\in \Bbb{C}[X_{1},\ldots ,X_{n}]$ and 
$L\in GL_{n}(\Bbb{C}),\,$then 
\begin{equation*}
\deg [L^{*}(f),L^{*}(g)]=\deg [f,g],
\end{equation*}
where $L^{*}(h)=h\circ L$ for any $h\in \Bbb{C}[X_{1},\ldots ,X_{n}].$
\end{lemma}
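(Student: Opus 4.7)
The plan is to reduce everything to how the Jacobian matrix of $(f,g)$ transforms under a linear change of coordinates and to observe that the quantity $\deg[f,g]-2$ equals the maximum degree among the $2\times 2$ minors of that Jacobian. From the definition of the Poisson bracket,
\begin{equation*}
\deg[f,g]\;=\;2+\max_{i<j}\deg M_{ij}(f,g),\qquad M_{ij}(f,g):=\frac{\partial f}{\partial X_i}\frac{\partial g}{\partial X_j}-\frac{\partial f}{\partial X_j}\frac{\partial g}{\partial X_i},
\end{equation*}
so it suffices to show that the collection of degrees $\{\deg M_{ij}(L^*f,L^*g)\}$ has the same maximum as $\{\deg M_{ij}(f,g)\}$.

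First I would apply the chain rule: writing $L=(L_{ki})_{k,i}$, we have $\partial(L^*f)/\partial X_i=\sum_k (L^*(\partial f/\partial Y_k))\cdot L_{ki}$, so the Jacobian matrix of $(L^*f,L^*g)$ equals $L^*\!\bigl(\mathrm{Jac}(f,g)\bigr)\cdot L$, where $L^*$ acts entrywise. A direct $2\times 2$-minor computation (equivalently, the Cauchy–Binet formula applied to the $2\times n$ matrix $\mathrm{Jac}(f,g)$ times $L$) then yields
\begin{equation*}
M_{ij}(L^*f,L^*g)\;=\;\sum_{k<l}(L_{ki}L_{lj}-L_{kj}L_{li})\cdot L^*\!\bigl(M_{kl}(f,g)\bigr),
\end{equation*}
i.e.\ each minor on the left is a $\mathbb{C}$-linear combination of the pullbacks $L^*(M_{kl}(f,g))$.

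The next step uses that composition with an invertible linear map is degree-preserving on polynomials: $\deg L^*(h)=\deg h$ for any $h\in\mathbb{C}[X_1,\dots,X_n]$. Therefore
\begin{equation*}
\deg M_{ij}(L^*f,L^*g)\;\le\;\max_{k<l}\deg L^*\!\bigl(M_{kl}(f,g)\bigr)\;=\;\max_{k<l}\deg M_{kl}(f,g),
\end{equation*}
and taking the maximum over $i<j$ and adding $2$ gives $\deg[L^*f,L^*g]\le\deg[f,g]$.

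Finally, to upgrade this inequality to equality, I would apply the same bound to $L^{-1}\in GL_n(\mathbb{C})$ and to the pair $L^*f,L^*g$. Since $(L^{-1})^*L^*(h)=h\circ L\circ L^{-1}=h$, this yields $\deg[f,g]\le\deg[L^*f,L^*g]$, completing the proof. There is no real obstacle here—the whole argument is linear algebra plus the chain rule; the only point that needs care is the bookkeeping for the minor identity after right-multiplication by $L$, which is just the statement that $\Lambda^2$ acts functorially on the rows of the Jacobian.
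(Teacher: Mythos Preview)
Your proof is correct and follows essentially the same route as the paper: both establish the inequality $\deg[L^*f,L^*g]\le\deg[f,g]$ via the chain rule and the Cauchy--Binet-type identity expressing $M_{ij}(L^*f,L^*g)$ as a $\Bbb{C}$-linear combination of the pullbacks $L^*(M_{kl}(f,g))$, and then apply the inequality to $L^{-1}$ to obtain equality. The only cosmetic difference is that the paper tracks homogeneous components explicitly (allowing $L$ to be non-invertible in the inequality step), whereas you invoke directly that $\deg L^*(h)=\deg h$ for invertible $L$; the underlying argument is the same.
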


We first show

\begin{proposition}
\label{prop_degree_linear_change}If $f,g\in \Bbb{C}[X_{1},\ldots ,X_{n}]$
and $L:\Bbb{C}^{n}\rightarrow \Bbb{C}^{n}$ is any linear map, $\,$then 
\begin{equation*}
\deg [L^{*}(f),L^{*}(g)]\leq \deg [f,g].
\end{equation*}
\end{proposition}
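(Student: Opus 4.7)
The plan is a direct chain-rule computation: express $[L^*(f), L^*(g)]$ explicitly in terms of the coefficients of $[f,g]$ composed with $L$, and observe that the extra factors that appear are scalars ($2\times 2$ minors of the matrix of $L$). Concretely, write $L(X_1,\ldots,X_n) = (L_1,\ldots,L_n)$ with $L_i = \sum_k a_{ik}X_k$. By the chain rule,
$$\frac{\partial L^*(f)}{\partial X_k} = \sum_{i=1}^n \Bigl(\frac{\partial f}{\partial X_i} \circ L\Bigr) a_{ik},$$
and similarly for $L^*(g)$. Plugging these into the definition of the Poisson bracket and collecting terms, the coefficient of $[X_k,X_l]$ becomes $\sum_{i,j}(\partial_i f\,\partial_j g)\circ L \cdot (a_{ik}a_{jl} - a_{il}a_{jk})$.

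The second step is to antisymmetrize the inner sum in $(i,j)$. Since $a_{ik}a_{jl} - a_{il}a_{jk}$ is antisymmetric under the swap $i\leftrightarrow j$, the coefficient of $[X_k,X_l]$ rewrites as
$$\sum_{i<j}(h_{ij}\circ L)\,(a_{ik}a_{jl} - a_{il}a_{jk}),$$
where $h_{ij} = \partial_i f\,\partial_j g - \partial_j f\,\partial_i g$ is exactly the coefficient of $[X_i,X_j]$ in $[f,g]$. In compact form,
$$[L^*(f), L^*(g)] = \sum_{k<l}\Bigl(\sum_{i<j}(h_{ij}\circ L)\,(a_{ik}a_{jl} - a_{il}a_{jk})\Bigr)[X_k,X_l].$$

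To finish, note that each $a_{ik}a_{jl}-a_{il}a_{jk}$ lies in $\mathbb{C}$, so the degree of the coefficient of $[X_k,X_l]$ is at most $\max_{i<j}\deg(h_{ij}\circ L)$. Since composition with a linear map cannot raise total degree, $\deg(h_{ij}\circ L)\leq \deg h_{ij}$, and by the very definition of the degree of a Poisson bracket one gets
$$\deg[L^*(f),L^*(g)] \leq 2 + \max_{i<j}\deg h_{ij} = \deg[f,g].$$

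The whole argument is essentially bookkeeping; the only place requiring care is the antisymmetrization step, where the sign conventions $[X_k,X_l] = -[X_l,X_k]$ and those of the $2\times 2$ minors of $L$ must be tracked consistently. Observe that invertibility of $L$ is not used anywhere, so the equality stated in Lemma \ref{lem_degree_linear_change} will then follow immediately by applying the proposition to both $L$ and $L^{-1}$.
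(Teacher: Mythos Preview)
Your proof is correct and follows essentially the same approach as the paper: both compute via the chain rule that the coefficient of $[X_k,X_l]$ in $[L^*(f),L^*(g)]$ is $\sum_{i<j}\bigl(Jac^{X_iX_j}(f,g)\circ L\bigr)\cdot(a_{ik}a_{jl}-a_{il}a_{jk})$, and then use that the minors are scalars and that composition with a linear map does not raise degree. The only difference is cosmetic: the paper first decomposes $f$ and $g$ into homogeneous components and carries out the computation degree by degree, whereas you work directly with $f$ and $g$, which is slightly more streamlined but not a different idea.
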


\begin{proof}
It is easy to see that for every $h\in \Bbb{C}[X_{1},\ldots ,X_{n}]$ we have
(here we allow $L^{*}(h_{d})=0$ even if $h_{d}\neq 0$) 
\begin{equation*}
\left[ L^{*}(h)\right] _{d}=L^{*}(h_{d}),
\end{equation*}
where the subscript $d$ denotes the homogeneous part of degree $d.$ We also
have 
\begin{equation*}
\left[ Jac^{ij}(f,g)\right] _{d}=\sum_{k+l=d+2}Jac^{ij}(f_{k},g_{l}),
\end{equation*}
where 
\begin{equation*}
Jac^{ij}(f,g)=Jac^{X_{i}X_{j}}(f,g)=\det \left[ 
\begin{array}{ll}
\frac{\partial f}{\partial X_{i}} & \frac{\partial f}{\partial X_{j}} \\ 
\frac{\partial g}{\partial X_{i}} & \frac{\partial g}{\partial X_{j}}
\end{array}
\right] .
\end{equation*}
By the above equalities we have 
\begin{eqnarray}
\left[ Jac^{ij}\left( L^{*}(f),L^{*}(g)\right) \right] _{d}
&=&\sum_{k+l=d+2}Jac^{ij}(L^{*}(f)_{k},L^{*}(g)_{l})
\label{row_deg_bracket_lienar_change_1} \\
&=&\sum_{k+l=d+2}Jac^{ij}(L^{*}(f_{k}),L^{*}(g_{l})).  \notag
\end{eqnarray}
Since for any $h\in \Bbb{C}[X_{1},\ldots ,X_{n}]$ and $r\in \{1,\ldots ,n\}$
we have 
\begin{equation*}
\frac{\partial L^{*}(h)}{\partial X_{r}}=\frac{\partial (h\circ L)}{\partial
X_{r}}=\sum_{s=1}^{n}\frac{\partial h}{\partial X_{s}}\left( L\right) \cdot
a_{sr},
\end{equation*}
where $\left( a_{ij}\right) $ is the matrix of the mapping $L,$ it follows
that 
\begin{gather}
Jac^{ij}(L^{*}(f_{k}),L^{*}(g_{l}))=\det \left[ 
\begin{array}{ll}
\sum_{r=1}^{n}\frac{\partial f_{k}}{\partial X_{r}}\left( L\right) \cdot
a_{ri} & \sum_{r=1}^{n}\frac{\partial f_{k}}{\partial X_{r}}\left( L\right)
\cdot a_{rj} \\ 
\sum_{s=1}^{n}\frac{\partial g_{l}}{\partial X_{s}}\left( L\right) \cdot
a_{si} & \sum_{s=1}^{n}\frac{\partial g_{l}}{\partial Xs}\left( L\right)
\cdot a_{sj}
\end{array}
\right]  \label{row_deg_bracket_lienar_change_2} \\
=\sum_{r,s=1}^{n}\frac{\partial f_{k}}{\partial X_{r}}\left( L\right) \cdot
a_{ri}\cdot \frac{\partial g_{l}}{\partial X_{s}}\left( L\right) \cdot
a_{sj}-\sum_{r,s=1}^{n}\frac{\partial f_{k}}{\partial X_{r}}\left( L\right)
\cdot a_{rj}\cdot \frac{\partial g_{l}}{\partial X_{s}}\left( L\right) \cdot
a_{si}  \notag \\
=\sum_{r,s=1}^{n}\left[ \frac{\partial f_{k}}{\partial X_{r}}\left( L\right)
\cdot a_{ri}\cdot \frac{\partial g_{l}}{\partial X_{s}}\left( L\right) \cdot
a_{sj}-\frac{\partial f_{k}}{\partial X_{s}}\left( L\right) \cdot
a_{sj}\cdot \frac{\partial g_{l}}{\partial X_{r}}\left( L\right) \cdot
a_{ri}\right]  \notag \\
=\sum_{r,s=1}^{n}Jac^{rs}(f_{k},g_{l})\left( L\right) \cdot a_{ri}a_{sj} 
\notag \\
=\sum_{1\leq r<s\leq n}Jac^{rs}(f_{k},g_{l})\left( L\right) \cdot
a_{ri}a_{sj}+\sum_{1\leq s<r\leq n}Jac^{rs}(f_{k},g_{l})\left( L\right)
\cdot a_{ri}a_{sj}  \notag \\
=\sum_{1\leq r<s\leq n}Jac^{rs}(f_{k},g_{l})\left( L\right) \cdot
a_{ri}a_{sj}-\sum_{1\leq r<s\leq n}Jac^{rs}(f_{k},g_{l})\left( L\right)
\cdot a_{si}a_{rj}  \notag \\
=\sum_{1\leq r<s\leq n}Jac^{rs}(f_{k},g_{l})\left( L\right) \det \left[ 
\begin{array}{ll}
a_{ri} & a_{rj} \\ 
a_{si} & a_{sj}
\end{array}
\right] .  \notag
\end{gather}
Now, by (\ref{row_deg_bracket_lienar_change_1}) and (\ref
{row_deg_bracket_lienar_change_2}), we have 
\begin{gather}
\left[ Jac^{ij}\left( L^{*}(f),L^{*}(g)\right) \right] _{d}
\label{row_deg_bracket_lienar_change_3} \\
=\sum_{k+l=d+2}\ \sum_{1\leq r<s\leq n}Jac^{rs}(f_{k},g_{l})\left( L\right)
\det \left[ 
\begin{array}{ll}
a_{ri} & a_{rj} \\ 
a_{si} & a_{sj}
\end{array}
\right]  \notag \\
=\sum_{1\leq r<s\leq n}\left( \sum_{k+l=d+2}Jac^{rs}(f_{k},g_{l})\right)
\left( L\right) \det \left[ 
\begin{array}{ll}
a_{ri} & a_{rj} \\ 
a_{si} & a_{sj}
\end{array}
\right] .  \notag
\end{gather}

Take any $d>\deg \left[ f,g\right] .$ Then 
\begin{equation}
\sum_{k+l=d+2}Jac^{rs}(f_{k},g_{l})=0
\label{row_deg_bracket_lienar_change_4}
\end{equation}
for all pairs $r,s$ satisfying $1\leq r<s\leq n.$ Thus, by (\ref
{row_deg_bracket_lienar_change_3}) and (\ref{row_deg_bracket_lienar_change_4}%
), we obtain 
\begin{equation}
\left[ Jac^{ij}\left( L^{*}(f),L^{*}(g)\right) \right] _{d}=0
\label{row_deg_bracket_lienar_change_5}
\end{equation}
for all $i,j.$ The above equalities (for all $i,j$) mean that $\deg
[L^{*}(f),L^{*}(g)]<d.$ Since we can take $d=\deg [f,g]+1,\deg
[f,g]+2,\ldots $ we obtain 
\begin{equation}
\deg [L^{*}(f),L^{*}(g)]\leq \deg [f,g].
\label{row_deg_bracket_lienar_change_6}
\end{equation}
\end{proof}

Now, we can prove Lemma \ref{lem_degree_linear_change}.

\begin{proof}
By the above proposition we only need to show that $\deg
[L^{*}(f),L^{*}(g)]\geq \deg [f,g].$ But $f=\left( L^{-1}\right) ^{*}\left(
L^{*}\left( f\right) \right) $ and $g=\left( L^{-1}\right) ^{*}\left(
L^{*}\left( g\right) \right) .$ So applying Proposition \ref
{prop_degree_linear_change} to the polynomials $L^{*}\left( f\right)
,L^{*}\left( g\right) $ and the mapping $L^{-1}$ we obtain 
\begin{equation*}
\deg [f,g]=\deg [\left( L^{-1}\right) ^{*}\left( L^{*}\left( f\right)
\right) ,\left( L^{-1}\right) ^{*}\left( L^{*}\left( g\right) \right) ]\leq
\deg [L^{*}(f),L^{*}(g)].
\end{equation*}
\end{proof}

\subsection{Shestakov-Umirbaev reductions\label{subsection_reductions_I_IV}}

In this section we present the most remarkable result of Shestakov and
Umirbaev, Theorem \ref{tw_deg_g_fg}. The notions of reductions of types I-IV
are crucial in this theorem. Thus we start with the following definitions
(see \cite{sh umb1} or \cite{sh umb2}).

\begin{definition}
\label{def_type_I}Let $\Theta =\left( f_{1},f_{2},f_{3}\right) \,$be an
automorphism of $A=\Bbb{C}[X,Y,Z]$ such that (for some $n\in \Bbb{N}^{*}$) $%
\deg f_{1}=2n,\deg f_{2}=ns,$ where $s\geq 3$ is an odd number, $2n<\deg
f_{3}\leq ns$ and $\overline{f}_{3}\notin \Bbb{C}\left[ \overline{f}_{1},%
\overline{f}_{2}\right] .$ Suppose that there exists $\alpha \in \Bbb{C}^{*}$
such that the elements $g_{1}=f_{1},$ $g_{2}=f_{2}-\alpha f_{3}$ satisfy the
following conditions:\newline
(i) $g_{1},g_{2}$ is a 2-reduced pair and $\deg g_{1}=\deg f_{1},\deg
g_{2}=\deg f_{2};$\newline
(ii) the automorphism $\left( g_{1},g_{2},f_{3}\right) $ admits an
elementary reduction $\left( g_{1},g_{2},g_{3}\right) $ with $\deg \left[
g_{1},g_{3}\right] <\deg g_{2}+\deg \left[ g_{1},g_{2}\right] .$\newline
Then we will say that $\Theta $ admits a reduction $\left(
g_{1},g_{2},g_{3}\right) $ of type I.\newline
We will also say that a polynomial automorphism $F=\left(
F_{1},F_{2},F_{3}\right) $ admits a reduction of type I if for some
permutation $\sigma $ of $\left\{ 1,2,3\right\} ,$ the automorphism $\Theta
=\left( F_{\sigma (1)},F_{\sigma (2)},F_{\sigma (3)}\right) $ admits a
reduction of type I.
\end{definition}

Before proposing next definitions we present an example due to Shestakov and
Umirbaev of a tame automorphism of $\Bbb{C}^{3}$ which does not admit an
elementary reduction but admits a reduction of type I.

\begin{example}
\label{exmple_red_type_I}Let 
\begin{eqnarray*}
T_{1}(x_{1},x_{2},x_{3})
&=&(x_{1},x_{2}+x_{1}^{2},x_{3}+2x_{1}x_{2}+x_{1}^{3}), \\
T_{2}(y_{1},y_{2},y_{3})
&=&(6y_{1}+6y_{2}y_{3}+y_{3}^{3},4y_{2}+y_{3}^{2},y_{3}), \\
T_{3}(z_{1},z_{2},z_{3}) &=&(z_{1},z_{2},z_{3}+z_{1}^{2}-z_{2}^{3}), \\
L(u_{1},u_{2},u_{3}) &=&(u_{1}+u_{3},u_{2},u_{3})
\end{eqnarray*}
and 
\begin{eqnarray*}
G &=&T_{3}\circ T_{2}\circ T_{1}, \\
F &=&L\circ G.
\end{eqnarray*}
\end{example}

It is easy to see that 
\begin{equation*}
\limfunc{mdeg}\left( T_{2}\circ T_{1}\right) =\left( 9,6,3\right) ,
\end{equation*}
and because 
\begin{equation*}
\left( 6y_{1}+6y_{2}y_{3}+y_{3}^{3}\right) ^{2}-\left(
4y_{2}+y_{3}^{2}\right)
^{3}=36y_{1}^{2}+72y_{1}y_{2}y_{3}+12y_{1}y_{3}^{3}-12y_{2}^{2}y_{3}^{2}-%
\allowbreak 64y_{2}^{3}
\end{equation*}
and (provided that $y_{1}=x_{1},y_{2}=x_{2}+x_{1}^{2}$ and $%
y_{3}=x_{3}+2x_{1}x_{2}+x_{1}^{3}$) 
\begin{gather*}
12y_{1}y_{3}^{3}-12y_{2}^{2}y_{3}^{2} \\
=12x_{1}\left( x_{3}+2x_{1}x_{2}+x_{1}^{3}\right) ^{3}-12\left(
x_{2}+x_{1}^{2}\right) ^{2}\left( x_{3}+2x_{1}x_{2}+x_{1}^{3}\right) ^{2} \\
=12x_{3}x_{1}^{7}-12x_{1}^{6}x_{2}^{2}+\text{lower degree monomials,}
\end{gather*}
we have 
\begin{equation*}
\limfunc{mdeg}\left( T_{3}\circ T_{2}\circ T_{1}\right) =\left( 9,6,8\right)
\end{equation*}
and so 
\begin{equation*}
\limfunc{mdeg}F=\limfunc{mdeg}\left( L\circ G\right) =\left( 9,6,8\right) .
\end{equation*}

From the construction of $F$ it is clear that $F$ is a tame automorphism.
Moreover, it does not admit an elementary reduction. Indeed, if we put $%
F=\left( F_{1},F_{2},F_{3}\right) $ and assume that $\left( F_{1}-g\left(
F_{2},F_{3}\right) ,F_{2},F_{3}\right) ,$ for some $g\in \Bbb{C}[X,Y],$ is
an elementary reduction of $\left( F_{1},F_{2},F_{3}\right) $ then we must
have\label{row_exmpl_1} 
\begin{equation}
\deg g\left( F_{2},F_{3}\right) =9.  \label{row_exmpl_red_typ_I_1}
\end{equation}
But by Proposition \ref{prop_deg_g_fg}, we have 
\begin{equation}
\deg g\left( F_{2},F_{3}\right) \geq q\left( p\cdot 8-6-8+\deg
[F_{2},F_{3}]\right) +8r,  \label{row_exmpl_red_typ_I_2}
\end{equation}
where $\deg _{Y}g(X,Y)=qp+r,0\leq r<p,p=\frac{6}{\gcd \left( 6,8\right) }=3.$
Thus by (\ref{row_exmpl_red_typ_I_1}) and (\ref{row_exmpl_red_typ_I_2}) and
because $p\cdot 8-6-8+\deg [F_{2},F_{3}]=10+\deg [F_{2},F_{3}]\geq 12>9,$ we
must have $q=0$ and $r\leq 1.$ Thus $g$ must be of the form 
\begin{equation}
g(X,Y)=g_{0}(X)+g_{1}(X)Y.  \label{row_exmpl_red_typ_I_3}
\end{equation}
Since $8\Bbb{N\cap }\left( 6+8\Bbb{N}\right) =\emptyset ,$ from (\ref
{row_exmpl_red_typ_I_1}) and (\ref{row_exmpl_red_typ_I_3}) we obtain $9=\deg
g\left( F_{2},F_{3}\right) \in 8\Bbb{N\cup }\left( 6+8\Bbb{N}\right) ,$ a
contradiction.

Next, if we assume that $\left( F_{1},F_{2}-g\left( F_{3},F_{1}\right)
,F_{3}\right) ,$ for some $g\in \Bbb{C}[X,Y],$ is an elementary reduction of 
$\left( F_{1},F_{2},F_{3}\right) $ then we must have 
\begin{equation}
\deg g\left( F_{3},F_{1}\right) =6.  \label{row_exmpl_red_typ_I_4}
\end{equation}
But by Proposition \ref{prop_deg_g_fg}, 
\begin{equation}
\deg g\left( F_{3},F_{1}\right) \geq q\left( p\cdot 9-9-8+\deg
[F_{3},F_{1}]\right) +9r,  \label{row_exmpl_red_typ_I_5}
\end{equation}
where $\deg _{Y}g(X,Y)=qp+r,0\leq r<p,p=\frac{8}{\gcd \left( 8,9\right) }=8.$
Because $p\cdot 9-9-8+\deg [F_{3},F_{1}]=55+\deg [F_{3},F_{1}]\geq 57>8,$
from (\ref{row_exmpl_red_typ_I_4}) and (\ref{row_exmpl_red_typ_I_5}) we
obtain $q=r=0.$ This means that $g(X,Y)=g(X)$ and $\deg g\left(
F_{3},F_{1}\right) =\deg g(F_{3})\in 8\Bbb{N}.$ However, $6\notin 8\Bbb{N}.$

Finally, if we assume that $\left( F_{1},F_{2},F_{3}-g\left(
F_{2},F_{1}\right) \right) ,$ for some $g\in \Bbb{C}[X,Y],$ is an elementary
reduction of $\left( F_{1},F_{2},F_{3}\right) $ then 
\begin{equation}
\deg g\left( F_{2},F_{1}\right) =8.  \label{row_exmpl_red_typ_I_6}
\end{equation}
As before, by Proposition \ref{prop_deg_g_fg}, 
\begin{equation}
\deg g\left( F_{2},F_{1}\right) \geq q\left( p\cdot 9-9-6+\deg
[F_{2},F_{1}]\right) +9r,  \label{row_exmpl_red_typ_I_7}
\end{equation}
where $\deg _{Y}g(X,Y)=qp+r,0\leq r<p,p=\frac{6}{\gcd \left( 6,9\right) }=2.$
In this case $p\cdot 9-9-6=3$ is not large enough for our purpose but $\deg
[F_{2},F_{1}]$ is. Indeed, 
\begin{eqnarray*}
\frac{\partial F_{1}}{\partial x_{i}} &=&\frac{\partial u_{1}}{\partial x_{i}%
}+\frac{\partial u_{3}}{\partial x_{i}} \\
&=&\frac{\partial z_{1}}{\partial x_{i}}+\frac{\partial z_{3}}{\partial x_{i}%
}+2z_{1}\frac{\partial z_{1}}{\partial x_{i}}-3z_{2}^{2}\frac{\partial z_{2}%
}{\partial x_{i}}
\end{eqnarray*}
and 
\begin{equation*}
\frac{\partial F_{2}}{\partial x_{i}}=\frac{\partial u_{2}}{\partial x_{i}}=%
\frac{\partial z_{2}}{\partial x_{i}}.
\end{equation*}
Thus, for $1\leq i<j\leq 3,$%
\begin{eqnarray}
\frac{\partial F_{1}}{\partial x_{i}}\frac{\partial F_{2}}{\partial x_{j}}-%
\frac{\partial F_{1}}{\partial x_{j}}\frac{\partial F_{2}}{\partial x_{i}}
&=&\left( \frac{\partial z_{1}}{\partial x_{i}}+\frac{\partial z_{3}}{%
\partial x_{i}}+2z_{1}\frac{\partial z_{1}}{\partial x_{i}}-3z_{2}^{2}\frac{%
\partial z_{2}}{\partial x_{i}}\right) \frac{\partial z_{2}}{\partial x_{j}}
\notag \\
&&-\left( \frac{\partial z_{1}}{\partial x_{j}}+\frac{\partial z_{3}}{%
\partial x_{j}}+2z_{1}\frac{\partial z_{1}}{\partial x_{j}}-3z_{2}^{2}\frac{%
\partial z_{2}}{\partial x_{j}}\right) \frac{\partial z_{2}}{\partial x_{i}}
\notag \\
&=&\left( \frac{\partial z_{1}}{\partial x_{i}}\frac{\partial z_{2}}{%
\partial x_{j}}-\frac{\partial z_{1}}{\partial x_{j}}\frac{\partial z_{2}}{%
\partial x_{i}}\right) +\left( \frac{\partial z_{3}}{\partial x_{i}}\frac{%
\partial z_{2}}{\partial x_{j}}-\frac{\partial z_{3}}{\partial x_{j}}\frac{%
\partial z_{2}}{\partial x_{i}}\right)  \label{row_exmpl_red_typ_I_8} \\
&&+2z_{1}\left( \frac{\partial z_{1}}{\partial x_{i}}\frac{\partial z_{2}}{%
\partial x_{j}}-\frac{\partial z_{1}}{\partial x_{j}}\frac{\partial z_{2}}{%
\partial x_{i}}\right) .  \notag
\end{eqnarray}
Since $z_{1},z_{2},z_{3}$ are algebraically independent, by Corollary \ref
{cor_rank_trdeg} for at least one pair $i,j,1\leq i<j\leq 3,$ we have 
\begin{equation*}
\frac{\partial z_{1}}{\partial x_{i}}\frac{\partial z_{2}}{\partial x_{j}}-%
\frac{\partial z_{1}}{\partial x_{j}}\frac{\partial z_{2}}{\partial x_{i}}%
\neq 0.
\end{equation*}
And since $\deg z_{1}=9,$ for that pair $i,j$ we have 
\begin{equation}
\deg 2z_{1}\left( \frac{\partial z_{1}}{\partial x_{i}}\frac{\partial z_{2}}{%
\partial x_{j}}-\frac{\partial z_{1}}{\partial x_{j}}\frac{\partial z_{2}}{%
\partial x_{i}}\right) \geq 9.  \label{row_exmpl_red_typ_I_9}
\end{equation}
Of course we also have 
\begin{equation}
\deg 2z_{1}\left( \frac{\partial z_{1}}{\partial x_{i}}\frac{\partial z_{2}}{%
\partial x_{j}}-\frac{\partial z_{1}}{\partial x_{j}}\frac{\partial z_{2}}{%
\partial x_{i}}\right) >\deg \left( \frac{\partial z_{1}}{\partial x_{i}}%
\frac{\partial z_{2}}{\partial x_{j}}-\frac{\partial z_{1}}{\partial x_{j}}%
\frac{\partial z_{2}}{\partial x_{i}}\right) .
\label{row_exmpl_red_typ_I_10}
\end{equation}
Since moreover 
\begin{eqnarray*}
\frac{\partial z_{2}}{\partial x_{i}} &=&4\frac{\partial y_{2}}{\partial
x_{i}}+2y_{3}\frac{\partial y_{3}}{\partial x_{i}}, \\
\frac{\partial z_{3}}{\partial x_{i}} &=&\frac{\partial y_{3}}{\partial x_{i}%
}
\end{eqnarray*}
and 
\begin{eqnarray*}
\deg y_{2} &=&\deg \left( x_{2}+x_{1}^{2}\right) =2, \\
\deg y_{3} &=&\deg \left( x_{3}+2x_{1}x_{2}+x_{1}^{3}\right) =3,
\end{eqnarray*}
it follows that 
\begin{eqnarray*}
\frac{\partial z_{2}}{\partial x_{i}}\frac{\partial z_{3}}{\partial x_{j}}-%
\frac{\partial z_{2}}{\partial x_{j}}\frac{\partial z_{3}}{\partial x_{i}}
&=&\left( 4\frac{\partial y_{2}}{\partial x_{i}}+2y_{3}\frac{\partial y_{3}}{%
\partial x_{i}}\right) \frac{\partial y_{3}}{\partial x_{j}}-\left( 4\frac{%
\partial y_{2}}{\partial x_{j}}+2y_{3}\frac{\partial y_{3}}{\partial x_{j}}%
\right) \frac{\partial y_{3}}{\partial x_{i}} \\
&=&4\left( \frac{\partial y_{2}}{\partial x_{i}}\frac{\partial y_{3}}{%
\partial x_{j}}-\frac{\partial y_{2}}{\partial x_{j}}\frac{\partial y_{3}}{%
\partial x_{i}}\right) ,
\end{eqnarray*}
and so 
\begin{equation}
\deg \left( \frac{\partial z_{2}}{\partial x_{i}}\frac{\partial z_{3}}{%
\partial x_{j}}-\frac{\partial z_{2}}{\partial x_{j}}\frac{\partial z_{3}}{%
\partial x_{i}}\right) =\deg \left( \frac{\partial y_{2}}{\partial x_{i}}%
\frac{\partial y_{3}}{\partial x_{j}}-\frac{\partial y_{2}}{\partial x_{j}}%
\frac{\partial y_{3}}{\partial x_{i}}\right) \leq 3.
\label{row_exmpl_red_typ_I_11}
\end{equation}
Finally, by (\ref{row_exmpl_red_typ_I_8}) - (\ref{row_exmpl_red_typ_I_11}), 
\begin{equation}
\deg \left[ F_{1},F_{2}\right] \geq 11.  \label{row_exmpl_red_typ_I_12}
\end{equation}

Now, using (\ref{row_exmpl_red_typ_I_12}) and (\ref{row_exmpl_red_typ_I_7})
we find that 
\begin{equation}
\deg g\left( F_{2},F_{1}\right) \geq q\cdot 14+9r.
\label{row_exmpl_red_typ_I_13}
\end{equation}
Thus, by (\ref{row_exmpl_red_typ_I_13}) and (\ref{row_exmpl_red_typ_I_6}),
we have $q=r=0.$ This means that $g\left( X,Y\right) =g\left( X\right) $ and 
$\deg g\left( F_{2},F_{1}\right) =\deg g\left( F_{2}\right) \in 6\Bbb{N},$
contrary to $8\notin 6\Bbb{N}.$

For more information about polynomial automorphisms which admit reductions
of type I see \cite{Kuroda2}.

\begin{definition}
\label{def_type_II}Let $\Theta =\left( f_{1},f_{2},f_{3}\right) \,$be an
automorphism of $A=\Bbb{C}[X,Y,Z]$ such that (for some $n\in \Bbb{N}^{*}$) $%
\deg f_{1}=2n,$ $\deg f_{2}=3n,$ $\frac{3}{2}n<\deg f_{3}\leq 2n$ and $%
\overline{f}_{1},\overline{f}_{3}$ are linearly independent. Suppose that
there exist $\alpha ,\beta \in \Bbb{C}$ with $\left( \alpha ,\beta \right)
\neq \left( 0,0\right) $ such that the elements $g_{1}=f_{1}-\alpha f_{3},$ $%
g_{2}=f_{2}-\beta f_{3}$ satisfy the following conditions:\newline
(i) $g_{1},g_{2}$ is a 2-reduced pair and $\deg g_{1}=\deg f_{1},$ $\deg
g_{2}=\deg f_{2};$\newline
(ii) the automorphism $\left( g_{1},g_{2},f_{3}\right) $ admits an
elementary reduction $\left( g_{1},g_{2},g_{3}\right) $ with $\deg \left[
g_{1},g_{3}\right] <\deg g_{2}+\deg \left[ g_{1},g_{2}\right] .$\newline
Then we will say that $\Theta $ admits a reduction $\left(
g_{1},g_{2},g_{3}\right) $ of type II.\newline
We will also say that a polynomial automorphism $F=\left(
F_{1},F_{2},F_{3}\right) $ admits a reduction of type II if for some
permutation $\sigma $ of $\left\{ 1,2,3\right\} ,$ the automorphism $\Theta
=\left( F_{\sigma (1)},F_{\sigma (2)},F_{\sigma (3)}\right) $ admits a
reduction of type II.
\end{definition}

\begin{definition}
\label{def_type_III_IV}Let $\Theta =\left( f_{1},f_{2},f_{3}\right) \,$be an
automorphism of $A=\Bbb{C}[X,Y,Z]$ such that (for some $n\in \Bbb{N}^{*}$) $%
\deg f_{1}=2n,$ and either 
\begin{equation*}
\deg f_{2}=3n,\qquad n<\deg f_{3}\leq \frac{3}{2}n,
\end{equation*}
or 
\begin{equation*}
\frac{5}{2}n<\deg f_{2}\leq 3n,\qquad \deg f_{3}=\frac{3}{2}n.
\end{equation*}
Suppose that there exist $\alpha ,\beta ,\gamma \in \Bbb{C}$ such that the
elements $g_{1}=f_{1}-\beta f_{3},$ $g_{2}=f_{2}-\gamma f_{3}-\alpha
f_{3}^{2}$ satisfy the following conditions:\newline
(i) $g_{1},g_{2}$ is a 2-reduced pair and $\deg g_{1}=2n,$ $\deg g_{2}=3n;$%
\newline
(ii) there exists $g_{3}$ of the form $g_{3}=\sigma f_{3}+g,$ where $\sigma
\in \Bbb{C}^{*},$ $g\in \Bbb{C}\left[ g_{1},g_{2}\right] ,$ such that $\deg
g_{3}\leq \frac{3}{2}n,$ $\deg \left[ g_{1},g_{3}\right] <3n+\deg \left[
g_{1},g_{2}\right] .$ \newline
If $\left( \alpha ,\beta ,\gamma \right) \neq \left( 0,0,0\right) $ and $%
\deg g_{3}<n+\deg \left[ g_{1},g_{2}\right] ,$ then we will say that $\Theta 
$ admits a reduction $\left( g_{1},g_{2},g_{3}\right) $ of type III. On the
other hand, if there exists $\mu \in \Bbb{C}^{*}$ such that $\deg \left(
g_{2}-\mu g_{3}^{2}\right) \leq 2n,$ then we will say that $\Theta $ admits
a reduction $\left( g_{1},g_{2}-\mu g_{3}^{2},g_{3}\right) $ of type IV.%
\newline
We will also say that a polynomial automorphism $F=\left(
F_{1},F_{2},F_{3}\right) $ admits a reduction of type III (type IV) if for
some permutation $\sigma $ of $\left\{ 1,2,3\right\} ,$ the automorphism $%
\Theta =\left( F_{\sigma (1)},F_{\sigma (2)},F_{\sigma (3)}\right) $ admits
a reduction of type III (type IV).
\end{definition}

Now, we can present the above mentioned theorem.

\begin{theorem}
\label{tw_type_1-4}\textit{(\cite{sh umb1}, Theorem 3) }Let $%
F=(F_{1},F_{2},F_{3})\,$be a tame automorphism of $\Bbb{C}^{3}.$ If $\deg
F_{1}+\deg F_{2}+\deg F_{3}>3$ (in other words, if $F$ is not an affine
automorphism), then $F$ admits either an elementary reduction or a reduction
of one of types I-IV.
\end{theorem}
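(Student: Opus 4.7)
The plan is to argue by induction on the total degree $\deg F_1 + \deg F_2 + \deg F_3$, exploiting the structure of $F$ as a composition of affine and triangular automorphisms and analyzing the algebraic dependencies among the leading forms $\overline{F}_1, \overline{F}_2, \overline{F}_3$. After permuting the coordinates I assume $\deg F_1 \leq \deg F_2 \leq \deg F_3$. The base case $\deg F_1+\deg F_2+\deg F_3 = 3$ is vacuous, so I focus on the inductive step.

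The starting observation is that if some leading form, say $\overline{F}_3$, can be written as $h(\overline{F}_1, \overline{F}_2)$ for some $h \in \Bbb{C}[X,Y]$, then $F_3 - h(F_1, F_2)$ has strictly smaller degree than $F_3$ and yields an elementary reduction. So I may assume that no such polynomial relation holds and, more generally, that $F$ admits no elementary reduction at all. By Proposition~\ref{prop_fg_alg_zalez}, any two of the leading forms are either algebraically independent or one is a scalar power of a common homogeneous polynomial; a case split on which pairs $(\overline{F}_i,\overline{F}_j)$ are $\ast$-reduced organizes the analysis.

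Next I would apply Proposition~\ref{prop_deg_g_fg} to each pair $(F_i, F_j)$: the non-existence of an elementary reduction means that the $p$-reduction index of every relevant pair must be at least $2$, and that the Poisson-bracket degrees $\deg[F_i, F_j]$ cannot be too small. Combining these numerical inequalities with the fact that $F$ is tame — so that in any alternating affine/triangular decomposition, the innermost non-affine step contributes a single, controlled dose of degree growth — should force the triple $(\deg F_1, \deg F_2, \deg F_3)$ into precisely one of the four numerical shapes catalogued in Definitions~\ref{def_type_I}, \ref{def_type_II} and \ref{def_type_III_IV}. Roughly, a ratio $3{:}2$ forces a type~I pattern $(2n, ns, \cdot)$ with $s$ odd; the balanced ratio $3{:}2$ on two coordinates forces types II--IV depending on where $\deg F_3$ sits relative to $n$ and $\tfrac{3}{2}n$. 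For each such shape one then manufactures the auxiliary elements $g_1, g_2, g_3$ and scalars $\alpha,\beta,\gamma,\mu$ demanded by the definition, using Lemma~\ref{lem_degree_linear_change} to precompose with a suitable $L\in GL_3(\Bbb{C})$ that aligns leading forms without perturbing the Poisson-bracket-degree bookkeeping.

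The main obstacle is the middle step: proving that the combinatorics of $p$-reduction together with the sharp estimate of Theorem~\ref{tw_deg_g_fg} collapses the space of obstruction patterns to exactly four families. This classification is the heart of the Shestakov--Umirbaev theorem and appears unavoidable; I would expect to follow essentially their original case analysis rather than find a conceptual shortcut, since the bounds involved are so tight that even verifying one family excludes the others requires substantial bookkeeping. By contrast, the concluding verification that once a reduction of type I--IV is produced the new triple has strictly smaller total degree (so the induction hypothesis applies, or we reach the affine case) is comparatively routine.
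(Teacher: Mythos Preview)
The paper does not prove this theorem at all: it is stated as Theorem~\ref{tw_type_1-4} with the explicit attribution ``(\cite{sh umb1}, Theorem 3)'' and is used thereafter as a black box. There is therefore no proof in the paper to compare your proposal against.

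As for the proposal itself, it is an honest outline of the architecture of the Shestakov--Umirbaev argument rather than a proof. You correctly identify the induction on total degree, the role of Proposition~\ref{prop_deg_g_fg} and Theorem~\ref{tw_deg_g_fg} in constraining the degree data, and the fact that the hard part is the combinatorial classification forcing the obstruction into one of the four types. But you also openly concede that this classification ``appears unavoidable'' and that you ``would expect to follow essentially their original case analysis.'' That is the entire content of the theorem; the surrounding scaffolding (induction setup, reduction to no elementary reduction, bookkeeping via Poisson brackets) is routine by comparison. So the proposal is not wrong in spirit, but it does not supply the missing idea --- it defers to the original paper at exactly the point where the work lies. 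If your goal is to reprove the result, you will need to carry out the full case analysis of \cite{sh umb2} (the detailed version); if your goal is only to use the result, citing it as the present paper does is the appropriate course.
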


\subsection{Some number theory}

We will use the following result from number theory, connected with the
so-called coin problem or Frobenius problem.

\begin{theorem}
\label{tw_sywester}\textit{(see e.g. \cite{Brauer}) }If $d_{1},d_{2}$ are
positive integers such that $\gcd (d_{1},d_{2})=1,$ then for every integer $%
k\geq (d_{1}-1)(d_{2}-1)$ there are $k_{1},k_{2}\in \Bbb{N}$ such that 
\begin{equation*}
k=k_{1}d_{1}+k_{2}d_{2}.
\end{equation*}
Moreover $(d_{1}-1)(d_{2}-1)-1\notin d_{1}\Bbb{N}+d_{2}\Bbb{N}.$
\end{theorem}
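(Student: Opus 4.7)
The plan is to exploit the fact that since $\gcd(d_1,d_2)=1$, multiplication by $d_2$ permutes the residue classes modulo $d_1$, so the integers $0,d_2,2d_2,\ldots,(d_1-1)d_2$ form a complete system of residues modulo $d_1$. For any integer $k$ I can therefore select the unique $k_2\in\{0,1,\ldots,d_1-1\}$ with $k\equiv k_2 d_2\pmod{d_1}$ and then set $k_1=(k-k_2 d_2)/d_1$, which is automatically an integer; everything reduces to controlling the sign of $k_1$.

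For the representation claim, I would fix $k\geq(d_1-1)(d_2-1)$, take $k_2$ as above, and argue by contradiction. If $k_1<0$, then $k_1\leq -1$, so
\begin{equation*}
k=k_1 d_1+k_2 d_2\leq -d_1+(d_1-1)d_2=d_1 d_2-d_1-d_2=(d_1-1)(d_2-1)-1,
\end{equation*}
contradicting the lower bound on $k$. Hence $k_1\in\Bbb{N}$ and $k=k_1 d_1+k_2 d_2$ is the desired representation.

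For the non-representability claim, I would suppose toward contradiction that $(d_1-1)(d_2-1)-1=d_1 d_2-d_1-d_2$ equals $k_1 d_1+k_2 d_2$ with $k_1,k_2\in\Bbb{N}$. Reducing this identity modulo $d_1$ gives $(k_2+1)d_2\equiv 0\pmod{d_1}$, and coprimality forces $k_2\geq d_1-1$; a symmetric reduction modulo $d_2$ forces $k_1\geq d_2-1$. Substituting back yields $k_1 d_1+k_2 d_2\geq (d_2-1)d_1+(d_1-1)d_2=2d_1 d_2-d_1-d_2$, which strictly exceeds the target value, the desired contradiction.

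No step should pose a real obstacle: the statement is the classical two-variable Sylvester--Frobenius theorem, and both halves are driven by the same short identity $(d_2-1)d_1+(d_1-1)d_2=2d_1 d_2-d_1-d_2$, applied from opposite directions.
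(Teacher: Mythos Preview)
Your proof is correct. Both halves are standard arguments for the two-coin Sylvester--Frobenius theorem, and each step checks out: the complete residue system modulo $d_1$ gives the unique $k_2\in\{0,\dots,d_1-1\}$, the contradiction bound $k\le -d_1+(d_1-1)d_2=(d_1-1)(d_2-1)-1$ is exactly what is needed, and in the second half the two modular reductions force $k_1\ge d_2-1$, $k_2\ge d_1-1$, overshooting the target.

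The paper's proof follows a different, more constructive line. For the representability part, instead of arguing by contradiction on a single $k$, it first fixes for each residue $r\in\{1,\dots,d_1-1\}$ an integer solution $(k_{1,r},k_{2,r})$ of $k_{1,r}d_1+k_{2,r}d_2=r$ with $k_{2,r}>0$ minimal, deduces the bound $k_{1,r}\ge 1-d_2$, and then explicitly verifies that each of the $d_1$ consecutive integers $(d_1-1)(d_2-1),\dots,(d_1-1)(d_2-1)+d_1-1$ lies in $d_1\Bbb N+d_2\Bbb N$. For the non-representability part, the paper parametrises all integer solutions of $k_1 d_1+k_2 d_2=(d_1-1)(d_2-1)-1$ as $\{(d_2-1-ld_2,\,ld_1-1):l\in\Bbb Z\}$ and checks that this set misses $\Bbb N^2$. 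Your argument is shorter and more symmetric in $d_1,d_2$; the paper's argument is more hands-on and makes the structure of the solution set visible. Either route is perfectly adequate here.
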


The proof of the above theorem can be found in the number theory literature,
but for the convenience of the reader we give it here. In the proof we will
write $M\left( d_{1},d_{2}\right) $ for the minimal $s\in \Bbb{N}$ such that 
$\left\{ s,s+1,\ldots \right\} \subset d_{1}\Bbb{N}+d_{2}\Bbb{N}.$ Let us
mention that the so-called Frobenius number (the maximal $s\in \Bbb{N}$ such
that $s\notin d_{1}\Bbb{N}+d_{2}\Bbb{N}$) is equal to $M\left(
d_{1},d_{2}\right) -1.$

\begin{proof}
Without loss of generality we can assume that $1<d_{1}\leq d_{2}.$ Indeed,
if $d_{1}=1,$ then $d_{1}\Bbb{N+}d_{2}\Bbb{N=N}$ and $(d_{1}-1)(d_{2}-1)=0.$
Thus for any $r=1,\ldots ,d_{1}-1$ there are integers $k_{1,r},k_{2,r}\in 
\Bbb{Z}$ such that 
\begin{equation*}
k_{1,r}d_{1}+k_{2,r}d_{2}=r.
\end{equation*}
Since $d_{1},d_{2},r>0$ and $r<d_{1}\leq d_{2},$ we have $k_{1,r}k_{2,r}<0.$
Moreover, since $%
(k_{1,r}-d_{2})d_{1}+(k_{2,r}+d_{1})d_{2}=k_{1,r}d_{1}+k_{2,r}d_{2}=r,$ we
can assume that $k_{2,r}>0.$ Notice that we can assume even more, namely
that $k_{2,r}>0$ and $k_{1,r}\geq d_{2}-1.\,$ Indeed, let $%
k_{1,r},k_{2,r}\in \Bbb{Z}$ be such that $k_{1,r}d_{1}+k_{2,r}d_{2}=r,$ $%
k_{2,r}>0$ and there are no $k_{1,r}^{\prime },k_{2,r}^{\prime }\in \Bbb{Z}$
such that $k_{1,r}^{\prime }d_{1}+k_{2,r}^{\prime }d_{2}=r,$ $%
k_{2,r}^{\prime }>0$ and $k_{2,r}^{\prime }<k_{2,r}.$ Then, since $%
(k_{1,r}+d_{2})d_{1}+(k_{2,r}-d_{1})d_{2}=k_{1,r}d_{1}+k_{2,r}d_{2}=r,$ we
have $k_{2,r}-d_{1}\leq 0$ (since $r<d_{1}\leq d_{2}$ we actually have $%
k_{2,r}-d_{1}<0$). Thus $k_{1,r}+d_{2}>0,$ and so $k_{1,r}\geq d_{2}-1.$

It is easy to see that to show that any natural number $k\geq
(d_{1}-1)(d_{2}-1)$ is in $d_{1}\Bbb{N+}d_{2}\Bbb{N},$ we only need to show
that 
\begin{equation*}
(d_{1}-1)(d_{2}-1),(d_{1}-1)(d_{2}-1)+1,\ldots
,(d_{1}-1)(d_{2}-1)+d_{1}-1\in d_{1}\Bbb{N+}d_{2}\Bbb{N}.
\end{equation*}
First, 
\begin{eqnarray*}
(d_{1}-1)(d_{2}-1)
&=&(d_{2}-1)d_{1}-d_{2}+1=(d_{2}-1)d_{1}-d_{2}+k_{1,1}d_{1}+k_{2,1}d_{2} \\
&=&(d_{2}-1+k_{1,1})d_{1}+(k_{2,1}-1)d_{2}\in d_{1}\Bbb{N+}d_{2}\Bbb{N},
\end{eqnarray*}
because $k_{1,1}\geq d_{2}-1$ and $k_{2,1}>0.$ Similarly, we show that $%
(d_{1}-1)(d_{2}-1)+1=(d_{2}-1)d_{1}-d_{2}+2,\ldots
,(d_{1}-1)(d_{2}-1)+d_{1}-2=(d_{2}-1)d_{1}-d_{2}+(d_{1}-1)\in d_{1}\Bbb{N+}%
d_{2}\Bbb{N}.$ To see that $(d_{1}-1)(d_{2}-1)+d_{1}-1\in d_{1}\Bbb{N+}d_{2}%
\Bbb{N}$ we write 
\begin{equation*}
(d_{1}-1)(d_{2}-1)+d_{1}-1=d_{1}d_{2}-d_{1}-d_{2}+1+d_{1}-1=(d_{1}-1)d_{2}.
\end{equation*}
Thus we have shown that $M(d_{1},d_{2})\leq (d_{1}-1)(d_{2}-1).$

To prove that $M(d_{1},d_{2})=\left( d_{1}-1\right) \left( d_{2}-1\right) $
it is enough to show that $(d_{1}-1)(d_{2}-1)-1\notin d_{1}\Bbb{N+}d_{2}\Bbb{%
N}.\,$Since $(d_{2}-1)d_{1}-d_{2}=(d_{1}-1)(d_{2}-1)-1$ and $\limfunc{lcm}%
(d_{1},d_{2})=d_{1}d_{2},$ it follows that 
\begin{eqnarray*}
\{(k_{1},k_{2}) &\in &\Bbb{Z}^{2}\ |\
k_{1}d_{1}+k_{2}d_{2}=(d_{1}-1)(d_{2}-1)-1\} \\
&=&\{(d_{2}-1-ld_{2},ld_{1}-1)\ |\ l\in \Bbb{Z}\}.
\end{eqnarray*}
But $\{(d_{2}-1-ld_{2},ld_{1}-1)|\ l\in \Bbb{Z}\}\cap \Bbb{N}^{2}=\emptyset
. $ This ends the proof.
\end{proof}

\section{Some useful results\label{section_useful_results}}

\subsection{Some simple remarks}

In this section we make some simple but useful remarks about existence of
automorphisms and tame automorphisms with given multidegree.

\begin{proposition}
\textit{(}\cite{Karas}\label{prop_m<n}, Prop. 2.1) If for $1\leq d_{1}\leq
\ldots \leq d_{n}$ there is a sequence of integers $1\leq i_{1}<\ldots
<i_{m}\leq n,$ such that there exists an automorphism $G$ of $\Bbb{C}^{m}$
with $\limfunc{mdeg}G=(d_{i_{1}},\ldots ,d_{i_{m}}),$ then there exists an
automorphism $F$ of $\Bbb{C}^{n}$ with $\limfunc{mdeg}F=(d_{1},\ldots
,d_{n}).$ Moreover, if $G$ is tame, then $F$ can also be found tame.
\end{proposition}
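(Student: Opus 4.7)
The plan is to construct $F$ as a composition of two automorphisms of $\Bbb{C}^{n}$: a ``block-lift'' of the given automorphism $G$ that handles the coordinates indexed by $i_{1},\ldots ,i_{m}$, and an elementary piece that inflates the degrees of the remaining coordinates to the prescribed values. The case $m=n$ is trivial ($F=G$), so I may assume $m<n$; if $m\geq 1$, pick any index in the subset, say $i_{1}$, to use as an ``auxiliary'' variable.

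First I would define $\widetilde{G}:\Bbb{C}^{n}\rightarrow \Bbb{C}^{n}$ by
\begin{equation*}
\widetilde{G}_{i_{k}}(x)=G_{k}(x_{i_{1}},\ldots ,x_{i_{m}})\qquad \text{for }k=1,\ldots ,m,
\end{equation*}
and $\widetilde{G}_{j}(x)=x_{j}$ for $j\notin \{i_{1},\ldots ,i_{m}\}$. Since the inverse of $G$ lifts in the same way, $\widetilde{G}\in \limfunc{Aut}(\Bbb{C}^{n})$; moreover if $G$ is tame then every affine / triangular factor of $G$ lifts by the same recipe, so $\widetilde{G}$ is tame. Next I would define an automorphism $\tau :\Bbb{C}^{n}\rightarrow \Bbb{C}^{n}$ that fixes the ``block'' variables and adds a monomial of the correct degree to each remaining coordinate:
\begin{equation*}
\tau _{i_{k}}(x)=x_{i_{k}},\qquad \tau _{j}(x)=x_{j}+x_{i_{1}}^{d_{j}}\quad \text{for }j\notin \{i_{1},\ldots ,i_{m}\}.
\end{equation*}
Because each modification to a coordinate $x_{j}$ uses only the untouched variables $x_{i_{1}},\ldots ,x_{i_{m}}$, the maps for distinct $j$ commute, so $\tau $ is a composition of elementary automorphisms and is tame, with inverse obtained by negating each added term.

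Finally I would set $F=\widetilde{G}\circ \tau $ and compute the coordinate functions directly. Because $\tau $ fixes the $x_{i_{k}}$, one gets $F_{i_{k}}(x)=G_{k}(x_{i_{1}},\ldots ,x_{i_{m}})$, of degree $d_{i_{k}}$, while for $j\notin \{i_{1},\ldots ,i_{m}\}$ one has $\widetilde{G}_{j}=x_{j}$, so $F_{j}(x)=\tau _{j}(x)=x_{j}+x_{i_{1}}^{d_{j}}$, of degree exactly $d_{j}$ (using $d_{j}\geq 1$). Hence $\limfunc{mdeg}F=(d_{1},\ldots ,d_{n})$, and tameness of $F$ follows from tameness of $\widetilde{G}$ and $\tau $. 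There is no real obstacle here; the only thing to be careful about is the choice of pre-composition (rather than post-composition), which ensures the monomial $x_{i_{1}}^{d_{j}}$ survives unchanged into $F_{j}$ instead of being absorbed into the $G_{k}$'s and possibly disturbing the degrees $d_{i_{k}}$.
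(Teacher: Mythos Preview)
Your proof is correct and is essentially identical to the paper's: your $\widetilde{G}$ and $\tau$ are precisely the paper's maps $g$ and $h$, and you set $F=\widetilde{G}\circ\tau=g\circ h$ just as the paper does. Your added remarks on why the lift $\widetilde{G}$ remains tame and why the order of composition matters are more explicit than the paper's treatment, but the underlying construction is the same.
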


\begin{proof}
Without loss of generality we can assume that $m<n.$ Let $1\leq j_{1}<\ldots
<j_{n-m}\leq n$ be such that $\{i_{1},\ldots ,i_{m}\}\cup \{j_{1},\ldots
,j_{n-m}\}=\{1,\ldots ,n\}.$ Then, of course, $\{i_{1},\ldots ,i_{m}\}\cap
\{j_{1},\ldots ,j_{n-m}\}=\emptyset .$ Consider the mapping $h=(h_{1},\ldots
,h_{n}):\Bbb{C}^{n}\rightarrow \Bbb{C}^{n}$ given by 
\begin{equation*}
h_{k}(x_{1},\ldots ,x_{n})=\left\{ 
\begin{array}{ll}
x_{k} & \text{for }k\in \{i_{1},\ldots ,i_{m}\}, \\ 
x_{k}+(x_{i_{1}})^{d_{k}}\quad & \text{for }k\in \{j_{1},\ldots ,j_{n-m}\}.
\end{array}
\right.
\end{equation*}
Of course $h$ is an automorphism of $\Bbb{C}^{n}$ and $\deg h_{k}=d_{k}$ for 
$k\in \{j_{1},\ldots ,j_{n-m}\}.$

Consider also the mapping $g=(g_{1},\ldots ,g_{n}):\Bbb{C}^{n}\rightarrow 
\Bbb{C}^{n}$ given by 
\begin{equation*}
g_{k}(u_{1},\ldots ,u_{n})=\left\{ 
\begin{array}{ll}
G_{l}(u_{i_{1}},\ldots ,u_{i_{m}})\quad & \text{for }k=i_{l}, \\ 
u_{k}\quad & \text{for }k\in \{j_{1},\ldots ,j_{n-m}\}.
\end{array}
\right.
\end{equation*}
Then $g$ is an automorphism of $\Bbb{C}^{n}$ and $\deg g_{k}=d_{k}$ for $%
k\in \{i_{1},\ldots ,i_{m}\}.$

Now $F=g\circ h$ is an automorphism of $\Bbb{C}^{n}$ (tame when $G$ is tame)
with $\limfunc{mdeg}F=\left( d_{1},\ldots ,d_{n}\right) .$
\end{proof}

\begin{proposition}
\label{prop_sum_d_i} \textit{(}\cite{Karas}, Prop. 2.2) If for a sequence of
integers $1\leq d_{1}\leq \ldots \leq d_{n}$ there is $i\in \{1,\ldots ,n\}$
such that 
\begin{equation*}
d_{i}=\sum_{j=1}^{i-1}k_{j}d_{j}\qquad \text{with }k_{j}\in \Bbb{N},
\end{equation*}
then there exists a tame automorphism $F$ of $\Bbb{C}^{n}$ with $\limfunc{%
mdeg}F=(d_{1},\ldots ,d_{n}).$
\end{proposition}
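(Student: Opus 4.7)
The plan is to construct $F$ explicitly as the composition of two tame automorphisms, so no auxiliary reduction or classification machinery is needed. First I would build a tame automorphism $H$ of $\Bbb{C}^{n}$ having $\deg H_{j}=d_{j}$ for every $j\neq i$ while $\deg H_{i}=1$, and then I would use an elementary map to bump the $i$-th coordinate up to degree $d_{i}$ via the representation $d_{i}=\sum_{j<i}k_{j}d_{j}$.

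For the first step, set
\begin{equation*}
H_{j}(x_{1},\ldots ,x_{n})=x_{j}+x_{i}^{d_{j}}\text{ for }j\neq i,\qquad H_{i}(x_{1},\ldots ,x_{n})=x_{i}.
\end{equation*}
Each of the $n-1$ assignments $x_{j}\mapsto x_{j}+x_{i}^{d_{j}}$ is elementary, and they commute since no two of them act on the same variable; so $H$ is tame. Its inverse is $y_{i}=x_{i},\ y_{j}=x_{j}-x_{i}^{d_{j}}$ for $j\neq i$, so $H\in \limfunc{Aut}(\Bbb{C}^{n})$. Because $d_{j}\geq 1$ for all $j$, we read off $\deg H_{j}=d_{j}$ for $j\neq i$ and $\deg H_{i}=1$.

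For the second step, define the elementary map
\begin{equation*}
E(u_{1},\ldots ,u_{n})=\bigl(u_{1},\ldots ,u_{i-1},\ u_{i}+\textstyle\prod_{j<i}u_{j}^{k_{j}},\ u_{i+1},\ldots ,u_{n}\bigr),
\end{equation*}
and put $F=E\circ H$. Then $F$ is tame (a composition of tame maps), $F_{j}=H_{j}$ for $j\neq i$, and
\begin{equation*}
F_{i}=H_{i}+\prod_{j<i}H_{j}^{k_{j}}=x_{i}+\prod_{j<i}\bigl(x_{j}+x_{i}^{d_{j}}\bigr)^{k_{j}}.
\end{equation*}
Since degrees of polynomials multiply additively under product,
\begin{equation*}
\deg \prod_{j<i}H_{j}^{k_{j}}=\sum_{j<i}k_{j}\deg H_{j}=\sum_{j<i}k_{j}d_{j}=d_{i}.
\end{equation*}

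The only delicate point — which I would treat as the mild ``main obstacle'' — is ruling out an accidental cancellation in the sum defining $F_{i}$. If $d_{i}\geq 2$, then $\deg H_{i}=1<d_{i}$ and no cancellation is possible, so $\deg F_{i}=d_{i}$. If instead $d_{i}=1$, the monotonicity $d_{1}\leq \cdots \leq d_{i}$ forces $d_{j}=1$ for all $j\leq i$, and the relation $d_{i}=\sum_{j<i}k_{j}d_{j}=\sum_{j<i}k_{j}$ gives exactly one $k_{j}=1$, so $F_{i}$ has degree $1=d_{i}$ as well. In either case $\limfunc{mdeg}F=(d_{1},\ldots ,d_{n})$, proving the proposition.
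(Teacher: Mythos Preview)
Your proof is correct and follows essentially the same approach as the paper: the paper defines the identical maps $h=H$ and $g=E$ and sets $F=g\circ h$, simply asserting that $\limfunc{mdeg}F=(d_{1},\ldots ,d_{n})$ without further comment. Your version is in fact more careful, since you verify tameness of $H$ explicitly and handle the edge case $d_{i}=1$ that the paper passes over in silence.
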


\begin{proof}
Define $h=(h_{1},\ldots ,h_{n}):\Bbb{C}^{n}\rightarrow \Bbb{C}^{n}$ and $%
g=(g_{1},\ldots ,g_{n}):\Bbb{C}^{n}\rightarrow \Bbb{C}^{n}$ by 
\begin{equation*}
h_{k}(x_{1},\ldots ,x_{n})=\left\{ 
\begin{array}{ll}
x_{k}\quad & \text{for }k=i, \\ 
x_{k}+x_{i}^{d_{k}}\quad & \text{for }k\neq i,
\end{array}
\right.
\end{equation*}
and 
\begin{equation*}
g_{k}(u_{1},\ldots ,u_{n})=\left\{ 
\begin{array}{ll}
u_{k}+u_{1}^{k_{1}}\cdots u_{i-1}^{k_{i-1}}\quad & \text{for }k=i, \\ 
u_{k}\quad & \text{for }k\neq i.
\end{array}
\right.
\end{equation*}
It is easy to see that $F=g\circ h$ is a tame automorphism with $\limfunc{%
mdeg}F=(d_{1},\ldots ,d_{n}).$
\end{proof}

The above proposition implies the following result.

\begin{corollary}
\textit{(}\cite{Karas}, Cor. 2.3) \label{cor_male_d1}If $1\leq d_{1}\leq
\ldots \leq d_{n}$ is a sequence of integers with $d_{1}\leq n-1,$ then
there exists a tame automorphism $F$ of $\Bbb{C}^{n}$ with $\limfunc{mdeg}%
F=(d_{1},\ldots ,d_{n}).$
\end{corollary}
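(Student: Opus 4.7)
The plan is to reduce the corollary to Proposition \ref{prop_sum_d_i} via a pigeonhole argument on residues modulo $d_{1}$. Since Proposition \ref{prop_sum_d_i} already builds a tame automorphism of the required multidegree as soon as \emph{some} $d_{i}$ is an $\mathbb{N}$-linear combination of the preceding $d_{1},\ldots,d_{i-1}$, all that remains is to extract such an index $i$ from the hypothesis $d_{1}\leq n-1$.

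First I would consider the $n$ residues
\begin{equation*}
d_{1}\bmod d_{1},\ d_{2}\bmod d_{1},\ \ldots,\ d_{n}\bmod d_{1}\in\{0,1,\ldots,d_{1}-1\}.
\end{equation*}
There are only $d_{1}$ possible values, and by hypothesis $d_{1}\leq n-1<n$. By the pigeonhole principle, two of these residues coincide: there exist indices $1\leq i<j\leq n$ with $d_{i}\equiv d_{j}\pmod{d_{1}}$. Because the sequence is non-decreasing, $d_{j}-d_{i}\geq 0$, hence $d_{j}-d_{i}=k\,d_{1}$ for some $k\in\mathbb{N}$.

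The next step is to rewrite this as an $\mathbb{N}$-linear combination of $d_{1},\ldots,d_{j-1}$:
\begin{equation*}
d_{j}=k\cdot d_{1}+1\cdot d_{i}+\sum_{l\notin\{1,i\},\,l<j}0\cdot d_{l},
\end{equation*}
with all coefficients in $\mathbb{N}$ (recall $0\in\mathbb{N}$ by the conventions fixed in Section \ref{section_notations_and}). Note that when $i=1$ this expression is still valid, read as $d_{j}=(k+1)d_{1}$. Thus the hypothesis of Proposition \ref{prop_sum_d_i} is satisfied with the index $j$, and invoking that proposition produces the desired tame automorphism $F$ of $\mathbb{C}^{n}$ with $\operatorname{mdeg}F=(d_{1},\ldots,d_{n})$.

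There is essentially no obstacle in this argument; the only subtlety worth highlighting is that the pigeonhole step crucially uses the strict inequality $d_{1}<n$, which is precisely what the bound $d_{1}\leq n-1$ provides. Everything else is packaged into Proposition \ref{prop_sum_d_i}, so the corollary reduces to a one-line counting argument followed by a direct appeal to that proposition.
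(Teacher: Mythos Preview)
Your proof is correct and follows essentially the same approach as the paper: a pigeonhole argument on residues modulo $d_{1}$ followed by an appeal to Proposition \ref{prop_sum_d_i}. Your version is in fact slightly more streamlined, since by including $d_{1}$ itself among the residues you avoid the paper's case split (some $r_{i}=0$ versus all $r_{i}\neq 0$) and handle both situations in a single pigeonhole step.
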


\begin{proof}
Let $r_{i}\in \{0,1,\ldots ,d_{1}-1\},$ for $i=2,\ldots ,n,$ be such that $%
d_{i}\equiv r_{i}(\func{mod}d_{1}).$ If there is an $i\in \{2,\ldots ,n\}$
such that $r_{i}=0,$ then $d_{i}=kd_{1}$ for some $k\in \Bbb{N}^{*}$ and by
Proposition \ref{prop_sum_d_i}, there exists a tame automorphism $F$ of $%
\Bbb{C}^{n}$ with the desired properties.

Thus assume that $r_{i}\neq 0$ for all $i=2,\ldots ,n.$ Since $d_{1}-1<n-1,$
there are $i,j\in \{2,\ldots ,n\},$ $i\neq j,$ such that $r_{i}=r_{j}.$
Without loss of generality we can assume that $i<j.$ Then $%
d_{j}=d_{i}+kd_{1} $ for some $k\in \Bbb{N},$ and by Proposition \ref
{prop_sum_d_i} there exists a tame automorphism $F$ of $\Bbb{C}^{n}$ with
the desired properties.
\end{proof}

The above corollary can be improved as follows.

\begin{theorem}
\label{tw_gcd_male} If $1\leq d_{1}\leq \ldots \leq d_{n}$ is a sequence of
integers with 
\begin{equation*}
\frac{d_{1}}{\gcd \left( d_{1},\ldots ,d_{n}\right) }\leq n-1,
\end{equation*}
then there exists a tame automorphism $F$ of $\Bbb{C}^{n}$ with $\limfunc{%
mdeg}F=(d_{1},\ldots ,d_{n}).$
\end{theorem}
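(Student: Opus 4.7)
The plan is to mimic the proof of Corollary \ref{cor_male_d1}, but sharpen the pigeonhole argument by exploiting the fact that every $d_i$, and in particular the remainders $d_i \bmod d_1$, must be divisible by $g=\gcd(d_1,\dots,d_n)$.

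First I would write $g=\gcd(d_1,\dots,d_n)$ and handle the trivial case: if $d_1 \mid d_i$ for some $i\in\{2,\dots,n\}$, then $d_i = (d_i/d_1)\,d_1$ is a nonnegative-integer combination of $d_1,\dots,d_{i-1}$, so Proposition \ref{prop_sum_d_i} applies directly.

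So assume no $d_i$ with $i\ge 2$ is divisible by $d_1$. For each such $i$ let $r_i \in \{1,\dots,d_1-1\}$ be the remainder of $d_i$ modulo $d_1$. Since $g$ divides both $d_i$ and $d_1$, it divides $r_i$, so every $r_i$ lies in the set
\begin{equation*}
S=\{g,\,2g,\,3g,\,\dots,\,(d_1/g-1)g\}\subset\{1,\dots,d_1-1\},
\end{equation*}
which has cardinality $d_1/g - 1$. The hypothesis $d_1/g \le n-1$ gives $|S|\le n-2$, while we have $n-1$ remainders $r_2,\dots,r_n$. By the pigeonhole principle, there exist indices $2\le i<j\le n$ with $r_i=r_j$, hence $d_1 \mid (d_j-d_i)$; since $d_i\le d_j$, we may write $d_j = d_i + k\,d_1$ for some $k\in\mathbb{N}$. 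Thus $d_j$ is a nonnegative-integer combination of $d_1,\dots,d_{j-1}$, and Proposition \ref{prop_sum_d_i} produces the required tame automorphism $F$ with $\operatorname{mdeg} F = (d_1,\dots,d_n)$.

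There is essentially no real obstacle here; the only point to get right is the observation that restricting residues of $d_i$ mod $d_1$ to multiples of $g$ cuts the pigeonhole pool from $d_1-1$ down to $d_1/g-1$, which is precisely what is needed to replace the old hypothesis $d_1 \le n-1$ by $d_1/g \le n-1$.
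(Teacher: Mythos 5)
Your proof is correct and follows essentially the same route as the paper: the paper also observes that all remainders $r_{i}$ of $d_{i}$ modulo $d_{1}$ are multiples of $\gcd (d_{1},\ldots ,d_{n})$, so they lie in a set of size $d_{1}/\gcd (d_{1},\ldots ,d_{n})\leq n-1$, and then reuses the case split and pigeonhole argument of Corollary \ref{cor_male_d1} together with Proposition \ref{prop_sum_d_i}. Your write-up just makes that argument explicit rather than citing it.
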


\begin{proof}
Let $d=\gcd (d_{1},\ldots ,d_{n}).$ Then the numbers $r_{2},\ldots ,r_{n}$
defined as in the proof of Corollary \ref{cor_male_d1} satisfy $r_{i}\in
\{0,d,2d,\ldots ,d_{1}-d\}$ for $i=2,\ldots ,n.$ Since the number of
elements of the set $\{0,d,2d,\ldots ,d_{1}-d\}$ is equal to 
\begin{equation*}
\frac{d_{1}}{\gcd \left( d_{1},\ldots ,d_{n}\right) }\leq n-1,
\end{equation*}
we can use the same arguments as in the proof of Corollary \ref{cor_male_d1}.
\end{proof}

Combining Theorem \ref{tw_gcd_male} and Proposition \ref{prop_m<n} we obtain
the following result.

\begin{corollary}
If for $1\leq d_{1}\leq \ldots \leq d_{n}$ there is a sequence of integers $%
1\leq i_{1}<\ldots <i_{m}\leq n,$ such that 
\begin{equation*}
\frac{d_{i_{1}}}{\gcd \left( d_{i_{1}},\ldots ,d_{i_{m}}\right) }\leq m-1,
\end{equation*}
then there exists a tame automorphism $F$ of $\Bbb{C}^{n}$ with $\limfunc{%
mdeg}F=(d_{1},\ldots ,d_{n}).$
\end{corollary}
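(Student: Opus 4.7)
The statement is a direct combination of Theorem \ref{tw_gcd_male} and Proposition \ref{prop_m<n}, so the plan is essentially just to chain these two results together. Since $1 \leq i_1 < \ldots < i_m \leq n$ and $d_1 \leq \ldots \leq d_n$, the subsequence $(d_{i_1}, \ldots, d_{i_m})$ is itself weakly increasing with $d_{i_1} \geq 1$, so the hypotheses of Theorem \ref{tw_gcd_male} are in place once we impose the numerical condition $d_{i_1}/\gcd(d_{i_1},\ldots,d_{i_m}) \leq m-1$.

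First, I would apply Theorem \ref{tw_gcd_male} to the sequence $(d_{i_1},\ldots,d_{i_m})$, obtaining a tame automorphism $G$ of $\Bbb{C}^m$ with $\limfunc{mdeg} G = (d_{i_1},\ldots,d_{i_m})$. Next, I would invoke Proposition \ref{prop_m<n} with this $G$: since there is a subsequence of indices $i_1 < \ldots < i_m$ for which a tame automorphism of $\Bbb{C}^m$ realizes the corresponding multidegree, the ``moreover'' clause of Proposition \ref{prop_m<n} guarantees the existence of a tame automorphism $F$ of $\Bbb{C}^n$ with $\limfunc{mdeg} F = (d_1,\ldots,d_n)$.

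There is essentially no obstacle here — the work has already been done in the two cited results. The only small thing to verify is that the hypothesis of Theorem \ref{tw_gcd_male} applies to the subsequence (i.e., that the subsequence is ordered and has positive entries, both of which are immediate), and that the subsequence indexing in Proposition \ref{prop_m<n} matches ours verbatim. Once those formalities are noted, the conclusion is immediate, and the proof reduces to a single two-line argument.
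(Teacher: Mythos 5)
Your proposal is correct and is exactly the paper's argument: the paper introduces this corollary with the phrase ``Combining Theorem \ref{tw_gcd_male} and Proposition \ref{prop_m<n} we obtain the following result,'' i.e.\ apply Theorem \ref{tw_gcd_male} to the subsequence $(d_{i_{1}},\ldots ,d_{i_{m}})$ and then lift via Proposition \ref{prop_m<n}. Nothing further is needed.
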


\subsection{Reducibility of type I and II}

Now we will show that in our considerations we do not need to pay attention
to reducibility of type I and II.

\begin{lemma}
\label{lem_red_I_II}Let $\left( d_{1},d_{2},d_{3}\right) \neq \left(
1,1,1\right) ,$ $d_{1}\leq d_{2}\leq d_{3}$ be a sequence of positive
integers. If there is an automorphism (a tame automorphism) $F:\Bbb{C}%
^{3}\rightarrow \Bbb{C}^{3}$ such that $F$ admits a reduction of type I or
II and $\limfunc{mdeg}F=\left( d_{1},d_{2},d_{3}\right) ,\,$then there is
also an automorphism (a tame automorphism) $\widetilde{F}:\Bbb{C}%
^{3}\rightarrow \Bbb{C}^{3}$ such that $\widetilde{F}$ admits an elementary
reduction and $\limfunc{mdeg}\widetilde{F}=\left( d_{1},d_{2},d_{3}\right) .$
Moreover, if $F\left( 0,0,0\right) =\left( 0,0,0\right) ,$ then $\widetilde{F%
}$ can also be found such that $\widetilde{F}\left( 0,0,0\right) =\left(
0,0,0\right) .$
\end{lemma}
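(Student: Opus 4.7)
The plan is to produce $\widetilde{F}$ by applying to $F$ a single linear (hence tame and origin-fixing) transformation, using the scalars $\alpha$ and $\beta$ supplied by Definitions \ref{def_type_I} and \ref{def_type_II}. The key observation is that condition (i) in both definitions guarantees $\deg g_1 = \deg f_1$ and $\deg g_2 = \deg f_2$, so such a transformation preserves the multidegree, while condition (ii) is already the statement that the resulting triple admits an elementary reduction.

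Concretely, since $F$ admits a reduction of type I or II, there is a permutation $\sigma$ of $\{1,2,3\}$ such that $(f_1,f_2,f_3) := (F_{\sigma(1)}, F_{\sigma(2)}, F_{\sigma(3)})$ admits the reduction in the sense of Definition \ref{def_type_I} (resp.\ \ref{def_type_II}). In the type-I case I pick the $\alpha \in \Bbb{C}^{*}$ furnished by the definition and set
\[
\widetilde{F}_{\sigma(1)} = F_{\sigma(1)}, \qquad \widetilde{F}_{\sigma(2)} = F_{\sigma(2)} - \alpha F_{\sigma(3)}, \qquad \widetilde{F}_{\sigma(3)} = F_{\sigma(3)}.
\]
In the type-II case I pick $(\alpha,\beta) \neq (0,0)$ from the definition and instead set
\[
\widetilde{F}_{\sigma(1)} = F_{\sigma(1)} - \alpha F_{\sigma(3)}, \quad \widetilde{F}_{\sigma(2)} = F_{\sigma(2)} - \beta F_{\sigma(3)}, \quad \widetilde{F}_{\sigma(3)} = F_{\sigma(3)}.
\]
In either case $\widetilde{F} = L \circ F$ for some $L \in GL_3(\Bbb{C})$. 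Hence $\widetilde{F}$ is an automorphism, it is tame whenever $F$ is (since $L$ is linear, in particular tame), and $\widetilde{F}(0)=L(F(0))=0$ whenever $F(0)=0$.

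It then remains to check the two required properties of $\widetilde{F}$. By construction $(\widetilde{F}_{\sigma(1)}, \widetilde{F}_{\sigma(2)}, \widetilde{F}_{\sigma(3)}) = (g_1, g_2, f_3)$ in the notation of the relevant definition. Condition (i) gives $\deg \widetilde{F}_{\sigma(i)} = \deg g_i = \deg f_i = \deg F_{\sigma(i)}$ for $i=1,2$, and the third slot is unchanged, so $\limfunc{mdeg}\widetilde{F} = (d_1,d_2,d_3)$. Condition (ii) asserts that $(g_1,g_2,f_3)$ admits an elementary reduction, and this property is plainly invariant under permuting the coordinate functions, so the same holds for $\widetilde{F}$.

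The argument faces no serious obstacle: the content of the lemma has already been pre-packaged into the definitions of reductions of types I and II, whose clauses (i) and (ii) are designed precisely to replace a type-I/II reduction by an elementary one after a suitable linear cleanup. The only care needed is the bookkeeping of the permutation $\sigma$, which must match the degree-preservation slots in (i) to the corresponding slots of $\limfunc{mdeg} F$. The hypothesis $(d_1,d_2,d_3) \neq (1,1,1)$ is only cosmetic: Definitions \ref{def_type_I} and \ref{def_type_II} require degrees of the form $2n$ and $ns$ (resp.\ $2n$ and $3n$) with $n \geq 1$, so no $F$ with multidegree $(1,1,1)$ can satisfy the hypothesis in the first place.
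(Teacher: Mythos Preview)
Your proof is correct and follows essentially the same approach as the paper: apply the linear change $L$ built from the scalars $\alpha$ (and $\beta$ for type~II) in Definitions \ref{def_type_I}--\ref{def_type_II}, observe that clause~(i) forces $\limfunc{mdeg}(L\circ F)=\limfunc{mdeg}F$ while clause~(ii) hands you the elementary reduction directly, and note that $L$ is linear so tameness and the origin are preserved. Your handling of the permutation $\sigma$ and your remark that the hypothesis $(d_1,d_2,d_3)\neq(1,1,1)$ is vacuous are both a bit more explicit than the paper's write-up, but the argument is the same.
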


\begin{proof}
Assume that $F=\left( F_{1},F_{2},F_{3}\right) $ admits a reduction of type
I. By Definition \ref{def_type_I} there is a permutation $\sigma \,$of $%
\left\{ 1,2,3\right\} $ and $\alpha \in \Bbb{C}^{*}$ such that the elements $%
g_{1}=F_{\sigma (1)},g_{2}=F_{\sigma (2)}-\alpha F_{\sigma (3)}$ satisfy the
following conditions:\newline
(i) $g_{1},g_{2}$ is a 2-reduced pair and $\deg g_{1}=\deg F_{\sigma (1)},$ $%
\deg g_{2}=\deg F_{\sigma (2)};$\newline
(ii) the automorphism $\left( g_{1},g_{2},F_{\sigma (3)}\right) $ admits an
elementary reduction of the form $\left( g_{1},g_{2},g_{3}\right) .$

For simplicity of notation (and without loss of generality) we assume that $%
\sigma =\limfunc{id}_{\left\{ 1,2,3\right\} }.$ Thus we can take $\widetilde{%
F}=\left( g_{1},g_{2},F_{3}\right) .$

If $F$ admits a reduction of type II we can use a similar construction to
obtain an automorphism $\widetilde{F}.$

Since $\widetilde{F}=G\circ F,$ where 
\begin{equation*}
G:\Bbb{C}^{3}\ni \left\{ 
\begin{array}{l}
x \\ 
y \\ 
z
\end{array}
\right\} \mapsto \left\{ 
\begin{array}{l}
x \\ 
y-\alpha z \\ 
z
\end{array}
\right\} \in \Bbb{C}^{3}\qquad \text{(for type I)}
\end{equation*}
or 
\begin{equation*}
G:\Bbb{C}^{3}\ni \left\{ 
\begin{array}{l}
x \\ 
y \\ 
z
\end{array}
\right\} \mapsto \left\{ 
\begin{array}{l}
x-\alpha z \\ 
y-\beta z \\ 
z
\end{array}
\right\} \in \Bbb{C}^{3}\qquad \text{(for type II)}
\end{equation*}
$\widetilde{F}$ is tame if and only if $F$ is tame. It is also clear that $%
\widetilde{F}\left( 0,0,0\right) =\left( 0,0,0\right) $ when $F\left(
0,0,0\right) =\left( 0,0,0\right) .$
\end{proof}

The above lemma also implies the following

\begin{proposition}
\label{prop_reduc_type_1_2}Let $\left( d_{1},d_{2},d_{3}\right) \neq \left(
1,1,1\right) ,$ $d_{1}\leq d_{2}\leq d_{3},$ be a sequence of positive
integers. If there is a tame automorphism $F:\Bbb{C}^{3}\rightarrow \Bbb{C}%
^{3}$ with $\limfunc{mdeg}F=\left( d_{1},d_{2},d_{3}\right) ,\,$then there
is also a tame automorphism $\widetilde{F}:\Bbb{C}^{3}\rightarrow \Bbb{C}%
^{3} $ such that $\limfunc{mdeg}\widetilde{F}=\left(
d_{1},d_{2},d_{3}\right) $ and $\widetilde{F}$ admits either an elementary
reduction or a reduction of type III or IV. Moreover we can require that $%
\widetilde{F}\left( 0,0,0\right) =\left( 0,0,0\right) .$
\end{proposition}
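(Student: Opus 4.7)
The plan is to combine the Shestakov--Umirbaev trichotomy from Theorem \ref{tw_type_1-4} with the preceding Lemma \ref{lem_red_I_II}, which has already done the essential work of converting a reduction of type I or II into an elementary one. No new heavy machinery should be needed.

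First I would dispose of the base-point condition. Given $F$ with $\mathrm{mdeg}\,F = (d_1,d_2,d_3)$, compose $F$ with the translation sending $F(0,0,0)$ to the origin. Translations are affine and in particular tame, they preserve tameness of the composition, and they do not change any coordinate degree, so the composed map has the same multidegree and fixes the origin. Replacing $F$ by this composition, I may assume $F(0,0,0) = (0,0,0)$.

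Second, since $(d_1,d_2,d_3) \neq (1,1,1)$ and $d_1 \leq d_2 \leq d_3$, we have $\deg F_1 + \deg F_2 + \deg F_3 > 3$, so $F$ is not an affine automorphism. Theorem \ref{tw_type_1-4} then applies, yielding that $F$ admits either an elementary reduction or a reduction of one of types I--IV. If $F$ already admits an elementary reduction, or a reduction of type III or IV, simply take $\widetilde{F} = F$ and we are done, with the origin fixed by construction.

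The remaining case is when $F$ admits a reduction of type I or II. Here Lemma \ref{lem_red_I_II}, applied to $F$, produces a tame automorphism $\widetilde{F}$ with $\mathrm{mdeg}\,\widetilde{F} = (d_1,d_2,d_3)$ that admits an elementary reduction, and the lemma also explicitly preserves the condition $\widetilde{F}(0,0,0) = (0,0,0)$. This completes the proof. The only potential obstacle worth noting is the bookkeeping that Lemma \ref{lem_red_I_II} is stated for a non-tame or tame $F$ alike and propagates tameness through the replacement $\widetilde{F} = G \circ F$ with $G$ affine; this is exactly what ensures the tame hypothesis on $F$ transfers to $\widetilde{F}$, so no additional argument is required.
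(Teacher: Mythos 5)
Your proposal is correct and follows essentially the same route as the paper: post-compose with the translation moving $F(0,0,0)$ to the origin (which changes neither tameness nor the multidegree), invoke the Shestakov--Umirbaev trichotomy, keep $\widetilde{F}=T\circ F$ in the elementary/III/IV cases, and fall back on Lemma \ref{lem_red_I_II} in the type I/II case. No gaps.
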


\begin{proof}
Let $F=\left( F_{1},F_{2},F_{3}\right) :\Bbb{C}^{3}\rightarrow \Bbb{C}^{3}$
be any tame automorphism with $\limfunc{mdeg}F=\left(
d_{1},d_{2},d_{3}\right) $ and let $T:\Bbb{C}^{3}\rightarrow \Bbb{C}^{3}$ be
the translation given by 
\begin{equation*}
T:\Bbb{C}^{3}\ni \left( x,y,z\right) \mapsto \left( x-F_{1}\left( 0\right)
,y-F_{2}\left( 0\right) ,z-F_{3}\left( 0\right) \right) \in \Bbb{C}^{3}.
\end{equation*}
Then obviously $T\circ F$ is a tame automorphism of $\Bbb{C}^{3}$ such that $%
\limfunc{mdeg}\left( T\circ F\right) =\limfunc{mdeg}F=\left(
d_{1},d_{2},d_{3}\right) $ and $\left( T\circ F\right) \left( 0,0,0\right)
=\left( 0,0,0\right) .$ If $T\circ F$ admits either an elementary reduction
or a reduction of type III or IV, then we take $\widetilde{F}=T\circ F.$ And
if $T\circ F$ admits a reduction of type I or II, then we can use Lemma \ref
{lem_red_I_II}.
\end{proof}

In particular Proposition \ref{prop_reduc_type_1_2} says that reductions of
type I and II are irrelevant for our considerations. To be precise we
formulate the following

\begin{theorem}
\label{tw_reduc+type_1_2}Let $\left( d_{1},d_{2},d_{3}\right) \neq \left(
1,1,1\right) ,$ $d_{1}\leq d_{2}\leq d_{3}$ be a sequence of positive
integers. To prove that there is no tame automorphism of $\Bbb{C}^{3}$ with
multidegree $\left( d_{1},d_{2},d_{3}\right) $ it is enough to show that a
(hypothetical) automorphism $F$ of $\Bbb{C}^{3}$ with $\limfunc{mdeg}%
F=\left( d_{1},d_{2},d_{3}\right) $ admits neither an elementary reduction
nor a reduction of type III or IV. Moreover, we can restrict our attention
to automorphisms $F$ with $F\left( 0,0,0\right) =\left( 0,0,0\right) .$
\end{theorem}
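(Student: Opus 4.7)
The plan is to derive Theorem \ref{tw_reduc+type_1_2} as a direct contrapositive consequence of Proposition \ref{prop_reduc_type_1_2}, which has already done all the substantive work. Suppose, for contradiction, that the hypothesis of the theorem holds (no automorphism $F$ of $\Bbb{C}^{3}$ with $\limfunc{mdeg}F=(d_{1},d_{2},d_{3})$ and $F(0,0,0)=(0,0,0)$ admits an elementary reduction or a reduction of type III or IV), yet there still exists a tame automorphism $F_{0}:\Bbb{C}^{3}\rightarrow \Bbb{C}^{3}$ with $\limfunc{mdeg}F_{0}=(d_{1},d_{2},d_{3})$.

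Applying Proposition \ref{prop_reduc_type_1_2} to $F_{0}$, one obtains a tame automorphism $\widetilde{F}$ of $\Bbb{C}^{3}$ satisfying $\limfunc{mdeg}\widetilde{F}=(d_{1},d_{2},d_{3})$, $\widetilde{F}(0,0,0)=(0,0,0)$, and admitting either an elementary reduction or a reduction of type III or IV. But $\widetilde{F}$ is then a hypothetical automorphism of the kind whose existence was excluded by assumption, a contradiction. This proves that under the hypothesis no tame automorphism with multidegree $(d_{1},d_{2},d_{3})$ can exist, and in checking the hypothesis one is free to restrict attention to $F$ with $F(0,0,0)=(0,0,0)$.

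I do not expect any genuine obstacle here: the theorem is essentially a clean restatement of Proposition \ref{prop_reduc_type_1_2}, repackaged as a proof strategy. All the real mathematical content sits upstream, in Lemma \ref{lem_red_I_II} (which uses that reductions of types I and II amount to composing $F$ on the left by a fixed element of $\limfunc{Aff}(\Bbb{C}^{3})$, preserving tameness and the multidegree) and in Shestakov--Umirbaev's Theorem \ref{tw_type_1-4} (which guarantees that a non-affine tame $F$ must admit some reduction among types I--IV or an elementary one). The only thing to be careful about in writing the proof is the logical direction: the hypothesis is a \emph{universal} statement over candidate automorphisms $F$, and Proposition \ref{prop_reduc_type_1_2} produces an \emph{existential} witness $\widetilde{F}$ contradicting it, so the contrapositive argument is the cleanest formulation.
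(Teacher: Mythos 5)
Your proposal is correct and matches the paper's treatment: the paper states this theorem as an immediate reformulation of Proposition \ref{prop_reduc_type_1_2} (which itself rests on Lemma \ref{lem_red_I_II} and Theorem \ref{tw_type_1-4}), and your contrapositive argument is exactly the implicit derivation. No gap.
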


To end this section, let us look again at Example \ref{exmple_red_type_I}.
If $F$ is the automorphism from that example, then $\limfunc{mdeg}F=(9,6,8)$
or $(6,8,9)$ after permutation of coordinates. This automorphism does not
admit an elementary reduction and admits a reduction of type I. One can
easily see that (in the notation of Example \ref{exmple_red_type_I}) 
\begin{equation*}
T_{2}\circ T_{1}=T_{3}^{-1}\circ L^{-1}\circ F
\end{equation*}
is a reduction of type I of $F.$ Moreover for $\widetilde{F}=L^{-1}\circ F$
we have 
\begin{equation*}
\limfunc{mdeg}\widetilde{F}=\limfunc{mdeg}F
\end{equation*}
and $T_{3}^{-1}\circ \widetilde{F}$ is an elementary reduction of $%
\widetilde{F}.$

\subsection{Reducibility of type III}

First of all notice that if $1\leq d_{1}\leq d_{2}\leq d_{3}$ are such that $%
\limfunc{mdeg}F=\left( d_{1},d_{2},d_{3}\right) $\thinspace for some
automorphism $F$ that admits a reduction of type III, then by Definition \ref
{def_type_III_IV} there is $n\in \Bbb{N}^{*}$ such that 
\begin{equation*}
d_{\sigma (1)}=2n
\end{equation*}
and either 
\begin{equation*}
d_{\sigma (2)}=3n,\qquad n<d_{\sigma (3)}\leq \frac{3}{2}n,
\end{equation*}
or 
\begin{equation*}
\frac{5}{2}n<d_{\sigma (2)}\leq 3n,\qquad d_{\sigma (3)}=\frac{3}{2}n
\end{equation*}
for some permutation $\sigma \,$of $\left\{ 1,2,3\right\} .$ Since $\frac{3}{%
2}n<2n<\min \left\{ \frac{5}{2}n,3n\right\} ,$ we must actually have 
\begin{equation*}
d_{2}=2n
\end{equation*}
and either 
\begin{equation*}
d_{3}=3n,\qquad n<d_{1}\leq \frac{3}{2}n,
\end{equation*}
or 
\begin{equation*}
\frac{5}{2}n<d_{3}\leq 3n,\qquad d_{1}=\frac{3}{2}n.
\end{equation*}

Thus we have the following remark.

\begin{remark}
\label{remakrk_red_type_3}If an automorphism $F$ of $\Bbb{C}^{3}$ with $%
\limfunc{mdeg}F=\left( d_{1},d_{2},d_{3}\right) ,$ $1\leq d_{1}\leq
d_{2}\leq d_{3},$ admits a reduction of type III, then\newline
(1) $2|d_{2},$\newline
(2) $3|d_{1}$ or $\frac{d_{3}}{d_{2}}=\frac{3}{2}.$
\end{remark}

Because of the remark above it is natural to consider the situation of the
following lemma.

\begin{lemma}
\label{lem_type_3}Let $\left( d_{1},d_{2},d_{3}\right) \neq \left(
1,1,1\right) ,$ $d_{1}\leq d_{2}\leq d_{3}$ be a sequence of positive
integers such that $\frac{d_{3}}{d_{2}}=\frac{3}{2}.$ If there is an
automorphism (a tame automorphism) $F:\Bbb{C}^{3}\rightarrow \Bbb{C}^{3}$
such that $F$ admits a reduction of type III and $\limfunc{mdeg}F=\left(
d_{1},d_{2},d_{3}\right) ,\,$then there is also an automorphism (a tame
automorphism) $\widetilde{F}:\Bbb{C}^{3}\rightarrow \Bbb{C}^{3}$ such that $%
\widetilde{F}$ admits an elementary reduction and $\limfunc{mdeg}\widetilde{F%
}=\left( d_{1},d_{2},d_{3}\right) .$ Moreover, if $F\left( 0,0,0\right)
=\left( 0,0,0\right) ,$ then $\widetilde{F}$ can also be found such that $%
\widetilde{F}\left( 0,0,0\right) =\left( 0,0,0\right) .$
\end{lemma}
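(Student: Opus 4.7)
The plan is to apply a tame, origin-preserving change of coordinates to $F$ that preserves multidegree and turns the given type~III reduction into a visible elementary reduction.

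First I fix a permutation $\tau$ of $\{1,2,3\}$ such that $\Theta=(f_{1},f_{2},f_{3}):=(F_{\tau(1)},F_{\tau(2)},F_{\tau(3)})$ admits a type~III reduction in the sense of Definition~\ref{def_type_III_IV}, with associated constants $\alpha,\beta,\gamma\in\Bbb{C}$, scalar $\sigma\in\Bbb{C}^{*}$, polynomial $g\in\Bbb{C}[g_{1},g_{2}]$, and resulting triple
\[
g_{1}=f_{1}-\beta f_{3},\qquad g_{2}=f_{2}-\gamma f_{3}-\alpha f_{3}^{2},\qquad g_{3}=\sigma f_{3}+g(g_{1},g_{2}).
\]
The hypothesis $d_{3}/d_{2}=3/2$, combined with Remark~\ref{remakrk_red_type_3}, forces $\deg f_{1}=2n=d_{2}$, $\deg f_{2}=3n=d_{3}$, and $\deg f_{3}=d_{1}$ with $n<d_{1}\leq\tfrac{3}{2}n$.

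Next I define the tame, origin-fixing automorphism
\[
\varphi(x,y,z)=\bigl(x-\beta z,\ y-\gamma z-\alpha z^{2},\ z\bigr),
\]
which factors as a composition of two elementary automorphisms, and set $\widetilde{\Theta}=\varphi\circ\Theta=(g_{1},g_{2},f_{3})$. The degree equalities $\deg g_{1}=\deg f_{1}$ and $\deg g_{2}=\deg f_{2}$ built into Definition~\ref{def_type_III_IV} yield $\limfunc{mdeg}\widetilde{\Theta}=\limfunc{mdeg}\Theta$. Permuting coordinates back by $\tau^{-1}$ produces $\widetilde{F}$ with $\limfunc{mdeg}\widetilde{F}=(d_{1},d_{2},d_{3})$; it is tame whenever $F$ is (since $\varphi$ is tame), and it fixes the origin whenever $F$ does (since $\varphi(0,0,0)=(0,0,0)$).

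To exhibit an elementary reduction of $\widetilde{F}$, I apply the elementary automorphism $E(x,y,z)=(x,\ y,\ z+\sigma^{-1}g(x,y))$ to $\widetilde{\Theta}$, producing $(g_{1},g_{2},\sigma^{-1}g_{3})$. The crucial point is the strict inequality $\deg g_{3}<\deg f_{3}$: a type~III reduction strictly decreases the total degree $\deg f_{1}+\deg f_{2}+\deg f_{3}$ (this is the complexity measure driving the Shestakov-Umirbaev induction behind Theorem~\ref{tw_type_1-4}), and since the first two degrees are preserved by construction, the third must strictly drop. Thus $E\circ\widetilde{\Theta}$ is a genuine elementary reduction of $\widetilde{\Theta}$, and permuting by $\tau^{-1}$ transfers this to an elementary reduction of $\widetilde{F}$. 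The main obstacle is justifying the strict inequality $\deg g_{3}<\deg f_{3}$; once this is extracted from the reduction-of-complexity interpretation of type~III, the remainder is routine bookkeeping with multidegrees and tame factorizations.
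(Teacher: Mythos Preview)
Your proof is correct and follows essentially the same route as the paper's: both arguments pass from $F$ to $(g_{1},g_{2},f_{3})$ via a triangular (tame, origin-fixing) change of coordinates, and both extract the key inequality $\deg g_{3}<\deg f_{3}$ from the fact that a type~III reduction strictly lowers $\deg f_{1}+\deg f_{2}+\deg f_{3}$ (the paper quotes this as \cite[Corollary~4]{sh umb2}) together with $\deg g_{1}=\deg f_{1}=2n$ and $\deg g_{2}=\deg f_{2}=3n$, the latter equality being exactly where the hypothesis $d_{3}/d_{2}=3/2$ enters. The only cosmetic difference is that the paper fixes the specific permutation $(f_{1},f_{2},f_{3})=(F_{2},F_{3},F_{1})$ up front, while you carry a general $\tau$ and permute back at the end.
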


In the proof of this lemma we will use the following result.

\begin{lemma}
(\textit{\cite{sh umb2}, Corollary 4}) If an automorphism $\left(
g_{1},g_{2},g_{3}\right) $ is a reduction of type III of an automorphism $%
\left( f_{1},f_{2},f_{3}\right) ,$ then 
\begin{equation*}
\deg g_{1}+\deg g_{2}+\deg g_{3}<\deg f_{1}+\deg f_{2}+\deg f_{3}.
\end{equation*}
\end{lemma}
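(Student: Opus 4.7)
By construction $\deg g_1 = 2n = \deg f_1$, so the desired inequality reduces to showing
$$
\deg g_2 + \deg g_3 \;<\; \deg f_2 + \deg f_3.
$$
The plan is to split according to the two subcases of Definition~\ref{def_type_III_IV}. In subcase~(a) ($\deg f_2 = 3n$, $n < \deg f_3 \le \tfrac{3}{2}n$) one has $\deg g_2 = \deg f_2$, and the goal reduces to $\deg g_3 < \deg f_3$. In subcase~(b) ($\tfrac{5}{2}n < \deg f_2 \le 3n$, $\deg f_3 = \tfrac{3}{2}n$) one only has $\deg g_2 = 3n \ge \deg f_2$, so the excess $\deg g_2 - \deg f_2$ must be strictly dominated by the drop $\deg f_3 - \deg g_3$.

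The central step in both subcases is to constrain the shape of $g := g_3 - \sigma f_3 \in \mathbb{C}[g_1,g_2]$. Writing $g = G(g_1,g_2)$ with $\deg_Y G = 2q+r$, $0 \le r < 2$, and applying Theorem~\ref{tw_deg_g_fg} to the $2$-reduced pair $(g_1,g_2)$ (so $\deg g_1 = 2n$, $\deg g_2 = 3n$), one obtains
$$
\deg g \;\ge\; q\bigl(n + \deg[g_1,g_2]\bigr) + 3n r.
$$
Combining this with $\deg g \le \max(\deg g_3,\deg f_3) \le \tfrac{3}{2}n$, the type III hypothesis $\deg g_3 < n + \deg[g_1,g_2]$, and an auxiliary lower bound $\deg[g_1,g_2] > \tfrac{n}{2}$ (which I would extract from the factorization $\overline{g_1} = c_1 h^2$, $\overline{g_2} = c_2 h^3$ of Proposition~\ref{prop_fg_alg_zalez} by computing the first surviving Poisson bracket term after the cancellation of the leading forms), one forces $q = 0$. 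Hence $g$ is at most linear in $g_2$: $g = G_0(g_1) + g_2\,G_1(g_1)$.

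With $g$ in this explicit form, the hypothesis $(\alpha,\beta,\gamma)\neq(0,0,0)$ ensures a non-trivial cancellation in one of the substitutions $g_1 = f_1 - \beta f_3$, $g_2 = f_2 - \gamma f_3 - \alpha f_3^2$, or $g_3 = \sigma f_3 + g$. In subcase~(a) this yields $\deg g_3 < \deg f_3$ directly by tracking the leading form of $\sigma f_3 = g_3 - G_0(g_1) - g_2\,G_1(g_1)$ and noting that the degrees of the right-hand terms lie in disjoint residue classes modulo $n$. In subcase~(b), $\deg g_2 > \deg f_2$ forces $\alpha \neq 0$ (since $\deg \gamma f_3 = \tfrac{3}{2}n \le \deg f_2$), and then $\overline{\alpha f_3^2}$ of degree exactly $3n$ cancels the top of $\overline{g_2}$; bounding the gain $\deg g_2 - \deg f_2$ by $3n - \deg f_2$ and the corresponding loss $\deg f_3 - \deg g_3$ by the linear structure of $g$ yields the required strict inequality.

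The main obstacle I anticipate is twofold: establishing the auxiliary lower bound $\deg[g_1,g_2] > \tfrac{n}{2}$, and the detailed bookkeeping of leading forms in subcase~(b), where several distinct configurations of $\alpha,\beta,\gamma$ must each be ruled out as producing a total-degree tie. Both rest on a careful analysis of the common homogeneous polynomial $h$ of degree $n$ through which the algebraically dependent $\overline{g_1},\overline{g_2}$ factor, together with the invariance of the Poisson-bracket degree under linear coordinate change (Lemma~\ref{lem_degree_linear_change}), which lets us normalize $h$ before running the case analysis.
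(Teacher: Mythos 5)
First, note that the paper itself contains no proof of this lemma: it is quoted from \cite{sh umb2} (Corollary 4) and used as a black box, so your attempt has to stand on its own --- and it does not. The decisive flaw is that your own structural analysis, if completed, proves the wrong thing. Grant your reduction to the shape of $g=g_{3}-\sigma f_{3}\in \Bbb{C}[g_{1},g_{2}]$: writing $g=G(g_{1},g_{2})$ with $\deg _{Y}G=2q+r$, the case $r\geq 1$ gives $\deg g\geq 3n$, impossible since $\deg g\leq \max (\deg g_{3},\deg f_{3})\leq \frac{3}{2}n$; and once you force $q=0$ you have $g=G_{0}(g_{1})+g_{2}G_{1}(g_{1})$, whose degree lies in $2n\Bbb{N}\cup (3n+2n\Bbb{N})$ by exactly the disjoint-residue-class observation you invoke, hence is $\geq 2n>\frac{3}{2}n$ unless $g$ is constant. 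So your plan forces $g\in \Bbb{C}$, hence $\deg g_{3}=\deg f_{3}$, and in subcase (a) (where $\deg g_{1}=\deg f_{1}$, $\deg g_{2}=\deg f_{2}$) this yields equality of the degree sums, not the strict inequality; in subcase (b) it is even worse, since $\deg g_{2}=3n\geq \deg f_{2}$ while $\deg g_{3}=\deg f_{3}$. The claimed conclusion ``$\deg g_{3}<\deg f_{3}$ directly by tracking the leading form'' cannot be extracted from this: a genuine drop requires a nonconstant $g$ of degree $\deg f_{3}\leq \frac{3}{2}n$ cancelling the top of $\sigma f_{3}$, which by Theorem \ref{tw_deg_g_fg} forces $q\geq 1$ and $\deg [g_{1},g_{2}]\leq \deg f_{3}-n\leq \frac{n}{2}$ --- precisely the negation of the auxiliary bound you want to prove. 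So the auxiliary bound is not a missing technicality; it is incompatible with the conclusion you are after.

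The auxiliary bound itself is also not obtainable by the method you sketch. Since $\overline{g_{1}}=c_{1}h^{2}$ and $\overline{g_{2}}=c_{2}h^{3}$, the bracket of the leading forms vanishes identically ($[h^{2},h^{3}]=0$), and nothing in the elementary toolkit of this paper controls the degree of the first non-vanishing homogeneous component of $[g_{1},g_{2}]$ after that cancellation; the only bound available here is $\deg [g_{1},g_{2}]\geq 2$ from algebraic independence. Lower estimates of the kind you postulate are exactly the hard content of the Shestakov--Umirbaev Poisson-bracket theory \cite{sh umb3}, not a computation with $h$. In short, the strict decrease for reductions of type III cannot be derived from Definition \ref{def_type_III_IV}, Proposition \ref{prop_deg_g_fg} and leading-form bookkeeping alone --- one must rule out the ``$g$ constant'' configuration for genuine automorphisms, and that requires the finer analysis of \cite{sh umb2}, which is why the paper cites Corollary 4 there instead of proving it.
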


\begin{proof}
\textit{of Lemma \ref{lem_type_3} }Assume that $F=\left(
F_{1},F_{2},F_{3}\right) $ admits a reduction of type III. By the above
considerations, the conditions of Definition \ref{def_type_III_IV} must be
satisfied for the automorphism $\theta =\left( f_{1},f_{2},f_{3}\right)
=\left( F_{2},F_{3},F_{1}\right) .$ Also by Definition \ref{def_type_III_IV}
there are $n\in \Bbb{N}^{*}$ and $\alpha ,\beta ,\gamma \in \Bbb{C},\left(
\alpha ,\beta ,\gamma \right) \neq \left( 0,0,0\right) ,$ such that the
elements $g_{1}=f_{1}-\beta f_{3},$ $g_{2}=f_{2}-\gamma f_{3}-\alpha
f_{3}^{2}$ satisfy the following conditions:\newline
(i) $g_{1},g_{2}$ is a 2-reduced pair and $\deg g_{1}=2n,\deg g_{2}=3n;$%
\newline
(ii) there exists $g_{3}$ of the form $g_{3}=\sigma f_{3}+g,$ where $\sigma
\in \Bbb{C}^{*},$ $g\in \Bbb{C}\left[ g_{1},g_{2}\right] ,$ such that $\deg
g_{3}\leq \frac{3}{2}n,\deg \left[ g_{1},g_{3}\right] <3n+\deg \left[
g_{1},g_{2}\right] ;$ \newline
(iii) $\deg g_{3}<n+\deg \left[ g_{1},g_{2}\right] .$

Let us notice that apart from $g_{3}=\sigma f_{3}+g\,$we can also take $%
\widetilde{g}_{3}=f_{3}+\frac{1}{\sigma }g=f_{3}+\widetilde{g},$ with $%
\widetilde{g}=\frac{1}{\sigma }g\in \Bbb{C}\left[ g_{1},g_{2}\right] .$

Since in our situation, i.e. $\frac{d_{3}}{d_{2}}=\frac{3}{2},$ we have $%
d_{2}=2n,$ $d_{3}=3n$ and hence $\deg F_{2}=\deg f_{1}=2n=\deg g_{1}$ and $%
\deg F_{3}=\deg f_{2}=3n=\deg g_{2},$ the lemma above yields $\deg
g_{3}<\deg f_{3}=\deg F_{1}=d_{1}.$ This means that the automorphism $\left(
g_{1},g_{2},f_{3}\right) ,$ and hence $\widetilde{F}=\left(
F_{1},g_{1},g_{2}\right) ,$ admits an elementary reduction. Of course $%
\limfunc{mdeg}\left( F_{1},g_{1},g_{2}\right) =\limfunc{mdeg}$ $\left(
F_{1},F_{2},F_{3}\right) .$

Since $\widetilde{F}=T_{2}\circ T_{1}\circ F,$ where the mappings 
\begin{equation*}
T_{1}:\Bbb{C}^{3}\ni \left\{ 
\begin{array}{l}
x \\ 
y \\ 
z
\end{array}
\right\} \mapsto \left\{ 
\begin{array}{l}
x \\ 
y-\beta x \\ 
z-\gamma x-\alpha x^{2}
\end{array}
\right\} \in \Bbb{C}^{3}
\end{equation*}
and 
\begin{equation*}
T_{2}:\Bbb{C}^{3}\ni \left\{ 
\begin{array}{l}
x \\ 
y \\ 
z
\end{array}
\right\} \mapsto \left\{ 
\begin{array}{l}
x+\widetilde{g}\left( y,z\right) \\ 
y \\ 
z
\end{array}
\right\} \in \Bbb{C}^{3}
\end{equation*}
are triangular automorphisms, $\widetilde{F}$ is tame if and only if $F$ is
tame.

Since $\deg F_{1}>0,$ also $\deg \widetilde{g}>0,$ and hence $\overline{%
\widetilde{g}}=\overline{\widetilde{g}-a}$ for all $a\in \Bbb{C}.$ Thus we
can assume that $\widetilde{g}\left( 0,0\right) =0.$ Then $\widetilde{F}%
\left( 0,0,0\right) =\left( 0,0,0\right) $ when $F\left( 0,0,0\right)
=\left( 0,0,0\right) .$
\end{proof}

By Lemma \ref{lem_type_3} we also have the following result.

\begin{proposition}
\label{prop_reduc_type_3}Let $\left( d_{1},d_{2},d_{3}\right) \neq \left(
1,1,1\right) ,$ $d_{1}\leq d_{2}\leq d_{3},$ be a sequence of positive
integers such that $\frac{d_{3}}{d_{2}}=\frac{3}{2}.$ If there is a tame
automorphism $F:\Bbb{C}^{3}\rightarrow \Bbb{C}^{3}$ such that $\limfunc{mdeg}%
F=\left( d_{1},d_{2},d_{3}\right) ,\,$then there is also a tame automorphism 
$\widetilde{F}:\Bbb{C}^{3}\rightarrow \Bbb{C}^{3}$ such that $\widetilde{F}$
admits either a reduction of type IV or an elementary reduction and $%
\limfunc{mdeg}\widetilde{F}=\left( d_{1},d_{2},d_{3}\right) .$ Moreover we
can require that $\widetilde{F}\left( 0,0,0\right) =\left( 0,0,0\right) .$
\end{proposition}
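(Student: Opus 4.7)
The proof is essentially a case-split made possible by combining Proposition~\ref{prop_reduc_type_1_2} with the lemma immediately preceding the statement. The plan is as follows.

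Start with any tame automorphism $F$ of $\Bbb{C}^{3}$ with $\limfunc{mdeg}F = (d_{1},d_{2},d_{3})$. By Proposition~\ref{prop_reduc_type_1_2}, there exists a tame automorphism $F_{0}\colon \Bbb{C}^{3}\to \Bbb{C}^{3}$ with $\limfunc{mdeg}F_{0} = (d_{1},d_{2},d_{3})$, $F_{0}(0,0,0)=(0,0,0)$, and such that $F_{0}$ admits either an elementary reduction, or a reduction of type III, or a reduction of type IV.

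If $F_{0}$ admits an elementary reduction or a reduction of type IV, set $\widetilde{F} = F_{0}$ and there is nothing more to prove. The only remaining case is when $F_{0}$ admits a reduction of type III. Here the hypothesis $d_{3}/d_{2}=3/2$ becomes essential: it is exactly the condition required to apply Lemma~\ref{lem_type_3} to $F_{0}$. That lemma then produces a tame automorphism $\widetilde{F}\colon\Bbb{C}^{3}\to\Bbb{C}^{3}$ with the same multidegree $(d_{1},d_{2},d_{3})$ which admits an elementary reduction, and the ``moreover'' clause of the lemma guarantees that, since $F_{0}(0,0,0)=(0,0,0)$, the new $\widetilde{F}$ can also be chosen to fix the origin.

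There is no real obstacle: the content of the argument sits entirely in Lemma~\ref{lem_type_3}, which was proved by using Shestakov--Umirbaev's degree-sum inequality for type III reductions to show that in the range $d_{3}/d_{2}=3/2$ the third coordinate produced by the reduction already has degree strictly less than $d_{1}$, yielding an elementary reduction after composition with suitable triangular maps. The present proposition merely packages that lemma together with Proposition~\ref{prop_reduc_type_1_2} into the clean statement that, under the ratio condition $d_{3}/d_{2}=3/2$, type III reductions are irrelevant to the investigation of $\limfunc{mdeg}(\limfunc{Tame}(\Bbb{C}^{3}))$.
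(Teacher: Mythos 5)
Your proof is correct and is essentially the paper's own argument: the paper composes with a translation and splits into the same cases (elementary/IV, type III handled by Lemma \ref{lem_type_3}, types I/II handled by Lemma \ref{lem_red_I_II}), which is exactly what your citation of Proposition \ref{prop_reduc_type_1_2} packages for you. Your only difference is invoking Proposition \ref{prop_reduc_type_1_2} as a black box instead of repeating its translation-and-case-split, which is a purely organizational simplification.
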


\begin{proof}
As in the proof of Proposition \ref{prop_reduc_type_1_2}, we consider the
automorphism $T\circ F.$ Then we have three cases: (I)$\;T\circ F$ admits a
reduction of type IV or an elementary reduction; (II) $T\circ F$ admits
reduction of type III; (III) $T\circ F$ admits a reduction of type I or II.
In the first case we put $\widetilde{F}=T\circ F,$ in the second case we use
Lemma \ref{lem_type_3} and in the third case we use Lemma \ref{lem_red_I_II}.
\end{proof}

The above proposition means that whenever $\frac{d_{3}}{d_{2}}=\frac{3}{2},$
reductions of type I, II and III are irrelevant for our considerations. More
precisely, we have the following

\begin{theorem}
\label{tw_reduc_type_3}Let $\left( d_{1},d_{2},d_{3}\right) \neq \left(
1,1,1\right) ,$ $d_{1}\leq d_{2}\leq d_{3},$ be a sequence of positive
integers such that $\frac{d_{3}}{d_{2}}=\frac{3}{2}$ or $3\nmid d_{1}.$ To
prove that there is no tame automorphism of $\Bbb{C}^{3}$ with multidegree $%
\left( d_{1},d_{2},d_{3}\right) $ it is enough to show that a (hypothetical)
automorphism $F$ of $\Bbb{C}^{3}$ with $\limfunc{mdeg}F=\left(
d_{1},d_{2},d_{3}\right) $ admits neither a reduction of type IV nor an
elementary reduction. Moreover, we can restrict our attention to
automorphisms $F:\Bbb{C}^{3}\rightarrow \Bbb{C}^{3}$ such that $F\left(
0,0,0\right) =\left( 0,0,0\right) .$
\end{theorem}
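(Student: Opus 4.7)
The plan is to assemble Theorem \ref{tw_reduc_type_3} out of three previously established ingredients: Theorem \ref{tw_type_1-4} (which guarantees that every non-affine tame automorphism admits \emph{some} reduction among elementary, I, II, III, IV), Proposition \ref{prop_reduc_type_1_2} (which lets us discard types I and II), and either Proposition \ref{prop_reduc_type_3} or Remark \ref{remakrk_red_type_3} (which together let us discard type III in the hypotheses at hand). The theorem is a contrapositive statement, so the argument begins by assuming a tame automorphism $F:\mathbb{C}^3\to\mathbb{C}^3$ with $\mdeg F=(d_1,d_2,d_3)$ exists and producing from it a tame automorphism $\widetilde F$ with the same multidegree, satisfying $\widetilde F(0,0,0)=(0,0,0)$, that admits either an elementary reduction or a reduction of type IV.

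First I would apply Proposition \ref{prop_reduc_type_1_2} to $F$ to obtain a tame automorphism $F'$ with $\mdeg F'=(d_1,d_2,d_3)$, $F'(0,0,0)=(0,0,0)$, and such that $F'$ admits an elementary reduction or a reduction of type III or IV. If $F'$ already admits an elementary or type~IV reduction, I am done with $\widetilde F=F'$. So the only remaining case is that $F'$ admits a reduction of type III, and the task reduces to eliminating this possibility in both branches of the disjunction $d_3/d_2=3/2$ or $3\nmid d_1$.

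In the branch $d_3/d_2=3/2$, I would invoke Proposition \ref{prop_reduc_type_3} directly on the original $F$: it produces the required $\widetilde F$ admitting an elementary or type~IV reduction with the prescribed multidegree and fixing the origin. In the branch $3\nmid d_1$, I would appeal to Remark \ref{remakrk_red_type_3}: since $F'$ is assumed to admit a reduction of type III, the remark forces $3\mid d_1$ or $d_3/d_2=3/2$; the first alternative contradicts the standing hypothesis, hence $d_3/d_2=3/2$ must hold, and I again apply Proposition \ref{prop_reduc_type_3} to $F'$ (or to $F$) to get $\widetilde F$. In either branch the construction yields a tame automorphism with the same multidegree, fixing the origin, and admitting an elementary or type~IV reduction, which is exactly the contrapositive we need.

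There is no genuine obstacle here; the proof is essentially a bookkeeping exercise fusing the two preceding propositions with Remark \ref{remakrk_red_type_3}. The only delicate point is to make sure that when a tame automorphism is replaced during the reductions, the multidegree and the fixed-point property at the origin are preserved simultaneously, but both Proposition \ref{prop_reduc_type_1_2} and Proposition \ref{prop_reduc_type_3} are explicitly stated to provide exactly these guarantees, so the assembly is routine. The only case-splitting required is the dichotomy in the hypothesis, and both cases collapse to a single application of Proposition \ref{prop_reduc_type_3} after invoking Remark \ref{remakrk_red_type_3} in the branch $3\nmid d_1$.
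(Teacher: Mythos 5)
Your proposal is correct and follows essentially the same route as the paper: reduce to the case of an elementary or type III/IV reduction via Proposition \ref{prop_reduc_type_1_2} (the content of Theorem \ref{tw_reduc+type_1_2}), then rule out type III by combining Remark \ref{remakrk_red_type_3} with the hypothesis to force $\frac{d_{3}}{d_{2}}=\frac{3}{2}$ and apply Proposition \ref{prop_reduc_type_3}. The paper's proof is exactly this bookkeeping, stated slightly more compactly.
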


\begin{proof}
Take any $\widetilde{F}\in \limfunc{Tame}(\Bbb{C}^{3})$ with $\limfunc{mdeg}%
\widetilde{F}=(d_{1},d_{2},d_{3}).$ By Theorem \ref{tw_reduc+type_1_2} we
can assume that $\widetilde{F}$ admits either an elementary reduction or a
reduction of type III or IV.

If $\widetilde{F}$ admits a reduction of type III, then by Remark \ref
{remakrk_red_type_3} and by the assumptions we have $\frac{d_{3}}{d_{2}}=%
\frac{3}{2}.$ Thus we can use Proposition \ref{prop_reduc_type_3}.
\end{proof}

\subsection{Reducibility of type IV and Kuroda's result}

In the previous sections we have proved that from our point of view
reductions of type I and II are irrelevant. The same is true for reductions
of type III under an additional assumption (see Theorem \ref{tw_reduc_type_3}%
).

The following result due to Kuroda says that reduction of type IV is also
irrelevant for our aim.

\begin{theorem}
\label{tw_Kuroda}\textit{(\cite{Kuroda3}, Thm. 7.1)} No tame automorphism of 
$\Bbb{C}^{3}$ admits a reduction of type IV.
\end{theorem}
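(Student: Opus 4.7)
The plan is a proof by contradiction. Suppose some tame automorphism $F=(f_{1},f_{2},f_{3})$ of $\Bbb{C}^{3}$ admits a reduction of type IV. By Definition \ref{def_type_III_IV} (after a permutation of coordinates) we obtain $n\in \Bbb{N}^{\ast }$, scalars $\alpha ,\beta ,\gamma \in \Bbb{C}$, $\sigma ,\mu \in \Bbb{C}^{\ast }$ and a polynomial $g\in \Bbb{C}[g_{1},g_{2}]$ such that $g_{1}=f_{1}-\beta f_{3}$ and $g_{2}=f_{2}-\gamma f_{3}-\alpha f_{3}^{2}$ form a $2$-reduced pair with $\deg g_{1}=2n$ and $\deg g_{2}=3n$, while $g_{3}=\sigma f_{3}+g$ satisfies $\deg g_{3}\leq \frac{3n}{2}$, $\deg [g_{1},g_{3}]<3n+\deg [g_{1},g_{2}]$, and $\deg (g_{2}-\mu g_{3}^{2})\leq 2n$.

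First I would pin down the leading-form structure. The inequality $\deg (g_{2}-\mu g_{3}^{2})\leq 2n<3n=\deg g_{2}$ forces the leading part of $\mu g_{3}^{2}$ to cancel $\overline{g_{2}}$; hence $\deg g_{3}=\frac{3n}{2}$ (so $n$ is even) and $\overline{g_{2}}=\mu \overline{g_{3}}^{\,2}$. Because $(g_{1},g_{2})$ is a $2$-reduced pair, condition (ii) of Definition \ref{def_*-red} gives algebraic dependence of $\overline{g_{1}}$ and $\overline{g_{2}}$, so Proposition \ref{prop_fg_alg_zalez} furnishes a primitive homogeneous $h$ and integers $n_{1},n_{2}\geq 0$, $c_{1},c_{2}\in \Bbb{C}^{\ast }$ with $\overline{g_{1}}=c_{1}h^{n_{1}}$ and $\overline{g_{2}}=c_{2}h^{n_{2}}$. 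Matching degrees gives $2n_{2}=3n_{1}$, so $n_{1}=2m$, $n_{2}=3m$, $\deg h=n/m$ for some $m\in \Bbb{N}^{\ast }$ with $m\mid n$. Substituting into the previous identity yields the key equation
\begin{equation*}
\overline{g_{3}}^{\,2}\;=\;\frac{c_{2}}{\mu }\,h^{3m}.
\end{equation*}

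For $m$ odd, $3m$ is odd; reading the equation in the UFD $\Bbb{C}[X,Y,Z]$, a square $\overline{g_{3}}^{\,2}$ equals (a constant times) $h^{3m}$ only if every exponent in the irreducible factorization of $h$ is even, i.e. $h=h_{0}^{2}$ for some non-constant homogeneous $h_{0}$, contradicting the primitivity condition $h\neq ch_{0}^{s}$ for $s>1$. The principal obstacle is the case $m$ even: then $\overline{g_{3}}=c_{3}h^{3m/2}$ is a legitimate polynomial, all three leading forms lie in $\Bbb{C}[h]$, and no contradiction is immediate from primitivity alone. To close this case I would apply the Shestakov-Umirbaev lower bound from Theorem \ref{tw_deg_g_fg} to the pair $(g_{3},g_{1})$ -- which is now $p$-reduced with $p=\deg g_{3}/\gcd (\deg g_{1},\deg g_{3})=3$ -- and couple it with the standing hypothesis $\deg [g_{1},g_{3}]<3n+\deg [g_{1},g_{2}]$, using Lemma \ref{lem_degree_linear_change} to normalize $h$ via a linear change of coordinates. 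Combining these degree estimates in the even-$m$ case should force a contradictory inequality on $\deg [g_{1},g_{3}]$; this combinatorial-degree bookkeeping, where tameness is expected to enter in a more essential way, is the technical heart of Kuroda's argument.
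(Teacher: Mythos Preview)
The paper does not prove this theorem; it is quoted from Kuroda \cite{Kuroda3} and used as a black box (see the sentence immediately preceding the statement). So there is no in-paper argument to compare your attempt against.

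Your proposal is not a complete proof. The odd-$m$ branch is fine: from $\overline{g_{3}}^{\,2}=(c_{2}/\mu )\,h^{3m}$ with $3m$ odd, unique factorization in $\Bbb{C}[X,Y,Z]$ forces every irreducible factor of $h$ to occur with even multiplicity, so $h=ch_{0}^{2}$ for some non-constant $h_{0}$, contradicting clause (ii) of Proposition \ref{prop_fg_alg_zalez}. But in the even-$m$ branch---which is the substantive case---you have only a plan: apply Theorem \ref{tw_deg_g_fg} to the $3$-reduced pair $(g_{3},g_{1})$, combine with the hypothesis $\deg [g_{1},g_{3}]<3n+\deg [g_{1},g_{2}]$, and hope for a contradictory inequality. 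No inequality is actually derived, the role of Lemma \ref{lem_degree_linear_change} is left unspecified, and you yourself label this step the ``technical heart'' still to be carried out. Note too that nothing in your outline uses tameness; your odd-$m$ argument would exclude type IV reductions for \emph{any} automorphism with those leading-form constraints, tame or not, so if the even-$m$ case were really just degree bookkeeping of the same flavor, the word ``tame'' in the statement would be superfluous. Kuroda's actual proof is considerably more involved than a single invocation of the Shestakov--Umirbaev inequality, and the even-$m$ case is not bookkeeping---it is the theorem.
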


Thus we have the following

\begin{theorem}
\label{tw_reduc_type_4}Let $\left( d_{1},d_{2},d_{3}\right) \neq \left(
1,1,1\right) ,$ $d_{1}\leq d_{2}\leq d_{3},$ be a sequence of positive
integers. To prove that there is no tame automorphism $F$ of $\Bbb{C}^{3}$
with $\limfunc{mdeg}F=\left( d_{1},d_{2},d_{3}\right) $ it is enough to show
that a (hypothetical) automorphism $F$ of $\Bbb{C}^{3}$ with $\limfunc{mdeg}%
F=\left( d_{1},d_{2},d_{3}\right) $ admits neither a reduction of type III
nor an elementary reduction. Moreover, if we additionally assume that $\frac{%
d_{3}}{d_{2}}=\frac{3}{2}$ or $3\nmid d_{1},\,$then it is enough to show
that no (hypothetical) automorphism of $\Bbb{C}^{3}$ with multidegree $%
\left( d_{1},d_{2},d_{3}\right) $ admits an elementary reduction. In both
cases we can restrict our attention to automorphisms $F:\Bbb{C}%
^{3}\rightarrow \Bbb{C}^{3}$ such that $F\left( 0,0,0\right) =\left(
0,0,0\right) .$
\end{theorem}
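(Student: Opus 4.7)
The plan is to argue by contraposition, assembling the result from the four previously stated theorems. Suppose $F$ is a tame automorphism of $\mathbb{C}^{3}$ with $\limfunc{mdeg} F=(d_{1},d_{2},d_{3})\neq (1,1,1)$. Since $F$ is then non-affine, Theorem \ref{tw_type_1-4} of Shestakov and Umirbaev says $F$ admits either an elementary reduction or a reduction of one of types I--IV. The goal is to whittle the list of possibilities down to (at worst) elementary plus type III in the general case, and to elementary alone under the extra hypothesis.

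For the first assertion, I first pass from $F$ to a tame automorphism $\widetilde{F}$ with the same multidegree, fixing the origin, which admits either an elementary reduction or a reduction of type III or IV; this is exactly the content of Theorem \ref{tw_reduc+type_1_2}, which takes care of reductions of type I and type II. Next, I invoke Kuroda's Theorem \ref{tw_Kuroda}, according to which no tame automorphism of $\mathbb{C}^{3}$ admits a reduction of type IV, so $\widetilde{F}$ must in fact admit either an elementary reduction or a reduction of type III. Contrapositively, if we have ruled out both elementary reductions and type III reductions for every hypothetical automorphism with multidegree $(d_{1},d_{2},d_{3})$, then no such tame automorphism can exist. The origin-preserving normalization is preserved because each of the above reduction steps allows us to retain $\widetilde{F}(0,0,0)=(0,0,0)$.

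For the second assertion, I additionally assume $\tfrac{d_{3}}{d_{2}}=\tfrac{3}{2}$ or $3\nmid d_{1}$. In this case Theorem \ref{tw_reduc_type_3} strengthens the intermediate reduction: we may choose the origin-fixing tame $\widetilde{F}$ of the prescribed multidegree to admit either an elementary reduction or a reduction of type IV. Applying Kuroda's Theorem \ref{tw_Kuroda} once more eliminates the type IV alternative, so $\widetilde{F}$ must admit an elementary reduction. Hence, in the restricted situation, ruling out elementary reductions alone suffices to preclude the existence of a tame automorphism with multidegree $(d_{1},d_{2},d_{3})$.

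No serious obstacle appears in this argument: the theorem is essentially a bookkeeping synthesis of the preceding results. The only point that demands a moment of care is checking that the normalizations imposed along the way (fixing the origin, and matching the multidegree through each replacement step) are simultaneously compatible; but all of this is explicitly included in the statements of Theorems \ref{tw_reduc+type_1_2} and \ref{tw_reduc_type_3}, so it is a matter of chaining the outputs together rather than doing any new analysis.
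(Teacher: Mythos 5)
Your argument is correct and is essentially the paper's own proof (the paper itself only says the proof is ``similar to the proof of Theorem \ref{tw_reduc_type_3}''): eliminate types I--II to reduce to an origin-fixing tame $\widetilde{F}$ admitting an elementary reduction or one of types III--IV, convert a type III reduction into the $\frac{d_{3}}{d_{2}}=\frac{3}{2}$ situation when the extra hypothesis holds, and use Kuroda's Theorem \ref{tw_Kuroda} to exclude type IV. The only point worth noting is that the tameness and origin-preservation of $\widetilde{F}$, which you correctly insist on before invoking Kuroda, are literally contained in Propositions \ref{prop_reduc_type_1_2} and \ref{prop_reduc_type_3} together with Remark \ref{remakrk_red_type_3}, rather than in the ``it is enough to show'' statements of Theorems \ref{tw_reduc+type_1_2} and \ref{tw_reduc_type_3} themselves.
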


\begin{proof}
The proof is similar to the proof of Theorem \ref{tw_reduc_type_3}.
\end{proof}

\subsection{Reducibility and linear change of coordinates}

Now we make some remarks that will be useful in considerations of some
special cases. The main result of this section says that we can restrict our
attention to the automorphisms whose linear part is the identity map.

\begin{lemma}
If an automorphism $\left( F_{1},F_{2},F_{3}\right) $ admits an elementary
reduction, then so does $\left( F_{1},F_{2},F_{3}\right) \circ L$ for every $%
L\in GL_{3}\left( \Bbb{C}\right) $.
\end{lemma}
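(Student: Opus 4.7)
The plan is to use two elementary observations: composition with a linear automorphism preserves the degree of any polynomial, and composition commutes with polynomial substitution on the right. Together these let the same witness $g$ that exhibits an elementary reduction of $F$ also exhibit one of $F \circ L$.

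More concretely, suppose $F = (F_1, F_2, F_3)$ admits an elementary reduction. By the characterization recalled just after the definition of elementary reduction, this means there exist an index $i \in \{1,2,3\}$ and a polynomial $g \in \mathbb{C}[Y_1, Y_2]$ such that, writing $\{j,k\} = \{1,2,3\} \setminus \{i\}$ with $j < k$,
\begin{equation*}
\deg\bigl(F_i - g(F_j, F_k)\bigr) < \deg F_i.
\end{equation*}
I would then set $\widetilde F = F \circ L = (F_1 \circ L, F_2 \circ L, F_3 \circ L)$ and claim the same $i$ and $g$ work. The verification has two ingredients: first, for any $h \in \mathbb{C}[X_1, X_2, X_3]$ and any $L \in GL_3(\mathbb{C})$, one has $\deg(h \circ L) = \deg h$, since an invertible linear change of coordinates preserves the leading form up to an invertible linear substitution and in particular preserves the total degree. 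Second, for any polynomial $g \in \mathbb{C}[Y_1, Y_2]$,
\begin{equation*}
g(F_j \circ L,\; F_k \circ L) = g(F_j, F_k) \circ L,
\end{equation*}
which is immediate from the definition of polynomial substitution.

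Combining these,
\begin{equation*}
\deg\bigl( (F_i \circ L) - g(F_j \circ L, F_k \circ L) \bigr)
= \deg\bigl( (F_i - g(F_j, F_k)) \circ L \bigr)
= \deg\bigl( F_i - g(F_j, F_k) \bigr) < \deg F_i = \deg (F_i \circ L),
\end{equation*}
so $\widetilde F = F \circ L$ admits an elementary reduction with the same index $i$ and the same reducing polynomial $g$.

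There is essentially no obstacle here; the only thing to be careful about is that the degree-preservation property $\deg(h \circ L) = \deg h$ genuinely requires $L \in GL_n(\mathbb{C})$ (an arbitrary linear endomorphism could drop the degree, which is why the statement is restricted to $GL_3(\mathbb{C})$). Everything else is formal.
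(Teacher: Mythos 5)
Your proof is correct and follows essentially the same route as the paper: keep the same reducing polynomial $g$, observe that $g(F_j\circ L,F_k\circ L)=g(F_j,F_k)\circ L$, and use that composition with an invertible linear map preserves degrees. The paper's proof is the same one-line argument, merely leaving the degree-preservation step implicit.
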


\begin{proof}
Without loss of generality we can assume that $\left(
F_{1},F_{2},F_{3}\right) $ admits an elementary reduction of the form $%
\left( F_{1}-G\left( F_{2},F_{3}\right) ,F_{2},F_{3}\right) .$ It is easy to
see that $\left( F_{1}\circ L-G\left( F_{2}\circ L,F_{3}\circ L\right)
,F_{2}\circ L,F_{3}\circ L\right) =\left( F_{1}-G\left( F_{2},F_{3}\right)
,F_{2},F_{3}\right) \circ L$ is an elementary reduction of $\left(
F_{1},F_{2},F_{3}\right) \circ L\allowbreak =\left( F_{1}\circ L,F_{2}\circ
L,F_{3}\circ L\right) .$
\end{proof}

We also have the following obvious lemma.

\begin{lemma}
\label{lem_mdeg_linear_change}For every mapping $F:\Bbb{C}^{n}\rightarrow 
\Bbb{C}^{n}$ and every $L\in GL_{n}\left( \Bbb{C}\right) $ we have 
\begin{equation*}
\limfunc{mdeg}\left( F\circ L\right) =\limfunc{mdeg}F.
\end{equation*}
\end{lemma}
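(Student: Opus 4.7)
The plan is to reduce the statement to the coordinate-wise claim that $\deg(f \circ L) = \deg f$ for every polynomial $f \in \Bbb{C}[X_1,\ldots,X_n]$ and every $L \in GL_n(\Bbb{C})$. Writing $F = (F_1,\ldots,F_n)$, we have $F \circ L = (F_1 \circ L,\ldots, F_n \circ L)$, so $\limfunc{mdeg}(F\circ L) = (\deg(F_1\circ L),\ldots,\deg(F_n\circ L))$, and the lemma will follow coordinate by coordinate.

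To prove the coordinate-wise claim, I would first establish the one-sided inequality $\deg(f\circ L) \leq \deg f$. If $L = (L_1,\ldots,L_n)$ with each $L_i \in \Bbb{C}[X_1,\ldots,X_n]$ of degree $1$, then expanding $f\circ L$ shows it is a $\Bbb{C}$-linear combination of products $L_1^{a_1}\cdots L_n^{a_n}$ with $a_1+\cdots+a_n \leq \deg f$, each of which has degree at most $\deg f$. Hence $\deg(f\circ L) \leq \deg f$.

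For the reverse inequality, I would apply the same one-sided bound to the polynomial $g := f\circ L$ and the linear automorphism $L^{-1} \in GL_n(\Bbb{C})$, which gives
\begin{equation*}
\deg f = \deg\bigl((f\circ L)\circ L^{-1}\bigr) = \deg(g\circ L^{-1}) \leq \deg g = \deg(f\circ L).
\end{equation*}
Combining the two inequalities yields $\deg(f\circ L)=\deg f$, and applying this to each $F_i$ finishes the proof.

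There is no real obstacle here; the content is simply that composition with an invertible linear map neither raises nor lowers total degree, the latter being enforced by invertibility through the symmetric argument applied to $L^{-1}$. The observation mirrors the idea already used in the proof of Lemma \ref{lem_degree_linear_change}, where Proposition \ref{prop_degree_linear_change} was applied twice (to $L$ and to $L^{-1}$) to upgrade an inequality to an equality.
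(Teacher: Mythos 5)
Your argument is correct: reducing to the coordinate-wise statement $\deg(F_i\circ L)=\deg F_i$, proving the inequality $\deg(f\circ L)\leq\deg f$ by expanding in the linear forms $L_1,\ldots,L_n$, and then applying the same bound to $L^{-1}$ is exactly the standard reasoning, and it mirrors the two-sided use of Proposition \ref{prop_degree_linear_change} in the proof of Lemma \ref{lem_degree_linear_change}. The paper states this lemma without proof as obvious, and your write-up supplies precisely the argument it implicitly relies on, so there is nothing to correct.
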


Combining the above two lemmas we obtain the following result.

\begin{theorem}
For every sequence of positive integers $\left( d_{1},\ldots ,d_{n}\right)
\neq \left( 1,\ldots ,1\right) ,$ if there is a tame automorphism $F:\Bbb{C}%
^{n}\rightarrow \Bbb{C}^{n}$ such that $F$ admits an elementary reduction, $%
F(0,\ldots ,0)=(0,\ldots ,0)$ and $\limfunc{mdeg}F=\left( d_{1},\ldots
,d_{n}\right) ,\,$then there is also a tame automorphism $\widetilde{F}:\Bbb{%
C}^{n}\rightarrow \Bbb{C}^{n}$ such that $\widetilde{F}$ admits an
elementary reduction, $\limfunc{mdeg}\widetilde{F}=\left( d_{1},\ldots
,d_{n}\right) $, $\widetilde{F}\left( 0,\ldots ,0\right) =\left( 0,\ldots
,0\right) $ and the linear part of $\widetilde{F}\,$is equal to $\limfunc{id}%
_{\Bbb{C}^{n}}.$
\end{theorem}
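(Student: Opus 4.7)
The plan is to set $\widetilde{F} = F \circ L^{-1}$, where $L \in GL_n(\mathbb{C})$ denotes the linear part of $F$. Note that since $F(0,\ldots,0) = (0,\ldots,0)$ and $F$ is a polynomial automorphism, $L$ is a well-defined invertible linear map, so $L^{-1}$ exists in $GL_n(\mathbb{C})$.

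The four required properties will then follow directly from the two lemmas immediately preceding the theorem, together with basic facts. First, $\widetilde{F}$ is tame: $F$ is tame by hypothesis, and $L^{-1} \in GL_n(\mathbb{C}) \subset \operatorname{Aff}(\mathbb{C}^n) \subset \operatorname{Tame}(\mathbb{C}^n)$, so their composition is tame. Second, $\widetilde{F}$ admits an elementary reduction by the first lemma of this subsection, applied to the automorphism $F$ (which admits an elementary reduction by hypothesis) and the element $L^{-1} \in GL_n(\mathbb{C})$. Third, $\operatorname{mdeg} \widetilde{F} = \operatorname{mdeg}(F \circ L^{-1}) = \operatorname{mdeg} F = (d_1, \ldots, d_n)$ by Lemma \ref{lem_mdeg_linear_change}. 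Fourth, $\widetilde{F}(0, \ldots, 0) = F(L^{-1}(0, \ldots, 0)) = F(0, \ldots, 0) = (0, \ldots, 0)$ since $L^{-1}$ is linear.

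Finally, the linear part of $\widetilde{F}$ is computed as follows: write $F = L + H$ where every coordinate of $H$ has only terms of degree $\geq 2$. Then
\[
\widetilde{F} = F \circ L^{-1} = L \circ L^{-1} + H \circ L^{-1} = \operatorname{id}_{\mathbb{C}^n} + H \circ L^{-1},
\]
and since composing $H$ with the linear map $L^{-1}$ preserves the property that all monomials have degree $\geq 2$, the linear part of $\widetilde{F}$ is exactly $\operatorname{id}_{\mathbb{C}^n}$, as required.

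There is no serious obstacle here; the entire argument is a one-line construction ($\widetilde{F} = F \circ L^{-1}$) whose verification is mechanical once the two preceding lemmas are in hand. The only mildly delicate point is to observe that the hypothesis $F(0) = 0$ is exactly what ensures the linear part $L$ of $F$ is a genuine element of $GL_n(\mathbb{C})$ (rather than merely the linear homogeneous component of an affine map with nonzero constant term), making $L^{-1}$ available.
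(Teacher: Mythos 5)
Your proposal is correct and is essentially the paper's own proof: the paper likewise sets $\widetilde{F}=F\circ L^{-1}$ with $L$ the linear part of $F$, and the verification rests on the two preceding lemmas exactly as you describe. One small remark: the hypothesis $F(0,\ldots ,0)=(0,\ldots ,0)$ is really needed to guarantee $\widetilde{F}(0,\ldots ,0)=(0,\ldots ,0)$ (and the decomposition $F=L+H$ with $H$ of order $\geq 2$), not the invertibility of $L$, which holds for any polynomial automorphism because its Jacobian determinant is a nonzero constant, so $DF(0)=L\in GL_{n}(\Bbb{C})$ regardless of the value of $F$ at the origin.
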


\begin{proof}
Let $L$ be the linear part of $F.$ Since $F\in \limfunc{Aut}(\Bbb{C}^{n}),$
we have $L\in GL_{n}(\Bbb{C}).$ The linear part of $F\circ L^{-1}$ is equal
to $\limfunc{id}_{\Bbb{C}^{n}}.$ We also have $\left( F\circ L^{-1}\right)
(0,\ldots ,0)=F(0,\ldots ,0)=(0,\ldots ,0).$
\end{proof}

\subsection{Relationship between the degree of the Poisson bracket and the
number of variables}

The main result of this section is Lemma \ref{lem_deg_fg_2_x1x2} below. We
start with the following

\begin{lemma}
\label{lem_fg_f_involve}\label{lem_deg_poiss_1}Let $f,g\in \Bbb{C}%
[X_{1},\ldots ,X_{n}]$ be such that 
\begin{equation*}
f=X_{1}+f_{2}+\cdots +f_{l},\qquad g=X_{2}+g_{2}+\cdots g_{m},
\end{equation*}
where $f_{i},g_{i}$ are homogeneous forms of degree $i.$ If $\deg \left[
f,g\right] =2$ and $f$ does not involve $X_{i},$ where $i>2,$ then $g$ does
not involve $X_{i}$ either.
\end{lemma}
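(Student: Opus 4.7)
The plan is to read off the conclusion from a single Jacobian, namely the one for the pair of variables $(X_1,X_i)$. Since $\deg[X_j,X_k]=2$ for all $j<k$, the hypothesis $\deg[f,g]=2$ forces every Jacobian
$$J_{jk}=\frac{\partial f}{\partial X_j}\frac{\partial g}{\partial X_k}-\frac{\partial f}{\partial X_k}\frac{\partial g}{\partial X_j}$$
to have degree at most $0$, i.e.\ to be a (possibly zero) constant in $\Bbb{C}$.

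I would then apply this observation to the pair $(1,i)$. By hypothesis $\partial f/\partial X_i=0$, so the identity collapses to
$$J_{1i}=\frac{\partial f}{\partial X_1}\cdot\frac{\partial g}{\partial X_i}\in\Bbb{C}.$$
The next step is to compare constant terms. From $f=X_1+f_2+\cdots +f_l$ with every $f_j$ homogeneous of degree $\geq 2$, the polynomial $\partial f/\partial X_1$ has constant term equal to $1$, hence is nonzero in $\Bbb{C}[X_1,\ldots,X_n]$. From $g=X_2+g_2+\cdots+g_m$ with $i>2$, the linear part of $g$ contributes nothing to $\partial g/\partial X_i$, so $\partial g/\partial X_i$ has constant term $0$.

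Therefore the product $(\partial f/\partial X_1)\cdot(\partial g/\partial X_i)$ has constant term $0$, and being an element of $\Bbb{C}$ by the previous step it must equal $0$. Since $\Bbb{C}[X_1,\ldots,X_n]$ is an integral domain and $\partial f/\partial X_1\neq 0$, we conclude $\partial g/\partial X_i=0$, i.e.\ $g$ does not involve $X_i$, as required.

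There is no real obstacle here: the whole argument is driven by the fact that both $f$ and $g$ have normalized linear parts ($X_1$ and $X_2$ respectively) which are free of $X_i$ for $i>2$. This is precisely what makes the constant terms of $\partial f/\partial X_1$ and $\partial g/\partial X_i$ cooperate so that constancy of a product in an integral domain forces the factor $\partial g/\partial X_i$ to vanish.
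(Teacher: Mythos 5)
Your proof is correct and follows essentially the same route as the paper: both arguments hinge on the single Jacobian $Jac^{X_{1}X_{i}}(f,g)=\frac{\partial f}{\partial X_{1}}\frac{\partial g}{\partial X_{i}}$ being a constant because $\deg [f,g]=2$, and then on the normalized linear parts $X_{1},X_{2}$ forcing $\frac{\partial g}{\partial X_{i}}=0$. Your finishing step (constant-term comparison plus the integral-domain property) is only a cosmetic variant of the paper's degree argument, so there is nothing to add.
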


\begin{proof}
The assumption $\deg \left[ f,g\right] =2$ implies that for all $1\leq
k<l\leq n$ we have 
\begin{equation*}
\deg Jac^{X_{k}X_{l}}\left( f,g\right) \leq 0.
\end{equation*}
In particular, 
\begin{equation*}
\deg Jac^{X_{1}X_{i}}\left( f,g\right) \leq 0,
\end{equation*}
but 
\begin{equation*}
Jac^{X_{1}X_{i}}\left( f,g\right) =\frac{\partial f}{\partial X_{1}}\frac{%
\partial g}{\partial X_{i}}-\frac{\partial f}{\partial X_{i}}\frac{\partial g%
}{\partial X_{1}}=\frac{\partial f}{\partial X_{1}}\frac{\partial g}{%
\partial X_{i}}.
\end{equation*}
Thus $\deg \frac{\partial g}{\partial X_{i}}\leq 0.$ In other words if $g$
involves $X_{i}$ then $X_{i}$ occurs in the linear part of $g.$ But this
contradicts the assumptions.
\end{proof}

Now we are in a position to prove the following lemma that is one of the
main ingredients in proving, for instance, that $\left( 5,6,9\right) \notin 
\limfunc{mdeg}\left( \limfunc{Tame}\left( \Bbb{C}^{3}\right) \right) .$

\begin{lemma}
\label{lem_deg_poiss_2}\label{lem_deg_fg_2_x1x2}Let $f,g\in \Bbb{C}%
[X_{1},\ldots ,X_{n}]$ be such that 
\begin{equation*}
f=X_{1}+f_{2}+\cdots +f_{l},\qquad g=X_{2}+g_{2}+\cdots +g_{m},
\end{equation*}
where $f_{i},g_{i}$ are homogeneous forms of degree $i.$ If $\deg \left[
f,g\right] =2,$ then $f,g\in \Bbb{C}[X_{1},X_{2}].$
\end{lemma}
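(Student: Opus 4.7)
The plan is to translate the hypothesis $\deg [f,g]=2$ into a very restrictive condition on all the $2\times 2$ Jacobian minors of the pair $(f,g)$, and then to read off from these minors that $\partial f/\partial X_i$ and $\partial g/\partial X_i$ must vanish for every $i>2$.

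First, since $\deg [X_i,X_j]=2$, the definition of the Poisson bracket gives
\[
\deg [f,g]=2+\max_{1\leq i<j\leq n}\deg Jac^{X_iX_j}(f,g),
\]
so the hypothesis forces $\deg Jac^{X_iX_j}(f,g)\leq 0$ for every pair $i<j$; in other words, each such Jacobian is a complex constant. These constants can be computed by evaluation at the origin: since the linear parts of $f$ and $g$ are $X_1$ and $X_2$ respectively, one has $(\partial f/\partial X_k)(0)=1$ if $k=1$ and $0$ otherwise, and $(\partial g/\partial X_k)(0)=1$ if $k=2$ and $0$ otherwise. Consequently, evaluating each $2\times 2$ determinant at $0$, one finds $Jac^{X_1X_2}(f,g)=1$ identically, while $Jac^{X_iX_j}(f,g)=0$ identically for every other pair $i<j$.

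I would then view the Jacobian matrix of $(f,g)$ as a $2\times n$ matrix over the field $K=\Bbb{C}(X_1,\ldots,X_n)$, whose $k$-th column is $v_k=(\partial f/\partial X_k,\,\partial g/\partial X_k)^{T}$. The first two columns $v_1,v_2$ are $K$-linearly independent, because $\det[v_1\,|\,v_2]=Jac^{X_1X_2}(f,g)=1\neq 0$. For any $i>2$, the vanishing of $Jac^{X_1X_i}(f,g)$ says that $v_i$ is $K$-proportional to $v_1$, while the vanishing of $Jac^{X_2X_i}(f,g)$ says that $v_i$ is $K$-proportional to $v_2$. Since $v_1$ and $v_2$ span distinct one-dimensional $K$-subspaces of $K^2$, their intersection is $\{0\}$, so $v_i=0$. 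Thus $\partial f/\partial X_i=\partial g/\partial X_i=0$ for every $i>2$, which is the desired conclusion $f,g\in \Bbb{C}[X_1,X_2]$.

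There is no real obstacle: the argument simply sharpens Lemma \ref{lem_fg_f_involve} (which uses only $Jac^{X_1X_i}$ and produces one-sided information about $g$) by symmetrically exploiting $Jac^{X_2X_i}$ as well, so that at each index $i>2$ both partial derivatives are forced to vanish simultaneously rather than only one of them.
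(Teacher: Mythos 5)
Your argument is correct, but it is not the route the paper takes, and the difference is worth noting. The paper proves the lemma by an induction on homogeneous components: from $\deg\left[ f,g\right] =2$ it extracts, degree by degree, that $\left[ Jac^{X_{1}X_{i}}\left( f,g\right) \right] _{k-1}=\frac{\partial g_{k}}{\partial X_{i}}$ and $\left[ Jac^{X_{2}X_{i}}\left( f,g\right) \right] _{k-1}=-\frac{\partial f_{k}}{\partial X_{i}}$ once the lower components are known not to involve $X_{i},$ and then invokes Lemma \ref{lem_fg_f_involve} to finish. You instead use the hypothesis globally: each minor $Jac^{X_{i}X_{j}}\left( f,g\right) $ has degree $\leq 0,$ hence is a constant determined by its value at the origin, which the linear parts $X_{1},X_{2}$ force to be $1$ for the pair $\left( 1,2\right) $ and $0$ otherwise; then a rank argument over $K=\Bbb{C}\left( X_{1},\ldots ,X_{n}\right) $ (the column $v_{i}$ lies in $Kv_{1}\cap Kv_{2}=\left\{ 0\right\} $ for $i>2$) kills both partial derivatives at once. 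Your version is shorter and more symmetric — it needs neither the componentwise induction nor the auxiliary Lemma \ref{lem_fg_f_involve} — and all its steps (constancy of the minors, evaluation at $0,$ linear independence of $v_{1},v_{2}$ from $\det =1$) are sound in characteristic zero. What the paper's inductive method buys in exchange is flexibility: it is the same computation that yields the stronger statements of Theorem \ref{tw_fg_f_involve_x1x2} and its generalization, where $\deg \left[ f,g\right] =d>2$ and the minors are no longer constants, so your evaluation-at-the-origin shortcut would not apply there.
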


\begin{proof}
Without loss of generality we can assume that $l\leq m.$ Let $i>2$ be
arbitrary. Let us notice that 
\begin{equation*}
\left[ Jac^{X_{1}X_{i}}\left( f,g\right) \right] _{1}=Jac^{X_{1}X_{i}}\left(
X_{1},g_{2}\right) +Jac^{X_{1}X_{i}}\left( f_{2},X_{2}\right) =\frac{%
\partial g_{2}}{\partial X_{i}}
\end{equation*}
and 
\begin{equation*}
\left[ Jac^{X_{2}X_{i}}\left( f,g\right) \right] _{1}=Jac^{X_{2}X_{i}}\left(
X_{1},g_{2}\right) +Jac^{X_{2}X_{i}}\left( f_{2},X_{2}\right) =-\frac{%
\partial f_{2}}{\partial X_{i}},
\end{equation*}
where $\left[ Jac^{X_{k}X_{l}}\left( f,g\right) \right] _{d}$ is the
homogeneous part of degree $d$ of $Jac^{X_{k}X_{l}}\left( f,g\right) .$ But
the assumption $\deg \left[ f,g\right] =2$ means in particular that $\left[
Jac^{X_{1}X_{i}}\left( f,g\right) \right] _{1}=0$ and $\left[
Jac^{X_{2}X_{i}}\left( f,g\right) \right] _{1}=0.$ Thus we obtain 
\begin{equation*}
\frac{\partial g_{2}}{\partial X_{i}}=0,\qquad \frac{\partial f_{2}}{%
\partial X_{i}}=0,
\end{equation*}
and so $f_{2},g_{2}$ do not involve $X_{i}.$ It follows that 
\begin{eqnarray*}
\left[ Jac^{X_{1}X_{i}}\left( f,g\right) \right] _{2}
&=&Jac^{X_{1}X_{i}}\left( X_{1},g_{3}\right) +Jac^{X_{1}X_{i}}\left(
f_{2},g_{2}\right) +Jac^{X_{1}X_{i}}\left( f_{3},X_{2}\right) \\
&=&Jac^{X_{1}X_{i}}\left( X_{1},g_{3}\right) =\frac{\partial g_{3}}{\partial
X_{i}}
\end{eqnarray*}
and 
\begin{eqnarray*}
\left[ Jac^{X_{2}X_{i}}\left( f,g\right) \right] _{2}
&=&Jac^{X_{2}X_{i}}\left( X_{1},g_{3}\right) +Jac^{X_{2}x_{i}}\left(
f_{2},g_{2}\right) +Jac^{X_{2}X_{i}}\left( f_{3},X_{2}\right) \\
&=&Jac^{X_{2}X_{i}}\left( f_{3},X_{2}\right) =-\frac{\partial f_{3}}{%
\partial X_{i}}.
\end{eqnarray*}
Since $\deg \left[ f,g\right] =2$ implies $\left[ Jac^{x_{1}x_{i}}\left(
f,g\right) \right] _{2}=0$ and $\left[ Jac^{x_{2}x_{i}}\left( f,g\right)
\right] _{2}=0,$ we see that 
\begin{equation*}
\frac{\partial g_{3}}{\partial X_{i}}=0,\qquad \frac{\partial f_{3}}{%
\partial X_{i}}=0,
\end{equation*}
and so $f_{3},g_{3}$ do not involve $X_{i}.$

Proceeding inductively, when we know that $f_{2},\ldots
,f_{l-1},g_{2},\ldots ,g_{l-1}$ do not involve $X_{i},$ we obtain 
\begin{eqnarray*}
\left[ Jac^{X_{1}X_{i}}\left( f,g\right) \right] _{n-1}
&=&Jac^{X_{1}X_{i}}\left( X_{1},g_{n}\right) +\cdots +Jac^{X_{1}X_{i}}\left(
f_{n},X_{2}\right) \\
&=&Jac^{X_{1}X_{i}}\left( X_{1},g_{n}\right) =\frac{\partial g_{n}}{\partial
X_{i}}
\end{eqnarray*}
and 
\begin{eqnarray*}
\left[ Jac^{X_{2}X_{i}}\left( f,g\right) \right] _{n-1}
&=&Jac^{X_{2}X_{i}}\left( X_{1},g_{n}\right) +\cdots +Jac^{X_{2}X_{i}}\left(
f_{n},X_{2}\right) \\
&=&Jac^{X_{2}X_{i}}\left( f_{n},X_{2}\right) =-\frac{\partial f_{n}}{%
\partial X_{i}}.
\end{eqnarray*}
By the assumption $\deg \left[ f,g\right] =2,$ as before we find that $f_{n}$
and $g_{n}$ do not involve $X_{i}.$ Therefore $f$ does not involve $X_{i}.$
To deduce that $g$ does not involve $X_{i}$ either, we can use Lemma \ref
{lem_fg_f_involve}.
\end{proof}

By similar arguments one can prove the following

\begin{theorem}
\label{tw_fg_f_involve_x1x2}Let $f,g\in \Bbb{C}[X_{1},\ldots ,X_{n}]$ be
such that 
\begin{equation*}
f=X_{1}+f_{2}+\cdots +f_{l},\qquad g=X_{2}+g_{2}+\cdots +g_{m},
\end{equation*}
where $f_{i},g_{i}$ are homogeneous forms of degree $i.$ If $\deg \left[
f,g\right] =d\leq \min \left\{ k,m\right\} ,d\geq 2,$ and $f_{i},g_{i}$ for $%
i=1,\ldots ,d-1$ do not involve $X_{l},$ where $l>2,$ then $f$ and $g$ do
not involve $X_{l}.$
\end{theorem}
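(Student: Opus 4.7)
\medskip

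The plan is to generalize the inductive degree-by-degree argument used in the proof of Lemma \ref{lem_deg_poiss_2}. First I observe that the hypothesis $\deg[f,g]=d$ is equivalent to
\[
\deg Jac^{X_{i}X_{j}}(f,g)\leq d-2 \qquad \text{for all }1\leq i<j\leq n,
\]
so every homogeneous component of $Jac^{X_{i}X_{j}}(f,g)$ of degree $\geq d-1$ vanishes. In particular I will focus on the pairs $(X_{1},X_{l})$ and $(X_{2},X_{l})$ and read off information about the parts of $f_{N}$ and $g_{N}$ that involve $X_{l}$ from the vanishing of
\[
\bigl[Jac^{X_{1}X_{l}}(f,g)\bigr]_{N-1}=0,\qquad \bigl[Jac^{X_{2}X_{l}}(f,g)\bigr]_{N-1}=0
\]
for every $N\geq d$.

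Next I proceed by induction on $N\geq d$, with inductive hypothesis that $f_{i},g_{i}$ do not involve $X_{l}$ for all $i\leq N-1$. For $N=d$ this is exactly the hypothesis of the theorem. For the inductive step I expand
\[
\bigl[Jac^{X_{1}X_{l}}(f,g)\bigr]_{N-1}=\sum_{i+j=N+1}Jac^{X_{1}X_{l}}(f_{i},g_{j}),
\]
and split the sum. The term with $i=1$ gives $Jac^{X_{1}X_{l}}(X_{1},g_{N})=\partial g_{N}/\partial X_{l}$; the term with $j=1$ gives $Jac^{X_{1}X_{l}}(f_{N},X_{2})=0$ since $X_{2}$ depends on neither $X_{1}$ nor $X_{l}$; and every remaining term has $2\leq i,j\leq N-1$, so by the inductive hypothesis both $f_{i}$ and $g_{j}$ are $X_{l}$-free and the Jacobian vanishes. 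Therefore $\partial g_{N}/\partial X_{l}=0$, i.e.\ $g_{N}$ does not involve $X_{l}$. An analogous calculation with $Jac^{X_{2}X_{l}}$ yields $\partial f_{N}/\partial X_{l}=0$.

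Iterating up to $N=\max\{l,m\}$ finishes the argument: all homogeneous components $f_{i}$ and $g_{i}$ are free of $X_{l}$, hence so are $f$ and $g$. I do not expect any serious obstacle; the computation is a straightforward bookkeeping exercise that parallels the case $d=2$ already carried out in Lemma \ref{lem_deg_poiss_2}, the only new feature being that the induction must start at the higher degree $N=d$ rather than at $N=2$, which is exactly what the bound $d\leq \min\{l,m\}$ is designed to accommodate. The slightly delicate point is to verify carefully that among the pairs $(i,j)$ with $i+j=N+1$ and $2\leq i,j\leq N-1$ the inductive hypothesis indeed applies (i.e.\ that $i,j\leq N-1$), but this is automatic.
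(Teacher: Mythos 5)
Your proposal is correct and follows essentially the same route the paper intends: the theorem is stated right after the proof of Lemma \ref{lem_deg_poiss_2} with the remark that it follows ``by similar arguments,'' namely the degree-by-degree induction on the vanishing homogeneous components of $Jac^{X_{1}X_{l}}(f,g)$ and $Jac^{X_{2}X_{l}}(f,g)$, started at degree $d$ instead of $2$, which is exactly what you do. The only cosmetic difference is that you extract $\partial f_{N}/\partial X_{l}=0$ and $\partial g_{N}/\partial X_{l}=0$ symmetrically at every step up to $\max\{l,m\}$, whereas the paper's $d=2$ argument runs the induction up to $\deg f$ and then invokes Lemma \ref{lem_fg_f_involve} to finish off $g$; both are fine.
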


The results of Lemma \ref{lem_deg_fg_2_x1x2} and Theorem \ref
{tw_fg_f_involve_x1x2} can be generalized as follows.

\begin{theorem}
Let $f,g\in \Bbb{C}[X_{1},\ldots ,X_{n}]$ be such that 
\begin{equation*}
f=f_{1}+f_{2}+\cdots +f_{l},\qquad g=g_{1}+g_{2}+\cdots g_{m},
\end{equation*}
where $f_{i},g_{i}$ are homogeneous forms of degree $i.$ If $f_{1},g_{1}$
are linearly independent, $\deg \left[ f,g\right] =d\leq \min \left\{
l,m\right\} ,d\geq 2,$ and $f_{i},g_{i},$ for $i=1,\ldots ,d-1,$ do not
involve $X_{r},$ where $r>2,$ then $f$ and $g$ do not involve $X_{r}.$
\end{theorem}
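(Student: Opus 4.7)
The plan is to reduce this statement to Theorem \ref{tw_fg_f_involve_x1x2} (the version in which the linear parts already equal $X_{1}$ and $X_{2}$) via a judiciously chosen linear change of coordinates $L\in GL_{n}(\Bbb{C})$.

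First, I would construct $L$ so that $L^{*}(f_{1})=X_{1}$ and $L^{*}(g_{1})=X_{2}$ while simultaneously $L^{*}$ preserves the subring $\Bbb{C}[X_{j}:j\neq r]$. Since $f_{1}$ and $g_{1}$ are linearly independent and neither involves $X_{r}$, both lie in the $(n-1)$-dimensional subspace $V=\mathrm{span}\{X_{j}:j\neq r\}$ of linear forms, and because $r>2$, the target forms $X_{1},X_{2}$ also lie in $V$. Any $\Bbb{C}$-linear isomorphism $V\to V$ sending $f_{1}\mapsto X_{1}$ and $g_{1}\mapsto X_{2}$ extends to an automorphism of the full space of linear forms by declaring $X_{r}\mapsto X_{r}$, and hence corresponds to some $L\in GL_{n}(\Bbb{C})$. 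By construction, $L^{*}(X_{j})\in V$ for every $j\neq r$, so the induced $\Bbb{C}$-algebra automorphism $L^{*}$ sends $\Bbb{C}[X_{j}:j\neq r]$ into (and in fact onto) itself.

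Next, I would apply $L^{*}$ to $f$ and $g$, obtaining $\widetilde{f}=X_{1}+L^{*}(f_{2})+\cdots +L^{*}(f_{l})$ and $\widetilde{g}=X_{2}+L^{*}(g_{2})+\cdots +L^{*}(g_{m})$, where each $L^{*}(f_{i})$ and $L^{*}(g_{i})$ is homogeneous of degree $i$. For every $i<d$, the hypothesis that $f_{i},g_{i}$ do not involve $X_{r}$ means $f_{i},g_{i}\in \Bbb{C}[X_{j}:j\neq r]$, and hence $L^{*}(f_{i}),L^{*}(g_{i})$ lie in the same subring, i.e.\ do not involve $X_{r}$. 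Moreover, Lemma \ref{lem_degree_linear_change} guarantees that $\deg [\widetilde{f},\widetilde{g}]=\deg [f,g]=d$. Thus $\widetilde{f}$ and $\widetilde{g}$ satisfy all hypotheses of Theorem \ref{tw_fg_f_involve_x1x2}, which forces $\widetilde{f},\widetilde{g}\in \Bbb{C}[X_{j}:j\neq r]$.

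Finally, since $(L^{-1})^{*}=(L^{*})^{-1}$ restricts to an automorphism of $\Bbb{C}[X_{j}:j\neq r]$ as well, applying it recovers $f=(L^{-1})^{*}(\widetilde{f})$ and $g=(L^{-1})^{*}(\widetilde{g})$ inside this subring, proving that neither $f$ nor $g$ involves $X_{r}$. The only delicate point in the argument is the simultaneous normalization in the first step---making $f_{1},g_{1}$ become $X_{1},X_{2}$ without introducing any spurious dependence on $X_{r}$---and this is precisely what the hypothesis $r>2$ affords by leaving enough room inside the hyperplane $V$.
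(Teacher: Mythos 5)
Your proof is correct and follows essentially the same route as the paper: normalize the linear parts to $X_{1},X_{2}$ by a linear change of coordinates that keeps the excluded variable separate, invoke Lemma \ref{lem_degree_linear_change} to preserve $\deg [f,g]$, apply Theorem \ref{tw_fg_f_involve_x1x2}, and transfer back. The only cosmetic difference is that the paper builds $L=(f_{1},g_{1},l_{3},\ldots ,l_{n-1},X_{r})$ explicitly (so the excluded variable becomes the last coordinate), while you fix $X_{r}$ and act on the hyperplane $V$ of forms not involving $X_{r}$; both are the same argument.
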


\begin{proof}
Let $l_{3},\ldots ,l_{n-1}\in \Bbb{C}[X_{1},\ldots ,X_{r-1},X_{r+1},\ldots
,X_{n}]$ be linear forms such that $f_{1},g_{1},l_{3},\ldots ,l_{n-1}$ are
linearly independent. Then $f_{1},g_{1},l_{3},\ldots ,l_{n-1},X_{r}$ are
also linearly independent. Let $L=(f_{1},g_{1},l_{3},\ldots ,l_{n-1},X_{r}):%
\Bbb{C}^{n}\rightarrow \Bbb{C}^{n}.$ Of course $L,L^{-1}\in GL_{n}(\Bbb{C}),$
and by Lemma \ref{lem_degree_linear_change}, $\deg [f\circ L^{-1},g\circ
L^{-1}]=\deg [f,g]=d.$ One can also check that $\left( f\circ L^{-1}\right)
_{1}=X_{1},\left( g\circ L^{-1}\right) _{1}=X_{2}$ and that $\left( f\circ
L^{-1}\right) _{i},\left( g\circ L^{-1}\right) _{i},$ for $i=1,\ldots ,d-1,$
do not involve $X_{l}.$ Thus by Theorem \ref{tw_fg_f_involve_x1x2}, $f\circ
L^{-1},g\circ L^{-1}$ do not involve $X_{l}$ either. And one can easily
check that the same is true for $f=\left( f\circ L^{-1}\right) \circ L$ and $%
g=\left( g\circ L^{-1}\right) \circ L.$
\end{proof}

\newpage 

\section{The case $\left( p_{1},p_{2},d_{3}\right) $ and its generalization%
\label{section_ppd_and_general}}

\subsection{The case $\left( p_{1},p_{2},d_{3}\right) $}

Here we investigate the set 
\begin{equation*}
\left\{ \left( p_{1},p_{2},d_{3}\right) :3\leq p_{1}<p_{2}\leq d_{3},\text{ }%
p_{1},p_{2}\text{ prime numbers }\right\} \cap \limfunc{mdeg}\left( \limfunc{%
Tame}\left( \Bbb{C}^{3}\right) \right) .
\end{equation*}
The complete description of this set is given in the following theorem.

\begin{theorem}
\label{tw_p1_p2_d3} \textit{(\cite{Karas2}, Thm. 1.1) }Let $d_{3}\geq
p_{2}>p_{1}\geq 3$ be positive integers. If $p_{1}$ and $p_{2}$ are prime
numbers, then $(p_{1},p_{2},d_{3})\in \limfunc{mdeg}(\limfunc{Tame}(\Bbb{C}%
^{3}))$ if and only if $d_{3}\in p_{1}\Bbb{N}+p_{2}\Bbb{N}.$
\end{theorem}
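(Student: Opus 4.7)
My plan is to prove both implications separately. The ``if'' direction is immediate: if $d_3 = k_1 p_1 + k_2 p_2$ with $k_1, k_2 \in \Bbb{N}$, then $d_3$ is a non-negative integer combination of the smaller degrees $p_1$ and $p_2$, so Proposition \ref{prop_sum_d_i} (applied with $i = 3$) directly produces a tame automorphism of $\Bbb{C}^3$ with multidegree $(p_1, p_2, d_3)$.

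For the ``only if'' direction, I would proceed by induction on $d_3$, with trivial base $d_3 = p_2$. Given $F \in \limfunc{Tame}(\Bbb{C}^3)$ with $\limfunc{mdeg}F = (p_1, p_2, d_3)$, Theorem \ref{tw_reduc_type_4} combined with Kuroda's Theorem \ref{tw_Kuroda} (excluding type IV reductions) and Remark \ref{remakrk_red_type_3} (excluding type III, since that would require $2 \mid p_2$, impossible for the odd prime $p_2$) allows me to replace $F$ by a tame automorphism of the same multidegree that admits an elementary reduction and fixes the origin. I then do a case analysis on which coordinate is reduced. If $F_1$ is reduced, then $\deg g(F_2, F_3) = p_1$ for some $g \in \Bbb{C}[Y_2,Y_3]$; Proposition \ref{prop_fg_alg_zalez} applied to $\overline{F}_2,\overline{F}_3$ together with primality of $p_2$ splits into the subcases $\deg h \in \{1, p_2\}$, and the lower bound of Proposition \ref{prop_deg_g_fg} applied to the resulting $p$-reduced pair $(F_2,F_3)$ rules out everything except the possibility $p_2 \mid d_3$. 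The case in which $F_2$ is reduced is analogous and forces $p_1 \mid d_3$. In both cases $d_3 \in p_1\Bbb{N} + p_2\Bbb{N}$.

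The remaining case, the elementary reduction of $F_3$, gives $\deg g(F_1, F_2) = d_3$ for some $g \in \Bbb{C}[Y_1,Y_2]$. When $\overline{F}_1, \overline{F}_2$ are algebraically independent, the leading monomials $a_{ij}\overline{F}_1^i \overline{F}_2^j$ are linearly independent, so the top monomial does not cancel and $d_3 = i p_1 + j p_2 \in p_1\Bbb{N} + p_2\Bbb{N}$ at once. The main obstacle is the subcase where $\overline{F}_1$ and $\overline{F}_2$ are algebraically dependent: by Proposition \ref{prop_fg_alg_zalez} combined with primality of $p_1$ and $p_2$, one obtains $\overline{F}_1 = c_1 h^{p_1}$ and $\overline{F}_2 = c_2 h^{p_2}$ for a single linear form $h$, so that $(F_1, F_2)$ is a $p_1$-reduced pair. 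The lower bound from Proposition \ref{prop_deg_g_fg} on $d_3 = \deg g(F_1, F_2)$, combined with Theorem \ref{tw_sywester} (Sylvester--Frobenius) to handle $d_3 \geq (p_1-1)(p_2-1)$ and the inductive hypothesis applied to the reduced tame automorphism $(F_1, F_2, F_3 - g(F_1, F_2))$ (whose third coordinate has strictly smaller degree) for the remaining range, should close the argument. Controlling the possible cancellations among the leading coefficients of $g(F_1, F_2)$ so that the resulting degree still lies in $p_1\Bbb{N} + p_2\Bbb{N}$ is the delicate point of the proof.
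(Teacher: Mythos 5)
Your ``if'' direction and your overall reduction strategy (Theorem \ref{tw_reduc_type_4}, exclusion of type III reductions via Remark \ref{remakrk_red_type_3} because $p_{2}$ is odd, then a case analysis over the three possible elementary reductions) coincide with the paper's, and your treatment of the cases where $F_{1}$ or $F_{2}$ is reduced is in substance the paper's argument. The genuine gap is in the case of an elementary reduction of $F_{3}$ with $\overline{F}_{1},\overline{F}_{2}$ algebraically dependent: there the entire content of the theorem is precisely the claim you defer, namely that $d_{3}=\deg g(F_{1},F_{2})$ must lie in $p_{1}\Bbb{N}+p_{2}\Bbb{N}$, and your proposed induction cannot supply it. The inductive hypothesis applied to $(F_{1},F_{2},F_{3}-g(F_{1},F_{2}))$ only tells you that the \emph{new} third degree $d_{3}'<d_{3}$ lies in $p_{1}\Bbb{N}+p_{2}\Bbb{N}$ (and only when $d_{3}'\geq p_{2}$, which need not hold); since $d_{3}'$ bears no arithmetic relation to $d_{3}$ beyond $d_{3}'<d_{3}$, this gives no information about $d_{3}$ itself, so the induction is a non sequitur and the base case/induction scaffolding does no work.

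What actually closes this case, and what the paper does, is the following. If $d_{3}\geq (p_{1}-1)(p_{2}-1)$ you are done by Theorem \ref{tw_sywester}, so assume $d_{3}<(p_{1}-1)(p_{2}-1)$. Proposition \ref{prop_deg_g_fg} applied to the $p_{1}$-reduced pair $(F_{1},F_{2})$ gives $d_{3}=\deg g(F_{1},F_{2})\geq q\left( p_{1}p_{2}-p_{1}-p_{2}+\deg \left[ F_{1},F_{2}\right] \right) +rp_{2}$ with $\deg _{Y}g=qp_{1}+r,$ and since $\deg \left[ F_{1},F_{2}\right] \geq 2$ the coefficient of $q$ is at least $(p_{1}-1)(p_{2}-1)+1>d_{3},$ forcing $q=0$; hence $g(X,Y)=\sum_{i=0}^{p_{1}-1}g_{i}(X)Y^{i}.$ Because $\func{lcm}(p_{1},p_{2})=p_{1}p_{2},$ the sets $p_{1}\Bbb{N},\ p_{2}+p_{1}\Bbb{N},\ \ldots ,\ (p_{1}-1)p_{2}+p_{1}\Bbb{N}$ are pairwise disjoint, so the summands $g_{i}(F_{1})F_{2}^{i},$ whose degrees are $p_{1}\deg g_{i}+ip_{2},$ have pairwise distinct degrees and no cancellation at the top degree can occur; therefore $d_{3}=\max_{i}\left( p_{1}\deg g_{i}+ip_{2}\right) \in p_{1}\Bbb{N}+p_{2}\Bbb{N}.$ This is exactly the ``control of cancellations'' you flagged; it is not an optional refinement but the decisive step, and without it your third case is unproved. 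A smaller point: in your first two cases Proposition \ref{prop_deg_g_fg} requires condition (iii) of Definition \ref{def_*-red}; when that condition fails you get $p_{2}\mid d_{3}$ (resp. $p_{1}\mid d_{3}$) directly, which is how those subcases should be phrased rather than through Proposition \ref{prop_fg_alg_zalez} with ``$\deg h\in \{1,p_{2}\}$''.
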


\begin{proof}
If $d_{3}\in p_{1}\Bbb{N}+p_{2}\Bbb{N},$ then by Proposition \ref
{prop_sum_d_i}, there exists a tame automorphism $F\in \limfunc{Tame}\left( 
\Bbb{C}^{3}\right) $ such that $\limfunc{mdeg}F=\left(
p_{1},p_{2},d_{3}\right) .$ Thus in order to prove the theorem we must only
show that if $d_{3}\notin p_{1}\Bbb{N}+p_{2}\Bbb{N},$ then there is no tame
automorphism $F:\Bbb{C}^{3}\rightarrow \Bbb{C}^{3}$ with $\limfunc{mdeg}%
F=\left( p_{1},p_{2},d_{3}\right) .$ Thus up to the end of the proof we
assume that $d_{3}\notin p_{1}\Bbb{N}+p_{2}\Bbb{N}.$

Assume, to the contrary, that there are tame automorphisms $F$ of $\Bbb{C}%
^{3}$ such that $\limfunc{mdeg}F=(p_{1},p_{2},d_{3}).$ By Theorem \ref
{tw_reduc_type_4}, we only need to show that all such automorphisms do not
admit an elementary reduction and reduction of type III. Since $p_{2}>3$ is
a prime number, $2\nmid p_{2}.$ Hence by Remark \ref{remakrk_red_type_3}, no
automorphism $F$ of $\Bbb{C}^{3}$ with $\limfunc{mdeg}F=\left(
p_{1},p_{2},d_{3}\right) $ admits a reduction of type III.

Assume, to the contrary, that there is an an automorphism $F=\left(
F_{1},F_{2},F_{3}\right) $ of $\Bbb{C}^{3}$ with $\limfunc{mdeg}F=\left(
p_{1},p_{2},d_{3}\right) $ that admits an elemenatry reduction. Notice that,
by Theorem \ref{tw_sywester}, 
\begin{equation}
d_{3}<(p_{1}-1)(p_{2}-1).  \label{d_3 male}
\end{equation}

Assume that 
\begin{equation*}
(F_{1},F_{2},F_{3}-g(F_{1},F_{2})),
\end{equation*}
where $g\in \Bbb{C}\left[ X,Y\right] ,$ is an elementary reduction of $%
\left( F_{1},F_{2},F_{3}\right) .$ Then we have $\deg g(F_{1},F_{2})=\deg
F_{3}=d_{3}.$ But, by Proposition \ref{prop_deg_g_fg}, 
\begin{equation*}
\deg g(F_{1},F_{2})\geq q(p_{1}p_{2}-p_{1}-p_{2}+\deg [F_{1},F_{2}])+rp_{2},
\end{equation*}
where $\deg _{Y}g(X,Y)=qp_{1}+r$ with $0\leq r<p_{1}.\,$ Since $F_{1},F_{2}$
are algebraically independent, $\deg [F_{1},F_{2}]\geq 2$ and so 
\begin{equation*}
p_{1}p_{2}-p_{1}-p_{2}+\deg [F_{1},F_{2}]\geq
p_{1}p_{2}-p_{1}-p_{2}+2>(p_{1}-1)(p_{2}-1).
\end{equation*}
This and (\ref{d_3 male}) imply that $q=0,$ and that: 
\begin{equation*}
g(X,Y)=\sum_{i=0}^{p_{1}-1}g_{i}(X)Y^{i}.
\end{equation*}
Since $\func{lcm}(p_{1},p_{2})=p_{1}p_{2},$ the sets 
\begin{equation*}
p_{1}\Bbb{N},p_{2}+p_{1}\Bbb{N},\ldots ,(p_{1}-1)p_{2}+p_{1}\Bbb{N}
\end{equation*}
are pairwise disjoint. This yields: 
\begin{equation*}
\deg \left( \sum_{i=0}^{p_{1}-1}g_{i}(F_{1})F_{2}^{i}\right) =\underset{%
i=0,\ldots ,p_{1}-1}{\max }\left( \deg F_{1}\deg g_{i}+i\deg F_{2}\right) ,
\end{equation*}
and so 
\begin{equation*}
d_{3}=\deg g\left( F_{1},F_{2}\right) \in \bigcup_{r=0}^{p_{1}-1}\left(
rp_{2}+p_{1}\Bbb{N}\right) \subset p_{1}\Bbb{N}+p_{2}\Bbb{N},
\end{equation*}
a contradiction.

Now, assume that 
\begin{equation*}
(F_{1},F_{2}-g(F_{1},F_{3}),F_{3}),
\end{equation*}
where $g\in \Bbb{C}\left[ X,Y\right] ,$ is an elementary reduction of $%
F=\left( F_{1},F_{2},F_{3}\right) .$ Since $d_{3}\notin p_{1}\Bbb{N}+p_{2}%
\Bbb{N},p_{1}\nmid d_{3}$ and $\gcd (p_{1},d_{3})=1.$ This means, by
Proposition \ref{prop_deg_g_fg}, that 
\begin{equation*}
\deg g(F_{1},F_{3})\geq q(p_{1}d_{3}-d_{3}-p_{1}+\deg [F_{1},F_{3}])+rd_{3},
\end{equation*}
where $\deg _{Y}g(X,Y)=qp_{1}+r$ with $0\leq r<p_{1}.$ Since $%
p_{1}d_{3}-d_{3}-p_{1}+\deg [F_{1},F_{3}]\geq p_{1}d_{3}-2d_{3}\geq
d_{3}>p_{2}$ and since we want to have $\deg g(F_{1},F_{3})=p_{2},$ we
conclude that $q=r=0.$ This means that $g(X,Y)=g(X),$ and then $p_{2}=\deg
g(F_{1})\in p_{1}\Bbb{N},$ a contradiction.

Finally, if we assume that $(F_{1}-g(F_{2},F_{3}),F_{2},F_{3}),$ where $g\in 
\Bbb{C}\left[ X,Y\right] ,$ is an elementary reduction of $\left(
F_{1},F_{2},F_{3}\right) ,$ then in the same way as in the previous case we
obtain a contradiction.
\end{proof}

\begin{corollary}
The following equality holds true 
\begin{eqnarray*}
&&\left\{ \left( p_{1},p_{2},d_{3}\right) :3\leq p_{1}<p_{2}\leq d_{3},\text{
}p_{1},p_{2}\text{ prime numbers }\right\} \cap \limfunc{mdeg}\left( 
\limfunc{Tame}\left( \Bbb{C}^{3}\right) \right) \\
&=&\left\{ \left( p_{1},p_{2},d_{3}\right) :3\leq p_{1}<p_{2}\leq d_{3},%
\text{ }p_{1},p_{2}\text{ prime numbers, }d_{3}\in p_{1}\Bbb{N}+p_{2}\Bbb{N}%
\right\}
\end{eqnarray*}
\end{corollary}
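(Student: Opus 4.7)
The corollary is nothing more than a set-theoretic reformulation of Theorem \ref{tw_p1_p2_d3}, so the plan is simply to unpack the biconditional from that theorem. Given the defining condition on the left-hand set, a triple $(p_1,p_2,d_3)$ lies there exactly when $3 \le p_1 < p_2 \le d_3$, the numbers $p_1, p_2$ are prime, and $(p_1,p_2,d_3) \in \limfunc{mdeg}(\limfunc{Tame}(\mathbb{C}^3))$. By Theorem \ref{tw_p1_p2_d3}, the last of these conditions is equivalent, under the hypotheses on $p_1, p_2, d_3$, to the arithmetic condition $d_3 \in p_1\mathbb{N} + p_2\mathbb{N}$.

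Thus the plan is: first fix an arbitrary $(p_1,p_2,d_3)$ in the left-hand set and apply the ``only if'' direction of Theorem \ref{tw_p1_p2_d3} to conclude $d_3 \in p_1\mathbb{N} + p_2\mathbb{N}$, placing the triple in the right-hand set. Conversely, given $(p_1,p_2,d_3)$ in the right-hand set, apply the ``if'' direction of Theorem \ref{tw_p1_p2_d3} (which rests on Proposition \ref{prop_sum_d_i}) to produce a tame automorphism with this multidegree, placing the triple in the left-hand set. There is no substantive obstacle here — all the work has been done in proving Theorem \ref{tw_p1_p2_d3}, which in turn relied on Theorem \ref{tw_reduc_type_4}, Remark \ref{remakrk_red_type_3}, Theorem \ref{tw_sywester}, and the Shestakov--Umirbaev estimate in Proposition \ref{prop_deg_g_fg}. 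The corollary is therefore a one-line consequence.
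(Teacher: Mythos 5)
Your proposal is correct and matches the paper's intent exactly: the corollary is stated as an immediate set-theoretic reformulation of Theorem \ref{tw_p1_p2_d3}, with both inclusions obtained by applying the two directions of that theorem's biconditional (the ``if'' direction ultimately resting on Proposition \ref{prop_sum_d_i}). Nothing further is needed.
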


\subsection{Some consequences}

\begin{theorem}
\textit{(\cite{Karas2}, Thm. 3.1) }Let $p_{2}>3$ be a prime number and $%
d_{3}\geq p_{2}$ be an integer. Then $(3,p_{2},d_{3})\in \limfunc{mdeg}(%
\limfunc{Tame}(\Bbb{C}^{3}))$ if and only if $d_{3}\notin \{2p_{2}-3k\ |\
k=1,\ldots ,\left[ \frac{p_{2}}{3}\right] \}.$
\end{theorem}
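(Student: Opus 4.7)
The plan is to derive this statement directly as a corollary of Theorem \ref{tw_p1_p2_d3}, by translating the semigroup membership condition $d_{3}\in 3\Bbb{N}+p_{2}\Bbb{N}$ into an explicit list of exceptional values. Since $3$ is a prime number and $3<p_{2}$, Theorem \ref{tw_p1_p2_d3} applies with $p_{1}=3$, giving
\begin{equation*}
(3,p_{2},d_{3})\in \limfunc{mdeg}(\limfunc{Tame}(\Bbb{C}^{3}))\Longleftrightarrow d_{3}\in 3\Bbb{N}+p_{2}\Bbb{N}.
\end{equation*}
Hence the whole task reduces to identifying, among the integers $d_{3}\geq p_{2}$, exactly those that are \emph{not} representable as $3a+p_{2}b$ with $a,b\in \Bbb{N}$.

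To do this I would work modulo $3$. Since $p_{2}>3$ is prime, $\gcd(3,p_{2})=1$ and $p_{2}\not\equiv 0\pmod 3$. For each $n\in \Bbb{N}$ let $b_{0}(n)\in \{0,1,2\}$ denote the smallest value of $b$ with $p_{2}b\equiv n\pmod 3$; then $n=3a+p_{2}b$ has a solution with $a\geq 0$ iff $n\geq p_{2}\,b_{0}(n)$. Concretely, $b_{0}(n)=0$ when $3\mid n$, $b_{0}(n)=1$ when $n\equiv p_{2}\pmod 3$, and $b_{0}(n)=2$ in the remaining residue class (namely $n\equiv 2p_{2}\pmod 3$). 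For $n\geq p_{2}$, the only way for $n$ to fail to lie in $3\Bbb{N}+p_{2}\Bbb{N}$ is therefore $b_{0}(n)=2$ together with $n<2p_{2}$, i.e.
\begin{equation*}
p_{2}\leq n<2p_{2}\quad\text{and}\quad n\equiv 2p_{2}\pmod 3.
\end{equation*}

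The final step is a routine rewriting: the integers in the half-open interval $[p_{2},2p_{2})$ congruent to $2p_{2}$ modulo $3$ are precisely $n=2p_{2}-3k$ with $k\geq 1$ and $2p_{2}-3k\geq p_{2}$, i.e.\ $1\leq k\leq \lfloor p_{2}/3\rfloor$. Combining this with the equivalence from Theorem \ref{tw_p1_p2_d3} yields
\begin{equation*}
(3,p_{2},d_{3})\in \limfunc{mdeg}(\limfunc{Tame}(\Bbb{C}^{3}))\Longleftrightarrow d_{3}\notin \{2p_{2}-3k:k=1,\ldots ,\lfloor p_{2}/3\rfloor\},
\end{equation*}
which is exactly the claim. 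There is no serious obstacle here: all the automorphism-theoretic content is already packaged into Theorem \ref{tw_p1_p2_d3}, so the only point to be careful about is the elementary mod-$3$ analysis, in particular checking that the Frobenius bound $(3-1)(p_{2}-1)-1=2p_{2}-3$ of Theorem \ref{tw_sywester} is consistent with the list produced (its largest element being $2p_{2}-3$, obtained for $k=1$).
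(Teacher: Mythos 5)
Your proposal is correct and follows essentially the same route as the paper: reduce to Theorem \ref{tw_p1_p2_d3} with $p_{1}=3$ and then characterize, by a residue-class argument modulo $3$, which $d_{3}\geq p_{2}$ lie outside $3\Bbb{N}+p_{2}\Bbb{N}$, arriving at the list $\{2p_{2}-3k:\ k=1,\ldots ,\left[ p_{2}/3\right] \}$. The only cosmetic difference is that the paper certifies the non-representable residue class via the Frobenius bound of Theorem \ref{tw_sywester}, whereas you argue directly with the minimal coefficient $b_{0}(n)$; both arguments are equivalent.
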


\begin{proof}
Since $p_{2}>3$ is a prime number, $p_{2}\equiv r\ (\func{mod}3)$ for some $%
r\in \{1,2\}.$ It is easy to see that if $d_{3}\geq p_{2}$ and $d_{3}\equiv
0\ (\func{mod}3)$ or $d_{3}\equiv r\ (\func{mod}3),$ then $d_{3}\in 3\Bbb{N}%
+p_{2}\Bbb{N}.$ Thus, by Theorem \ref{tw_sywester}, 
\begin{equation*}
2(p_{2}-1)-1\neq 0,r\ (\func{mod}3).
\end{equation*}
Take any $d_{3}$ such that $p_{2}\leq d_{3}\leq 2p_{2}-3$ and $d_{3}\neq
0,r\ (\func{mod}3).$ Since $d_{3}\leq 2p_{2}-3$ and $d_{3}\equiv 2p_{2}-3\ (%
\func{mod}3),$ we see that $d_{3}\notin 3\Bbb{N}+p_{2}\Bbb{N},$ because
otherwise we would have $2p_{2}-3\in 3\Bbb{N}+p_{2}\Bbb{N},$ contrary
toTheorem \ref{tw_sywester}. Thus 
\begin{eqnarray*}
\{d_{3} &\in &\Bbb{N\ }|\ d_{3}\geq p_{2},d_{3}\notin 3\Bbb{N}+p_{2}\Bbb{N}%
\}= \\
&=&\{d_{3}\in \Bbb{N\ }|\ p_{2}\leq d_{3}\leq 2p_{2}-3,d_{3}\equiv 2p_{2}-3%
\Bbb{(}\func{mod}3\Bbb{)}\} \\
&=&\{2p_{2}-3k\ |\ k=1,\ldots ,\left[ \tfrac{p_{2}}{3}\right] \}
\end{eqnarray*}
\end{proof}

\begin{theorem}
\textit{(\cite{Karas2}, Thm. 3.2) }(a) If $d_{3}\geq 7,$ then $%
(5,7,d_{3})\in \limfunc{mdeg}(\limfunc{Tame}(\Bbb{C}^{3}))$ if and only if 
\begin{equation*}
d_{3}\neq 8,9,11,13,16,18,23.
\end{equation*}
\newline
(b) If $d_{3}\geq 11,$ then $(5,11,d_{3})\in \limfunc{mdeg}(\limfunc{Tame}(%
\Bbb{C}^{3}))$ if and only if 
\begin{equation*}
d_{3}\neq 12,13,14,17,18,19,23,24,28,29,34,39.
\end{equation*}
\newline
(c) If $d_{3}\geq 13,$ then $(5,13,d_{3})\in \limfunc{mdeg}(\limfunc{Tame}(%
\Bbb{C}^{3}))$ if and only if 
\begin{equation*}
d_{3}\neq 14,16,17,19,21,22,24,27,29,32,34,37,42,47.
\end{equation*}
\newline
(d) If $d_{3}\geq 11,$ then $(7,11,d_{3})\in \limfunc{mdeg}(\limfunc{Tame}(%
\Bbb{C}^{3}))$ if and only if 
\begin{equation*}
d_{3}\neq 12,13,15,16,17,19,20,23,24,26,27,30,31,34,37,38,41,45,48,52,59.
\end{equation*}
\end{theorem}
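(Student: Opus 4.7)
The plan is to obtain each of the four parts as a direct corollary of Theorem \ref{tw_p1_p2_d3} combined with Theorem \ref{tw_sywester}. By Theorem \ref{tw_p1_p2_d3}, for primes $3 \leq p_1 < p_2$ and $d_3 \geq p_2$, one has $(p_1,p_2,d_3) \in \mdeg(\Tame(\Bbb{C}^3))$ if and only if $d_3 \in p_1\Bbb{N} + p_2\Bbb{N}$. So the proof reduces entirely to describing the numerical semigroup complement $\{d_3 \geq p_2 : d_3 \notin p_1\Bbb{N} + p_2\Bbb{N}\}$ for each of the pairs $(5,7),(5,11),(5,13),(7,11)$.

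Next, I would invoke Theorem \ref{tw_sywester}: since the pairs $(5,7),(5,11),(5,13),(7,11)$ all consist of coprime integers, every $d_3 \geq (p_1-1)(p_2-1)$ lies in $p_1\Bbb{N} + p_2\Bbb{N}$. In particular, the Frobenius bounds are $24, 40, 48, 60$ respectively, so for each part it suffices to examine only the finitely many $d_3$ with $p_2 \leq d_3 < (p_1-1)(p_2-1)$ and determine which of them fail to be representable as $k_1 p_1 + k_2 p_2$ with $k_1, k_2 \in \Bbb{N}$.

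For each pair I would then proceed by the following routine bookkeeping: for $i = 0, 1, \ldots, p_1-1$, list the arithmetic progression $ip_2 + p_1\Bbb{N}$, which consists exactly of the integers congruent to $ip_2 \pmod{p_1}$ and at least $ip_2$; the union over $i$ covers $p_1\Bbb{N}+p_2\Bbb{N}$. The complement in $\{p_2, p_2+1, \ldots, (p_1-1)(p_2-1)-1\}$ is then read off by inspection. For example in case (a) with $(p_1,p_2) = (5,7)$, one writes down the progressions $5\Bbb{N}$, $7+5\Bbb{N}$, $14+5\Bbb{N}$, $21+5\Bbb{N}$, $28+5\Bbb{N}$ and sees that the integers in $[7,23]$ missed are precisely $8,9,11,13,16,18,23$. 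Entirely analogous tabulations give the lists in (b), (c), (d).

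There is no genuine obstacle in this argument; the only care needed is the combinatorial verification of the four complement-lists, which is a straightforward finite check using the four Frobenius bounds $24,40,48,60$. Thus the theorem follows immediately from Theorems \ref{tw_p1_p2_d3} and \ref{tw_sywester} together with these elementary enumerations.
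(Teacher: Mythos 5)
Your proposal is correct and is essentially the paper's own argument: the paper likewise deduces each part from Theorem \ref{tw_p1_p2_d3} together with Theorem \ref{tw_sywester}, reducing to a finite check of which integers below the bound $(p_{1}-1)(p_{2}-1)$ (namely $24,40,48,60$) lie in $p_{1}\Bbb{N}+p_{2}\Bbb{N}$. Your enumeration of the complements matches the stated lists, so nothing is missing.
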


\begin{proof}
This is a consequence of Theorems \ref{tw_sywester} and \ref{tw_p1_p2_d3}.
For example to prove (a), by Theorems \ref{tw_sywester} and \ref{tw_p1_p2_d3}
we only have to check which numbers among $7,8,\ldots ,23=(5-1)(7-1)-1$ are
elements of the set $5\Bbb{N}+7\Bbb{N}.$
\end{proof}

\subsection{Generalization}

Here we make a generalization of Theorem \ref{tw_p1_p2_d3}.

\begin{theorem}
\textit{(\cite{Karas Zygad}, Thm. 2.1) }\label{tw_gcd_d1,d2_1}\label%
{tw_odd_odd_gcd_1}Let $d_{3}\geq d_{2}>d_{1}\geq 3$ be positive integers. If 
$d_{1}$ and $d_{2}$ are odd numbers such that $\gcd \left(
d_{1},d_{2}\right) =1$, then $(d_{1},d_{2},d_{3})\in \limfunc{mdeg}(\limfunc{%
Tame}(\Bbb{C}^{3}))$ if and only if $d_{3}\in d_{1}\Bbb{N}+d_{2}\Bbb{N}.$
\end{theorem}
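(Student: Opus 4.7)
The strategy is to closely follow the proof of Theorem \ref{tw_p1_p2_d3} (the prime case), with the hypotheses that $d_{1},d_{2}$ are odd and coprime replacing the hypothesis that they are primes. The forward direction is immediate: if $d_{3}=k_{1}d_{1}+k_{2}d_{2}$ with $k_{1},k_{2}\in \Bbb{N}$, then Proposition \ref{prop_sum_d_i} produces a tame automorphism with $\limfunc{mdeg}=(d_{1},d_{2},d_{3})$.

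For the converse, assume $d_{3}\notin d_{1}\Bbb{N}+d_{2}\Bbb{N}$ (hence in particular $d_{3}>d_{2}$, $d_{1}\nmid d_{3}$, $d_{2}\nmid d_{3}$), and suppose for contradiction that some $F=(F_{1},F_{2},F_{3})\in \limfunc{Tame}(\Bbb{C}^{3})$ has $\limfunc{mdeg}F=(d_{1},d_{2},d_{3})$. By Theorem \ref{tw_reduc_type_4} we may assume $F$ admits either an elementary reduction or a reduction of type III. Since $d_{2}$ is odd, condition (1) of Remark \ref{remakrk_red_type_3} fails, so $F$ cannot admit a reduction of type III. Thus we are reduced to ruling out the three possible elementary reductions by applying Proposition \ref{prop_deg_g_fg}. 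Note that in each case condition (iii) of Definition \ref{def_*-red} holds for the relevant pair, because any relation $\overline{F_{i}}\in \Bbb{C}[\overline{F_{j}}]$ would force the degree divisibility $d_{j}\mid d_{i}$, which is excluded by the hypothesis $d_{1}<d_{2}<d_{3}$ together with $\gcd(d_{1},d_{2})=1$ and $d_{1}\nmid d_{3}$, $d_{2}\nmid d_{3}$.

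For the reduction $(F_{1},F_{2},F_{3}-g(F_{1},F_{2}))$ with $\deg g(F_{1},F_{2})=d_{3}$, using $p=d_{1}$ (since $\gcd(d_{1},d_{2})=1$), $\deg [F_{1},F_{2}]\geq 2$, and $d_{3}<(d_{1}-1)(d_{2}-1)$ from Theorem \ref{tw_sywester}, we force $q=0$ in $\deg _{Y}g=qd_{1}+r$; then disjointness of the classes $rd_{2}+d_{1}\Bbb{N}$ (a consequence of $\gcd(d_{1},d_{2})=1$) yields $d_{3}\in d_{1}\Bbb{N}+d_{2}\Bbb{N}$, a contradiction. For the reduction $(F_{1},F_{2}-g(F_{1},F_{3}),F_{3})$ with $\deg g(F_{1},F_{3})=d_{2}$, set $p=d_{1}/\gcd(d_{1},d_{3})$: then $p\geq 2$ (as $d_{1}\nmid d_{3}$) and $p$ is odd (as $d_{1}$ is odd), so $p\geq 3$. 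Since $pd_{3}-d_{3}-d_{1}+\deg [F_{1},F_{3}]\geq 2d_{3}-d_{1}+2>d_{2}$, using $d_{3}>d_{2}>d_{1}$, this forces $q=0$; and $rd_{3}\leq d_{2}<d_{3}$ forces $r=0$, so $g(X,Y)=g(X)$, giving $d_{2}=d_{1}\deg g\in d_{1}\Bbb{N}$, contradicting $\gcd(d_{1},d_{2})=1$ with $d_{1}\geq 3$. The third case $(F_{1}-g(F_{2},F_{3}),F_{2},F_{3})$ is handled analogously with $p=d_{2}/\gcd(d_{2},d_{3})\geq 3$ (oddness of $d_{2}$), and yields $d_{1}\in d_{2}\Bbb{N}$, contradicting $0<d_{1}<d_{2}$.

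The main obstacle is ensuring $p\geq 3$ in the latter two cases. Without the oddness hypothesis $p$ could equal $2$, and then the inequality $pd_{3}-d_{3}-d_{i}+\deg [F_{i},F_{j}]>d_{j}$ needed to force $q=0$ could fail whenever $d_{3}$ is not substantially larger than $d_{2}$. The oddness of both $d_{1}$ and $d_{2}$ is precisely what bridges the gap between the primality used in Theorem \ref{tw_p1_p2_d3} and the more general statement here; everything else is a direct transcription of the earlier argument.
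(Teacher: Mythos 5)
Your proposal is correct and follows essentially the same route as the paper: it transcribes the proof of the prime case, rules out type III reductions via oddness of $d_{2}$, and in the two modified elementary-reduction cases uses that $p$ divides the odd number $d_{1}$ (resp. $d_{2}$), hence $p\neq 2$ and $p\geq 3$, to force $q=r=0$ and reach the same contradictions. Your explicit check of condition (iii) of Definition \ref{def_*-red} via degree divisibility is a small addition the paper leaves implicit, but it does not change the argument.
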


\begin{proof}
The proof is a modification of the proof of Theorem \ref{tw_p1_p2_d3}. As
before, if we assume that $d_{3}\in d_{1}\Bbb{N}+d_{2}\Bbb{N},$ then by
Proposition \ref{prop_sum_d_i}, there is a tame automorphism $F$ of $\Bbb{C}%
^{3}$ such that $\limfunc{mdeg}F=\left( d_{1},d_{2},d_{3}\right) .$

Moreover, as in the proof of Theorem \ref{tw_p1_p2_d3}, we only need to show
that no automorphism $F$ of $\Bbb{C}^{3}$ with $\limfunc{mdeg}F=\left(
d_{1},d_{2},d_{3}\right) $ admits an elementary reduction, when $d_{3}\notin
d_{1}\Bbb{N}+d_{2}\Bbb{N}.$ Assume, to the contrary, that $d_{3}\notin d_{1}%
\Bbb{N}+d_{2}\Bbb{N}$ and that $F=\left( F_{1},F_{2},F_{3}\right) $ is an
authomorphism of $\Bbb{C}^{3}$ with $\limfunc{mdeg}F=\left(
d_{1},d_{2},d_{3}\right) $ that admits an elementary reduction.

If we assume that $(F_{1},F_{2},F_{3}-g(F_{1},F_{2})),$where $g\in \Bbb{C}%
[X,Y],$ is an elementary reduction of $\left( F_{1},F_{2},F_{3}\right) ,$
then we can proceed exactly in the same way as in the proof of Theorem \ref
{tw_p1_p2_d3}.

Only in the case of an elementary reduction of the form $%
(F_{1},F_{2}-g(F_{1},F_{3}),F_{3})$ and $(F_{1}-g(F_{2},F_{3}),F_{2},F_{3})$
we must modify the arguments.

Assume that 
\begin{equation*}
(F_{1},F_{2}-g(F_{1},F_{3}),F_{3}),
\end{equation*}
where $g\in \Bbb{C}[X,X],$ is an elementary reduction of $\left(
F_{1},F_{2},F_{3}\right) .$ Since $d_{3}\notin d_{1}\Bbb{N}+d_{2}\Bbb{N},$
we have $d_{1}\nmid d_{3}.$ This follows that 
\begin{equation*}
p=\frac{d_{1}}{\gcd \left( d_{1},d_{3}\right) }>1.
\end{equation*}
Since $d_{1}\,$is odd, we also have $p\neq 2.$ Thus by Proposition \ref
{prop_deg_g_fg}, 
\begin{equation*}
\deg g(F_{1},F_{3})\geq q(pd_{3}-d_{3}-d_{1}+\deg [F_{1},F_{3}])+rd_{3},
\end{equation*}
where $\deg _{Y}g(X,Y)=qp+r$ with $0\leq r<p.$ Since $p\geq 3,$ we see that $%
pd_{3}-d_{3}-d_{1}+\deg [F_{1},F_{3}]\geq 2d_{3}-d_{1}+2>d_{3}.$ Since we
want to have $\deg g(F_{1},F_{3})=d_{2},$ it follows that $q=r=0,$ and then $%
g(X,Y)=g(X).$ This means that $d_{2}=\deg g\left( F_{1}\right) \in d_{1}\Bbb{%
N},$ contradicting to $\gcd \left( d_{1},d_{2}\right) =1$ and $1<d_{1}.$

Finally, if we assume that $(F_{1}-g(F_{2},F_{3}),F_{2},F_{3}),$ where $g\in 
\Bbb{C}\left[ X,Y\right] ,$is an elementary reduction of $\left(
F_{1},F_{2},F_{3}\right) ,$ then in the same way as in the previous case we
obtain a contradiction.
\end{proof}

\subsection{The set $\limfunc{mdeg}\left( \limfunc{Aut}\left( \Bbb{C}%
^{3}\right) \right) \backslash \limfunc{mdeg}\left( \limfunc{Tame}\left( 
\Bbb{C}^{3}\right) \right) $}

In this paragraph we say a few words about a relation between $\limfunc{mdeg}%
\left( \limfunc{Tame}\left( \Bbb{C}^{3}\right) \right) $ and $\limfunc{mdeg}%
\left( \limfunc{Aut}\left( \Bbb{C}^{3}\right) \right) .$ The obvious
relation is 
\begin{equation*}
\limfunc{mdeg}\left( \limfunc{Tame}\left( \Bbb{C}^{3}\right) \right) \subset 
\limfunc{mdeg}\left( \limfunc{Aut}\left( \Bbb{C}^{3}\right) \right)
\end{equation*}
and, more generally, 
\begin{equation*}
\limfunc{mdeg}\left( \limfunc{Tame}\left( \Bbb{C}^{n}\right) \right) \subset 
\limfunc{mdeg}\left( \limfunc{Aut}\left( \Bbb{C}^{n}\right) \right) .
\end{equation*}
The question is, whether the set $\limfunc{mdeg}\left( \limfunc{Tame}\left( 
\Bbb{C}^{n}\right) \right) $ is a proper subset of $\limfunc{mdeg}\left( 
\limfunc{Aut}\left( \Bbb{C}^{n}\right) \right) .$ In dimension two the
answer is negative due to Jung \cite{Jung} and van der Kulk \cite{Kulk}.
Namely we have 
\begin{equation*}
\limfunc{mdeg}\left( \limfunc{Tame}\left( \Bbb{C}^{2}\right) \right) =%
\limfunc{mdeg}\left( \limfunc{Aut}\left( \Bbb{C}^{2}\right) \right) =\left\{
\left( d_{1},d_{2}\right) :d_{1}|d_{2}\text{ or }d_{2}|d_{1}\right\} .
\end{equation*}
Let us notice that the result of Shestakov and Umirbaev \cite{sh umb2} about
wildness of the Nagata's example does not imply the positive answer in
dimension three. The problem is that the Nagata's example is of multidegree $%
\left( 5,3,1\right) \in \limfunc{mdeg}\left( \limfunc{Tame}\left( \Bbb{C}%
^{3}\right) \right) .$ In spite of that, the answer is positive. We will
show it in this subsection. Actually we show not only that the difference $%
\limfunc{mdeg}\left( \limfunc{Aut}\left( \Bbb{C}^{3}\right) \right)
\backslash \limfunc{mdeg}\left( \limfunc{Tame}\left( \Bbb{C}^{3}\right)
\right) $ is not empty, but also that this set has infinitely many elements.

Let 
\begin{equation*}
N:\Bbb{C}^{3}\ni (x,y,z)\mapsto
(x+2y(y^{2}+zx)-z(y^{2}+zx)^{2},y-z(y^{2}+zx),z)\in \Bbb{C}^{3}
\end{equation*}
be the Nagata's example and let 
\begin{equation*}
T:\Bbb{C}^{3}\ni (x,y,z)\mapsto (z,y,x)\in \Bbb{C}^{3}.
\end{equation*}
We start with the following lemma.

\begin{lemma}
\textit{(\cite{Karas Zygad}, Lem. 3.1) }For all $n\in \Bbb{N}$ we have $%
\limfunc{mdeg}(T\circ N)^{n}=(4n-3,4n-1,4n+1).$
\end{lemma}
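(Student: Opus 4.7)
The plan is to prove the lemma by induction on $n \geq 1$, exploiting the classical fact that the polynomial $y^{2}+zx$ is invariant under the Nagata automorphism $N$. A direct computation gives $N^{\ast}(y^{2}+zx) = y^{2}+zx$; since $T^{\ast}(y^{2}+zx) = y^{2}+xz = y^{2}+zx$, it follows that $(T\circ N)^{\ast}(y^{2}+zx) = y^{2}+zx$, and therefore $((T\circ N)^{n})^{\ast}(y^{2}+zx) = y^{2}+zx$ for every $n$. Writing $(T\circ N)^{n} = (A_{n},B_{n},C_{n})$, this invariance translates into the crucial identity
\[
B_{n}^{2} + C_{n} A_{n} = y^{2}+zx.
\]

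The base case $n=1$ is handled by directly computing
\[
T\circ N = \bigl(z,\; y - z(y^{2}+zx),\; x + 2y(y^{2}+zx) - z(y^{2}+zx)^{2}\bigr),
\]
whose multidegree is $(1,3,5) = (4\cdot 1 - 3,\; 4\cdot 1 - 1,\; 4\cdot 1 + 1)$, and the identity above is immediate from the same formulas.

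For the inductive step, assuming the multidegree of $(T\circ N)^{n}$ is $(4n-3,4n-1,4n+1)$ and the invariant identity holds, I would compose on the left by $T\circ N$ to obtain
\[
(T\circ N)^{n+1} = \bigl(C_{n},\; B_{n} - C_{n}(B_{n}^{2}+C_{n}A_{n}),\; A_{n} + 2B_{n}(B_{n}^{2}+C_{n}A_{n}) - C_{n}(B_{n}^{2}+C_{n}A_{n})^{2}\bigr),
\]
and then replace every occurrence of $B_{n}^{2}+C_{n}A_{n}$ by the degree-$2$ polynomial $y^{2}+zx$. Reading off degrees: the first coordinate $C_{n}$ has degree $4n+1 = 4(n+1)-3$; the second coordinate $B_{n} - C_{n}(y^{2}+zx)$ has degree $\max(4n-1,\,4n+3) = 4(n+1)-1$; and the third, $A_{n} + 2B_{n}(y^{2}+zx) - C_{n}(y^{2}+zx)^{2}$, has degree $4n+5 = 4(n+1)+1$, because the last summand strictly dominates the other two.

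The essential point, and really the only nontrivial input, is the invariance $N^{\ast}(y^{2}+zx) = y^{2}+zx$; without it, the Nagata formulas would blow up the degrees roughly threefold at each iteration, rather than growing by $4$ as claimed. Once that identity is in hand, the rest of the argument is a bookkeeping exercise in which no delicate cancellations must be tracked, since the dominant term in each coordinate comes from a distinct source.
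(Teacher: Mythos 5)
Your proof is correct and follows essentially the same route as the paper: both establish the base case by direct computation, use the invariance $((T\circ N)^{n})^{*}(y^{2}+zx)=y^{2}+zx$ (the paper states the identity $g_{n}^{2}+h_{n}f_{n}=Y^{2}+ZX$ and proves it by induction, which is the same fact), and then read off the degrees from the recursion, where in each coordinate the top-degree summand strictly dominates. Your derivation of the invariant via $N^{*}(y^{2}+zx)=y^{2}+zx$ and $T^{*}(y^{2}+zx)=y^{2}+zx$ is just a cleaner packaging of the paper's inductive verification.
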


\begin{proof}
We have $T\circ N(x,y,z)=(z,y-z(y^{2}+zx),x+2y(y^{2}+zx)-z(y^{2}+zx)^{2}),$
so the above equality is true for $n=1.$ Let $(f_{n},g_{n},h_{n})=\left(
T\circ N\right) ^{n}$ for $f_{n},g_{n},h_{n}\in \Bbb{C[}X,Y,Z].$ One can see
that $g_{1}^{2}+h_{1}f_{1}=Y^{2}+ZX,$ and by induction that $%
g_{n}^{2}+h_{n}f_{n}=Y^{2}+ZX$ for any $n\in \Bbb{N}^{*}.\Bbb{\,}$Thus 
\begin{gather*}
(f_{n+1},g_{n+1},h_{n+1})=\left( T\circ N\right) \circ \left(
f_{n},g_{n},h_{n}\right) \\
=\left( h_{n},g_{n}-h_{n}\left( g_{n}^{2}+h_{n}f_{n}\right)
,f_{n}+2h_{n}\left( g_{n}^{2}+h_{n}f_{n}\right) -h_{n}\left(
g_{n}^{2}+h_{n}f_{n}\right) ^{2}\right) \\
=\left( h_{n},g_{n}-h_{n}\left( Y^{2}+ZX\right) ,f_{n}+2h_{n}\left(
Y^{2}+ZX\right) -h_{n}\left( Y^{2}+ZX\right) ^{2}\right) .
\end{gather*}
So if we assume that $\limfunc{mdeg}(f_{n},g_{n},h_{n})=(4n-3,4n-1,4n+1),$
we obtain \allowbreak $\limfunc{mdeg}(f_{n+1},g_{n+1},h_{n+1})=(4n+1,\left(
4n+1\right) +2,\left( 4n+1\right) +2\cdot 2)=(4(n+1)-3,4(n+1)-1,4(n+1)+1).$
\end{proof}

By the above lemma and Theorem \ref{tw_gcd_d1,d2_1} we obtain the following

\begin{theorem}
\textit{(\cite{Karas Zygad}, Thm. 3.2) }For every $n\in \Bbb{N}$ the
automorphism $(T\circ N)^{n}$ is wild.
\end{theorem}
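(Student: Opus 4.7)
The plan is to combine the preceding lemma with Theorem~\ref{tw_odd_odd_gcd_1} to obstruct tameness at the level of multidegrees, treating the small case $n=1$ by hand. By the lemma, for every $n\geq 1$ we have $\limfunc{mdeg}(T\circ N)^{n} = (d_1,d_2,d_3)$ with $d_1 = 4n-3$, $d_2 = 4n-1$, $d_3 = 4n+1$, so it suffices to show that, except in a single exceptional case, this triple does not lie in $\limfunc{mdeg}(\limfunc{Tame}(\Bbb{C}^3))$.

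For $n\geq 2$, I would verify the hypotheses of Theorem~\ref{tw_odd_odd_gcd_1}: clearly $d_1 = 4n-3 \geq 5$, both $d_1$ and $d_2 = 4n-1$ are odd, and since $\gcd(d_1,d_2)$ divides $d_2 - d_1 = 2$ while both numbers are odd, $\gcd(d_1,d_2) = 1$. Theorem~\ref{tw_odd_odd_gcd_1} then reduces everything to showing $d_3 = 4n+1 \notin d_1\Bbb{N} + d_2\Bbb{N}$. This is a short size estimate: for $n\geq 2$ we have $2d_1 = 8n-6 > 4n+1 = d_3$ and similarly $2d_2 > d_3$, so in any representation $k_1 d_1 + k_2 d_2 = d_3$ with $k_i\in\Bbb{N}$ both $k_1,k_2\in\{0,1\}$; the four resulting values $0$, $4n-3$, $4n-1$, $8n-4$ are all distinct from $4n+1$ for $n\geq 2$.

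The case $n = 1$ requires a different argument, since the multidegree $(1,3,5)$ does belong to $\limfunc{mdeg}(\limfunc{Tame}(\Bbb{C}^3))$ (by Proposition~\ref{prop_sum_d_i}, for instance), so Theorem~\ref{tw_odd_odd_gcd_1} cannot apply here---its hypothesis $d_1 \geq 3$ fails outright. Here I would argue directly: $T$ is linear, hence tame, so if $T\circ N$ were tame then $N = T^{-1}\circ(T\circ N)$ would be a composition of two tame automorphisms and therefore tame, contradicting the Shestakov-Umirbaev theorem that $N$ is wild.

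The main obstacle is essentially only this $n = 1$ case: the uniform multidegree argument via Theorem~\ref{tw_odd_odd_gcd_1} disposes of all $n\geq 2$ in one stroke, but collapses when $d_1 = 1$, and for this initial value the wildness of $(T\circ N)^n$ rests not on any multidegree count but on the original Shestakov-Umirbaev wildness of the Nagata automorphism itself.
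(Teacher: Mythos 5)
Your proof is correct and follows essentially the same route as the paper: for $n\geq 2$ it applies Theorem~\ref{tw_odd_odd_gcd_1} to the multidegree $(4n-3,4n-1,4n+1)$ after checking oddness, coprimality, and $4n+1\notin(4n-3)\Bbb{N}+(4n-1)\Bbb{N}$, and for $n=1$ it falls back on the Shestakov--Umirbaev wildness of the Nagata automorphism (composed with the linear, hence tame, map $T$). Your write-up is slightly more explicit than the paper's in justifying the non-representability of $4n+1$ and the reduction of the $n=1$ case to the wildness of $N$ itself.
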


\begin{proof}
For $n=1$ this is the result of Shestakov and Umirbaev \cite{sh umb1,sh umb2}%
. So we can assume that $n\geq 2.$ The numbers $4n-3,4n-1$ are odd and $\gcd
(4n-3,4n-1)=\gcd (4n-3,2)=1.$ Since $4n-3>2,$ we see that $4n+1\notin (4n-3)%
\Bbb{N}+(4n-1)\Bbb{N}.$ Then, by Theorem \ref{tw_gcd_d1,d2_1}, $%
(4n-3,4n-1,4n+1)\notin \limfunc{mdeg}(\limfunc{Tame}(\Bbb{C}^{3}))$ for $%
n>1. $ This proves that $(T\circ N)^{n}$ is not a tame automorphism.
\end{proof}

Let us notice that in the proof of the above theorem we have also proved
that 
\begin{equation*}
\left\{ \left( 4n-3,4n-1,4n+1\right) :n\in \Bbb{N},n\geq 2\right\} \subset 
\limfunc{mdeg}(\limfunc{Aut}(\Bbb{C}^{3}))\backslash \limfunc{mdeg}(\limfunc{%
Tame}(\Bbb{C}^{3})).
\end{equation*}
This gives the following result.

\begin{theorem}
\textit{(\cite{Karas Zygad}, Thm. 1.1) }The set $\limfunc{mdeg}(\limfunc{Aut}%
(\Bbb{C}^{3}))\backslash \limfunc{mdeg}(\limfunc{Tame}(\Bbb{C}^{3}))$ is
infinite.
\end{theorem}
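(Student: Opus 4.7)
The plan is to exhibit, in one stroke, an explicit infinite family of multidegrees that lie in $\limfunc{mdeg}(\limfunc{Aut}(\Bbb{C}^{3}))$ but not in $\limfunc{mdeg}(\limfunc{Tame}(\Bbb{C}^{3}))$. The candidates will be the iterates $(T\circ N)^{n}$ for $n\geq 2$, where $N$ is the Nagata automorphism and $T$ is the coordinate swap $(x,y,z)\mapsto(z,y,x)$ introduced just before the statement.

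First, each $(T\circ N)^{n}$ is a polynomial automorphism of $\Bbb{C}^{3}$, being a composition of automorphisms, so its multidegree automatically lies in $\limfunc{mdeg}(\limfunc{Aut}(\Bbb{C}^{3}))$. The preceding lemma already computes
\[
\limfunc{mdeg}(T\circ N)^{n}=(4n-3,\,4n-1,\,4n+1),
\]
so as $n$ runs over $\{2,3,4,\ldots\}$ we obtain infinitely many pairwise distinct elements of $\limfunc{mdeg}(\limfunc{Aut}(\Bbb{C}^{3}))$.

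The remaining step is to show that $(4n-3,4n-1,4n+1)\notin\limfunc{mdeg}(\limfunc{Tame}(\Bbb{C}^{3}))$ for every $n\geq 2$. Here I would invoke Theorem \ref{tw_odd_odd_gcd_1}: the two smaller entries are odd, $\gcd(4n-3,4n-1)=\gcd(4n-3,2)=1$, and both are $\geq 3$ precisely because $n\geq 2$. The theorem then reduces the question to whether $4n+1\in(4n-3)\Bbb{N}+(4n-1)\Bbb{N}$. A direct inspection rules this out: for $n\geq 2$ the only elements of $(4n-3)\Bbb{N}+(4n-1)\Bbb{N}$ strictly below $2(4n-3)=8n-6$ are $0$, $4n-3$, and $4n-1$, while $4n-1<4n+1<8n-6$ and $4n+1$ equals none of these. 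Combining both halves places the infinite set $\{(4n-3,4n-1,4n+1):n\geq 2\}$ inside $\limfunc{mdeg}(\limfunc{Aut}(\Bbb{C}^{3}))\setminus\limfunc{mdeg}(\limfunc{Tame}(\Bbb{C}^{3}))$, which settles the theorem.

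There is essentially no genuine obstacle, since the deep content has been absorbed into Theorem \ref{tw_odd_odd_gcd_1} and, through it, into the Shestakov--Umirbaev machinery recalled in Section \ref{section_main_tools}. The only bookkeeping point to watch is the lower bound $n\geq 2$: for $n=1$ the triple is $(1,3,5)$, the multidegree of $N$ itself, which \emph{is} tame, so the hypothesis $d_{1}\geq 3$ in Theorem \ref{tw_odd_odd_gcd_1} is genuinely needed and forces us to start the family at $n=2$.
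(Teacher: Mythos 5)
Your proposal is correct and follows essentially the same route as the paper: it uses the family $(T\circ N)^{n}$ for $n\geq 2$, the lemma computing $\limfunc{mdeg}(T\circ N)^{n}=(4n-3,4n-1,4n+1)$, and Theorem \ref{tw_odd_odd_gcd_1} together with the observation that $4n+1\notin (4n-3)\Bbb{N}+(4n-1)\Bbb{N}$. Your explicit check of this last non-membership and the remark about why $n=1$ must be excluded are slightly more detailed than the paper's, but the argument is the same.
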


\section{The case $\left( 3,d_{2},d_{3}\right) \label{section_3dd}$}

In this section we give a complete description of the set 
\begin{equation*}
\left\{ \left( 3,d_{2},d_{3}\right) |3\leq d_{2}\leq d_{3}\right\} \cap 
\limfunc{mdeg}(\limfunc{Tame}(\Bbb{C}^{3})).
\end{equation*}
This description is given by the following

\begin{theorem}
\label{tw_3_d2_d3} \textit{(\cite{Karas3}, Thm. 1.1) }If $3\leq d_{2}\leq
d_{3},$ then $(3,d_{2},d_{3})\in \limfunc{mdeg}(\limfunc{Tame}(\Bbb{C}^{3}))$
if and only if $3|d_{2}$ or $d_{3}\in 3\Bbb{N}+d_{2}\Bbb{N}.$
\end{theorem}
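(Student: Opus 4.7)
The plan is to prove both implications separately. The ``if'' direction is immediate from Proposition \ref{prop_sum_d_i}: if $3 \mid d_2$, write $d_2 = k\cdot 3 = k\cdot d_1$ and apply the proposition with $i=2$; if $d_3 \in 3\Bbb{N}+d_2\Bbb{N}$, write $d_3 = k_1 d_1 + k_2 d_2$ and apply it with $i=3$.

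For the converse I assume $3\nmid d_2$ and $d_3\notin 3\Bbb{N}+d_2\Bbb{N}$ and suppose, toward a contradiction, that some $F=(F_1,F_2,F_3)\in\limfunc{Tame}(\Bbb{C}^3)$ satisfies $\limfunc{mdeg}F=(3,d_2,d_3)$. Since $3\Bbb{N}\cup d_2\Bbb{N}\subset 3\Bbb{N}+d_2\Bbb{N}$, the hypothesis yields $3\nmid d_3$, $d_2\nmid d_3$, and in particular $4\leq d_2<d_3$. A short case analysis shows that the numerical constraints of Definition \ref{def_type_III_IV} together with $d_1=3$ force $(d_1,d_2,d_3)=(3,4,6)$ in both subcases; but this triple is excluded by $6\in 3\Bbb{N}$. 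Hence $F$ admits no reduction of type III, and by Theorem \ref{tw_reduc_type_4} it suffices to rule out each of the three elementary reductions of $F$.

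Each elementary reduction is handled via Proposition \ref{prop_deg_g_fg}. For $F_1 - g(F_2,F_3)$ one sets $p = d_2/\gcd(d_2,d_3)$ and observes that the hypothesis excludes $p\in\{1,2,3\}$ (the cases $p=1$ and $p=3$ are trivial; for $p=2$, writing $d_2=2e$ and $d_3=ke$ with odd $k=3+2m$ gives $d_3 = 3e+m\,d_2\in 3\Bbb{N}+d_2\Bbb{N}$), so $p\geq 4$. The standard case split on $\deg_Y g = pq+r$ then yields $\deg g(F_2,F_3)>3$, contradicting the requirement that this degree equal $3$. For $F_2-g(F_1,F_3)$, $p=3$ since $3\nmid d_3$, and the bound $d_2\geq q(2d_3-1)+r\,d_3$ forces $q=r=0$; then $g\in\Bbb{C}[X]$, giving $d_2\in 3\Bbb{N}$ and contradicting $3\nmid d_2$.

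The decisive case is $F_3-g(F_1,F_2)$. Proposition \ref{prop_deg_g_fg} with $p=3$ and $\deg_Y g = 3q+r$ produces
\[
d_3 \;\geq\; q\bigl(2d_2 - 3 + \deg[F_1,F_2]\bigr) + r\,d_2 \;\geq\; q(2d_2-1) + r\,d_2.
\]
By Theorem \ref{tw_sywester} the Frobenius number of $(3,d_2)$ equals $2d_2-3$, so the assumption $d_3\notin 3\Bbb{N}+d_2\Bbb{N}$ forces $d_3\leq 2d_2-3$ and hence $q=0$. Thus $g(X,Y)=g_0(X)+g_1(X)Y+g_2(X)Y^2$, and because $\gcd(3,d_2)=1$ the possible degrees $\deg g_0(F_1)$, $\deg g_1(F_1)F_2$, $\deg g_2(F_1)F_2^2$ lie in the pairwise disjoint residue classes $3\Bbb{N}$, $d_2+3\Bbb{N}$, $2d_2+3\Bbb{N}$ modulo $3$. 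No cancellation between these blocks is possible, so $d_3 = \deg g(F_1,F_2)\in 3\Bbb{N}+d_2\Bbb{N}$, contradicting the hypothesis. The principal obstacle is precisely this third case: the Shestakov--Umirbaev inequality alone leaves small values of $\deg_Y g$ unconstrained, and one must combine it with the disjointness of residue classes modulo $3$ and the Frobenius bound to close the argument.
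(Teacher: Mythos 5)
Your proposal is correct and follows essentially the same route as the paper: Proposition \ref{prop_sum_d_i} for the ``if'' direction, the Frobenius bound from Theorem \ref{tw_sywester}, the numerical exclusion of type III reductions (forcing $(3,4,6)$), and Proposition \ref{prop_deg_g_fg} applied to the three possible elementary reductions. The only local variation is in the $F_{1}-g(F_{2},F_{3})$ case, where the paper treats $p=2$ (i.e. $d_{2}=2n$, $d_{3}=ns$ with $s\geq 3$ odd) by a separate degree estimate, while you rule it out outright by noting $d_{3}=3e+m\,d_{2}\in 3\Bbb{N}+d_{2}\Bbb{N}$ --- a correct and slightly cleaner handling of that subcase; likewise keeping $r\leq 2$ and using three disjoint residue classes instead of the paper's $r<2$ is an equivalent finish.
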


\begin{proof}
By Corollary \ref{prop_sum_d_i}, if $3|d_{2}$ or $d_{3}\in 3\Bbb{N}+d_{2}%
\Bbb{N},$ there exists a tame automorphism $F:\Bbb{C}^{3}\rightarrow \Bbb{C}%
^{3}$ such that $\limfunc{mdeg}F=(3,d_{2},d_{3}).$ Thus in order to prove
Theorem \ref{tw_3_d2_d3} it is enough to show that if $3\nmid d_{2}$ and $%
d_{3}\notin 3\Bbb{N}+d_{2}\Bbb{N},$ then there is no tame automorphism of $%
\Bbb{C}^{3}$ with multidegree $(3,d_{2},d_{3}).$ So from now we will assume
that $3\nmid d_{2}$ and $d_{3}\notin 3\Bbb{N}+d_{2}\Bbb{N}.$

Since $3\nmid d_{2},$ we have $\gcd (3,d_{2})=1.$ Hence Theorem \ref
{tw_sywester} implies that for all $k\geq (3-1)(d_{2}-1)=2d_{2}-2$ we have $%
k\in 3\Bbb{N}+d_{2}\Bbb{N}.$ Thus, since $d_{3}\notin 3\Bbb{N}+d_{2}\Bbb{N},$
we have 
\begin{equation}
d_{3}<2d_{2}-2.  \label{row_sylwester}
\end{equation}

By Theorem \ref{tw_reduc_type_4} it is enough to show that all automorphisms 
$F$ of $\Bbb{C}^{3}$ with $\limfunc{mdeg}F=\left( 3,d_{2},d_{3}\right) $ do
not admit an elementary reduction and reduction of type III. Notice also
that, since $d_{1}=3$ and $d_{2}$ can be an even number, we can not use
Remark \ref{remakrk_red_type_3} to obtain that all automorphisms $F$ of $%
\Bbb{C}^{3}$ with $\limfunc{mdeg}F=\left( 3,d_{2},d_{3}\right) $ do not
admit reduction of type III.

Assume that an automorphism $F=\left( F_{1},F_{2},F_{3}\right) :\Bbb{C}%
^{3}\rightarrow \Bbb{C}^{3}$ with $\limfunc{mdeg}F=\left(
3,d_{2},d_{3}\right) $ admits a reduction of type III. Then by Definition 
\ref{def_type_III_IV} there is a permutation $\sigma \,$of $\{1,2,3\}$ and $%
n\in \Bbb{N}^{*}$ such that $\deg F_{\sigma (1)}=2n,$ and either: 
\begin{equation}
\deg F_{\sigma (2)}=3n,\qquad n<\deg F_{\sigma (3)}\leq \frac{3}{2}n
\label{row_type_III}
\end{equation}
or 
\begin{equation}
\frac{5}{2}n<\deg F_{\sigma (2)}\leq 3n,\qquad \deg F_{\sigma (3)}=\frac{3}{2%
}n.  \label{row_type_IV}
\end{equation}
Since $\frac{3}{2}n<2n<\min \{\frac{5}{2}n,3n\},$ we have $d_{2}=2n$ and
either: 
\begin{equation*}
d_{3}=3n,\qquad n<3\leq \frac{3}{2}n
\end{equation*}
or 
\begin{equation*}
\frac{5}{2}n<d_{3}\leq 3n,\qquad 3=\frac{3}{2}n.
\end{equation*}
Thus $n=2$ and then $5<d_{3}\leq 6.\,$From the last inequalities we obtain $%
d_{3}=6.$ This contradicts $d_{3}\notin 3\Bbb{N}+d_{2}\Bbb{N}.$

Now, assume that $(F_{1},F_{2},F_{3}-g(F_{1},F_{2})),$ where $g\in \Bbb{C}%
[X,Y],$ is an elementary reduction of $(F_{1},F_{2},F_{3}).$ Hence we have $%
\deg g(F_{1},F_{2})=\deg F_{3}=d_{3}.$ Since $\gcd (3,d_{2})=1,$ by
Proposition \ref{prop_deg_g_fg} we have 
\begin{equation*}
\deg g(F_{1},F_{2})\geq q(3d_{2}-d_{2}-3+\deg [F_{1},F_{2}])+rd_{2},
\end{equation*}
where $\deg _{Y}g(X,Y)=3q+r$ with $0\leq r<3.\,$ Since $F_{1},F_{2}$ are
algebraically independent, $\deg [F_{1},F_{2}]\geq 2$ and so $%
3d_{2}-d_{2}-3+\deg [F_{1},F_{2}]\geq 2d_{2}-1.$ Then (\ref{row_sylwester})
implies $q=0.$ Also by (\ref{row_sylwester}) we must have $r<2.$ Thus $%
g(X,Y)=g_{0}(X)+g_{1}(X)Y.$ Since $3\Bbb{N\cap (}d_{2}+3\Bbb{N})=\emptyset ,$
we deduce that $\deg g(F_{1},F_{2})\in 3\Bbb{N\cup (}d_{2}+3\Bbb{N})\subset 3%
\Bbb{N}+d_{2}\Bbb{N},$ contrary to assumption.

Now, assume that $(F_{1},F_{2}-g(F_{1},F_{3}),F_{3}),$ where $g\in \Bbb{C}%
[X,Y],$ there is an elementary reduction of $(F_{1},F_{2},F_{3}).$ Then $%
\deg g(F_{1},F_{3})=d_{2}.$ Since $d_{3}\notin 3\Bbb{N}+d_{2}\Bbb{N},$ it
follows that $\gcd (3,d_{3})=1.$ Then by Proposition \ref{prop_deg_g_fg} we
have 
\begin{equation*}
\deg g(F_{1},F_{3})\geq q(3d_{3}-d_{3}-3+\deg [F_{1},F_{3}])+rd_{3},
\end{equation*}
where $\deg _{Y}g(X,Y)=3q+r$ with $0\leq r<3.$ Since $3d_{3}-d_{3}-3+\deg
[F_{1},F_{3}]\geq 2d_{3}-1>d_{2},$ we infer that $q=0.$ Since also $%
d_{3}>d_{2}$ (because $d_{3}\geq d_{2}$ and $d_{3}\notin 3\Bbb{N}+d_{2}\Bbb{%
N)},$ we see that $r=0.$ Thus $g(X,Y)=g(X),$ and $\deg g(F_{1},F_{3})=\deg
g(F_{1})\in 3\Bbb{N},$ a contradiction.

Finally, assume that $(F_{1}-g(F_{2},F_{3}),F_{2},F_{3}),$ where $g\in \Bbb{C%
}[x,y],$ there is an elementary reduction of $(F_{1},F_{2},F_{3}).$ Then $%
\deg g(F_{2},F_{3})=3.$ Let 
\begin{equation*}
p=\frac{d_{2}}{\gcd (d_{2},d_{3})}.
\end{equation*}
Since $d_{3}\notin 3\Bbb{N}+d_{2}\Bbb{N},$ we obtain $d_{2}\nmid d_{3},$ and
hence $p>1.$ By Proposition \ref{prop_deg_g_fg} we have 
\begin{equation*}
\deg g(F_{2},F_{3})\geq q(pd_{3}-d_{2}-d_{3}+\deg [F_{1},F_{3}])+rd_{3},
\end{equation*}
where $\deg _{Y}g(X,Y)=qp+r$ with $0\leq r<p.$ Since $d_{3}>3,$ it follows
that $r=0.$ Consider the case $p\geq 3.$ Then $pd_{3}-d_{2}-d_{3}+\deg
[F_{1},F_{3}]\geq d_{3}+\deg [F_{1},F_{3}]>3.$ Thus we must have $q=0.$
Hence $g(X,Y)=g(X),$ and $3=\deg g(F_{2},F_{3})=\deg g(F_{2})\in d_{2}\Bbb{N}%
.$ This contradicts $d_{2}\neq 3$ (we have assumed that $3\nmid d_{2}$). 

Consider now the case $p=2.$ Since $p=2,$ we have,  for some $n\in \Bbb{N},$ 
$d_{2}=2n$ and $d_{3}=ns,$ where  $s\geq 3$ is odd. Since also $d_{2}>3,$ it
follows that $n\geq 2.$ This means that $d_{3}-d_{2}\geq 2,$ and $%
2d_{3}-d_{3}-d_{2}+\deg [F_{1},F_{3}]=d_{3}-d_{2}+\deg [F_{1},F_{3}]\geq 4>3.
$ Thus, also in this case we have $q=0.$ As before this leads to a
contradiction.
\end{proof}

\begin{corollary}
The following equality holds true 
\begin{gather*}
\left\{ \left( 3,d_{2},d_{3}\right) |3\leq d_{2}\leq d_{3}\right\} \cap 
\limfunc{mdeg}(\limfunc{Tame}(\Bbb{C}^{3}))= \\
=\left\{ \left( 3,d_{2},d_{3}\right) |3\leq d_{2}\leq d_{3},3|d_{2}\text{ or 
}d_{3}\in 3\Bbb{N}+d_{2}\Bbb{N}\right\} .
\end{gather*}
\end{corollary}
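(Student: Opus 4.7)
The plan is straightforward: the corollary is nothing more than a reformulation of the biconditional of Theorem \ref{tw_3_d2_d3} as an equality of two sets, so I unpack the set-theoretic equality into its two inclusions and invoke the corresponding implication of the theorem in each direction.

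For the inclusion $\supseteq$, I take any triple $(3,d_{2},d_{3})$ with $3\leq d_{2}\leq d_{3}$ satisfying $3\mid d_{2}$ or $d_{3}\in 3\Bbb{N}+d_{2}\Bbb{N}$, and I exhibit a tame automorphism realizing this multidegree. When $3\mid d_{2}$, writing $d_{2}=k\cdot 3$ for some $k\in \Bbb{N}$ puts the sequence $(3,d_{2},d_{3})$ into the hypothesis of Proposition \ref{prop_sum_d_i} with $i=2$, so there is a tame $F$ with $\limfunc{mdeg}F=(3,d_{2},d_{3})$. When $d_{3}\in 3\Bbb{N}+d_{2}\Bbb{N}$, Proposition \ref{prop_sum_d_i} applied with $i=3$ again yields such a tame $F$. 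This is precisely the easy half of Theorem \ref{tw_3_d2_d3}.

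For the inclusion $\subseteq$, I use the ``only if'' direction of Theorem \ref{tw_3_d2_d3}: any triple $(3,d_{2},d_{3})\in \limfunc{mdeg}(\limfunc{Tame}(\Bbb{C}^{3}))$ with $3\leq d_{2}\leq d_{3}$ must satisfy $3\mid d_{2}$ or $d_{3}\in 3\Bbb{N}+d_{2}\Bbb{N}$. The entire substantive content — reducing to automorphisms that admit neither an elementary reduction nor a reduction of type III via Theorem \ref{tw_reduc_type_4}, using Theorem \ref{tw_sywester} to bound $d_{3}<2d_{2}-2$, and applying Proposition \ref{prop_deg_g_fg} to rule out each possible elementary reduction — is already carried out in the proof of Theorem \ref{tw_3_d2_d3}. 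Consequently there is essentially no remaining obstacle: the corollary is a clean set-theoretic repackaging whose real work has already been done, and the only ``main obstacle'' has been discharged upstream in the proof of the theorem itself.
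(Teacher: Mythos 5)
Your proposal is correct and matches the paper's (implicit) treatment: the corollary is stated as an immediate set-theoretic restatement of the biconditional in Theorem \ref{tw_3_d2_d3}, with the easy inclusion supplied by Proposition \ref{prop_sum_d_i} and the hard inclusion by the "only if" direction already proved. Nothing further is needed.
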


\section{The case $\left( 4,d_{2},d_{3}\right) \label{section_4dd}$}

In this section we give partial description of the set 
\begin{equation*}
\left\{ \left( 4,d_{2},d_{3}\right) |4\leq d_{2}\leq d_{3}\right\} \cap 
\limfunc{mdeg}(\limfunc{Tame}(\Bbb{C}^{3})).
\end{equation*}
This description will be given separately for four cases: (I) $d_{2},d_{3}$
are both even numbers, (II) $d_{2},d_{3}$ are both odd numbers, (III) $d_{2}$
is even and $d_{3}$ is odd, (IV) $d_{2}$ is odd and $d_{3}$ is even.

\subsection{The case $\left( 4,\limfunc{even},\limfunc{even}\right) $}

This is the easiest case, summarised as follows.

\begin{theorem}
\label{tw_4pp}For all even numbers $d_{3}\geq d_{2}\geq 4$, $\left(
4,d_{2},d_{3}\right) \in \limfunc{mdeg}\left( \limfunc{Tame}\left( \Bbb{C}%
^{3}\right) \right) .$
\end{theorem}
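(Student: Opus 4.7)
The plan is to observe that this is an immediate consequence of Theorem \ref{tw_gcd_male}. Since $d_2$ and $d_3$ are both even and $d_1 = 4$ is even as well, we have $\gcd(4,d_2,d_3) \in \{2,4\}$, so
\[
\frac{d_1}{\gcd(d_1,d_2,d_3)} = \frac{4}{\gcd(4,d_2,d_3)} \leq 2 = n-1,
\]
where $n=3$. Hence Theorem \ref{tw_gcd_male} produces a tame automorphism $F$ of $\mathbb{C}^3$ with $\mdeg F = (4,d_2,d_3)$.

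If one prefers to exhibit the automorphism directly, I would run the argument that is hidden inside Theorem \ref{tw_gcd_male} in this concrete case. Reducing $d_2$ and $d_3$ modulo $4$, each residue lies in $\{0,2\}$ because $d_2,d_3$ are even. Two subcases arise. First, if at least one of $d_2,d_3$ is divisible by $4$, say $d_i = k\, d_1$ for some $k \in \mathbb{N}^*$, then Proposition \ref{prop_sum_d_i} (applied with the appropriate permutation of coordinates and using the permutation invariance of $\mdeg(\Tame(\mathbb{C}^3))$) yields a tame $F$ with $\mdeg F = (4,d_2,d_3)$. Second, if both residues equal $2$, then $d_3 - d_2 \in 4\mathbb{N}$, so $d_3 = d_2 + k \cdot 4$ for some $k \in \mathbb{N}$; again Proposition \ref{prop_sum_d_i} provides the desired tame automorphism.

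There is no real obstacle here: the entire argument reduces to the pigeonhole observation that among the two residues of $d_2,d_3$ modulo $4$, both taken from the two-element set $\{0,2\}$, either one is zero or they coincide. In either case the constructive proof of Proposition \ref{prop_sum_d_i} gives an explicit tame automorphism built from two triangular maps, which finishes the theorem.
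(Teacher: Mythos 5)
Your proof is correct and is essentially identical to the paper's: both reduce the statement to Theorem \ref{tw_gcd_male} via the observation that $\gcd(4,d_2,d_3)\in\{2,4\}$, hence $4/\gcd(4,d_2,d_3)\leq 2=n-1$. The additional unwinding through Proposition \ref{prop_sum_d_i} is a correct but optional elaboration of what Theorem \ref{tw_gcd_male} already encapsulates.
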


\begin{proof}
Since all numbers $4,d_{2},d_{3}$ are even, we have $\gcd \left(
4,d_{2},d_{3}\right) \in \left\{ 2,4\right\} .$ Thus $\frac{4}{\gcd \left(
4,d_{2},d_{3}\right) }\leq 2$ and we can use Theorem \ref{tw_gcd_male}.
\end{proof}

\subsection{The case $\left( 4,\limfunc{odd},\limfunc{odd}\right) $}

In this section we give entire description of the set 
\begin{equation*}
\left\{ \left( 4,d_{2},d_{3}\right) \ |\ 4\leq d_{2}\leq d_{3},\quad
d_{2},d_{3}\in 2\Bbb{N}+1\right\} \cap \limfunc{mdeg}\left( \limfunc{Tame}%
\left( \Bbb{C}^{3}\right) \right) .
\end{equation*}
We will show the following

\begin{theorem}
\label{tw_4_odd_odd}Let $d_{3}\geq d_{2}\geq 4$ be odd numbers. Then $\left(
4,d_{2},d_{3}\right) \in \limfunc{mdeg}\left( \limfunc{Tame}\left( \Bbb{C}%
^{3}\right) \right) $ if and only if $d_{3}\in 4\Bbb{N}+d_{2}\Bbb{N}.$
\end{theorem}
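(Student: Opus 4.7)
The plan is to follow the template of Theorem \ref{tw_odd_odd_gcd_1}, using the fact that $\gcd(4,d_2)=1$ whenever $d_2$ is odd. The ``if'' direction is immediate: if $d_3\in 4\mathbb{N}+d_2\mathbb{N}$, Proposition \ref{prop_sum_d_i} (applied with $i=3$) supplies a tame automorphism of $\mathbb{C}^{3}$ with multidegree $(4,d_2,d_3)$.

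For the ``only if'' direction I would assume $d_3\notin 4\mathbb{N}+d_2\mathbb{N}$ together with the existence of a tame $F=(F_1,F_2,F_3)$ with $\mathrm{mdeg}\,F=(4,d_2,d_3)$, and derive a contradiction. Since $d_2$ is odd, $\gcd(4,d_2)=1$, so Theorem \ref{tw_sywester} gives $d_3<(4-1)(d_2-1)=3d_2-3$; also $d_3\ne d_2$ (else $d_3\in d_2\mathbb{N}$), so in fact $d_3>d_2$. By Theorem \ref{tw_reduc_type_4} it suffices to rule out elementary reductions and reductions of type III. Type III is immediately excluded by Remark \ref{remakrk_red_type_3}, whose condition (1) demands $2\mid d_2$.

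For elementary reductions I would treat the three possible directions separately via Proposition \ref{prop_deg_g_fg}. First, if $(F_1,F_2,F_3-g(F_1,F_2))$ is an elementary reduction, then $\deg g(F_1,F_2)=d_3$; here $p=4$, and writing $\deg_Y g=4q+r$ the bound
\[
\deg g(F_1,F_2)\geq q\bigl(4d_2-d_2-4+\deg[F_1,F_2]\bigr)+r d_2\geq q(3d_2-2)+r d_2
\]
together with $d_3<3d_2-3$ forces $q=0$. Since $d_2$ is odd, the four residues $r d_2\pmod 4$, $r\in\{0,1,2,3\}$, are pairwise distinct, so the leading monomials of the four blocks $g_r(F_1)F_2^r$ cannot cancel, and $\deg g(F_1,F_2)=\max_r(4\deg g_r+r d_2)$; hence $d_3\in 4\mathbb{N}+d_2\mathbb{N}$, a contradiction. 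Second, if $(F_1,F_2-g(F_1,F_3),F_3)$ is an elementary reduction, then $p=4$ again (as $d_3$ is odd), an identical bound forces $q=0$, and $d_2=4k+r d_3$ for some $r\in\{0,1,2,3\}$; parity of $d_2$ forces $r$ odd, and $d_3>d_2$ rules out both $r=1$ and $r=3$. Third, if $(F_1-g(F_2,F_3),F_2,F_3)$ is an elementary reduction, then $p=d_2/\gcd(d_2,d_3)$ is odd and, since $d_2\nmid d_3$, satisfies $p\geq 3$; Proposition \ref{prop_deg_g_fg} combined with $d_3\geq 5>4$ then forces $g(X,Y)=g(X)$, giving $\deg g(F_2,F_3)=(\deg g)\cdot d_2\geq 5$, contradicting the required value $4$.

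The main obstacle is the first of the three elementary-reduction cases: one must ensure that the leading contributions in $g(F_1,F_2)=\sum_{r=0}^{3}g_r(F_1)F_2^r$ do not cancel, so that $\deg g(F_1,F_2)$ must itself lie in $4\mathbb{N}+d_2\mathbb{N}$. This is precisely where the oddness of $d_2$ (equivalently $\gcd(4,d_2)=1$) is essential, via the fact that $\{r d_2\bmod 4:r=0,1,2,3\}=\{0,1,2,3\}$. Once this observation is in place, the remaining arithmetic and degree manipulations are a direct adaptation of the proof of Theorem \ref{tw_odd_odd_gcd_1}.
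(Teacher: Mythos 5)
Your proof is correct and follows essentially the same route as the paper's: the Sylvester bound $d_{3}<3d_{2}-3$, exclusion of type III via Remark \ref{remakrk_red_type_3} (since $2\nmid d_{2}$) together with Theorem \ref{tw_reduc_type_4}, and the three elementary-reduction cases handled by Proposition \ref{prop_deg_g_fg} and the pairwise disjointness of the residue classes $rd_{2}+4\Bbb{N}$. The only cosmetic differences are that in the first case you keep all four residues $r\in \left\{ 0,1,2,3\right\} $ rather than first deducing $r\leq 2$, and in the second case you conclude via a parity and magnitude argument on $d_{2}=4k+rd_{3}$ instead of reading $r=0$ directly off the degree estimate.
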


\begin{proof}
Because of Proposition \ref{prop_sum_d_i} it is enough to show that if $%
d_{3}\notin 4\Bbb{N}+d_{2}\Bbb{N},$ then $\left( 4,d_{2},d_{3}\right) \notin 
\limfunc{mdeg}\left( \limfunc{Tame}\left( \Bbb{C}^{3}\right) \right) .$
Thus, up to the end of the proof we assume that $d_{3}\notin 4\Bbb{N}+d_{2}%
\Bbb{N}.$ Since $d_{2}$ is odd, we have $\gcd \left( 4,d_{2}\right) =1,$ and
so, by Theorem \ref{tw_sywester}, 
\begin{equation}
d_{3}<\left( 4-1\right) \left( d_{2}-1\right) =3d_{2}-3.
\label{row_4_odd_odd_0}
\end{equation}

By Remark \ref{remakrk_red_type_3} and Theorem \ref{tw_reduc_type_4}, it is
enough to show that no automorphism $F$ of $\Bbb{C}^{3}$ with $\limfunc{mdeg}%
F=\left( 4,d_{2},d_{3}\right) $ admits an elementary reduction.

Assume, to the contrary, that $\left( F_{1},F_{2},F_{3}-g\left(
F_{1},F_{2}\right) \right) ,$ where $g\in \Bbb{C}\left[ X,Y\right] ,$ is an
elementary reduction of an automorphism $F=\left( F_{1},F_{2},F_{3}\right) :%
\Bbb{C}^{3}\rightarrow \Bbb{C}^{3}$ with $\limfunc{mdeg}F=\left(
4,d_{2},d_{3}\right) .$ Then
\begin{equation}
\deg g\left( F_{1},F_{2}\right) =d_{3}.  \label{row_4_odd_odd_1}
\end{equation}
By Proposition \ref{prop_deg_g_fg}, 
\begin{equation}
\deg g\left( F_{1},F_{2}\right) \geq q\left( pd_{2}-d_{2}-4+\deg \left[
F_{1},F_{2}\right] \right) +rd_{2},  \label{row_4_odd_odd_2}
\end{equation}
where $\deg _{Y}g\left( X,Y\right) =pq+r,0\leq r<p$ and $p=\frac{4}{\gcd
\left( 4,d_{2}\right) }=4.$ Since $pd_{2}-d_{2}-4+\deg \left[
F_{1},F_{2}\right] =3d_{2}-4+\deg \left[ F_{1},F_{2}\right] \geq 3d_{2}-2,$
by (\ref{row_4_odd_odd_0}), (\ref{row_4_odd_odd_1}) and (\ref
{row_4_odd_odd_2}) we have $q=0$ and $r\leq 2.$ This means that $g\left(
X,Y\right) $ is of the form 
\begin{equation*}
g\left( X,Y\right) =g_{0}\left( X\right) +g_{1}\left( X\right) Y+g_{2}\left(
X\right) Y^{2}.
\end{equation*}
Since the sets $4\Bbb{N},d_{2}+4\Bbb{N}$ and $2d_{2}+4\Bbb{N}$ are pairwise
disjoint (because $\func{lcm}\left( 4,d_{2}\right) =4d_{2}$), it follows
that 
\begin{equation*}
d_{3}=\deg g\left( F_{1},F_{2}\right) \in 4\Bbb{N}\cup \left( d_{2}+4\Bbb{N}%
\right) \cup \left( 2d_{2}+4\Bbb{N}\right) .
\end{equation*}
This contradicts $d_{3}\notin 4\Bbb{N}+d_{2}\Bbb{N}.$

Now, assume that $\left( F_{1},F_{2}-g\left( F_{1},F_{3}\right)
,F_{3}\right) ,$ where $g\in \Bbb{C}\left[ x,y\right] ,$ is an elementary
reduction of an automorphism $F=\left( F_{1},F_{2},F_{3}\right) :\Bbb{C}%
^{3}\rightarrow \Bbb{C}^{3}$ with $\limfunc{mdeg}F=\left(
4,d_{2},d_{3}\right) .$ Then 
\begin{equation}
\deg g\left( F_{1},F_{3}\right) =d_{2}.  \label{row_4_odd_odd_3}
\end{equation}
But, by Proposition \ref{prop_deg_g_fg} we have 
\begin{equation}
\deg g\left( F_{1},F_{3}\right) \geq q\left( pd_{3}-d_{3}-4+\deg \left[
F_{1},F_{3}\right] \right) +rd_{3},  \label{row_4_odd_odd_4}
\end{equation}
where $\deg _{Y}g\left( X,Y\right) =pq+r,0\leq r<p$ and $p=\frac{4}{\gcd
\left( 4,d_{2}\right) }=4.$ Since $d_{3}>d_{2}>4,$ we see that $%
pd_{3}-d_{3}-4+\deg \left[ F_{1},F_{3}\right] >2d_{3}>d_{2}.$ Hence by (\ref
{row_4_odd_odd_3}) and (\ref{row_4_odd_odd_4}), $q=r=0.$ This means that $%
g\left( X,Y\right) =g\left( X\right) $ and so $d_{2}=\deg g\left(
F_{1},F_{3}\right) =\deg g\left( F_{1}\right) \in 4\Bbb{N}.$ This
contradicts the assumption that $d_{2}$ is an odd number.

Finally, assume that $\left( F_{1}-g\left( F_{2},F_{3}\right)
,F_{2},F_{3}\right) ,$ where $g\in \Bbb{C}\left[ x,y\right] ,$ is an
elementary reduction of an automorphism $F=\left( F_{1},F_{2},F_{3}\right) :%
\Bbb{C}^{3}\rightarrow \Bbb{C}^{3}$ with $\limfunc{mdeg}F=\left(
4,d_{2},d_{3}\right) .$ Then 
\begin{equation}
\deg g\left( F_{2},F_{3}\right) =4.  \label{row_4_odd_odd_5}
\end{equation}
By Proposition \ref{prop_deg_g_fg}, 
\begin{equation}
\deg g\left( F_{1},F_{3}\right) \geq q\left( pd_{3}-d_{3}-d_{2}+\deg \left[
F_{2},F_{3}\right] \right) +rd_{3},  \label{row_4_odd_odd_6}
\end{equation}
where $\deg _{Y}g\left( X,Y\right) =pq+r,0\leq r<p$ and $p=\frac{d_{2}}{\gcd
\left( d_{2},d_{3}\right) }.$ Since $d_{3}>4,$ by (\ref{row_4_odd_odd_5})
and (\ref{row_4_odd_odd_6}) we have $r=0.$ Since also $2\nmid d_{2}$ and $%
d_{2}\nmid d_{3}$ (because $d_{3}\notin 4\Bbb{N}+d_{2}\Bbb{N}$), we conclude
that $p=\frac{d_{2}}{\gcd \left( d_{2},d_{3}\right) }\geq 3$ and $%
pd_{3}-d_{3}-d_{2}+\deg \left[ F_{2},F_{3}\right] >d_{3}>4.$ Thus $q=0.$
Then we obtain a contradiction as in the previous case.
\end{proof}

\begin{corollary}
The following equality holds true: 
\begin{gather*}
\left\{ \left( 4,d_{2},d_{3}\right) :4\leq d_{2}\leq d_{3},\ d_{2},d_{3}\in 2%
\Bbb{N}+1\right\} \cap \limfunc{mdeg}\left( \limfunc{Tame}\left( \Bbb{C}%
^{3}\right) \right) = \\
=\left\{ \left( 4,d_{2},d_{3}\right) :4\leq d_{2}\leq d_{3},\ d_{2},d_{3}\in
2\Bbb{N}+1,\ d_{3}\in 4\Bbb{N}+d_{2}\Bbb{N}\right\} .
\end{gather*}
\end{corollary}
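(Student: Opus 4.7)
The corollary is a direct packaging of Theorem~\ref{tw_4_odd_odd} as a set equality, so the plan is to verify the two inclusions, each of which has already been done inside the theorem's argument. No new ideas are required; the work is bookkeeping.

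For the inclusion $\supseteq$, I would take any triple $(4,d_2,d_3)$ on the right-hand side, i.e.\ with $4\le d_2\le d_3$, both $d_2$ and $d_3$ odd, and $d_3\in 4\mathbb{N}+d_2\mathbb{N}$. Writing $d_3=4k_1+d_2 k_2$ with $k_1,k_2\in\mathbb{N}$ puts us exactly in the situation of Proposition~\ref{prop_sum_d_i} applied to the sequence $(d_1,d_2,d_3)=(4,d_2,d_3)$ and $i=3$. That proposition produces an explicit tame automorphism of $\mathbb{C}^3$ (as a composition of two triangular maps) whose multidegree is $(4,d_2,d_3)$, so the triple lies in $\limfunc{mdeg}(\limfunc{Tame}(\mathbb{C}^3))$.

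For the inclusion $\subseteq$, I would take any $(4,d_2,d_3)$ in the left-hand side: $4\le d_2\le d_3$, $d_2$ and $d_3$ odd, and $(4,d_2,d_3)\in\limfunc{mdeg}(\limfunc{Tame}(\mathbb{C}^3))$. Theorem~\ref{tw_4_odd_odd} gives precisely the equivalence $(4,d_2,d_3)\in\limfunc{mdeg}(\limfunc{Tame}(\mathbb{C}^3))\iff d_3\in 4\mathbb{N}+d_2\mathbb{N}$ under these odd-parity hypotheses, so $d_3\in 4\mathbb{N}+d_2\mathbb{N}$ follows immediately and the triple lies in the right-hand set.

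Combining the two inclusions yields the asserted equality. Since the content of the corollary is already extracted from Theorem~\ref{tw_4_odd_odd} and Proposition~\ref{prop_sum_d_i}, there is no substantive obstacle; the only thing to check carefully is that the parity hypotheses on $d_2,d_3$ on both sides of the equality match the hypotheses under which Theorem~\ref{tw_4_odd_odd} was proved, which they do by construction.
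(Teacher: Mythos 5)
Your proposal is correct and matches the paper's (implicit) argument exactly: the corollary is just Theorem \ref{tw_4_odd_odd} restated as a set equality, with the $\supseteq$ inclusion coming from Proposition \ref{prop_sum_d_i} and the $\subseteq$ inclusion from the "only if" direction of the theorem. No gaps.
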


\subsection{The case $\left( 4,\limfunc{even},\limfunc{odd}\right) $}

We start this subsection with the following two examples (or rather two
series of examples).

\begin{example}
Since 
\begin{eqnarray*}
\left( X+Z^{4}\right) ^{3} &=&Z^{12}+3XZ^{8}+3X^{2}Z^{4}+X^{3}, \\
\left( Y+Z^{6}\right) ^{2} &=&Z^{12}+2YZ^{6}+Y^{2},
\end{eqnarray*}
we see that 
\begin{equation*}
\deg \left[ \left( Y+Z^{6}\right) ^{2}-\left( X+Z^{4}\right) ^{3}\right] =9.
\end{equation*}
Thus, for any $k\in \Bbb{N},$%
\begin{equation*}
\deg \left[ \left( Y+Z^{6}\right) ^{2}-\left( X+Z^{4}\right) ^{3}\right]
\left( X+Z^{4}\right) ^{k}=9+4k.
\end{equation*}
This means that 
\begin{equation*}
\limfunc{mdeg}\left( F_{2}\circ F_{1}\right) =\left( 4,6,9+4k\right) ,
\end{equation*}
where 
\begin{eqnarray*}
F_{1}\left( x,y,z\right)  &=&\left( x+z^{4},y+z^{6},z\right) , \\
F_{2}\left( u,v,w\right)  &=&\left( u,v,w+\left( v^{2}-u^{3}\right)
u^{k}\right) .
\end{eqnarray*}
\end{example}

\begin{example}
Since 
\begin{eqnarray*}
\left( X+Z^{4}\right) ^{3} &=&Z^{12}+3XZ^{8}+3X^{2}Z^{4}+X^{3}, \\
\left( Y+\tfrac{3}{2}XZ^{2}+Z^{6}\right) ^{2} &=&Z^{12}+3XZ^{8}+2YZ^{6}+%
\tfrac{9}{4}X^{2}Z^{4}+3YXZ^{2}+Y^{2},
\end{eqnarray*}
it follows that 
\begin{equation*}
\deg \left[ \left( Y+\tfrac{3}{2}XZ^{2}+Z^{6}\right) ^{2}-\left(
X+Z^{4}\right) ^{3}\right] =7,
\end{equation*}
and 
\begin{equation*}
\deg \left[ \left( Y+\tfrac{3}{2}XZ^{2}+Z^{6}\right) ^{2}-\left(
X+Z^{4}\right) ^{3}\right] \left( X+Z^{4}\right) ^{k}=7+4k.
\end{equation*}
Thus we have 
\begin{equation*}
\limfunc{mdeg}\left( F_{2}\circ F_{1}\right) =\left( 4,6,7+4k\right) ,
\end{equation*}
where 
\begin{eqnarray*}
F_{1}\left( x,y,z\right)  &=&\left( x+z^{4},y+\tfrac{3}{2}%
xz^{2}+z^{6},z\right) , \\
F_{2}\left( u,v,w\right)  &=&\left( u,v,w+\left( v^{2}-u^{3}\right)
u^{k}\right) .
\end{eqnarray*}
\end{example}

Combining above examples and Theorem \ref{tw_4pp} we obtain the following

\begin{proposition}
For any integer $d_{3}\geq 6$ we have $\left( 4,6,d_{3}\right) \in \limfunc{%
mdeg}\left( \limfunc{Tame}\left( \Bbb{C}^{3}\right) \right) .$
\end{proposition}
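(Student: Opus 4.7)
The plan is to prove the proposition by a simple case split on the parity of $d_{3}$, combining Theorem \ref{tw_4pp} with the two explicit constructions given in the preceding examples. No estimates of Poisson brackets, no reductions of type I--IV, and no further machinery are needed; everything reduces to verifying that the three families of available multidegrees together exhaust every integer $d_{3}\geq 6$.

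First I would dispose of the even case. If $d_{3}\geq 6$ is even, then all three entries of $(4,6,d_{3})$ are even and satisfy $4\leq 6\leq d_{3}$, so Theorem \ref{tw_4pp} directly produces a tame automorphism of $\Bbb{C}^{3}$ with multidegree $(4,6,d_{3})$.

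Next I would handle the odd case. If $d_{3}\geq 6$ is odd then $d_{3}\geq 7$, and exactly one of $d_{3}\equiv 1\pmod{4}$ or $d_{3}\equiv 3\pmod{4}$ holds. In the first case $d_{3}=9+4k$ for some $k\in \Bbb{N}$, and the first of the two examples above furnishes a tame composition $F_{2}\circ F_{1}$ with $\limfunc{mdeg}(F_{2}\circ F_{1})=(4,6,9+4k)=(4,6,d_{3})$. In the second case $d_{3}=7+4k$ for some $k\in \Bbb{N}$, and the second example furnishes a tame composition with $\limfunc{mdeg}(F_{2}\circ F_{1})=(4,6,7+4k)=(4,6,d_{3})$.

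Since the substantive content has already been carried out in the two examples (the key being the leading-term cancellation in $(Y+Z^{6})^{2}-(X+Z^{4})^{3}$, respectively $(Y+\tfrac{3}{2}XZ^{2}+Z^{6})^{2}-(X+Z^{4})^{3}$, which drops the degree from $12$ down to $9$, respectively $7$, so that multiplying by $(X+Z^{4})^{k}$ adds exactly $4k$), there is no genuine obstacle: the proposition is merely a covering argument saying that the two odd arithmetic progressions $7+4\Bbb{N}$ and $9+4\Bbb{N}$, together with the even integers $\geq 6$, exhaust all integers $\geq 6$.
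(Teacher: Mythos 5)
Your argument is correct and is exactly the paper's proof: the paper introduces this proposition with the words ``Combining above examples and Theorem \ref{tw_4pp} we obtain the following,'' i.e.\ the even case is handled by Theorem \ref{tw_4pp} and the odd case by the two example families covering $7+4\Bbb{N}$ and $9+4\Bbb{N}$. Your covering check (odd $d_{3}\geq 6$ forces $d_{3}\geq 7$, and the two progressions exhaust the odd integers $\geq 7$) is the same observation the paper leaves implicit.
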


In the same manner one can prove the following

\begin{proposition}
For any integer $d_{3}\geq 10$ we have $\left( 4,10,d_{3}\right) \in 
\limfunc{mdeg}\left( \limfunc{Tame}\left( \Bbb{C}^{3}\right) \right) .$
\end{proposition}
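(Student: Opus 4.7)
The plan is to dispatch the even and odd cases for $d_{3}$ separately. If $d_{3}$ is even, then $\gcd(4,10,d_{3})\geq 2$, so $\tfrac{4}{\gcd(4,10,d_{3})}\leq 2$, and Theorem \ref{tw_gcd_male} (already packaged as Theorem \ref{tw_4pp} for this sub-case) supplies a tame automorphism with multidegree $(4,10,d_{3})$. It therefore remains to handle every odd $d_{3}\geq 11$ by explicit construction, in close analogy with the two preceding propositions for $(4,6,d_{3})$.

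I will take
\begin{equation*}
F_{1}(x,y,z)=(x+z^{4},\;y+\psi(x,z),\;z),\qquad F_{2}(u,v,w)=\bigl(u,\;v,\;w+(v^{2}-u^{5})u^{k}\bigr),
\end{equation*}
with $\psi \in \Bbb{C}[x,z]$ of degree $10$ and leading monomial $z^{10}$, so that $F_{1}$ is a triangular automorphism of multidegree $(4,10,1)$, $F_{2}$ is elementary, and the third coordinate of the tame automorphism $F_{2}\circ F_{1}$ equals $z+\bigl((y+\psi)^{2}-(x+z^{4})^{5}\bigr)(x+z^{4})^{k}$. Choosing $\psi=z^{10}+\tfrac{5}{2}xz^{6}+\tfrac{15}{8}x^{2}z^{2}$ realises the first three coefficients of the formal expansion of $(x+z^{4})^{5/2}$, cancelling the $z^{20}$, $xz^{16}$ and $x^{2}z^{12}$ contributions in $\psi^{2}-(x+z^{4})^{5}$ and leaving a remainder of degree exactly $11$ (dominated by $-\tfrac{5}{8}x^{3}z^{8}$). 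Multiplying by $(x+z^{4})^{k}$ then yields multidegrees $(4,10,11+4k)$ for all $k\in \Bbb{N}$, which covers every $d_{3}\equiv 3\pmod{4}$ with $d_{3}\geq 11$.

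The main obstacle is producing the residue class $d_{3}\equiv 1\pmod{4}$. The next formal Taylor coefficient of $(x+z^{4})^{5/2}$ is $\tfrac{5}{16}x^{3}z^{-2}$, which is not a polynomial, so the cancellation scheme cannot be continued literally. The key observation is that adjoining instead a term $\alpha x^{3}$ with any $\alpha \neq 0$ to $\psi$ keeps $\deg \psi = 10$ and does not disturb any of the cancellations at degrees $\geq 14$: through $2z^{10}\cdot \alpha x^{3}$ the only new top-order monomial it creates in $\psi^{2}$ is $2\alpha x^{3}z^{10}$, of degree $13$, while all other cross terms generated by $\alpha x^{3}$ (with $\tfrac{5}{2}xz^{6}$, $\tfrac{15}{8}x^{2}z^{2}$ and itself) have degree at most $10$. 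A direct computation then confirms $\deg\bigl((y+\psi)^{2}-(x+z^{4})^{5}\bigr)=13$, after which varying $k$ yields multidegrees $(4,10,13+4k)$ for $k\in \Bbb{N}$, covering every odd $d_{3}\equiv 1\pmod{4}$ with $d_{3}\geq 13$. The two families together exhaust all odd $d_{3}\geq 11$, and combined with the even case this proves $(4,10,d_{3})\in \limfunc{mdeg}(\limfunc{Tame}(\Bbb{C}^{3}))$ for every $d_{3}\geq 10$.
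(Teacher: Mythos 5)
Your proof is correct and follows essentially the paper's route: the even case is handled by Theorem \ref{tw_gcd_male}, and the odd case by explicit tame compositions $F_{2}\circ F_{1}$ built from truncations of the formal expansion of $\left( x+z^{4}\right) ^{5/2}$, exactly in the manner of the paper's $\left( 4,6,d_{3}\right) $ examples and of Proposition \ref{prop_4_4k+2_d3}; your degree computations (remainders of degree $11$ and, after the $\alpha x^{3}$ perturbation, degree $13$, with the cancellations at degrees $\geq 14$ untouched) check out. The only cosmetic difference is that you reach the class $d_{3}\equiv 1\ \left( \func{mod}4\right) $ by adding $\alpha x^{3}$, whereas the paper's general scheme inserts a low-degree term $Z^{r}$ to shift the leading term; both devices serve the same purpose.
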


Using Corollary \ref{cor_male_d1} we obtain

\begin{proposition}
For $k=1,2,\ldots $ and an integer $d_{3}\geq 4k$ we have $\left(
4,4k,d_{3}\right) \in \limfunc{mdeg}\left( \limfunc{Tame}\left( \Bbb{C}%
^{3}\right) \right) .$
\end{proposition}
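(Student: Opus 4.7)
The plan is to invoke Proposition \ref{prop_sum_d_i} directly. The sequence $(d_1, d_2, d_3) = (4, 4k, d_3)$ satisfies $1 \leq d_1 \leq d_2 \leq d_3$ under the stated hypotheses ($k \geq 1$ and $d_3 \geq 4k$), and the crucial observation is simply that $d_2 = 4k = k \cdot d_1$. Taking the index $i = 2$ and the coefficient $k_1 = k \in \Bbb{N}$, the hypothesis $d_i = \sum_{j=1}^{i-1} k_j d_j$ of Proposition \ref{prop_sum_d_i} is satisfied, so that proposition produces a tame automorphism $F$ of $\Bbb{C}^{3}$ with $\limfunc{mdeg} F = (4, 4k, d_3)$.

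For concreteness, running the construction from the proof of Proposition \ref{prop_sum_d_i} yields $F = g \circ h$, where
\begin{equation*}
h(x,y,z) = (x + y^{4},\ y,\ z + y^{d_{3}}), \qquad g(u,v,w) = (u,\ v + u^{k},\ w).
\end{equation*}
Here $h$ is a composition of two elementary automorphisms (acting on the $x$- and $z$-coordinates with $y$ held fixed) and $g$ is itself elementary, so $F$ is tame. A direct computation gives $F_{1} = x + y^{4}$ of degree $4$, $F_{2} = y + (x + y^{4})^{k}$ of degree $4k$ (since $k \geq 1$), and $F_{3} = z + y^{d_{3}}$ of degree $d_{3}$, confirming $\limfunc{mdeg} F = (4, 4k, d_{3})$.

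There is no real obstacle: the statement is an immediate specialisation of Proposition \ref{prop_sum_d_i} once one recognises the arithmetic identity $4k = k \cdot 4$. The purpose of recording this as a separate proposition appears to be to package a clean building block for the partial description of the $(4, d_{2}, d_{3})$-case; together with Theorem \ref{tw_4pp} (which handles $d_{2}, d_{3}$ both even) and the explicit examples disposing of $d_{2} \in \{6, 10\}$, it settles all multidegrees of the form $(4, 4k, d_{3})$ with $d_{3} \geq 4k$, notably covering those cases where $d_{3}$ is odd and Theorem \ref{tw_4pp} does not apply.
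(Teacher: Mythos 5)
Your proof is correct and is essentially the paper's argument: the paper derives this proposition from the observation $4\mid 4k$, which (as in the first case of the proof of Corollary \ref{cor_male_d1}) is exactly an application of Proposition \ref{prop_sum_d_i} with $d_{2}=k\cdot d_{1}$, and your explicit $F=g\circ h$ is the construction from that proposition's proof. Your verification of the degrees is fine, so nothing is missing.
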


The next proposition gives partial information about multidegrees of the
form $\left( 4,4k+2,d_{3}\right) ,$ where $k=3,4,\ldots $ and $d_{3}\geq
4k+2.$

\begin{proposition}
\label{prop_4_4k+2_d3}For integers $k\geq 3$ and $d_{3}\geq 5k+1$ we have $%
\left( 4,4k+2,d_{3}\right) \in \limfunc{mdeg}\left( \limfunc{Tame}\left( 
\Bbb{C}^{3}\right) \right) .$
\end{proposition}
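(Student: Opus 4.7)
The plan is to generalize the two constructions of type $(4,6,d_3)$ given just above in Examples, adapting them to $d_2 = 4k+2$ for $k \geq 3$. I would write $F = F_2 \circ F_1$ with
\[
F_1(x,y,z) = \bigl(x+z^4,\ y+P(x,z),\ z\bigr),\qquad F_2(u,v,w) = \bigl(u,\ v,\ w + H(u,v)\bigr),
\]
where $P(x,z)$ is chosen of degree $4k+2$ and $H(u,v)$ is chosen to realize the required third-coordinate degree. Then $F \in \limfunc{Tame}(\Bbb{C}^{3})$ and $\mdeg F = (4,\,4k+2,\,\deg H(F_{1,1},F_{1,2}))$, so the task reduces to realizing every $d_3 \geq 5k+1$ as $\deg H(F_{1,1},F_{1,2})$ for a suitable choice of $P$ and $H$.

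First I would dispose of even $d_3$: since $\gcd(4, 4k+2) = 2$, every even $d_3 \geq 4k+2$ lies in $4\Bbb{N} + (4k+2)\Bbb{N}$, and Proposition \ref{prop_sum_d_i} produces the required tame automorphism directly.

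For odd $d_3$, take
\[
P_s(x,z) = \sum_{j=0}^{s}\alpha_j\, x^j z^{4k+2-4j},\qquad \alpha_0 = 1,
\]
with $\alpha_1,\ldots,\alpha_s \in \Bbb{C}$ determined by the triangular recursion $\sum_{i+j=l}\alpha_i\alpha_j = \binom{2k+1}{l}$ for $l = 1,\ldots,s$, which forces $P_s^2$ to agree with $(x+z^4)^{2k+1}$ through the first $s+1$ coefficients of its homogeneous decomposition. A direct binomial expansion (generalizing the calculation of Example $(4,6,7+4k)$) yields
\[
\deg\bigl((y+P_s)^2 - (x+z^4)^{2k+1}\bigr) \;=\; \max\bigl(4k+3,\ 8k+1-3s\bigr) \;=\; 8k+1-3s \;=:\; D_s
\]
for $k \geq 3$ and $0 \leq s \leq k$. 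Setting $H(u,v) = (v^2 - u^{2k+1})\,u^m$ then gives $\mdeg F = (4, 4k+2, D_s + 4m)$, while the variant $H(u,v) = v\,(v^2 - u^{2k+1})\,u^m$ shifts the third-coordinate degree up by an additional $4k+2$. As $s$ ranges over $\{0,1,\ldots,k\}$, the values $D_s$ traverse $\{5k+1,\,5k+4,\,\ldots,\,8k+1\}$ with consecutive gap $3$; since $k+1 \geq 4$, at least one representative of each residue class modulo $4$ appears.

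The main obstacle is the final combinatorial coverage step: one must verify that the union
\[
\bigcup_{s=0}^{k}\bigcup_{m\geq 0}\{D_s + 4m\}\ \cup\ \bigcup_{s=0}^{k}\bigcup_{m\geq 0}\{D_s + (4k+2) + 4m\},
\]
together with the set of even $d_3 \geq 4k+2$ already handled by Proposition \ref{prop_sum_d_i}, exhausts every integer $d_3 \geq 5k+1$. The bound $5k+1$ in the proposition is precisely $\min_s D_s$, which explains why the threshold takes this value. Checking this coverage requires a case-by-case match of the residue $d_3 \pmod 4$ with a suitable $D_s$ of the correct parity, possibly combined with the $v$-factor variant to shift the residue by $4k+2 \equiv 2 \pmod 4$; the delicate cases are those with $d_3$ close to $5k+1$ whose residue forces the use of larger $s$ or of an auxiliary factor.
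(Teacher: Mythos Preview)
Your structural setup matches the paper's: both use $F=G\circ F_{1}$ with $F_{1}(x,y,z)=(x+z^{4},\,y+P(x,z),\,z)$ and $G(u,v,w)=(u,v,w+(v^{2}-u^{2k+1})u^{q})$, and both solve a triangular system in the coefficients of $P$ to kill the top terms of $(x+z^{4})^{2k+1}-P^{2}$. The difference is that the paper takes
\[
P(x,z)=z^{r}+\sum_{l=0}^{k}a_{l}x^{l}z^{4k+2-4l},\qquad r\in\{k-1,k,k+1,k+2\},
\]
whereas your $P_{s}$ omits the extra monomial $z^{r}$. That omission is fatal for the coverage step, not merely ``delicate''.

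With your $P_{s}$, the surviving top term of $(y+P_{s})^{2}-(x+z^{4})^{2k+1}$ is (as you correctly compute) of degree $D_{s}=8k+1-3s$, so $\{D_{s}:0\le s\le k\}$ is an arithmetic progression with step $3$. Hence the minimal representatives of the four residue classes modulo $4$ among the $D_{s}$ are $D_{k}=5k+1$, $D_{k-1}=5k+4$, $D_{k-2}=5k+7$, $D_{k-3}=5k+10$. The set $\bigcup_{s,m}\{D_{s}+4m\}$ therefore misses $5k+2$, $5k+3$ and $5k+6$. Your $v$-factor variant shifts by $4k+2$, so its smallest value is $D_{k}+(4k+2)=9k+3$, far too large to fill these holes; and Proposition~\ref{prop_sum_d_i} only rescues the even ones. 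Concretely, for $k=3$ your two families together with Proposition~\ref{prop_sum_d_i} do not produce $d_{3}=17$ or $d_{3}=21$, yet the statement requires $(4,14,17)$ and $(4,14,21)$.

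The paper's extra term $z^{r}$ is exactly what repairs this. After solving the same system for $a_{0},\ldots,a_{k}$ (with $a_{0}=1$), the cross term $2z^{r}\cdot a_{0}z^{4k+2}=2z^{4k+2+r}$ survives and becomes the top-degree term, giving degree exactly $4k+2+r$. Letting $r$ run over four consecutive integers $k-1,k,k+1,k+2$ yields third-coordinate degrees $5k+1,5k+2,5k+3,5k+4$, after which the $u^{q}$ factor covers every $d_{3}\ge 5k+1$ with no residue analysis needed. To fix your argument, insert such a $z^{r}$ into $P_{s}$ (with $s=k$) and track the new leading monomial; varying $s$ alone cannot do the job.
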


\begin{proof}
Let us notice that 
\begin{equation*}
\left( X+Z^{4}\right) ^{2k+1}=\sum_{l=0}^{2k+1}\binom{2k+1}{l}%
X^{l}Z^{8k+4-4l}
\end{equation*}
and 
\begin{eqnarray*}
\left( Y+Z^{r}+\sum_{l=0}^{k}a_{l}X^{l}Z^{4k+2-4l}\right) ^{2}
&=&Y^{2}+2YZ^{r}+Z^{2r}+2Y\sum_{l=0}^{k}a_{l}X^{l}Z^{4k+2-4l} \\
&&+2Z^{r}\sum_{l=0}^{k}a_{l}X^{l}Z^{4k+2-4l} \\
&&+\sum_{s=0}^{2k}\left( \sum_{l+m=s,\ l,m\in \left\{ 0,\ldots ,k\right\}
}a_{l}a_{m}\right) X^{s}Z^{8k+4-4s}.
\end{eqnarray*}

We will consider the cases $r=k-1,k,k+1$ and $k+2.$ Thus we have:
\begin{eqnarray*}
\deg 2YZ^{r} &\leq &k+3<5k+1, \\
\deg Z^{2r} &\leq &2k+4<5k+1, \\
\deg 2Y\sum_{l=0}^{k}a_{l}X^{l}Z^{4k+2-4l} &\leq &4k+3<5k+1, \\
\deg 2Z^{r}\sum_{l=2}^{k}a_{l}X^{l}Z^{4k+2-4l} &\leq &5k-2<5k+1.
\end{eqnarray*}
This means that the only summands of the polynomial 
\begin{equation}
\left( X+Z^{4}\right) ^{2k+1}-\left(
Y+Z^{r}+\sum_{l=0}^{k}a_{l}X^{l}Z^{4k+2-4l}\right) ^{2}
\label{row_prop_4_4k+2}
\end{equation}
of degree greater or equal to $5k+1$ are: 
\begin{eqnarray*}
&&\left( 1-a_{0}^{2}\right) Z^{8k+4}, \\
&&\left[ \binom{2k+1}{1}-2a_{0}a_{1}\right] XZ^{8k}, \\
&&\left[ \binom{2k+1}{2}-\left( 2a_{0}a_{2}+a_{1}^{2}\right) \right]
X^{2}Z^{8k-4}, \\
&&\vdots  \\
&&\left[ \binom{2k+1}{k}-\left( a_{0}a_{k}+a_{1}a_{k-1}+\cdots
+a_{k-1}a_{1}+a_{k}a_{0}\right) \right] X^{k}Z^{4k+4}, \\
&&2a_{0}z^{4k+2+r}
\end{eqnarray*}
and (only in the case $r=k+2$) 
\begin{equation*}
2a_{1}XZ^{4k-2+r}.
\end{equation*}
Since we can recursively solve the following system of equations (notice
that we can take $a_{0}=1$) 
\begin{eqnarray*}
1-a_{0}^{2} &=&0, \\
\binom{2k+1}{1}-2a_{0}a_{1} &=&0, \\
\binom{2k+1}{2}-\left( 2a_{0}a_{2}+a_{1}^{2}\right)  &=&0, \\
&&\vdots  \\
\binom{2k+1}{k}-\left( a_{0}a_{k}+a_{1}a_{k-1}+\cdots
+a_{k-1}a_{1}+a_{k}a_{0}\right)  &=&0,
\end{eqnarray*}
it follows that we can choose coefficients $a_{0},a_{1},\ldots ,a_{k}$ such
that the degree of the polynomial (\ref{row_prop_4_4k+2}) is equal to 
\begin{equation*}
\deg \left( 2a_{0}Z^{4k+2+r}\right) =4k+2+r.
\end{equation*}
Taking $r=k-1,k,k+1$ and $k+2$ we obtain polynomials of degree equal to $%
5k+1,5k+2,5k+3$ and $5k+4,$ respectively.

Now, it is easy to see that taking 
\begin{equation*}
F\left( x,y,z\right) =\left(
x+z^{4},y+z^{r}+\sum_{l=0}^{k}a_{l}x^{l}z^{4k+2-4l},z\right)
\end{equation*}
and 
\begin{equation*}
G\left( u,v,w\right) =\left( u,v,w+\left( u^{4k+1}-v^{2}\right) u^{q}\right)
,
\end{equation*}
where $q=0,1,\ldots ,$ we obtain that 
\begin{equation*}
\limfunc{mdeg}\left( G\circ F\right) =\left( 4,4k+2,4k+2+r+4q\right) .
\end{equation*}
Since for any $d_{3}\geq 5k+1$ we can find $r\in \left\{
k-1,k,k+1,k+2\right\} $ and $q\in \Bbb{N}$ such that $4k+2+r+4q=d_{3},$ the
result follows.
\end{proof}

\subsection{The case $\left( 4,\limfunc{odd},\limfunc{even}\right) $}

In this subsection we give almost complete description of the set 
\begin{equation*}
\left\{ \left( 4,d_{2},d_{3}\right) \ |\ 4\leq d_{2}\leq d_{3},\quad
d_{2}\in 2\Bbb{N}+1,d_{3}\in 2\Bbb{N}\right\} \cap \limfunc{mdeg}\left( 
\limfunc{Tame}\left( \Bbb{C}^{3}\right) \right) .
\end{equation*}
Namely we have the following result.

\begin{theorem}
\label{tw_4_odd_even}If $d_{2}\geq 5$ is an odd number and $d_{3}\geq d_{2}$
is an even number such that $d_{3}-d_{2}\neq 1,$ then $\left(
4,d_{2},d_{3}\right) \in \limfunc{mdeg}\left( \limfunc{Tame}\left( \Bbb{C}%
^{3}\right) \right) $ if and only if $d_{3}\in 4\Bbb{N}+d_{2}\Bbb{N}.$
\end{theorem}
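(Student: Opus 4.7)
The plan is to follow the template of the preceding results in this section. The forward direction is immediate from Proposition \ref{prop_sum_d_i}, so I focus on the converse. Assuming $d_3 \notin 4\mathbb{N}+d_2\mathbb{N}$, I first extract the numerical consequences. Since $d_2$ is odd, $\gcd(4,d_2)=1$, and Theorem \ref{tw_sywester} gives $d_3 < 3d_2-3$. Since $d_3 \in 4\mathbb{N}$ would place $d_3$ in $4\mathbb{N}+d_2\mathbb{N}$, one has $4 \nmid d_3$, hence $\gcd(4,d_3)=2$, and also $d_2 \nmid d_3$. The parities of $d_2$ and $d_3$ differ, so $d_3-d_2$ is odd; combined with the hypothesis $d_3 - d_2 \neq 1$, this yields the crucial inequality $d_3 \geq d_2 + 3$.

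By Theorem \ref{tw_reduc_type_4} it will suffice to rule out, for a hypothetical tame automorphism $F=(F_1,F_2,F_3)$ with the given multidegree, any reduction of type III or any elementary reduction. Type III is excluded immediately by Remark \ref{remakrk_red_type_3}, since $2 \nmid d_2$.

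I plan to dispatch the three elementary reductions in parallel. In each case, conditions (i) and (iii) of Definition \ref{def_*-red} hold for the relevant pair: algebraic independence is automatic, and $d_2 \geq 5$, $4 \nmid d_3$, $d_2 \nmid d_3$ together preclude any of the leading forms $\overline{F}_1,\overline{F}_2,\overline{F}_3$ from being a polynomial in another (for instance $\overline{F}_3 = c\overline{F}_1^k$ would force $4\mid d_3$, and $\overline{F}_3 = c\overline{F}_2^k$ would force $d_2\mid d_3$). The Shestakov--Umirbaev bound of Proposition \ref{prop_deg_g_fg} then applies. For a reduction of the form $(F_1,F_2,F_3-g(F_1,F_2))$, the relevant $p=4$; the bound, combined with $d_3 < 3d_2-3$, forces $q=0$ and $r \leq 2$, and since $\func{lcm}(4,d_2)=4d_2$ the sets $i d_2 + 4\mathbb{N}$ for $i = 0,1,2$ are pairwise disjoint, so $d_3 \in 4\mathbb{N} + d_2\mathbb{N}$ --- contradiction. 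For $(F_1,F_2-g(F_1,F_3),F_3)$, the relevant $p=4/\gcd(4,d_3)=2$; the bound, using $d_3 \geq d_2+3$, forces $g \in \mathbb{C}[X]$, giving $d_2 \in 4\mathbb{N}$ and contradicting $d_2$ odd. For $(F_1-g(F_2,F_3),F_2,F_3)$, the relevant $p = d_2/\gcd(d_2,d_3) \geq 2$; the bound, using $d_3 > 4$ and $d_3 - d_2 \geq 3$, again forces $g \in \mathbb{C}[X]$, giving $4 \in d_2 \mathbb{N}$ and contradicting $d_2 \geq 5$.

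The principal obstacle will be the second elementary-reduction case: this is precisely where the hypothesis $d_3 - d_2 \neq 1$ is essential. Without it one has $d_3 - 2 < d_2$, and the lower bound $q(d_3-2) \leq d_2$ no longer forces $q=0$, so the route through Proposition \ref{prop_deg_g_fg} breaks down. This also explains the gap in the theorem at $d_3 = d_2 + 1$, which evidently requires a different technique.
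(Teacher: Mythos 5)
Your proposal is correct and follows essentially the same route as the paper: Proposition \ref{prop_sum_d_i} for the forward direction, the Sylvester bound $d_{3}<3d_{2}-3$, exclusion of type III via Remark \ref{remakrk_red_type_3} together with Theorem \ref{tw_reduc_type_4}, and the three elementary-reduction cases handled by Proposition \ref{prop_deg_g_fg}, with $d_{3}\geq d_{2}+3$ doing the work in the middle case exactly as in the paper. The only deviations are cosmetic: you verify conditions (i) and (iii) of Definition \ref{def_*-red} explicitly, and in the third case you use $p\geq 2$ plus $d_{3}-d_{2}\geq 3$ where the paper uses that $p$ is odd, hence $p\geq 3$; both yield $q=r=0$.
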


\begin{proof}
If $d_{3}\in 4\Bbb{N}+d_{2}\Bbb{N},$ then by Proposition \ref{prop_sum_d_i}, 
$\left( 4,d_{2},d_{3}\right) \in \limfunc{mdeg}\left( \limfunc{Tame}\left( 
\Bbb{C}^{3}\right) \right) .$ Thus up to the end of the proof we will assume
that $d_{3}\notin 4\Bbb{N}+d_{2}\Bbb{N}.$ Our purpose is to show that there
is no tame automorphism $F$ of $\Bbb{C}^{3}$ with $\limfunc{mdeg}F=\left(
4,d_{2},d_{3}\right) .$ Since $d_{2}$ is an odd number, by Remark \ref
{remakrk_red_type_3} and Theorem \ref{tw_reduc_type_4} it is enough to show
that all automorphisms $F:\Bbb{C}^{3}\rightarrow \Bbb{C}^{3}$ with $\limfunc{%
mdeg}F=\left( 4,d_{2},d_{3}\right) $ do not admit an elementary reduction.
Thus the rest of the proof is the inspection of the three cases of
elementary reducibility.

Assume that $\left( F_{1},F_{2},F_{3}-g\left( F_{1},F_{2}\right) \right) ,$
where $g\in \Bbb{C}\left[ X,Y\right] ,$ is an elementary reduction of an
automorphism $F=\left( F_{1},F_{2},F_{3}\right) :\Bbb{C}^{3}\rightarrow \Bbb{%
C}^{3}$ with $\limfunc{mdeg}F=\left( 4,d_{2},d_{3}\right) .$ Thus 
\begin{equation*}
\deg g\left( F_{1},F_{2}\right) =d_{3},
\end{equation*}
and by Proposition \ref{prop_deg_g_fg}, 
\begin{equation*}
\deg g\left( F_{1},F_{2}\right) \geq q\left( pd_{2}-d_{2}-4+\deg \left[
F_{1},F_{2}\right] \right) +rd_{2},
\end{equation*}
where $\deg _{Y}g\left( X,Y\right) =pq+r,0\leq r<p$ and $p=\frac{4}{\gcd
\left( 4,d_{2}\right) }=4.$ Since $d_{3}\notin 4\Bbb{N}+d_{2}\Bbb{N}$ and $%
\gcd \left( 4,d_{2}\right) =1,$ we have (similarly as in the proof of
Theorem \ref{tw_4_odd_odd}) 
\begin{equation}
d_{3}<3d_{2}-3.  \label{row_4_odd_even_0}
\end{equation}
Thus we can repeat the arguments from the corresponding case in the proof of
Theorem \ref{tw_4_odd_odd} to obtain a contradiction.

Now, assume that $\left( F_{1},F_{2}-g\left( F_{1},F_{3}\right)
,F_{3}\right) ,$ for some $g\in \Bbb{C}\left[ X,Y\right] ,$ is an elementary
reduction of an authomorphism $F=\left( F_{1},F_{2},F_{3}\right) $ of $\Bbb{C%
}^{3}$ with $\limfunc{mdeg}F=\left( 4,d_{2},d_{3}\right) .$ Then
\begin{equation}
\deg g\left( F_{1},F_{3}\right) =d_{2},  \label{row_4_odd_even_1}
\end{equation}
and by Proposition \ref{prop_deg_g_fg}, 
\begin{equation}
\deg g\left( F_{1},F_{3}\right) \geq q\left( pd_{3}-d_{3}-4+\deg \left[
F_{1},F_{2}\right] \right) +rd_{3},  \label{row_4_odd_even_2}
\end{equation}
where $\deg _{Y}g\left( X,Y\right) =pq+r,0\leq r<p$ and $p=\frac{4}{\gcd
\left( 4,d_{3}\right) }=2$ (because $d_{3}$ is an even number and $%
d_{3}\notin 4\Bbb{N}+d_{2}\Bbb{N}$). Thus $pd_{3}-d_{3}-4+\deg \left[
F_{1},F_{2}\right] \geq d_{3}-2.$ But by the assumptions $d_{3}-d_{2}\geq 0$
is an odd number different from $1.$ So $d_{2}\leq d_{3}-3,$ and then $%
pd_{2}-d_{2}-4+\deg \left[ F_{1},F_{2}\right] >d_{2}.$ Consequently, by (\ref
{row_4_odd_even_1}) and (\ref{row_4_odd_even_2}), $q=0.$ Since also $r=0$
(because $d_{3}>d_{2}$), we see that $g\left( X,Y\right) =g\left( X\right) ,$
and so 
\begin{equation*}
d_{2}=\deg g\left( F_{1},F_{3}\right) =\deg g\left( F_{1}\right) \in 4\Bbb{N}%
.
\end{equation*}
This contradicts the assumption that $d_{2}$ is an odd number.

In the last case we can repeat the arguments from the corresponding case in
the proof of Theorem \ref{tw_4_odd_odd}
\end{proof}

\begin{corollary}
If $d_{2}\geq 5$ is an odd number such that $d_{2}\equiv 3\left( \func{mod}%
4\right) ,$ and $d_{3}\geq d_{2}$ is an even number, then $\left(
4,d_{2},d_{3}\right) \in \limfunc{mdeg}\left( \limfunc{Tame}\left( \Bbb{C}%
^{3}\right) \right) $ if and only if $d_{3}\in 4\Bbb{N}+d_{2}\Bbb{N}.$
\end{corollary}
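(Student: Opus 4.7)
The plan is to derive this corollary directly from Theorem \ref{tw_4_odd_even} by disposing of the single case that the theorem excludes, namely $d_{3}-d_{2}=1$. Since $d_{2}$ is odd and $d_{3}\geq d_{2}$ is even, the difference $d_{3}-d_{2}$ is a positive odd integer, hence lies in $\{1,3,5,\ldots\}$. If $d_{3}-d_{2}\geq 3$, the hypotheses of Theorem \ref{tw_4_odd_even} are satisfied, and both implications of the corollary are an immediate restatement of that theorem; the extra congruence assumption $d_{2}\equiv 3\pmod{4}$ is not used here.

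The only remaining case is $d_{3}=d_{2}+1$. I claim this case is resolved automatically. Indeed, since $d_{2}\equiv 3\pmod{4}$, we have $d_{3}=d_{2}+1\equiv 0\pmod{4}$, so $d_{3}\in 4\mathbb{N}$ and in particular
\begin{equation*}
d_{3}\in 4\mathbb{N}+d_{2}\mathbb{N}.
\end{equation*}
Thus the right-hand side of the asserted equivalence is \emph{always} true in this sub-case, so the nontrivial direction (``$\Longrightarrow$'') requires nothing. For the converse direction, Proposition \ref{prop_sum_d_i} applied to the sequence $(4,d_{2},d_{3})$ with $d_{3}=4k$ (and $k=(d_{2}+1)/4$) produces a tame automorphism of $\mathbb{C}^{3}$ with the prescribed multidegree.

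Combining the two cases, the equivalence holds for every even $d_{3}\geq d_{2}$, which is the assertion of the corollary. There is no genuine obstacle: the role of the congruence $d_{2}\equiv 3\pmod{4}$ is precisely to trivialise the lone case excluded from Theorem \ref{tw_4_odd_even}, so the proof is essentially a one-line observation plus a citation.
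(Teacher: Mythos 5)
Your proof is correct and follows essentially the same route as the paper: when $d_{3}-d_{2}>1$ one quotes Theorem \ref{tw_4_odd_even} directly, and when $d_{3}=d_{2}+1$ the congruence $d_{2}\equiv 3\ (\func{mod}4)$ forces $4|d_{3}$, so $d_{3}\in 4\Bbb{N}+d_{2}\Bbb{N}$ and Proposition \ref{prop_sum_d_i} supplies the tame automorphism. Nothing is missing.
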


\begin{proof}
Let us notice, that if $d_{3}-d_{2}=1,$ then $4|d_{3}.$ Thus $d_{3}\in 4\Bbb{%
N}+d_{2}\Bbb{N}$ and by Proposition \ref{prop_sum_d_i} $\left(
4,d_{2},d_{3}\right) \in \limfunc{mdeg}\left( \limfunc{Tame}\left( \Bbb{C}%
^{3}\right) \right) .$ In the case $d_{3}-d_{2}>1,$ we can use Theorem \ref
{tw_4_odd_even}.
\end{proof}

By the above corollary, we know that to complete the picture of the set 
\begin{equation*}
\left\{ \left( 4,d_{2},d_{3}\right) \ |\ 4\leq d_{2}\leq d_{3},\quad
d_{2}\in 2\Bbb{N}+1,d_{3}\in 2\Bbb{N}\right\} \cap \limfunc{mdeg}\left( 
\limfunc{Tame}\left( \Bbb{C}^{3}\right) \right) 
\end{equation*}
it is enough to consider the triples of the form 
\begin{equation*}
\left( 4,4k+1,4k+2\right) \qquad \text{for }k=1,2,\ldots 
\end{equation*}
Moreover, using the arguments from the proof of Theorem \ref{tw_4_odd_even},
one can show the following

\begin{proposition}
Let $k\in \Bbb{N}^{*}.$ If there exists a tame authomorphism $\widetilde{F}$
of $\Bbb{C}^{3}$ with $\limfunc{mdeg}\widetilde{F}=\left( 4,4k+1,4k+2\right)
,$ then there is also a tame automorphism $F=\left( F_{1},F_{2},F_{3}\right) 
$ of $\Bbb{C}^{3}$ with $\limfunc{mdeg}F=\left( 4,4k+1,4k+2\right) $ that
admits an elementary reduction $\left( F_{1},F_{2}-g\left(
F_{1},F_{3}\right) ,F_{3}\right) ,$ for some $g\in \Bbb{C}\left[ X,Y\right] .
$ Moreover, for such automorphism $F$ we have $\deg \left[
F_{1},F_{3}\right] \leq 3.$
\end{proposition}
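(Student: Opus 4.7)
The plan is to start from the given tame $\widetilde{F}$ and successively narrow down what form a tame automorphism of the prescribed multidegree can take. By Kuroda's Theorem~\ref{tw_Kuroda} type~IV reductions are unavailable for tame maps, and since $d_{2}=4k+1$ is odd, Remark~\ref{remakrk_red_type_3} rules out type~III (the first condition $2\mid d_{2}$ already fails). Combined with Proposition~\ref{prop_reduc_type_1_2}, this yields a tame automorphism $F=(F_{1},F_{2},F_{3})$ with $\limfunc{mdeg}F=(4,4k+1,4k+2)$ admitting an elementary reduction. It then remains to identify the shape of this reduction and to bound $\deg[F_{1},F_{3}]$.

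Next I would exclude two of the three possible shapes of elementary reduction via Proposition~\ref{prop_deg_g_fg}. For a reduction $(F_{1},F_{2},F_{3}-g(F_{1},F_{2}))$ the relevant parameter is $p=4/\gcd(4,4k+1)=4$; the Poisson-bracket estimate forces $\deg_{Y}g\leq 3$ (since the $q$-coefficient is $\geq 12k-1>4k+2$ for $k\geq 1$), so $\deg g(F_{1},F_{2})$ lies in one of the four progressions $j(4k+1)+4\Bbb{N}$, $j=0,1,2,3$. These have pairwise distinct residues modulo $4$ (because $\mathrm{lcm}(4,4k+1)=4(4k+1)$), and $4k+2\equiv 2\pmod{4}$ can only come from $j=2$, requiring $4k+2\geq 8k+2$, impossible. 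For a reduction $(F_{1}-g(F_{2},F_{3}),F_{2},F_{3})$ the parameter is $p=(4k+1)/\gcd(4k+1,4k+2)=4k+1$; since $p d_{3}-d_{3}-d_{2}+\deg[F_{2},F_{3}]\geq 16k^{2}+4k+1>4$ and $d_{3}=4k+2>4$, the bound forces $q=r=0$, so $g=g(X)$ and $4=\deg g(F_{2})\in(4k+1)\Bbb{N}$, absurd. Hence the only surviving shape is $(F_{1},F_{2}-g(F_{1},F_{3}),F_{3})$, as required.

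For the \emph{moreover} claim, in this case $\deg g(F_{1},F_{3})=4k+1$, and the relevant parameter is $p=4/\gcd(4,4k+2)=2$. Proposition~\ref{prop_deg_g_fg} yields
\begin{equation*}
\deg g(F_{1},F_{3})\ \geq\ q\bigl(4k-2+\deg[F_{1},F_{3}]\bigr)+r(4k+2),
\end{equation*}
where $\deg_{Y}g=2q+r$, $0\leq r<2$. If $\deg[F_{1},F_{3}]\geq 4$, then the coefficient of $q$ is $\geq 4k+2>4k+1$, forcing $q=0$, so $g(X,Y)=g_{0}(X)+g_{1}(X)Y$. The two nonzero contributions $g_{0}(F_{1})$ and $g_{1}(F_{1})F_{3}$ have degrees in the disjoint sets $4\Bbb{N}$ and $(4k+2)+4\Bbb{N}$ (residues $0$ and $2$ modulo $4$), so they cannot cancel at top degree, and $\deg g(F_{1},F_{3})\in 4\Bbb{N}\cup((4k+2)+4\Bbb{N})$. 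Since $4k+1\equiv 1\pmod{4}$ lies in neither set, this is a contradiction; hence $\deg[F_{1},F_{3}]\leq 3$.

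The main obstacle, as in several of the preceding arguments in the paper, is the residue-modulo-$4$ bookkeeping in the third-shape exclusion: one must verify that even with possible leading-form cancellation, the achievable degrees of $g(F_{1},F_{2})$ or $g(F_{1},F_{3})$ remain confined to arithmetic progressions that miss $4k+2$ and $4k+1$ respectively. The hypothesis $k\geq 1$ is essential precisely because the obstruction $4k+2<2(4k+1)$ only kicks in then, and the bound $\deg[F_{1},F_{3}]\leq 3$ is the quantitative residue of this obstruction; this bound is what makes the triple $(4,4k+1,4k+2)$ tractable for the Jacobian-conjecture-style analysis used elsewhere in the paper.
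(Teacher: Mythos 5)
Your proof is correct and follows essentially the same route the paper intends: it reruns the argument of Theorem \ref{tw_4_odd_even} (exclude reductions of types I--IV via Remark \ref{remakrk_red_type_3}, Proposition \ref{prop_reduc_type_1_2} and Kuroda's theorem, then kill the first and third shapes of elementary reduction with Proposition \ref{prop_deg_g_fg} and the disjoint-arithmetic-progression bookkeeping), and the surviving second shape with $p=2$ yields $\deg[F_{1},F_{3}]\leq 3$ exactly as claimed. No gaps.
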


Using arguments from the proof of Theorem \ref{lem_5_6_9} one can also show
that $\deg \left[ F_{1},F_{3}\right] =3$ when $k<25.$

\section{The case $\left( p,d_{2},d_{3}\right) $ and $\left(
5,d_{2},d_{3}\right) \label{section_pdd_5dd}$}

\subsection{The general case.}

Now we make, in some sense, a generalization of the results of section 'The
case $\left( 3,d_{2},d_{3}\right) $'. This generalization is not complete,
because in the presented picture there are some holes. The first, general
result is the following

\begin{theorem}
\label{tw_p_d2_d3}Let $2\leq p\leq d_{2}\leq d_{3}$ be a sequence of
positive integers, and let $p$ be a prime number. If\newline
(1) $\frac{d_{3}}{d_{2}}\neq \frac{3}{2}$ or\newline
(2) $\frac{d_{3}}{d_{2}}=\frac{3}{2}$ and $\frac{d_{2}}{2}>p-2,$\newline
then $\left( p,d_{2},d_{3}\right) \in \limfunc{mdeg}\left( \limfunc{Tame}%
\left( \Bbb{C}^{3}\right) \right) $ if and only if $p|d_{2}$ or $d_{3}\in p%
\Bbb{N}+d_{2}\Bbb{N}.$
\end{theorem}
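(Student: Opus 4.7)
The existence direction is immediate from Proposition \ref{prop_sum_d_i}: if $p\mid d_2$ then $d_2\in p\Bbb{N}$, while if $d_3\in p\Bbb{N}+d_2\Bbb{N}$ then $d_3$ is a non-negative integer combination of $p$ and $d_2$; in either case Proposition \ref{prop_sum_d_i} supplies a tame automorphism of $\Bbb{C}^3$ with multidegree $(p,d_2,d_3)$. For $p=2$ the converse is vacuous, since $2\Bbb{N}+d_2\Bbb{N}$ contains every $d_3\ge d_2$ as soon as $d_2\ge 2$, so I may assume $p\ge 3$. Arguing by contradiction, suppose $p\nmid d_2$, $d_3\notin p\Bbb{N}+d_2\Bbb{N}$, and $F=(F_1,F_2,F_3)$ is a tame automorphism with $\limfunc{mdeg}F=(p,d_2,d_3)$. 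Since $p$ is prime, $\gcd(p,d_2)=1$, and Theorem \ref{tw_sywester} supplies the crucial upper bound $d_3<(p-1)(d_2-1)$. By Theorem \ref{tw_reduc_type_4} it suffices to exclude a reduction of type III or an elementary reduction of $F$. For type III, Remark \ref{remakrk_red_type_3} demands $2\mid d_2$ together with $p=3$ or $d_3/d_2=3/2$: when $d_3/d_2=3/2$ the refined conclusion of Theorem \ref{tw_reduc_type_4} discards type III automatically, and when $p=3$ with $d_3/d_2\ne 3/2$ the numerical constraints, as in the proof of Theorem \ref{tw_3_d2_d3}, force the only compatible triple $(3,4,6)$, for which $d_3/d_2=3/2$, a contradiction. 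So only elementary reductions need to be ruled out.

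I would then inspect the three elementary reductions via Proposition \ref{prop_deg_g_fg}. If $F_3$ is reduced by $g(F_1,F_2)$ with $\deg_Y g=pq+r$, $0\le r<p$, the Sylvester bound together with $\deg[F_1,F_2]\ge 2$ forces $q=0$, so $g(X,Y)=\sum_{i=0}^{p-1}g_i(X)Y^i$. Since $\gcd(p,d_2)=1$, the progressions $id_2+p\Bbb{N}$ for $i=0,\dots,p-1$ lie in distinct residue classes modulo $p$, hence are pairwise disjoint, so $d_3=\deg g(F_1,F_2)\in p\Bbb{N}+d_2\Bbb{N}$, a contradiction. If $F_2$ is reduced by $g(F_1,F_3)$, then $\gcd(p,d_3)=1$ (since $d_3\notin p\Bbb{N}$) and $(p-1)d_3-p+2>d_2$ (using $d_3\ge d_2\ge p\ge 3$) forces $q=0$; the remainder $r$ must also vanish since $rd_3\ge d_3>d_2$ otherwise. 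Hence $g(X,Y)=g(X)$ and $d_2\in p\Bbb{N}$, contradicting $p\nmid d_2$.

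The core of the argument is the third case: $F_1$ reduced by $g(F_2,F_3)$, where $\deg g(F_2,F_3)=p$. Set $p'=d_2/\gcd(d_2,d_3)$; since $d_3\notin d_2\Bbb{N}$ we have $p'\ge 2$. Proposition \ref{prop_deg_g_fg} gives
\[
p\ge q\bigl((p'-1)d_3-d_2+2\bigr)+rd_3,
\]
so $rd_3>p$ for $r\ge 1$ forces $r=0$, and to force $q=0$ one needs $(p'-1)d_3-d_2+2>p$. Writing $d_2=p'd$, $d_3=sd$ with $\gcd(p',s)=1$ and $s>p'$, this becomes $(p'-1)sd>d_2+p-2$. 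For $p'\ge 3$ it is automatic from $(p'-1)d_3\ge 2d_2$. The sole obstruction occurs when $p'=2$ and $s=3$, i.e., $d_3/d_2=3/2$, in which case the inequality reduces to $d_2/2>p-2$---precisely hypothesis (2). Hypothesis (1), $d_3/d_2\ne 3/2$, excludes $s=3$ outright and forces $s\ge 5$, which trivially implies the inequality. With $q=r=0$, $g(X,Y)=g(X)$ and $p=\deg g(F_2)\in d_2\Bbb{N}$; primality of $p$ and $d_2\ge p$ yield $d_2=p$, contradicting $p\nmid d_2$. This final sub-case with $p'=2$, $s=3$ is the main obstacle and is exactly the reason the theorem splits into hypotheses (1) and (2).
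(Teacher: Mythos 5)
Your proof is correct and follows essentially the same route as the paper: Proposition \ref{prop_sum_d_i} for existence, the Frobenius bound $d_{3}<(p-1)(d_{2}-1)$ from Theorem \ref{tw_sywester}, and Proposition \ref{prop_deg_g_fg} applied to the three elementary reductions, with the sub-case $\widetilde{p}=2$, $s=3$ correctly isolated as the sole obstruction that forces the split into hypotheses (1) and (2). The only cosmetic difference is that the paper dispatches $p=3$ by citing Theorem \ref{tw_3_d2_d3} wholesale, whereas you fold that case into the general argument and rule out type III reductions for $p=3$ directly.
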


\begin{proof}
By Corollary \ref{prop_sum_d_i}, if $p|d_{2}$ or $d_{3}\in p\Bbb{N}+d_{2}%
\Bbb{N},$ then there exists a tame automorphism $F:\Bbb{C}^{3}\rightarrow 
\Bbb{C}^{3}$ such that $\limfunc{mdeg}F=\left( p,d_{2},d_{3}\right) .$ Thus
in order to prove Theorem \ref{tw_p_d2_d3} it is enough to show that if $%
p\nmid d_{2}$ and $d_{3}\notin p\Bbb{N}+d_{2}\Bbb{N}$ and (1) or (2) holds,
then $\left( p,d_{2},d_{3}\right) \notin \limfunc{mdeg}\left( \limfunc{Tame}%
\left( \Bbb{C}^{3}\right) \right) .$

So let us assume that $p\nmid d_{2},d_{3}\notin p\Bbb{N}+d_{2}\Bbb{N}$ and
(1) or (2) hold. In particular $p<d_{2}<d_{3}.$ By Theorems \ref{tw_3_d2_d3}
and \ref{cor_male_d1}, we can assume that $p>3.$ Indeed, for $p=2,$ by
Corollary \ref{cor_male_d1} we have $\left( 2,d_{2},d_{3}\right) \in 
\limfunc{mdeg}\left( \limfunc{Tame}\left( \Bbb{C}^{3}\right) \right) $ for
all integers $2\leq d_{2}\leq d_{3}.$ Also the condition $2|d_{2}$ or $%
d_{3}\in 2\Bbb{N}+d_{2}\Bbb{N}$ is satisfied for all integers $2\leq
d_{2}\leq d_{3}.$ For $p=3$ we simple use Theorem \ref{tw_3_d2_d3}. So up to
the end of the proof we will assume that $p>3.$ Thus by Theorem \ref
{tw_reduc_type_4} it is enough to show that no automorphism $F:\Bbb{C}%
^{3}\rightarrow \Bbb{C}^{3}$ with $\limfunc{mdeg}F=\left(
p,d_{2},d_{3}\right) $ admits an elementary reduction (notice that $3\nmid p$%
).

Assume, to the contrary, that there exists an automorphism $%
F=(F_{1},F_{2},F_{3})$ with $\limfunc{mdeg}F=\left( p,d_{2},d_{3}\right) $
that addmits an elementary reduction. Since $p\nmid d_{2},$ we have $\gcd
(p,d_{2})=1.$ So by Theorem \ref{tw_sywester} we have $k\in p\Bbb{N}+d_{2}%
\Bbb{N}$ for all $k\geq (p-1)(d_{2}-1)=pd_{2}-d_{2}-p+1.$ Thus 
\begin{equation}
d_{3}<pd_{2}-d_{2}-p+1,  \label{row_sylwester_pdd}
\end{equation}
since $d_{3}\notin p\Bbb{N}+d_{2}\Bbb{N}.$

Assume that
\begin{equation*}
(F_{1},F_{2},F_{3}-g(F_{1},F_{2})),
\end{equation*}
where $g\in \Bbb{C}[X,Y],$ is an elementary reduction of $%
(F_{1},F_{2},F_{3}).$ Hence we have $\deg g(F_{1},F_{2})=\deg F_{3}=d_{3}.$
Since $\gcd (p,d_{2})=1,$ we see that $\frac{p}{\gcd \left( p,d_{2}\right) }%
=p,$ and so by Proposition \ref{prop_deg_g_fg}, 
\begin{equation*}
\deg g(F_{1},F_{2})\geq q(pd_{2}-d_{2}-p+\deg [F_{1},F_{2}])+rd_{2},
\end{equation*}
where $\deg _{Y}g(X,Y)=pq+r$ with $0\leq r<p.\,$ Since $F_{1},F_{2}$ are
algebraically independent, $\deg [F_{1},F_{2}]\geq 2$ and $%
pd_{2}-d_{2}-p+\deg [F_{1},F_{2}]\geq pd_{2}-d_{2}-p+2.$ Then by (\ref
{row_sylwester_pdd}) follows that $q=0.$ Thus 
\begin{equation*}
g(X,Y)=\sum_{i=0}^{p-1}g_{i}(X)Y^{i}.
\end{equation*}
Since $\func{lcm}(p,d_{2})=pd_{2},$ the sets 
\begin{equation*}
p\Bbb{N},d_{2}+p\Bbb{N},\ldots ,(p-1)d_{2}+p\Bbb{N}
\end{equation*}
are pairwise disjoint. So 
\begin{equation*}
\deg \left( \sum_{i=0}^{p-1}g_{i}(F_{1})F_{2}^{i}\right) =\underset{%
i=0,\ldots ,p-1}{\max }\left( \deg F_{1}\deg g_{i}+i\deg F_{2}\right) 
\end{equation*}
and 
\begin{eqnarray*}
d_{3} &=&\deg g\left( F_{1},F_{2}\right)  \\
&=&\deg \left( \sum_{i=0}^{p-1}g_{i}(F_{1})F_{2}^{i}\right) \in
\bigcup_{r=0}^{p-1}\left( rd_{2}+p\Bbb{N}\right) \subset p\Bbb{N}+d_{2}\Bbb{N%
},
\end{eqnarray*}
a contradiction.

Now assume that
\begin{equation*}
(F_{1},F_{2}-g(F_{1},F_{3}),F_{3}),
\end{equation*}
where $g\in \Bbb{C}[X,Y],$ is an elementary reduction of $%
(F_{1},F_{2},F_{3}).$ Since $d_{3}\notin p\Bbb{N}+d_{2}\Bbb{N},$ we have $%
p\nmid d_{3}$ and $\gcd (p,d_{3})=1.$ Hence by Proposition \ref
{prop_deg_g_fg}, 
\begin{equation*}
\deg g(F_{1},F_{3})\geq q(pd_{3}-d_{3}-p+\deg [F_{1},F_{3}])+rd_{3},
\end{equation*}
where $\deg _{Y}g(X,Y)=qp+r$ with $0\leq r<p.$ Since $pd_{3}-d_{3}-p+\deg
[F_{1},F_{3}]\geq pd_{3}-2d_{3}\geq 3d_{3}>d_{2}$ and since we want to have $%
\deg g(F_{1},F_{3})=p_{2},$ we conclude that $q=r=0.$ This means that $%
g(X,Y)=g(X),$ and so 
\begin{equation*}
d_{2}=\deg g\left( F_{1},F_{2}\right) =\deg g\left( F_{1}\right) \in p\Bbb{%
N\subset }p\Bbb{N}+d_{2}\Bbb{N},
\end{equation*}
a contradiction.

Finally, assume that 
\begin{equation*}
(F_{1}-g(F_{2},F_{3}),F_{2},F_{3}),
\end{equation*}
where $g\in \Bbb{C}[X,Y],$ is an elementary reduction of $%
(F_{1},F_{2},F_{3}).$ Thus we have $\deg g(F_{2},F_{3})=p.$ Let 
\begin{equation*}
\widetilde{p}=\frac{d_{2}}{\gcd (d_{2},d_{3})}.
\end{equation*}
Since $d_{3}\notin p\Bbb{N}+d_{2}\Bbb{N},$ we see that $d_{2}\nmid d_{3},$
and so $\widetilde{p}>1.$ By Proposition \ref{prop_deg_g_fg}, 
\begin{equation*}
\deg g(F_{2},F_{3})\geq q(\widetilde{p}d_{3}-d_{2}-d_{3}+\deg
[F_{1},F_{3}])+rd_{3},
\end{equation*}
where $\deg _{Y}g(X,Y)=q\widetilde{p}+r$ with $0\leq r<\widetilde{p}.$ Since 
$d_{3}>p$ (because $d_{3}>d_{2}>p$), we see that $r=0.$ Consider the case $%
\widetilde{p}\geq 3.$ Then $\widetilde{p}d_{3}-d_{2}-d_{3}+\deg
[F_{1},F_{3}]\geq d_{3}+\deg [F_{1},F_{3}]>p.$ Thus we must have $q=0.$
Hence $g(X,Y)=g(X)$ and 
\begin{equation*}
p=\deg g(F_{2},F_{3})=\deg g(F_{2})\in d_{2}\Bbb{N}.
\end{equation*}
This contradicts $d_{2}\neq p$ (we have assummed that $p\nmid d_{2}$).

Now, consider the case $\widetilde{p}=2.$ Since $\widetilde{p}=2,$ we have,
for some $n\in \Bbb{N}^{*},$ $d_{2}=2n$ and $d_{3}=ns,$ where $s\geq 3$ is
odd. Consider first the case $s>3.$ Then 
\begin{eqnarray*}
2d_{3}-d_{3}-d_{2}+\deg [F_{1},F_{3}] &=&d_{3}-d_{2}+\deg [F_{1},F_{3}] \\
&=&\left( s-2\right) n+\deg [F_{1},F_{3}]>d_{2}>p.
\end{eqnarray*}
Thus we have $q=0.\,$As before this leads to a contradiction.

Now, consider the case $s=3.$ This is the case when we use the second
statement of the assumption (2). Since $d_{2}=2n$ and $d_{3}=3n,$ we see
that $\frac{d_{3}}{d_{2}}=\frac{3}{2}.$ Hence (1) is not satisfied. Thus,
the assumption (2) is satisfied and so $n=\frac{d_{2}}{2}>p-2.$ Hence 
\begin{eqnarray*}
2d_{3}-d_{3}-d_{2}+\deg [F_{1},F_{3}] &=&d_{3}-d_{2}+\deg [F_{1},F_{3}]\geq 
\\
&\geq &n+2>p.
\end{eqnarray*}
So, also in this case we have $q=0.$ As before this leads to a contradiction.
\end{proof}

For small prime numbers $p$ the above theorem gives, for example, the
following results.

\begin{corollary}
\label{cor_p_d2_d3}(a) If $\left( 5,d_{2},d_{3}\right) \neq \left(
5,6,9\right) $ and $5\leq d_{2}\leq d_{3},$ then $\left(
5,d_{2},d_{3}\right) \in \limfunc{mdeg}\left( \limfunc{Tame}\left( \Bbb{C}%
^{3}\right) \right) $ if and only if $5|d_{2}$ or $d_{3}\in 5\Bbb{N}+d_{2}%
\Bbb{N}.$\newline
(b) If $\left( 7,d_{2},d_{3}\right) \notin \left\{ \left( 7,8,12\right)
,\left( 7,10,15\right) \right\} $ and $7\leq d_{2}\leq d_{3},$ then $\left(
7,d_{2},d_{3}\right) \in \limfunc{mdeg}\left( \limfunc{Tame}\left( \Bbb{C}%
^{3}\right) \right) $ if and only if $7|d_{2}$ or $d_{3}\in 7\Bbb{N}+d_{2}%
\Bbb{N}.$\newline
(c) If $\left( 11,d_{2},d_{3}\right) \notin \left\{ \left( 11,12,18\right)
,\left( 11,14,21\right) ,\left( 11,16,24\right) ,\left( 11,18,27\right)
\right\} $ and $11\leq d_{2}\leq d_{3},$ then $\left( 11,d_{2},d_{3}\right)
\in \limfunc{mdeg}\left( \limfunc{Tame}\left( \Bbb{C}^{3}\right) \right) $
if and only if $1|d_{2}$ or $d_{3}\in 11\Bbb{N}+d_{2}\Bbb{N}.$\newline
(d) If $\left( 13,d_{2},d_{3}\right) \notin \left\{ \left( 13,14,21\right)
,\left( 13,16,24\right) ,\left( 13,18,27\right) ,\left( 13,20,30\right)
,\left( 13,22,33\right) \right\} $ and $13\leq d_{2}\leq d_{3},$ then $%
\left( 13,d_{2},d_{3}\right) \in \limfunc{mdeg}\left( \limfunc{Tame}\left( 
\Bbb{C}^{3}\right) \right) $ if and only if $13|d_{2}$ or $d_{3}\in 13\Bbb{N}%
+d_{2}\Bbb{N}.$
\end{corollary}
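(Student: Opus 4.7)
The plan is to read off all four parts of the corollary directly from Theorem \ref{tw_p_d2_d3}. For each prime $p \in \{5,7,11,13\}$ and each triple $(p,d_2,d_3)$ with $p\le d_2\le d_3$, I would check whether the hypotheses of Theorem \ref{tw_p_d2_d3} are met; if so, the desired iff characterization is immediate from that theorem, and the only work left is to identify precisely the triples that escape those hypotheses and confirm that they coincide with the lists given in parts (a)--(d).

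Both hypotheses of Theorem \ref{tw_p_d2_d3} fail simultaneously exactly when condition~(1) fails, i.e.\ $d_3/d_2 = 3/2$ (which forces $d_2$ to be even and $d_3 = 3d_2/2$), and condition~(2) fails, i.e.\ $d_2/2 \le p-2$ (equivalently $d_2 \le 2p-4$). Combined with the standing inequality $d_2 \ge p$, the exceptional set for a given prime $p$ is therefore
\begin{equation*}
\bigl\{(p,d_2,3d_2/2) : d_2 \in 2\Bbb{N},\ p\le d_2\le 2p-4\bigr\}.
\end{equation*}
A direct enumeration then gives: for $p=5$ only $d_2=6$ qualifies, producing $(5,6,9)$; for $p=7$ the even values $d_2\in\{8,10\}$ yield $(7,8,12)$ and $(7,10,15)$; for $p=11$ the values $d_2\in\{12,14,16,18\}$ produce the four listed triples; and for $p=13$ the values $d_2\in\{14,16,18,20,22\}$ produce the five listed triples.

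There is no genuine obstacle here --- the characterization itself is the content of Theorem \ref{tw_p_d2_d3}, and the corollary is simply a combinatorial repackaging. The only mild care required is with the strict inequality $d_2/2>p-2$ in condition~(2) of the theorem: the boundary case $d_2 = 2p-4$ is not covered by that hypothesis and therefore must be included among the exceptional triples, which is why $(7,10,15)$ and $(13,22,33)$ appear in the respective lists.
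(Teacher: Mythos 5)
Your proposal is correct and follows essentially the same route as the paper: the paper likewise derives all four parts directly from Theorem \ref{tw_p_d2_d3} by enumerating the triples that satisfy neither condition (1) nor condition (2), i.e.\ those with $d_{3}/d_{2}=3/2$ and $p\leq d_{2}\leq 2p-4$ with $d_{2}$ even. Your explicit identification of the exceptional set and the remark about the boundary case $d_{2}=2p-4$ make precise what the paper leaves as ``one can easily check.''
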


\begin{proof}
One can easily check that, for example, for $p=11$ the only triples of the
form $\left( 11,d_{2},d_{3}\right) $, with $11\leq d_{2}\leq d_{3},$ that
does not satisfy neither condition (1) nor condition (2) of the above
theorem are $\left( 11,12,18\right) ,\left( 11,14,21\right) ,\left(
11,16,24\right) $ and $\left( 11,18,27\right) .$
\end{proof}

The point (a) of the above corollary says that we have almost complete
description of the set
\begin{equation}
\left\{ \left( 5,d_{2},d_{3}\right) :5\leq d_{2}\leq d_{3}\right\} \cap 
\limfunc{mdeg}\left( \limfunc{Tame}\left( \Bbb{C}^{3}\right) \right) .
\label{row_pdd_p5_1}
\end{equation}

The only one thing that we do not know is whetear $\left( 5,6,9\right) $ is
an element of this set. One can, of course, notice that $9\notin 5\Bbb{N}+6%
\Bbb{N}.$ In the next section we show that $\left( 5,6,9\right) \notin 
\limfunc{mdeg}\left( \limfunc{Tame}\left( \Bbb{C}^{3}\right) \right) ,$ and
so we obtain the complete description of the set (\ref{row_pdd_p5_1}).

\subsection{Tame automorphism of $\Bbb{C}^{3}$ with multidegree equal $%
\left( 5,6,9\right) $ and the Jacobian Conjecture}

Our main purpose in this section is to prove the following result.

\begin{theorem}
\label{lem_5_6_9}There is no tame automorphism of $\Bbb{C}^{3}$ with
multidegree equal to $\left( 5,6,9\right) .$
\end{theorem}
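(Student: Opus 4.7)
The plan is to invoke Theorem~\ref{tw_reduc_type_4}. Since $(d_1,d_2,d_3)=(5,6,9)$ satisfies $d_3/d_2=3/2$ (and incidentally $3\nmid d_1$), it is enough to rule out elementary reductions of a hypothetical tame automorphism $F=(F_1,F_2,F_3)$ with $\limfunc{mdeg}F=(5,6,9)$; moreover, by the remarks on linear change of coordinates I may further assume that $F(0,0,0)=(0,0,0)$ and that the linear part of $F$ equals $\limfunc{id}_{\Bbb{C}^3}$, so each $F_i=X_i+(\text{terms of degree}\geq 2)$. I split into three cases according to which coordinate of $F$ is lowered by the elementary reduction.

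The two easy cases fall directly out of Proposition~\ref{prop_deg_g_fg}. If $(F_1,F_2-g(F_1,F_3),F_3)$ is an elementary reduction, then $\deg g(F_1,F_3)=6$; with $p=5/\gcd(5,9)=5$ the bound $p d_3-d_3-d_1+\deg[F_1,F_3]\geq 31+2=33>6$ forces $q=r=0$, so $g\in\Bbb{C}[X]$ and $\deg g(F_1)\in 5\Bbb{N}$, contradicting $6\notin 5\Bbb{N}$. If $(F_1,F_2,F_3-g(F_1,F_2))$ is an elementary reduction, then $\deg g(F_1,F_2)=9$; with $p=5/\gcd(5,6)=5$ the estimate $p d_2-d_2-d_1+\deg[F_1,F_2]\geq 21$ forces $q=0$, and $6r\leq 9$ forces $r\in\{0,1\}$. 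Since $5\Bbb{N}$ and $6+5\Bbb{N}$ are disjoint, no leading-term cancellation occurs in $g(F_1,F_2)$, so $\deg g(F_1,F_2)\in 5\Bbb{N}\cup(6+5\Bbb{N})$, which misses $9$.

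The remaining, and main, case is an elementary reduction of the form $(F_1-g(F_2,F_3),F_2,F_3)$ with $\deg g(F_2,F_3)=5$. Here $p=6/\gcd(6,9)=2$, so Proposition~\ref{prop_deg_g_fg} reads $\deg g(F_2,F_3)\geq q\bigl(3+\deg[F_2,F_3]\bigr)+9r$. If $\deg[F_2,F_3]\geq 3$ this again forces $q=r=0$ and yields the impossible $5\in 6\Bbb{N}$. Since $F_2$ and $F_3$ are algebraically independent we have $\deg[F_2,F_3]\geq 2$, so the only surviving possibility is $\deg[F_2,F_3]=2$. This is where the main obstacle lies, and it will be dispatched by combining Lemma~\ref{lem_deg_fg_2_x1x2} with the two-dimensional Jacobian Conjecture.

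To handle $\deg[F_2,F_3]=2$, I plan to apply Lemma~\ref{lem_deg_fg_2_x1x2} (after the evident permutation of variables, using that the linear parts of $F_2$ and $F_3$ are $X_2$ and $X_3$, respectively) to conclude that $F_2,F_3\in\Bbb{C}[X_2,X_3]$. Since $F\in\limfunc{Aut}(\Bbb{C}^3)$, its Jacobian determinant lies in $\Bbb{C}^*$; expanding the Jacobian matrix along the column of $X_1$-derivatives (whose $F_2$- and $F_3$-entries vanish) gives
\[
\frac{\partial F_1}{\partial X_1}\cdot\det\!\begin{pmatrix}\partial F_2/\partial X_2 & \partial F_2/\partial X_3\\ \partial F_3/\partial X_2 & \partial F_3/\partial X_3\end{pmatrix}\in\Bbb{C}^*,
\]
so each factor is a nonzero constant. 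Thus $(F_2,F_3)\colon\Bbb{C}^2\to\Bbb{C}^2$ is a polynomial map with constant nonzero Jacobian and $\max(\deg F_2,\deg F_3)=9$, which is well within the range for which the two-dimensional Jacobian Conjecture is verified (Moh's theorem), so $(F_2,F_3)\in\limfunc{Aut}(\Bbb{C}^2)$. Corollary~\ref{cor_dim2_tame_d1_d2}(a) then forces $6\mid 9$ or $9\mid 6$, both of which are false; this final contradiction completes the proof.
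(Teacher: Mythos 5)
Your proof is correct and follows essentially the same route as the paper: Theorem \ref{tw_reduc_type_4} to reduce to elementary reductions, Proposition \ref{prop_deg_g_fg} to dispose of the first two cases and to isolate $\deg[F_2,F_3]=2$ in the third, then Lemma \ref{lem_deg_fg_2_x1x2} plus Moh's theorem and Jung--van der Kulk for the final contradiction. The only (harmless) cosmetic differences are that you normalize the linear part of $F$ to the identity at the outset rather than composing with $L^{-1}$ only in the last case, and that you extract the Keller condition by expanding the $3\times 3$ Jacobian instead of reading it directly off $\deg[F_2,F_3]=2$.
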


Before we give the proof of the above theorem we recall some positive
results about the Jacobian Conjecture in dimension two. In the proof of the
theorem we can use one of such results but for the completeness we recall a
little bit more.

The first one is the following result due to Magnus \cite{Magnus}.

\begin{theorem}
\textit{(Magnus, see also \cite{van den Essen}, Thm. 10.2.24)} Let $F=\left(
P,Q\right) $ be a Keller map (i.e. such that $JacF=1$). If $\gcd \left( \deg
P,\deg Q\right) =1$ then $F$ is invertible and $\deg P=1$ or $\deg Q=1.$
\end{theorem}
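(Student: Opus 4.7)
The plan is induction on $\deg P + \deg Q$, with the base case $\deg P + \deg Q = 2$ (both components affine) handled by elementary linear algebra. For the inductive step, the starting observation is that $Jac(P,Q) = 1$ being a nonzero constant forces its homogeneous component of degree $\deg P + \deg Q - 2$ to vanish whenever that number is positive; that component is precisely $Jac(\overline P, \overline Q)$, so $\overline P$ and $\overline Q$ are algebraically dependent. Lemma \ref{lem_f_g_alg_zalez} then furnishes a homogeneous $h \in \Bbb{C}[X,Y]$ and integers $n_1, n_2 \geq 0$ with $\overline P = c_1 h^{n_1}$ and $\overline Q = c_2 h^{n_2}$. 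Comparing degrees gives $\deg P = n_1 \deg h$ and $\deg Q = n_2 \deg h$, so the coprimality hypothesis forces $\deg h = 1$ and $\gcd(n_1, n_2) = 1$. An invertible linear change of source coordinates then normalizes $h$ to $Y$, yielding $\overline P = c_1 Y^{n_1}$ and $\overline Q = c_2 Y^{n_2}$; after swapping $P \leftrightarrow Q$ if necessary I arrange $n_1 \leq n_2$.

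If $n_1 = 1$ the theorem follows at once: another affine change of source variables puts $P = X$, the Keller identity collapses to $\partial Q / \partial Y = c \in \Bbb{C}^*$, and therefore $Q = cY + g(X)$ for some $g \in \Bbb{C}[X]$, exhibiting $F$ as a triangular automorphism with $\deg P = 1$. The substance of the induction is the case $n_1 \geq 2$: since $\gcd(n_1, n_2) = 1$ with $n_1 \geq 2$ gives $n_1 \nmid n_2$, the crude elementary reduction $Q \mapsto Q - \alpha P^k$ cannot lower $\deg Q$. My plan is to reduce more subtly by exploiting finer constraints: $\overline P = c_1 Y^{n_1}$ forces $\deg_X P \leq n_1 - 1$ and $\deg_Y(P|_{X=0}) = n_1$, with analogous bounds for $Q$, and the Keller identity written in $\Bbb{C}(Y)[X]$ should then produce a polynomial $\widetilde Q = Q - R(P)$ of strictly smaller total degree with $Jac(P, \widetilde Q) \in \Bbb{C}^*$, at which point the inductive hypothesis applies. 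The main obstacle is executing this reduction cleanly and ensuring that the coprimality hypothesis (or a suitable divisibility surrogate, such as $\deg P \mid \deg \widetilde Q$) survives the passage from $(P,Q)$ to $(P, \widetilde Q)$. A technically cleaner alternative is to invoke the Abhyankar--Moh--Suzuki theorem: the Keller condition makes a generic fiber $\{P = c\}$ smooth and (after further analysis) isomorphic to $\Bbb{A}^1$, and pulling $Q$ back along a polynomial parametrization then forces $n_1 \mid n_2$ directly, contradicting $\gcd(n_1, n_2) = 1$ with $n_1 \geq 2$ and precluding this case entirely.

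Once invertibility of $F$ is established, the final assertion $\deg P = 1$ or $\deg Q = 1$ drops out from the Jung--van der Kulk theorem (Theorem \ref{tw_jung_van_der_Kulk}) combined with Corollary \ref{cor_dim2_tame_d1_d2}: every element of $\limfunc{Aut}(\Bbb{C}^2)$ is tame, so its bidegree $(d_1, d_2)$ satisfies $d_1 \mid d_2$ or $d_2 \mid d_1$, and the coprimality $\gcd(d_1, d_2) = 1$ then forces $\min(d_1, d_2) = 1$.
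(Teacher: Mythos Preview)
The paper does not supply its own proof of this theorem; it is merely recalled from the literature (Magnus, and van den Essen, Thm.~10.2.24) as background for the argument that $(5,6,9)\notin\limfunc{mdeg}(\limfunc{Tame}(\Bbb C^3))$. So there is no ``paper's proof'' to compare against, and I evaluate your proposal on its own merits.

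Your setup through the reduction $\overline P=c_1Y^{n_1}$, $\overline Q=c_2Y^{n_2}$ with $\gcd(n_1,n_2)=1$ is correct, and the case $n_1=1$ is handled cleanly. The genuine gap is the case $n_1\ge 2$, which you yourself flag as the ``substance'' and then do not close. Your first plan (find $R\in\Bbb C[T]$ with $\deg(Q-R(P))<\deg Q$) faces exactly the obstacle you name: even if such a reduction exists, there is no reason $\gcd(\deg P,\deg\tilde Q)=1$, so your inductive hypothesis does not apply to $(P,\tilde Q)$. This is not a cosmetic issue; the induction as you have set it up cannot proceed. Your second plan (Abhyankar--Moh--Suzuki via the generic fiber) hides a step at least as hard as the theorem itself: the Keller condition gives smoothness of every fiber $\{P=c\}$, but smoothness alone does not make an affine plane curve isomorphic to $\Bbb A^1$ --- you would need irreducibility, rationality, and a single place at infinity, none of which you establish. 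Indeed, showing that a generic fiber of $P$ is a line is essentially equivalent to showing $P$ is a coordinate, which already implies the conclusion.

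The actual proofs in the literature (Magnus's original, or the treatment in van den Essen) bypass this by using finer degree-theoretic information --- for instance a weighted-degree or Newton-polygon argument that extracts a divisibility relation $\deg P\mid\deg Q$ (or vice versa) directly from the Keller condition when the leading forms are powers of a linear form, without needing a reduction that preserves coprimality. Your final paragraph (deducing $\min(\deg P,\deg Q)=1$ from Jung--van der Kulk once invertibility is known) is correct and is the standard way to finish, but the invertibility step is where your argument currently stalls.
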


The next, also due to Magnus, is the following corollary of the above
theorem.

\begin{corollary}
\textit{(Magnus, see e.g. \cite{van den Essen})} If $F=\left( P,Q\right) $
is a Keller map and $\deg P$ or $\deg Q$ is a prime number, then $F$ is
invertible.
\end{corollary}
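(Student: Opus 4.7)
The plan is to reduce to Magnus's theorem (the preceding result) by iterated elementary reductions that strictly lower the degree of one coordinate. Without loss of generality, suppose $\deg P=p$ is prime. If $\gcd (\deg P,\deg Q)=1$, Magnus's theorem applies directly and $F$ is invertible. Otherwise, since $p$ is prime, $p\mid \deg Q$, so $\deg Q=kp$ for some integer $k\geq 1$; the goal is to peel off one such factor.

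To carry out the reduction, first I would analyze the leading forms. Since $Jac(P,Q)=1$ is a nonzero constant while $\deg P+\deg Q\geq 2p\geq 4>2$, the homogeneous component of $Jac(P,Q)$ of top degree $\deg P+\deg Q-2$ must vanish; this component equals $Jac(\overline{P},\overline{Q})$, so $Jac(\overline{P},\overline{Q})=0$. By Lemma \ref{lem_f_g_alg_zalez} there exist a homogeneous $h$ and constants $c_{1},c_{2}\in \Bbb{C}^{*}$ with $\overline{P}=c_{1}h^{n_{1}}$ and $\overline{Q}=c_{2}h^{n_{2}}$. The relation $\deg P\mid \deg Q$ forces $n_{1}\mid n_{2}$ after dividing through by $\deg h$, so $k=n_{2}/n_{1}=\deg Q/\deg P$ is a positive integer and $\overline{Q}=(c_{2}/c_{1}^{k})\overline{P}^{k}$.

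Define $Q'=Q-(c_{2}/c_{1}^{k})P^{k}$. By construction the leading forms cancel, so $\deg Q'<\deg Q$. Using $Jac(P,P^{k})=0$, a direct computation gives $Jac(P,Q')=Jac(P,Q)=1$, so $F'=(P,Q')$ is again Keller. Moreover, $F'=\tau \circ F$ where $\tau (u,v)=(u,v-(c_{2}/c_{1}^{k})u^{k})$ is an elementary polynomial automorphism, so $F$ is invertible if and only if $F'$ is invertible.

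Finally I would iterate. The degree of the second coordinate strictly decreases at each step, so the procedure terminates. It cannot halt with that degree equal to zero, since then the Jacobian would vanish, contradicting the Keller property. Hence we eventually reach a Keller map $(P,Q^{(m)})$ with $\deg Q^{(m)}\geq 1$ and $p\nmid \deg Q^{(m)}$, which gives $\gcd (p,\deg Q^{(m)})=1$; Magnus's theorem then yields invertibility of $F^{(m)}$, and hence of $F$. The main technical step is the leading-form analysis, which relies on the Keller condition forcing $Jac(\overline{P},\overline{Q})=0$ together with Lemma \ref{lem_f_g_alg_zalez}; the rest of the argument (Jacobian invariance under the reduction, strict degree drop, termination, and impossibility of terminating at a constant) is routine.
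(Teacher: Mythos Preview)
The paper does not supply a proof of this corollary; it is stated without argument as a known consequence of Magnus's theorem (with a pointer to \cite{van den Essen}). Your argument is correct and is the standard one: when $p\mid\deg Q$ the Keller condition forces $Jac(\overline{P},\overline{Q})=0$, Lemma~\ref{lem_f_g_alg_zalez} then gives $\overline{Q}=c\,\overline{P}^{k}$, and the resulting elementary reduction strictly lowers $\deg Q$ while preserving both the Keller property and invertibility; iterating terminates (the second degree cannot drop to $0$) at a pair with coprime degrees, where Magnus's theorem applies.
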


Later Applegate, Onishi and Nagata improved the result of Magnus.

\begin{theorem}
\textit{(Applegate, Onishi, Nagata, see e.g. \cite{Applegate, BabaNakai} or 
\cite{van den Essen}) Let }$F=\left( P,Q\right) $ be a Keller map and $%
d=\gcd \left( \deg P,\deg Q\right) .$ If $d\leq 8$ or $d$ is a prime number,
then $F$ is invertible.
\end{theorem}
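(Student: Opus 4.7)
The plan is to apply Theorem \ref{tw_reduc_type_4}: since $d_3/d_2 = 9/6 = 3/2$ (and also $3\nmid d_1=5$), it suffices to exclude the existence of a hypothetical tame automorphism $F=(F_1,F_2,F_3)$ with $\mdeg F=(5,6,9)$, $F(0)=0$, and (by the subsection on linear change of coordinates) linear part equal to $\mathrm{id}_{\mathbb{C}^3}$, which admits an elementary reduction. I would then inspect the three possible elementary reductions one by one, using Proposition \ref{prop_deg_g_fg} to bound $\deg_Y g$.

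For a reduction of the shape $(F_1,F_2,F_3-g(F_1,F_2))$: here $p=\tfrac{5}{\gcd(5,6)}=5$, and Proposition \ref{prop_deg_g_fg} forces $\deg_Y g\le 1$, so $g(X,Y)=g_0(X)+g_1(X)Y$; then $\deg g(F_1,F_2)\in 5\mathbb{N}\cup(6+5\mathbb{N})$, which does not contain $9$. For a reduction of the shape $(F_1,F_2-g(F_1,F_3),F_3)$: here $p=\tfrac{5}{\gcd(5,9)}=5$, and the inequality forces $q=r=0$, so $g(X,Y)=g(X)$ and $6=\deg g(F_1)\in 5\mathbb{N}$, contradiction. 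Both of these cases are routine and parallel arguments seen earlier in the paper.

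The main obstacle is the remaining case, an elementary reduction of the form $(F_1-g(F_2,F_3),F_2,F_3)$. Here $p=\tfrac{6}{\gcd(6,9)}=2$, and Proposition \ref{prop_deg_g_fg} gives
\begin{equation*}
5=\deg g(F_2,F_3)\ge q\bigl(3+\deg[F_2,F_3]\bigr)+9r,\qquad \deg_Y g=2q+r,\;0\le r<2.
\end{equation*}
Hence $r=0$. The subcase $q=0$ immediately gives $5=\deg g(F_2)\in 6\mathbb{N}$, a contradiction. In the subcase $q=1$ the inequality forces $\deg[F_2,F_3]\le 2$; since $F_2,F_3$ are algebraically independent, Proposition \ref{cor_rank_trdeg} gives $\deg[F_2,F_3]\ge 2$, so $\deg[F_2,F_3]=2$ exactly.

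To close this subcase I would exploit the assumption that the linear part of $F$ is the identity. Up to the permutation $X_1\leftrightarrow X_3$ (which preserves the degree of the Poisson bracket by Lemma \ref{lem_degree_linear_change}), the pair $F_2,F_3$ then has the form required in Lemma \ref{lem_deg_fg_2_x1x2}, and that lemma forces $F_2,F_3$ to depend only on two of the three variables. But then the Jacobian $\det JF$ factors as the product of $\partial F_1/\partial X_1$ (a polynomial in three variables) and the $2\times 2$ Jacobian of $(F_2,F_3)$ (a polynomial in two variables); since $\det JF\in\mathbb{C}^*$, both factors are nonzero constants. Hence $(F_2,F_3)$ is a Keller map of $\mathbb{C}^2$. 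Because $\gcd(\deg F_2,\deg F_3)=\gcd(6,9)=3$ is prime, the Applegate--Onishi--Nagata theorem (or equally Moh's theorem, as the paper remarks) implies $(F_2,F_3)\in\mathrm{Aut}(\mathbb{C}^2)$. The Jung--van der Kulk theorem, in the form of Corollary \ref{cor_dim2_tame_d1_d2}(a), then forces $6\mid 9$ or $9\mid 6$, which is absurd. This final linkage, through the two-dimensional Jacobian Conjecture, is the delicate part of the argument; the rest of the proof is the numerical bookkeeping described above.
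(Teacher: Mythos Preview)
Your proposal does not address the stated theorem at all. The statement given is the Applegate--Onishi--Nagata result on two-dimensional Keller maps: if $F=(P,Q):\Bbb{C}^2\to\Bbb{C}^2$ satisfies $Jac(P,Q)\in\Bbb{C}^*$ and $\gcd(\deg P,\deg Q)\le 8$ or is prime, then $F$ is invertible. Your write-up is instead a proof of Theorem~\ref{lem_5_6_9}, namely that $(5,6,9)\notin\limfunc{mdeg}(\limfunc{Tame}(\Bbb{C}^3))$. These are different theorems; the second one \emph{uses} the first (or Moh's Theorem~\ref{tw_JC_dim2_Moh}) as a black box at its crucial step.

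In the paper, the Applegate--Onishi--Nagata theorem is not proved; it is only recalled from the literature (with references to \cite{Applegate, BabaNakai} and \cite{van den Essen}) as background for the $(5,6,9)$ argument. So there is no ``paper's own proof'' to compare against: the paper treats this as an external input. A genuine proof would require the Newton--Puiseux/characteristic-pair machinery developed by Magnus, Applegate--Onishi, and Nagata for analyzing a hypothetical Jacobian pair, none of which appears in your proposal.

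For what it is worth, your argument for Theorem~\ref{lem_5_6_9} is essentially the paper's proof of that result (same case split via Proposition~\ref{prop_deg_g_fg}, same reduction to $\deg[F_2,F_3]=2$, same appeal to Lemma~\ref{lem_deg_fg_2_x1x2} and the two-variable Jacobian results). But it is simply attached to the wrong statement here.
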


The last result that we recall here is the following one due to Moh \cite
{Moh}.

\begin{theorem}
\textit{(see also \cite{van den Essen})} \label{tw_JC_dim2_Moh}Let $F:\Bbb{C}%
^{2}\rightarrow \Bbb{C}^{2}$ be a Keller map with $\deg F\leq 101.$ Then $F$
is invertible.
\end{theorem}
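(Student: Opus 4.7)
The plan is to argue by contradiction: suppose a Keller map $F = (P,Q):\Bbb{C}^{2}\to \Bbb{C}^{2}$ with $\deg F\le 101$ fails to be invertible, and choose such a counterexample minimizing $\deg P + \deg Q$, with $d_{1} := \deg P \le d_{2} := \deg Q$. First I would observe that since $Jac(P,Q)=1$, the leading form of $Jac(P,Q)$ vanishes, hence $Jac(\overline{P},\overline{Q})=0$. By Lemma~\ref{lem_f_g_alg_zalez} there is a homogeneous $h\in\Bbb{C}[X,Y]$ and positive integers $m,n$ with $\overline{P} = c_{1} h^{m}$ and $\overline{Q} = c_{2} h^{n}$, where $h$ is not a proper power; in particular $d_{1}/m = d_{2}/n = \deg h$.

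Next I would use minimality to rule out elementary reductions. If $d_{1}\mid d_{2}$, one could replace $Q$ by $Q - \alpha P^{d_{2}/d_{1}}$ for a suitable $\alpha \in \Bbb{C}^{*}$ (chosen so that the leading forms cancel), producing a new Keller pair of strictly smaller total degree and contradicting minimality; hence $d_{1}\nmid d_{2}$. Moreover $d_{1}\ge 2$, or else $F$ is triangular and obviously invertible. Combining with the results of Magnus and Applegate--Onishi--Nagata recalled immediately before the theorem, $\gcd(d_{1},d_{2})$ is neither $\le 8$ nor a prime, so $\gcd(d_{1},d_{2})\ge 9$ and is composite. These coarse constraints already eliminate a large swath of pairs with $d_{2}\le 101$.

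The main obstacle is upgrading these numerical constraints into a complete elimination. For this I would follow Moh's strategy: analyze the branches of $\{P = 0\}$ at infinity via their Puiseux expansions, substitute each branch into $Q$, and perform successive Tschirnhaus-type eliminations. Each elimination step yields a ``principal pair'' whose Newton data is strictly smaller, and the Keller condition, together with the vanishing of the appropriate resultants, forces very restrictive numerical relations between the exponents appearing in the Puiseux series --- Moh's ``configurations of roots''. A (finite but substantial) enumeration of admissible configurations compatible with $d_{2}\le 101$, $d_{1}\nmid d_{2}$, and $\gcd(d_{1},d_{2})\ge 9$ composite then shows that none is realizable. The hardest part, both conceptually and computationally, is precisely this final stage: controlling the combinatorial growth of admissible Newton configurations and certifying case-by-case that each violates either algebraic dependence along the top Newton edge or the normalization $Jac(P,Q)=1$; the threshold $101$ emerges as the largest $d_{2}$ for which the enumeration can be pushed through with the structural lemmas available.
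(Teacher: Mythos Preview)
The paper does not prove this theorem at all: it is stated purely as a citation of Moh's result (and of van den Essen's exposition), with no argument given. So there is no ``paper's own proof'' to compare against; the theorem functions as an external input that the paper merely invokes later, in the proof of Theorem~\ref{lem_5_6_9}.

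As for your sketch itself: the opening reductions are fine --- the observation that $Jac(\overline{P},\overline{Q})=0$, the use of Lemma~\ref{lem_f_g_alg_zalez} to get $\overline{P}=c_1 h^m$, $\overline{Q}=c_2 h^n$, and the minimality argument ruling out $d_1\mid d_2$ are all standard and correct, and the appeal to the Magnus and Applegate--Onishi--Nagata results to force $\gcd(d_1,d_2)\ge 9$ composite is exactly how one coarsely cuts down the search space. What follows, however, is not a proof but a description of a program: you name Moh's ingredients (Puiseux branches at infinity, Tschirnhaus eliminations, principal pairs, configurations of roots) without carrying out any of them, and you explicitly defer the decisive step to an unperformed ``finite but substantial enumeration''. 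That is an honest summary of Moh's strategy, but it is not itself an argument --- the entire content of the theorem lies precisely in the case analysis you have only pointed toward. If the intent is to supply a proof rather than a roadmap, the enumeration (or a reference to where it is carried out in full) is the missing piece.
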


Now we can give the proof of Theorem \ref{lem_5_6_9}.

\begin{proof}
By Theorem \ref{tw_reduc_type_4}, it is enough to show that no
(hypothetical) automorphism $F$ of $\Bbb{C}^{3}$ with $\limfunc{mdeg}%
F=\left( 5,6,9\right) $ admits an elementary reduction. Moreover, it is
enough to show this for automorphisms $F:\Bbb{C}^{3}\rightarrow \Bbb{C}^{3}$
such that $F\left( 0,0,0\right) =\left( 0,0,0\right) .$

Aassume, to the contrary, that there is an automorphism $F=\left(
F_{1},F_{2},F_{3}\right) :\Bbb{C}^{3}\rightarrow \Bbb{C}^{3}$ with $\limfunc{%
mdeg}F=\left( 5,6,9\right) $ that admits an elementary reduction.

Assume that 
\begin{equation*}
\left( F_{1},F_{2},F_{3}-g\left( F_{1},F_{2}\right) \right) ,
\end{equation*}
where $g\in \Bbb{C}\left[ X,Y\right] ,$ is an elementary reduction of $%
\left( F_{1},F_{2},F_{3}\right) .$ Then 
\begin{equation}
\deg g\left( F_{1},F_{2}\right) =\deg F_{3}=9.  \label{row_tw_569_1}
\end{equation}
By Proposition \ref{prop_deg_g_fg}, 
\begin{equation}
\deg g\left( F_{1},F_{2}\right) \geq q\left( 5\cdot 6-6-5+\deg \left[
F_{1},F_{2}\right] \right) +6r,  \label{row_tw_569_2}
\end{equation}
where $\deg _{Y}g\left( X,Y\right) =5q+r,$ with $0\leq r<5.$ Since $5\cdot
6-6-5+\deg \left[ F_{1},F_{2}\right] \geq 19+\deg \left[ F_{1},F_{2}\right]
>9,$ by (\ref{row_tw_569_1}) and (\ref{row_tw_569_2}) we have $q=0.$ Also by
(\ref{row_tw_569_1}) and (\ref{row_tw_569_2}) we have $r<2.$ Thus $g\left(
X,Y\right) =g_{0}\left( X\right) +Yg_{0}\left( X\right) ,$ and since $5\Bbb{N%
}\cap \left( 6+5\Bbb{N}\right) =\emptyset ,$ it follows that 
\begin{equation*}
9=\deg g\left( F_{1},F_{2}\right) \in 5\Bbb{N}\cup \left( 6+5\Bbb{N}\right) ,
\end{equation*}
a contradiction.

Now, assume that 
\begin{equation*}
\left( F_{1},F_{2}-g\left( F_{1},F_{3}\right) ,F_{3}\right) ,
\end{equation*}
where $g\in \Bbb{C}\left[ X,Y\right] ,$ is an elementary reduction of $%
\left( F_{1},F_{2},F_{3}\right) .$ Then 
\begin{equation}
\deg g\left( F_{1},F_{3}\right) =\deg F_{2}=6.  \label{row_tw_569_3}
\end{equation}
By Proposition \ref{prop_deg_g_fg} 
\begin{equation}
\deg g\left( F_{1},F_{3}\right) \geq q\left( 5\cdot 9-9-5+\deg \left[
F_{1},F_{3}\right] \right) +9r,  \label{row_tw_569_4}
\end{equation}
where $\deg _{Y}g\left( X,Y\right) =5q+r,$ with $0\leq r<5.$ Since $5\cdot
9-9-5+\deg \left[ F_{1},F_{3}\right] \geq 31+\deg \left[ F_{1},F_{3}\right]
>6,$ we have $q=r=0.$ This means that $g\left( X,Y\right) =g\left( X\right) ,
$ and so 
\begin{equation*}
\deg g\left( F_{1},F_{2}\right) =\deg g\left( F_{1}\right) \in 5\Bbb{N},
\end{equation*}
a contradiction.

Finally, assume that 
\begin{equation*}
\left( F_{1}-g\left( F_{2},F_{3}\right) ,F_{2},F_{3}\right) ,
\end{equation*}
where $g\in \Bbb{C}\left[ X,Y\right] ,$ is an elementary reduction of $%
\left( F_{1},F_{2},F_{3}\right) .$ By Theorem \ref{tw_reduc_type_4}, we can
also assume that $F\left( 0,0,0\right) =\left( 0,0,0\right) .$ We have 
\begin{equation}
\deg g\left( F_{2},F_{3}\right) =\deg F_{1}=5  \label{row_tw_569_5}
\end{equation}
and by Proposition \ref{prop_deg_g_fg}, 
\begin{equation}
\deg g\left( F_{2},F_{3}\right) \geq q\left( p\cdot 9-9-6+\deg \left[
F_{2},F_{3}\right] \right) +9r,  \label{row_tw_569_6}
\end{equation}
where $\deg _{Y}g\left( X,Y\right) =qp+r,$ with $0\leq r<p$ and $p=\frac{6}{%
\gcd \left( 6,9\right) }=2.$ By (\ref{row_tw_569_5}) and (\ref{row_tw_569_6}%
), $r=0.$

Consider the case $\deg \left[ F_{2},F_{3}\right] >2.$ Then $p\cdot
9-9-6+\deg \left[ F_{2},F_{3}\right] =3+\deg \left[ F_{2},F_{3}\right] >5,$
and then by (\ref{row_tw_569_5}) and (\ref{row_tw_569_6}) we see that $q=0.$
Thus in this case, we have $g\left( X,Y\right) =g\left( X\right) ,$ and so $%
\deg g\left( F_{2},F_{3}\right) =\deg g\left( F_{2}\right) \in 6\Bbb{N}.$
This contradicts (\ref{row_tw_569_5}).

Now, consider the case $\deg \left[ F_{2},F_{3}\right] =2$ (since $%
F_{2},F_{3}$ are algebraically independent, we have $\deg \left[
F_{2},F_{3}\right] \geq 2$). Let $L$ be the linear part of the automorphism $%
F.$ Since $F\left( 0,0,0\right) =(0,0,0),$ the linear part of $F\circ L^{-1}$
is the identity map $\limfunc{id}_{\Bbb{C}^{3}}.$ Thus 
\begin{eqnarray}
F_{2}\circ L^{-1} &=&X_{2}+\text{higher degree summands,}
\label{row_tw_569_7} \\
F_{3}\circ L^{-1} &=&X_{3}+\text{higher degree summands.}  \notag
\end{eqnarray}
Since, by Lemma \ref{lem_degree_linear_change}, 
\begin{equation*}
\deg \left[ F_{2}\circ L^{-1},F_{3}\circ L^{-1}\right] =\deg \left[
F_{2},F_{3}\right] =2,
\end{equation*}
it follows, by Lemma \ref{lem_deg_poiss_2}, that 
\begin{equation*}
F_{2}\circ L^{-1},F_{3}\circ L^{-1}\in \Bbb{C}\left[ X_{2},X_{3}\right] .
\end{equation*}
But $\deg \left[ F_{2}\circ L^{-1},F_{3}\circ L^{-1}\right] =2$ means that 
\begin{equation*}
Jac\left( F_{2}\circ L^{-1},F_{3}\circ L^{-1}\right) \in \Bbb{C}^{*}
\end{equation*}
(of course we consider here $F_{2}\circ L^{-1},F_{3}\circ L^{-1}$ as
functions of two variables $X_{2},X_{3}$). By Lemma \ref
{lem_mdeg_linear_change} we have $\deg \left( F_{2}\circ L^{-1}\right)
=6,\deg \left( F_{3}\circ L^{-1}\right) =9.$ Then, by Theorem \ref
{tw_JC_dim2_Moh}, the map $\left( F_{2}\circ L^{-1},F_{3}\circ L^{-1}\right)
:\Bbb{C}^{3}\rightarrow \Bbb{C}^{3}$ is an automorphism. But $6\nmid 9$
contradict Jung - van der Kulk theorem (see Thoerem \ref
{tw_jung_van_der_Kulk} and Corollary \ref{cor_dim2_tame_d1_d2}).
\end{proof}

By Theorem \ref{lem_5_6_9} and Corollary \ref{cor_p_d2_d3} (a) we obtain the
following result.

\begin{corollary}
The following equality holds true 
\begin{gather*}
\left\{ \left( 5,d_{2},d_{3}\right) :5\leq d_{2}\leq d_{3}\right\} \cap 
\limfunc{mdeg}\left( \limfunc{Tame}\left( \Bbb{C}^{3}\right) \right) = \\
=\left\{ \left( 5,d_{2},d_{3}\right) :5\leq d_{2}\leq d_{3},5|d_{2}\text{ or 
}d_{3}\in 5\Bbb{N}+d_{2}\Bbb{N}\right\} .
\end{gather*}
\end{corollary}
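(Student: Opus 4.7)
The plan is to prove both inclusions, and the argument is really just a bookkeeping exercise that packages together the two results immediately preceding the corollary. Write the target equality as $A = B$, where
\[
A = \{(5,d_2,d_3) : 5\le d_2\le d_3\}\cap \operatorname{mdeg}(\operatorname{Tame}(\mathbb{C}^3)),
\]
\[
B = \{(5,d_2,d_3) : 5\le d_2\le d_3,\ 5\mid d_2 \text{ or } d_3\in 5\mathbb{N}+d_2\mathbb{N}\}.
\]

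First I would handle the inclusion $B\subseteq A$. If $(5,d_2,d_3)\in B$, then either $5\mid d_2$ (so $d_2=k\cdot 5$ is a non-negative integer combination of $5$, i.e.\ $d_2\in 5\mathbb{N}$) or $d_3\in 5\mathbb{N}+d_2\mathbb{N}$. In both cases the hypothesis of Proposition \ref{prop_sum_d_i} is met: in the first case take $i=2$ with $d_2=k\cdot 5$; in the second case take $i=3$ with $d_3=k_1\cdot 5+k_2\cdot d_2$. Proposition \ref{prop_sum_d_i} then produces a tame automorphism $F$ of $\mathbb{C}^3$ with $\operatorname{mdeg}F=(5,d_2,d_3)$, so $(5,d_2,d_3)\in A$.

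For the reverse inclusion $A\subseteq B$, I would split on whether the triple is $(5,6,9)$ or not. For $(5,d_2,d_3)\neq(5,6,9)$ with $5\le d_2\le d_3$, Corollary \ref{cor_p_d2_d3}(a) already asserts the equivalence $(5,d_2,d_3)\in A \Longleftrightarrow 5\mid d_2$ or $d_3\in 5\mathbb{N}+d_2\mathbb{N}$, which gives membership in $B$. For the remaining triple $(5,6,9)$, one checks directly that $5\nmid 6$ and that $9\notin 5\mathbb{N}+6\mathbb{N}$ (the representations $9=5a+6b$ with $a,b\ge 0$ force $b\le 1$, and neither $b=0$ nor $b=1$ leaves an integer solution for $a$). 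Hence $(5,6,9)\notin B$. Theorem \ref{lem_5_6_9} rules out $(5,6,9)\in A$ as well, so the exceptional triple is consistently absent from both sides.

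There is essentially no obstacle here since the real work has been done in Theorem \ref{lem_5_6_9} (the $(5,6,9)$ exclusion via the two-dimensional Jacobian Conjecture through Moh's theorem) and in Corollary \ref{cor_p_d2_d3}(a) (which in turn rests on Theorem \ref{tw_p_d2_d3}). The only point needing care is the explicit verification that $9\notin 5\mathbb{N}+6\mathbb{N}$ so that the $(5,6,9)$ case does not break the stated characterisation; this is immediate.
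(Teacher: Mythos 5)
Your proposal is correct and follows essentially the same route as the paper, which derives the corollary directly from Corollary \ref{cor_p_d2_d3}(a) (covering every triple other than $\left( 5,6,9\right) $) together with Theorem \ref{lem_5_6_9} (excluding $\left( 5,6,9\right) $ from the tame multidegrees); your explicit check that $9\notin 5\Bbb{N}+6\Bbb{N}$ and the appeal to Proposition \ref{prop_sum_d_i} for the easy inclusion are exactly the bookkeeping the paper leaves implicit.
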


\subsection{The case $\left( p,2\left( p-2\right) ,3\left( p-2\right)
\right) $}

In the same manner as we proved Theorem \ref{lem_5_6_9} one can show the
following

\begin{theorem}
\label{tw_p_od_5_do_35}Let $p\geq 5$ be a prime number such that $p\leq 35.$
Then $\left( p,2\left( p-2\right) ,3\left( p-3\right) \right) \notin 
\limfunc{mdeg}\left( \limfunc{Tame}\left( \Bbb{C}^{3}\right) \right) .$
\end{theorem}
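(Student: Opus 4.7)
The plan is to generalize the proof of Theorem~\ref{lem_5_6_9} almost verbatim, reading the case $p=5$ there as the seed of a single argument uniform in $p$, with the upper bound $p\leq 35$ being precisely the range in which Moh's theorem remains available. First, since $p\geq 5$ is prime one has $3\nmid p$, so by Theorem~\ref{tw_reduc_type_4} it suffices to rule out every elementary reduction of a hypothetical tame automorphism $F=(F_{1},F_{2},F_{3})$ of $\Bbb{C}^{3}$ with $\limfunc{mdeg}F=(p,2(p-2),3(p-2))$ normalized by $F(0,0,0)=(0,0,0)$. There are three types of elementary reduction to inspect, according to which coordinate is modified.

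For reductions of the form $(F_{1},F_{2},F_{3}-g(F_{1},F_{2}))$ and $(F_{1},F_{2}-g(F_{1},F_{3}),F_{3})$, one uses that $p$ is a prime $\geq 5$ to see $\gcd(p,2(p-2))=\gcd(p,3(p-2))=1$, so the relevant ratio in Proposition~\ref{prop_deg_g_fg} is $p$ itself. The resulting lower bounds for $\deg g(F_{1},F_{2})$ and $\deg g(F_{1},F_{3})$ are much larger than $3(p-2)$ and $2(p-2)$ respectively when $q\geq 1$, forcing $q=0$; then the remainder analysis — combined with the disjointness of the residues $i\cdot 2(p-2)+p\Bbb{N}$ modulo $p$, respectively with the observation that $p\nmid 2(p-2)$ — produces a contradiction exactly as in the corresponding cases of the proof of Theorem~\ref{lem_5_6_9}.

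The main obstacle, as in that earlier proof, is the third case: an elementary reduction of the form $(F_{1}-g(F_{2},F_{3}),F_{2},F_{3})$. Now $\gcd(2(p-2),3(p-2))=p-2$, so Proposition~\ref{prop_deg_g_fg} is applied with ratio $2$ only. Writing $\deg_{Y}g=2q+r$, the inequalities $2(p-2)>p$ and $3(p-2)>p$ (valid for $p\geq 5$) together with $\deg[F_{2},F_{3}]\geq 2$ force $r=0$ and collapse the bound to
\begin{equation*}
p=\deg g(F_{2},F_{3})\geq q\bigl((p-2)+\deg[F_{2},F_{3}]\bigr),
\end{equation*}
which leaves only the critical configuration $q=1$ and $\deg[F_{2},F_{3}]=2$. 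This is the same bottleneck encountered in the $(5,6,9)$ proof.

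To finish, I would replay the endgame of Theorem~\ref{lem_5_6_9}: let $L\in GL_{3}(\Bbb{C})$ be the linear part of $F$, so that $F\circ L^{-1}$ has linear part $\limfunc{id}_{\Bbb{C}^{3}}$. Lemma~\ref{lem_degree_linear_change} gives $\deg[F_{2}\circ L^{-1},F_{3}\circ L^{-1}]=2$, and the normalization puts $F_{2}\circ L^{-1}$ and $F_{3}\circ L^{-1}$ in exactly the form assumed in Lemma~\ref{lem_deg_poiss_2}, which therefore places both polynomials in $\Bbb{C}[X_{2},X_{3}]$. Their algebraic independence (inherited from the automorphism $F\circ L^{-1}$) forces the $2\times 2$ Jacobian $Jac^{23}(F_{2}\circ L^{-1},F_{3}\circ L^{-1})$ — which by the degree condition must already be a constant — to be a nonzero constant, so $(F_{2}\circ L^{-1},F_{3}\circ L^{-1}):\Bbb{C}^{2}\to\Bbb{C}^{2}$ is a Keller map. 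For $p\leq 35$ the coordinate degrees are $2(p-2)\leq 66$ and $3(p-2)\leq 99$, so Theorem~\ref{tw_JC_dim2_Moh} (Moh) applies and declares this map an automorphism of $\Bbb{C}^{2}$. But then Theorem~\ref{tw_jung_van_der_Kulk} and Corollary~\ref{cor_dim2_tame_d1_d2} require $2(p-2)\mid 3(p-2)$ or $3(p-2)\mid 2(p-2)$, which is absurd. The precise threshold $p\leq 35$ is forced by $3(p-2)\leq 101$, so the result is gated exactly at the present reach of the two-dimensional Jacobian Conjecture.
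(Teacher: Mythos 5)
Your proposal is correct and follows essentially the same route as the paper, which simply notes that since $3(p-2)\leq 101$ one can invoke Theorem~\ref{tw_JC_dim2_Moh} and repeat the argument of Theorem~\ref{lem_5_6_9}; your write-up is a faithful expansion of exactly that argument, with the reduction via Theorem~\ref{tw_reduc_type_4}, the two easy elementary-reduction cases handled by Proposition~\ref{prop_deg_g_fg}, and the critical case $q=1$, $\deg[F_{2},F_{3}]=2$ dispatched by Lemmas~\ref{lem_degree_linear_change} and~\ref{lem_deg_poiss_2} together with Moh's theorem and Jung--van der Kulk. (Note the statement's ``$3(p-3)$'' is a typo for $3(p-2)$, which you correctly read.)
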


\begin{proof}
Since $3\left( p-2\right) \leq 101,$ it follows that one can use Theorem \ref
{tw_JC_dim2_Moh} and repeat the arguments from the proof of Theorem \ref
{lem_5_6_9}.
\end{proof}

By the above theorem and Corollary \ref{cor_p_d2_d3} we obtain the following
result.

\begin{corollary}
The following equality holds true 
\begin{gather*}
\left[ \left\{ \left( 7,d_{2},d_{3}\right) :7\leq d_{2}\leq d_{3}\right\}
\cap \limfunc{mdeg}\left( \limfunc{Tame}\left( \Bbb{C}^{3}\right) \right)
\right] \backslash \left\{ \left( 7,8,12\right) \right\} = \\
=\left\{ \left( 7,d_{2},d_{3}\right) :7\leq d_{2}\leq d_{3},7|d_{2}\text{ or 
}d_{3}\in 7\Bbb{N}+d_{2}\Bbb{N}\right\} .
\end{gather*}
\end{corollary}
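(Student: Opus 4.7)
The plan is to derive the corollary directly by combining Corollary \ref{cor_p_d2_d3}(b) with Theorem \ref{tw_p_od_5_do_35}, after checking that the two ``exceptional'' triples $(7,8,12)$ and $(7,10,15)$ that Corollary \ref{cor_p_d2_d3}(b) is forced to set aside both lie outside the right-hand set. Denote the right-hand set by $B$. A direct numerical check shows $(7,8,12)\notin B$ and $(7,10,15)\notin B$: neither $8$ nor $10$ is divisible by $7$, and one sees by inspection (or by Theorem \ref{tw_sywester}) that $12\notin 7\Bbb N+8\Bbb N$ and $15\notin 7\Bbb N+10\Bbb N$.

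For the inclusion $\supseteq$, I would take an arbitrary $(7,d_2,d_3)\in B$ and invoke Proposition \ref{prop_sum_d_i}. If $7\mid d_2$, write $d_2=k\cdot 7$; if $d_3\in 7\Bbb N+d_2\Bbb N$, write $d_3=k_1\cdot 7+k_2\cdot d_2$. In either case Proposition \ref{prop_sum_d_i} yields a tame automorphism of $\Bbb C^3$ with multidegree $(7,d_2,d_3)$, so $(7,d_2,d_3)$ belongs to the left-hand set. Since $(7,8,12)\notin B$, removing it from the left-hand set does not affect this inclusion.

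For the inclusion $\subseteq$, I would take $(7,d_2,d_3)\in\limfunc{mdeg}(\limfunc{Tame}(\Bbb C^3))$ with $7\le d_2\le d_3$ and $(7,d_2,d_3)\neq(7,8,12)$. If $(7,d_2,d_3)=(7,10,15)$, then Theorem \ref{tw_p_od_5_do_35} (applied with $p=7$, so $(p,2(p-2),3(p-2))=(7,10,15)$) contradicts tameness; hence $(7,d_2,d_3)\notin\{(7,8,12),(7,10,15)\}$. Then Corollary \ref{cor_p_d2_d3}(b) applies and gives $7\mid d_2$ or $d_3\in 7\Bbb N+d_2\Bbb N$, i.e. $(7,d_2,d_3)\in B$.

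There is no real obstacle here: the corollary is purely a bookkeeping consequence of Corollary \ref{cor_p_d2_d3}(b) (which already isolated $(7,8,12)$ and $(7,10,15)$ as the two potentially exceptional triples) together with Theorem \ref{tw_p_od_5_do_35} (which uses the two-dimensional Jacobian Conjecture in low degree to eliminate $(7,10,15)$). The only content lies in the short verification that $(7,8,12)$ and $(7,10,15)$ fail the Frobenius condition defining $B$, so that the clean set-theoretic equality stated above is really the sharpening of Corollary \ref{cor_p_d2_d3}(b) that Theorem \ref{tw_p_od_5_do_35} makes available.
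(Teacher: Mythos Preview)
Your proof is correct and follows exactly the approach the paper indicates: the paper's own ``proof'' is the single sentence ``By the above theorem and Corollary \ref{cor_p_d2_d3} we obtain the following result,'' and you have carefully unpacked precisely that combination. The numerical checks that $(7,8,12)\notin B$ and $(7,10,15)\notin B$ are the only details the paper leaves implicit, and you have supplied them.
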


The above corollary means that to obtain the complete description of the set 
$\left\{ \left( 7,d_{2},d_{3}\right) :7\leq d_{2}\leq d_{3}\right\} \cap 
\limfunc{mdeg}\left( \limfunc{Tame}\left( \Bbb{C}^{3}\right) \right) $ we
''only'' need to know whether $\left( 7,8,12\right) \in \limfunc{mdeg}\left( 
\limfunc{Tame}\left( \Bbb{C}^{3}\right) \right) .$

At the end of this subsection notice the following result.

\begin{theorem}
\label{tw_p_ponad_35}The Jacobian Conjecture for dimension two iimplies that
for a prime numbers $p\geq 5$ we have $\left( p,2\left( p-2\right) ,3\left(
p-2\right) \right) \notin \limfunc{mdeg}\left( \limfunc{Tame}\left( \Bbb{C}%
^{3}\right) \right) .$
\end{theorem}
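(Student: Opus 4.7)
The plan is to mimic, essentially verbatim, the proof of Theorem~\ref{lem_5_6_9} (the case $p=5$, $(5,6,9)$) and of Theorem~\ref{tw_p_od_5_do_35}, the only difference being that Moh's theorem (Theorem~\ref{tw_JC_dim2_Moh}), which only handles degree $\leq 101$, is replaced by the full two-dimensional Jacobian Conjecture assumed in the hypothesis. Assuming the 2D~JC, I would suppose for contradiction that there exists a tame $F=(F_1,F_2,F_3)$ of $\Bbb{C}^3$ with $\limfunc{mdeg}F=(p,2(p-2),3(p-2))$ for some prime $p\geq 5$. Since $d_3/d_2=3/2$, Theorem~\ref{tw_reduc_type_4} (combined with Kuroda's Theorem~\ref{tw_Kuroda} ruling out type IV) reduces the problem to showing that no such $F$ admits an elementary reduction. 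Moreover by the last statement of Theorem~\ref{tw_reduc_type_4}, we may assume $F(0,0,0)=(0,0,0)$.

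Next I would record two easy numerical facts used throughout: since $p$ is an odd prime $\geq 5$, $\gcd(p,2(p-2))=\gcd(p,p-2)=1$ and similarly $\gcd(p,3(p-2))=1$; and one verifies directly that $3(p-2)\notin p\Bbb{N}+2(p-2)\Bbb{N}$ for every prime $p\geq 5$. Then I would handle the three types of elementary reduction in turn. For a reduction $(F_1,F_2,F_3-g(F_1,F_2))$, Proposition~\ref{prop_deg_g_fg} applied with $p'=p/\gcd(p,2(p-2))=p$ forces $q=0$ in $\deg_Y g = pq+r$, so that $g(X,Y)=\sum_{i=0}^{p-1}g_i(X)Y^i$; since $\mathrm{lcm}(p,2(p-2))=2p(p-2)$, the sets $i\cdot 2(p-2)+p\Bbb{N}$ are pairwise disjoint, yielding $3(p-2)\in p\Bbb{N}+2(p-2)\Bbb{N}$, a contradiction. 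For $(F_1,F_2-g(F_1,F_3),F_3)$ a similar but easier estimate (using $\gcd(p,3(p-2))=1$) forces $g(X,Y)=g(X)$, whence $2(p-2)\in p\Bbb{N}$, impossible.

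The substantive case, and the place where the 2D~JC enters, is the reduction $(F_1-g(F_2,F_3),F_2,F_3)$, where $\deg g(F_2,F_3)=p$. Here $p'=2(p-2)/(p-2)=2$, and Proposition~\ref{prop_deg_g_fg} gives
\begin{equation*}
\deg g(F_2,F_3)\geq q\bigl((p-2)+\deg[F_2,F_3]\bigr)+3(p-2)r.
\end{equation*}
Since $3(p-2)>p$ we immediately get $r=0$. If $\deg[F_2,F_3]>2$ then $(p-2)+\deg[F_2,F_3]\geq p+1>p$, forcing $q=0$, so $g(X,Y)=g(X)$, giving $p\in 2(p-2)\Bbb{N}$, impossible. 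The only remaining possibility is $\deg[F_2,F_3]=2$. Let $L\in GL_3(\Bbb{C})$ be the linear part of $F$; then the linear part of $F\circ L^{-1}$ is the identity, so $F_2\circ L^{-1}=X_2+\cdots$ and $F_3\circ L^{-1}=X_3+\cdots$. By Lemma~\ref{lem_degree_linear_change} we still have $\deg[F_2\circ L^{-1},F_3\circ L^{-1}]=2$, so Lemma~\ref{lem_deg_poiss_2} forces $F_2\circ L^{-1},F_3\circ L^{-1}\in \Bbb{C}[X_2,X_3]$; moreover $\deg[F_2\circ L^{-1},F_3\circ L^{-1}]=2$ means their 2-variable Jacobian is a nonzero constant.

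Thus $(F_2\circ L^{-1},F_3\circ L^{-1})$ is a Keller map $\Bbb{C}^2\to\Bbb{C}^2$ of bidegree $(2(p-2),3(p-2))$. Applying the assumed 2D Jacobian Conjecture, this map is a polynomial automorphism of $\Bbb{C}^2$; but then the Jung--van der Kulk Theorem~\ref{tw_jung_van_der_Kulk} together with Corollary~\ref{cor_dim2_tame_d1_d2} requires $2(p-2)\mid 3(p-2)$, i.e.\ $2\mid 3$, a contradiction. This last step is the only place where the hypothesis is invoked, and it is precisely the bottleneck: for small $p$ (those with $3(p-2)\leq 101$) Moh's theorem suffices and no conjectural input is needed, but for $p>35$ one genuinely needs the full 2D~JC to close the argument, which is why the statement is phrased as an implication.
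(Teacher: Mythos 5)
Your proposal is correct and follows exactly the route the paper intends: the paper's own proof of Theorem \ref{tw_p_ponad_35} is a one-line instruction to repeat the argument of Theorem \ref{lem_5_6_9} with Moh's bound replaced by the assumed two-dimensional Jacobian Conjecture, and your write-up carries this out faithfully (the reduction via Theorem \ref{tw_reduc_type_4} using $d_3/d_2=3/2$, the two easy elementary-reduction cases, and the key case $\deg[F_2,F_3]=2$ handled through Lemmas \ref{lem_degree_linear_change} and \ref{lem_deg_poiss_2} plus Jung--van der Kulk). The numerical verifications ($\gcd(p,2(p-2))=\gcd(p,3(p-2))=1$, $3(p-2)\notin p\Bbb{N}+2(p-2)\Bbb{N}$, and the degree estimates forcing $q=r=0$) all check out.
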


\begin{proof}
If we assume that the Jacobian Conjecture for dimension two holds true, then
one can repeat the arguments from the proof of Theorem \ref{lem_5_6_9}.
\end{proof}

\begin{corollary}
If there is a tame automorphism $F$ of $\Bbb{C}^{3}$ with $\limfunc{mdeg}%
F=\left( p,2\left( p-2\right) ,3\left( p-2\right) \right) ,$ where $p>35$ is
a prime number, then the Jacobian Conjecture for dimension two is false.
\end{corollary}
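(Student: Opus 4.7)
The statement is the contrapositive of Theorem~\ref{tw_p_ponad_35} applied for the fixed prime $p>35$, so the plan reduces to supplying the proof that the author only indicated by reference to Theorem~\ref{lem_5_6_9}. I assume for contradiction that $F=(F_{1},F_{2},F_{3})$ is a tame automorphism of $\Bbb{C}^{3}$ with $\limfunc{mdeg}F=(p,\,2(p-2),\,3(p-2))$ and that the two-dimensional Jacobian Conjecture holds. Since $d_{3}/d_{2}=3/2$, Theorem~\ref{tw_reduc_type_4} lets me further assume $F(0,0,0)=(0,0,0)$ and that $F$ admits an elementary reduction; I then rule out the three possible shapes of such a reduction.

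For reductions of the form $(F_{1},F_{2},F_{3}-g(F_{1},F_{2}))$ and $(F_{1},F_{2}-g(F_{1},F_{3}),F_{3})$, the coprimeness facts $\gcd(p,2(p-2))=\gcd(p,3(p-2))=1$ (valid for any odd prime $p\neq 3$) combined with Proposition~\ref{prop_deg_g_fg} force $g$ to be of very small degree in its second variable, and routine arithmetic then shows that neither $3(p-2)\in p\Bbb{N}+2(p-2)\Bbb{N}$ nor $2(p-2)\in p\Bbb{N}$ can hold for $p\geq 7$. These are the easy cases and exactly parallel the first two cases in the proof of Theorem~\ref{lem_5_6_9}.

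The decisive case is $(F_{1}-g(F_{2},F_{3}),F_{2},F_{3})$, where $\deg g(F_{2},F_{3})=p$. Since $2(p-2)/\gcd(2(p-2),3(p-2))=2$, Proposition~\ref{prop_deg_g_fg} eliminates all admissible $g$ only when $\deg[F_{2},F_{3}]>2$; the residual possibility is $\deg[F_{2},F_{3}]=2$. Post-composing $F$ with the inverse of its linear part (which preserves the multidegree by Lemma~\ref{lem_mdeg_linear_change} and the Poisson-bracket degree by Lemma~\ref{lem_degree_linear_change}), I may assume $F_{i}=X_{i}+\cdots$ for $i=1,2,3$. Lemma~\ref{lem_deg_poiss_2} then forces $F_{2},F_{3}\in\Bbb{C}[X_{2},X_{3}]$, while $\deg[F_{2},F_{3}]=2$ means $Jac(F_{2},F_{3})\in\Bbb{C}^{*}$, so $(F_{2},F_{3})$ is a Keller pair in two variables.

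This is where the main obstacle appears. In the $(5,6,9)$ case and in Theorem~\ref{tw_p_od_5_do_35} (covering $5\leq p\leq 35$) one concludes by Moh's Theorem~\ref{tw_JC_dim2_Moh}, but that result only handles polynomial maps of degree at most $101$, whereas for $p>35$ one has $3(p-2)>99$ and Moh's bound is insufficient. Under the hypothesis that the two-dimensional Jacobian Conjecture is true, however, the Keller pair $(F_{2},F_{3})$ is a genuine polynomial automorphism of $\Bbb{C}^{2}$, and Corollary~\ref{cor_dim2_tame_d1_d2} (Jung--van der Kulk) then forces $2(p-2)\mid 3(p-2)$ or $3(p-2)\mid 2(p-2)$, both absurd. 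This contradiction establishes Theorem~\ref{tw_p_ponad_35}, and the corollary follows by contraposition.
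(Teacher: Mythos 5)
Your proof is correct and takes essentially the same route as the paper: the corollary is just the contrapositive of Theorem \ref{tw_p_ponad_35}, whose proof the paper obtains by repeating the argument of Theorem \ref{lem_5_6_9} with the assumed two-dimensional Jacobian Conjecture standing in for Moh's theorem -- exactly what you do. Your write-up merely supplies the details the paper leaves implicit (the arithmetic disposing of the first two elementary reductions and the Keller-pair/Jung--van der Kulk contradiction in the residual case $\deg [F_{2},F_{3}]=2$), and these details are all accurate.
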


\begin{proof}
This is a consequence of Theorem \ref{tw_p_od_5_do_35} and Theorem \ref
{tw_p_ponad_35}.
\end{proof}

In particular we have the following

\begin{theorem}
If there is a tame automorphism $F$ of $\Bbb{C}^{3}$ with $\limfunc{mdeg}%
F=\left( 37,70,105\right) ,$ then the two-dimensional Jacobian Conjecture is
false.
\end{theorem}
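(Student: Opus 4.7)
The plan is to observe that $(37,70,105)$ has the form $(p, 2(p-2), 3(p-2))$ for $p=37$, that $37$ is a prime number, and that $37>35$. Hence the statement is an immediate instance of the corollary preceding it. To indicate what underlies that corollary, I would follow the template of the proof of Theorem~\ref{lem_5_6_9} (the $(5,6,9)$ case), noting the one place where Moh's theorem (Theorem~\ref{tw_JC_dim2_Moh}) is no longer applicable.

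Assume for contradiction that a tame $F=(F_1,F_2,F_3)$ with $\limfunc{mdeg}F=(37,70,105)$ exists, and assume the two-dimensional Jacobian Conjecture. Since $\tfrac{105}{70}=\tfrac{3}{2}$, Theorem~\ref{tw_reduc_type_4} lets us assume that $F(0,0,0)=(0,0,0)$ and that $F$ admits an elementary reduction. I would then dispose of the three shapes of elementary reduction via Proposition~\ref{prop_deg_g_fg}. For $(F_1,F_2,F_3-g(F_1,F_2))$, the relevant $p$ is $37/\gcd(37,70)=37$, and the numerical bound $37\cdot 70-70-37+\deg[F_1,F_2]\gg 105$ forces $g(X,Y)=g_0(X)+Yg_1(X)$; then $\deg g(F_1,F_2)\in 37\mathbb{N}\cup(70+37\mathbb{N})$, which misses $105$. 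For $(F_1,F_2-g(F_1,F_3),F_3)$, the same kind of estimate with $p=37/\gcd(37,105)=37$ forces $g(X,Y)=g(X)$, contradicting $70\notin 37\mathbb{N}$.

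The decisive case is $(F_1-g(F_2,F_3),F_2,F_3)$. Here $p=70/\gcd(70,105)=2$ and the numerical bound becomes $35+\deg[F_2,F_3]$. If $\deg[F_2,F_3]>2$ this exceeds $37$ and the same contradiction as before goes through. If $\deg[F_2,F_3]=2$, let $L$ be the linear part of $F$; by Lemma~\ref{lem_degree_linear_change}, $\deg[F_2\circ L^{-1},F_3\circ L^{-1}]=2$, and by Lemma~\ref{lem_deg_poiss_2} we get $F_2\circ L^{-1},F_3\circ L^{-1}\in\mathbb{C}[X_2,X_3]$ with constant nonzero Jacobian. Thus $(F_2\circ L^{-1},F_3\circ L^{-1})\colon\mathbb{C}^2\to\mathbb{C}^2$ is a Keller map of bidegree $(70,105)$. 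Under the assumed two-dimensional Jacobian Conjecture it is an automorphism of $\mathbb{C}^2$, contradicting Corollary~\ref{cor_dim2_tame_d1_d2} since $70\nmid 105$. The contrapositive is the desired conclusion.

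The main obstacle, and the only genuinely new point relative to the $(5,6,9)$ proof, is precisely the final step: since $105>101$, Moh's theorem no longer provides an unconditional Keller$\Rightarrow$automorphism conclusion, and the full two-dimensional Jacobian Conjecture must be invoked as a hypothesis. Everything else is bookkeeping parallel to Theorem~\ref{lem_5_6_9}, so the proof reduces to checking the arithmetic ($\gcd(37,70)=1$, $\gcd(37,105)=1$, $\gcd(70,105)=35$, and the miss $105\notin 37\mathbb{N}+70\mathbb{N}$) and invoking the corollary.
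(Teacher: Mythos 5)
Your proposal is correct and follows the paper's own route: the paper derives this statement as the instance $p=37$ of the preceding corollary (via Theorem \ref{tw_p_ponad_35}, whose proof is exactly the $(5,6,9)$ argument with the two-dimensional Jacobian Conjecture substituted for Moh's theorem in the final case). Your arithmetic in all three elementary-reduction cases checks out, and you correctly isolate the single point where the assumption of the Jacobian Conjecture replaces Theorem \ref{tw_JC_dim2_Moh} because $105>101$.
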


\newpage 

\section{Finiteness results\label{section_ab_skonczone}}

Let us consider the set 
\begin{equation*}
T_{a,b}^{(n)}=\left\{ \left( d_{1},\ldots ,d_{n}\right) \in \left( \Bbb{N}%
^{*}\right) ^{n}:d_{1}\leq \ldots \leq d_{n},d_{1}=a,d_{2}=b\right\}
\backslash \limfunc{mdeg}\left( \limfunc{Tame}\left( \Bbb{C}^{n}\right)
\right) .
\end{equation*}
Of course, by Jung-Van der Kulk result, $T_{a,b}^{(2)}=\left\{ \left(
a,b\right) \right\} $ if $a\nmid b,$ and $T_{a,b}^{(2)}=\emptyset $ if $a|b.$
Thus $\#T_{a,b}^{(2)}\leq 1<+\infty $ for all $1\leq a\leq b.$ We will show
that also for $n\geq 3$ the set $T_{a,b}^{(n)}$ is finite. For $n=3$ this
result is due to Zygad\l o \cite{Zygadlo}.

\begin{theorem}
\label{tw_finite_result_1}For all integers $1\leq a\leq b$ the set $%
T_{a,b}^{(3)}$ is finite. Moreover the following inclusion is true 
\begin{equation*}
T_{a,b}^{(3)}\subset \left\{ \left( a,b,d_{3}\right) :d_{3}<\func{lcm}\left(
a,b\right) -r\right\} ,
\end{equation*}
where $r=\min \left\{ b-1,\left( a-1\right) \left( \left\lfloor \frac{b}{a}%
\right\rfloor +1\right) \right\} .$
\end{theorem}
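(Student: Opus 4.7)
The plan is to reduce the finiteness claim to showing that for every $d_3 \ge L - r$ (with $L = \func{lcm}(a,b)$), the triple $(a,b,d_3)$ is the multidegree of a tame automorphism of $\Bbb{C}^3$; once this is done, $T_{a,b}^{(3)}$ is contained in the finite set $\{(a,b,d_3) : b \le d_3 < L - r\}$. First I would dispose of the trivial case $a \mid b$: since $b = (b/a)\,a$, Proposition~\ref{prop_sum_d_i} applied with $i = 2$ produces a tame automorphism with multidegree $(a,b,d_3)$ for every $d_3 \ge 1$, so $T_{a,b}^{(3)} = \emptyset$ and the containment is vacuous.

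Assume now $a \nmid b$ and set $d = \gcd(a,b)$. I would split the analysis by the residue of $d_3$ modulo $d$. When $d \mid d_3$, apply Theorem~\ref{tw_sywester} to the coprime pair $(a/d, b/d)$: every multiple of $d$ no smaller than $d(a/d-1)(b/d-1) = L - a - b + d$ lies in $a\Bbb{N} + b\Bbb{N}$. Since $r \le b-1 \le a+b-d-1$, the hypothesis $d_3 \ge L - r$ forces $d_3 \ge L - a - b + d$, hence $d_3 \in a\Bbb{N} + b\Bbb{N}$, and Proposition~\ref{prop_sum_d_i} (with $i = 3$) supplies a tame automorphism.

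When $d \nmid d_3$, Proposition~\ref{prop_sum_d_i} cannot be applied directly and I would construct the automorphism by hand. Take $F_1 = x_1 + x_3^a$ and an Ansatz $F_2 = x_2 + \sum_{k=1}^{\lfloor b/a \rfloor} \alpha_k x_1^k x_3^{b-ka} + x_3^b$, choosing the coefficients $\alpha_k$ recursively so as to cancel successive top-degree terms of $F_1^{b/d} - c F_2^{a/d}$ (such cancellation is possible because $\overline{F_1}^{b/d}$ and $\overline{F_2}^{a/d}$ are proportional to $x_3^L$). Each successful cancellation strictly lowers the degree of this polynomial, and the cap $\lfloor b/a \rfloor + 1$ on the number of legitimate auxiliary monomials in $F_2$ produces a minimal attainable degree of the form $L - (a-1)(\lfloor b/a \rfloor + 1)$; further multiplying the resulting "reduced" polynomials by monomials in $F_1, F_2$ then sweeps every larger value within each residue class modulo $d$. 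Setting $F_3 = x_3 + g(F_1, F_2)$ for an appropriate $g \in \Bbb{C}[X,Y]$ yields a tame automorphism with multidegree $(a,b,d_3)$.

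The main obstacle is this last construction: carefully tracking which degrees — and which residues modulo $d$ — the Ansatz actually realizes, and verifying that the two thresholds $b-1$ (from the Sylvester step) and $(a-1)(\lfloor b/a \rfloor + 1)$ (from the cancellation step) combine into exactly the bound $r = \min\{b-1,\, (a-1)(\lfloor b/a \rfloor + 1)\}$ appearing in the statement. This interplay, together with showing that the "reduced" polynomials obtained by cancellation together cover all non-divisible residues uniformly in $a$ and $b$, is the technical heart of Zygad\l o's argument in \cite{Zygadlo}.
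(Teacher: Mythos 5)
Your overall strategy is the right one and is essentially the paper's (which follows Zygad\l o): reduce the theorem to realizing every $d_{3}\geq \func{lcm}(a,b)-r$ as a third coordinate degree, via a composition $G\circ F$ with $F=\left( x+z^{a},\,y+\cdots ,\,z\right) $ and $G=\left( u,v,w+(u^{\widetilde{b}}-v^{\widetilde{a}})u^{q}\right) ,$ where the coefficients in the middle coordinate are chosen recursively to cancel the leading terms of $\left( X+Z^{a}\right) ^{\widetilde{b}}-\left( \cdots \right) ^{\widetilde{a}}.$ Your preliminary reductions (the case $a\mid b$ via Proposition \ref{prop_sum_d_i}; the case $d\mid d_{3}$ via Theorem \ref{tw_sywester} applied to the coprime pair $(a/d,b/d)$) are correct.

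The gap is in the case $d\nmid d_{3}$, and it is precisely the point you defer to ``the technical heart.'' With your Ansatz $F_{2}=x_{2}+\sum_{k}\alpha _{k}x_{1}^{k}x_{3}^{b-ka}+x_{3}^{b}$ the degree of the reduced polynomial $F_{1}^{\widetilde{b}}-cF_{2}^{\widetilde{a}}$ is not a tunable parameter: the pure terms $X^{k}Z^{a(\widetilde{b}-k)}$ sit in degrees $L-k(a-1)$, and after cancelling some or all of them the only degrees you can land on are $L-k(a-1)$ (stopping early) or $1+b(\widetilde{a}-1)=L-(b-1)$ (full cancellation, when the cross term with the linear variable $x_{2}$ takes over). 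Multiplying afterwards by monomials in $F_{1},F_{2}$ only sweeps $\deg +a\Bbb{N}+b\Bbb{N}$, which stays inside a single residue class modulo $d$, and these classes need not exhaust $\Bbb{Z}/d\Bbb{Z}$. For $(a,b)=(6,9)$ you reach degrees $13$ and $10$, both $\equiv 1\ (\func{mod}3)$, so $d_{3}=11,14,17,\ldots $ are never produced, although the theorem (with $L-r=10$) asserts $(6,9,11)\in \limfunc{mdeg}\left( \limfunc{Tame}\left( \Bbb{C}^{3}\right) \right) .$ The missing device is the extra adjustable monomial $Z^{p}$, $1\leq p\leq b-1$, in the second coordinate: its cross term with the $\widetilde{a}$-th power pins the degree of the reduced polynomial at exactly $p+b(\widetilde{a}-1)$, and this is where the two caps $b-1$ (from $p\geq 1$) and $(a-1)\left( \left\lfloor b/a\right\rfloor +1\right) $ (from the number of cancellable pure terms) combine into $r$. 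One then realizes the $a$ consecutive degrees $L-r,\ldots ,L-r+a-1$ directly and finishes by adding multiples of $a$; no splitting by residue modulo $d$ is needed.
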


The original proof of the above theorem due to Zygad\l o can be found in 
\cite{Zygadlo}, but we give here another, simpler proof. It is based on the
proof of Proposition \ref{prop_4_4k+2_d3}, but there are also similarities
to the proof in \cite{Zygadlo}.

\begin{proof}
First of all notice that without loss of generality we can assume that $%
1<a<b.\,$Indeed, if $a=1\,$or $a=b,\,$then by Proposition \ref{prop_sum_d_i}
we have $T_{a,b}^{(3)}=\emptyset .$ Thus up to the end of the proof we
assume that $1<a<b.$

Let $d=\gcd \left( a,b\right) .$ Then $a=d\widetilde{a},$ $b=d\widetilde{b},$
where $\widetilde{a},\widetilde{b}\in \Bbb{N}^{*}$ are coprime numbers. We
have $\func{lcm}\left( a,b\right) =\frac{ab}{\gcd \left( a,b\right) }=a%
\widetilde{b}=b\widetilde{a}.$ Let us notice that 
\begin{equation}
\left( X+Z^{a}\right) ^{\widetilde{b}}=\sum_{l=0}^{\widetilde{b}}\binom{%
\widetilde{b}}{l}X^{l}Z^{a\widetilde{b}-la}  \label{tw_T_ab_Zygad_1}
\end{equation}
and 
\begin{gather}
\left( Y+Z^{p}+\sum_{l=0}^{\left\lfloor \frac{b}{a}\right\rfloor
}a_{l}X^{l}Z^{b-la}\right) ^{\widetilde{a}}=  \label{tw_T_ab_Zygad_2} \\
=\sum_{s_{1}+s_{2}=\widetilde{a},s_{1}>0}\left( Y+Z^{p}\right)
^{s_{1}}\left( \sum_{l=0}^{\left\lfloor \frac{b}{a}\right\rfloor
}a_{l}X^{l}Z^{b-la}\right) ^{s_{2}}+\left( \sum_{l=0}^{\left\lfloor \frac{b}{%
a}\right\rfloor }a_{l}X^{l}Z^{b-la}\right) ^{\widetilde{a}}.  \notag
\end{gather}
If we take $p<b,$ then 
\begin{equation*}
\deg \left[ \sum_{s_{1}+s_{2}=\widetilde{a},s_{1}>0}\left( Y+Z^{p}\right)
^{s_{1}}\left( \sum_{l=0}^{\left\lfloor \frac{b}{a}\right\rfloor
}a_{l}X^{l}Z^{b-la}\right) ^{s_{2}}\right] \le p+b\left( \widetilde{a}%
-1\right) ,
\end{equation*}
and since $Z^{p+b\left( \widetilde{a}-1\right) }$ can be obtained in the
above polynomial only in one way, that we actually have (provided that $%
a_{0}\neq 0$): 
\begin{equation}
\deg \left[ \sum_{s_{1}+s_{2}=\widetilde{a},s_{1}>0}\left( Y+Z^{p}\right)
^{s_{1}}\left( \sum_{l=0}^{\left\lfloor \frac{b}{a}\right\rfloor
}a_{l}X^{l}Z^{b-la}\right) ^{s_{2}}\right] =p+b\left( \widetilde{a}-1\right)
.  \label{tw_T_ab_Zygad_3}
\end{equation}

In the sequel, we will take $p\in \left\{ 1,\ldots ,b-1\right\} $ such that $%
p+b\left( \widetilde{a}-1\right) =\func{lcm}\left( a,b\right) -r,\ldots ,%
\func{lcm}\left( a,b\right) -r+\left( a-1\right) .$ This is possible,
because $b\left( \widetilde{a}-1\right) +1\leq \func{lcm}\left( a,b\right)
-r $ and $\func{lcm}\left( a,b\right) -r+\left( a-1\right) <\func{lcm}\left(
a,b\right) =b\widetilde{a}.$

Now, using (\ref{tw_T_ab_Zygad_1}), (\ref{tw_T_ab_Zygad_2}) and (\ref
{tw_T_ab_Zygad_3}) we obtain that the summands of degree greather than $%
p+b\left( \widetilde{a}-1\right) \,$in the polynomial 
\begin{equation*}
\left( X+Z^{a}\right) ^{\widetilde{b}}-\left(
Y+Z^{p}+\sum_{l=0}^{\left\lfloor \frac{b}{a}\right\rfloor
}a_{l}X^{l}Z^{b-la}\right) ^{\widetilde{a}}
\end{equation*}
are 
\begin{eqnarray*}
&&\left( 1-a_{0}^{\widetilde{a}}\right) Z^{a\widetilde{b}}, \\
&&\left[ \binom{\widetilde{b}}{1}-\binom{\widetilde{a}}{1}a_{0}^{\widetilde{a%
}-1}a_{1}\right] XZ^{a\left( \widetilde{b}-1\right) }, \\
&&\left[ \binom{\widetilde{b}}{2}-\binom{\widetilde{a}}{2}a_{0}^{\widetilde{a%
}-2}a_{1}^{2}-\binom{\widetilde{a}}{1}a_{0}^{\widetilde{a}-1}a_{2}\right]
X^{2}Z^{a\left( \widetilde{b}-2\right) },
\end{eqnarray*}
and for $k=3,\ldots ,\left\lfloor \frac{b}{a}\right\rfloor $%
\begin{equation*}
\left[ \binom{\widetilde{b}}{k}-\left( \sum_{l_{1}+\cdots +l_{\widetilde{a}%
}=k,l_{i}<k}a_{l_{1}}\cdots a_{l_{\widetilde{a}}}\right) -\binom{\widetilde{a%
}}{1}a_{0}^{\widetilde{a}-1}a_{k}\right] X^{k}Z^{a\left( \widetilde{b}%
-k\right) }.
\end{equation*}
Thus we can recursively choose coefficients $a_{0},\ldots ,a_{\left\lfloor 
\frac{b}{a}\right\rfloor }$ such that all expressions in the brackets above
are equal to zero. Since also in the polynomial 
\begin{equation*}
\left( X+Z^{a}\right) ^{\widetilde{b}}-\left( \sum_{l=0}^{\left\lfloor \frac{%
b}{a}\right\rfloor }a_{l}X^{l}Z^{b-la}\right) ^{\widetilde{a}}
\end{equation*}
there is no summands belonging to $\Bbb{C}\left[ Z\right] \backslash \Bbb{C}$
(provided that $a_{0}=1$), then 
\begin{equation*}
\deg \left[ \left( X+Z^{a}\right) ^{\widetilde{b}}-\left(
Y+Z^{p}+\sum_{l=0}^{\left\lfloor \frac{b}{a}\right\rfloor
}a_{l}X^{l}Z^{b-la}\right) ^{\widetilde{a}}\right] =p+b\left( \widetilde{a}%
-1\right) .
\end{equation*}

Now, let $d_{3}\geq \func{lcm}\left( a,b\right) -r$ be arbitrary. Then there
are $p\in \{1,\ldots ,b-1\}$ and $q\in \Bbb{N}$ such that $p+b\left( 
\widetilde{a}-1\right) \in \left\{ \func{lcm}\left( a,b\right) -r,\ldots ,%
\func{lcm}\left( a,b\right) -r+\left( a-1\right) \right\} $ and $%
d_{3}=p+b\left( \widetilde{a}-1\right) +qa.$ By the above considerations we
obtain that 
\begin{equation*}
\limfunc{mdeg}\left( G\circ F\right) =\left( a,b,d_{3}\right) ,
\end{equation*}
where 
\begin{equation*}
F\left( x,y,z\right) =\left( x+z^{a},y+z^{p}+\sum_{l=0}^{\left\lfloor \frac{b%
}{a}\right\rfloor }a_{l}x^{l}z^{b-la},z\right)
\end{equation*}
and 
\begin{equation*}
G\left( u,v,w\right) =\left( u,v,w+\left( u^{\widetilde{b}}-v^{\widetilde{a}%
}\right) u^{q}\right) .
\end{equation*}
\end{proof}

\begin{corollary}
For $n\in \Bbb{N},n\geq 3,$ and all integers $1\leq a\leq b$ the set $%
T_{a,b}^{(n)}$ is finite. Moreover the following inclusion is true 
\begin{equation*}
T_{a,b}^{(3)}\subset \left\{ \left( a,b,d_{3},\ldots ,d_{n}\right) \in
\left( \Bbb{N}^{*}\right) ^{n}:d_{3},\ldots ,d_{n}<\func{lcm}\left(
a,b\right) -r\right\} ,
\end{equation*}
where $r$ is defined as in Theorem \ref{tw_finite_result_1}.
\end{corollary}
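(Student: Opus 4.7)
The plan is to reduce the corollary to the three-dimensional case (Theorem~\ref{tw_finite_result_1}) via the lifting result of Proposition~\ref{prop_m<n}, then to deduce the finiteness as a purely combinatorial consequence of the inclusion.

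First I would argue the inclusion. Fix $n \geq 3$ and a tuple $(a,b,d_3,\ldots,d_n)$ with $a \leq b \leq d_3 \leq \cdots \leq d_n$, and suppose it does \emph{not} lie in the right-hand side, i.e.\ there is some index $k \in \{3,\ldots,n\}$ with $d_k \geq \func{lcm}(a,b) - r$. Since the sequence is nondecreasing, $a \leq b \leq d_k$, so the triple $(a,b,d_k)$ satisfies the hypothesis of Theorem~\ref{tw_finite_result_1}. By that theorem, $(a,b,d_k) \notin T_{a,b}^{(3)}$, which means $(a,b,d_k) \in \limfunc{mdeg}(\limfunc{Tame}(\Bbb{C}^3))$. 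Hence there is a tame automorphism $G$ of $\Bbb{C}^3$ with $\limfunc{mdeg} G = (a,b,d_k) = (d_1, d_2, d_k)$.

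Next I would apply Proposition~\ref{prop_m<n} with $m = 3$ and the choice of indices $i_1 = 1$, $i_2 = 2$, $i_3 = k$. The proposition produces a tame automorphism $F$ of $\Bbb{C}^n$ with $\limfunc{mdeg} F = (d_1,\ldots,d_n) = (a,b,d_3,\ldots,d_n)$, showing that $(a,b,d_3,\ldots,d_n) \notin T_{a,b}^{(n)}$. Contrapositively, every element of $T_{a,b}^{(n)}$ lies in the set displayed on the right-hand side of the corollary, which proves the inclusion.

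Finally, finiteness of $T_{a,b}^{(n)}$ follows immediately from the inclusion: the set $\{(a,b,d_3,\ldots,d_n) \in (\Bbb{N}^*)^n : b \leq d_3 \leq \cdots \leq d_n < \func{lcm}(a,b) - r\}$ is bounded in each coordinate and therefore finite (if $\func{lcm}(a,b) - r \leq b$ the set is even empty). The main obstacle to watch out for is purely notational: the corollary as displayed writes $T_{a,b}^{(3)}$ inside the inclusion where it should presumably read $T_{a,b}^{(n)}$, so in the writeup I would silently correct that, and I would also verify that the edge cases $a = 1$ or $a = b$ remain consistent with the convention in the proof of Theorem~\ref{tw_finite_result_1} (where those cases give $T_{a,b}^{(3)} = \emptyset$ via Proposition~\ref{prop_sum_d_i}, and the same argument via Proposition~\ref{prop_m<n} gives $T_{a,b}^{(n)} = \emptyset$ as well).
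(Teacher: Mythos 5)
Your proposal is correct and follows essentially the same route as the paper: reduce to the triple $(a,b,d_{i})$ with $d_{i}\geq \func{lcm}(a,b)-r$, apply Theorem \ref{tw_finite_result_1} to get a tame automorphism of $\Bbb{C}^{3}$ with that multidegree, and lift to $\Bbb{C}^{n}$. Your two editorial observations are also sound: the displayed inclusion should indeed read $T_{a,b}^{(n)}$, and the lifting step is correctly attributed to Proposition \ref{prop_m<n} (the paper's own proof cites Proposition \ref{prop_sum_d_i} here, which appears to be a slip).
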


\begin{proof}
If for some $i\in \left\{ 3,\ldots ,n\right\} $ we have $d_{i}\geq \func{lcm}%
\left( a,b\right) -r$ (actually we can think that $i=n,$ because of the
inequalities $d_{3}\leq \ldots \leq d_{n}$) then by Theorem \ref
{tw_finite_result_1}, there exists a tame automorphism $F:\Bbb{C}%
^{3}\rightarrow \Bbb{C}^{3}$ such that $\limfunc{mdeg}F=\left(
a,b,d_{i}\right) .$ Now it is enough to use Proposition \ref{prop_sum_d_i}.
\end{proof}

\section{Multidegree of the inverse of polynomial automorphisms of $\Bbb{C}%
^{2}$}

In \cite{RuskWiniar} Rusek and Winiarski proved that for all automorphisms $%
F $ of $\Bbb{C}^{n}$ the equality $\deg F^{-1}\leq \left( \deg F\right)
^{n-1}$ holds true and hence $\deg F^{-1}=\deg F$ for $n=2.$ Here we give
complete information about $\limfunc{mdeg}F^{-1}$ for $F\in \limfunc{Aut}%
\left( \Bbb{C}^{2}\right) .$

\subsection{Multidegree and the length of the automorphism of $\Bbb{C}^{2}$}

Here we establish the relations between multidegree of a given automorphism
of $\Bbb{C}^{2}$ and the length of it. We start with the following

\begin{lemma}
\label{lem_degP_degQ}If $\left( P,Q\right) \in \limfunc{Aut}\left( \Bbb{C}%
^{2}\right) $ is such that $\deg P<\deg Q,$ then there is a polynomial $f\in 
\Bbb{C}\left[ T\right] $ with $\deg f>1$ such that:\newline
(1) $\deg \left( Q-f\left( P\right) \right) <\deg P$ if $\deg P>1$ or\newline
(2) $\deg \left( Q-f\left( P\right) \right) =1$ if $\deg P=1.$
\end{lemma}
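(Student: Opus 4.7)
The plan is to iteratively peel off the top-degree part of $Q$ by subtracting suitable monomials in $P$, using Corollary \ref{cor_dim2_tame_d1_d2}. Concretely, since $(P,Q) \in \limfunc{Aut}\left( \Bbb{C}^{2}\right)$ with $\deg P < \deg Q$, part (a) of that corollary gives $\deg P \mid \deg Q$ and part (b)(i) gives $\overline{Q} = c\,\overline{P}^{\,\deg Q/\deg P}$ for some $c \in \Bbb{C}^{*}$; hence $Q_{1} := Q - cP^{\deg Q/\deg P}$ satisfies $\deg Q_{1} < \deg Q$, and $(P, Q_{1})$ is still an automorphism (being the composition of $(P,Q)$ with an elementary map). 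I would then iterate: while $\deg Q_{j} > \deg P$, set $Q_{j+1} := Q_{j} - c_{j} P^{n_{j}}$ with $n_{j} = \deg Q_{j}/\deg P \geq 2$, the coefficient $c_{j}$ being furnished by part (b)(i) applied to $(P, Q_{j})$. Since degrees are strictly decreasing and each $Q_{j}$ must stay non-constant (otherwise $(P, Q_{j})$ would not be an automorphism), the process terminates at some $Q_{k}$ with $1 \leq \deg Q_{k} \leq \deg P$.

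In case (2), where $\deg P = 1$, the termination condition itself forces $\deg Q_{k} = 1$, so I take $f(T) = \sum_{j=0}^{k-1} c_{j} T^{n_{j}}$; the leading exponent $n_{0} = \deg Q \geq 2$ ensures $\deg f > 1$. In case (1), where $\deg P > 1$, if the iteration stops with $\deg Q_{k} < \deg P$ the same choice of $f$ works. The only remaining possibility — and what I expect to be the main obstacle — is the borderline $\deg Q_{k} = \deg P > 1$, in which further monomial subtraction alone cannot lower the degree.

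To handle that borderline case, I would apply part (b)(iii) of Corollary \ref{cor_dim2_tame_d1_d2} to the automorphism $(P, Q_{k})$: it supplies an affine $\lambda$ whose second coordinate $a'P + b'Q_{k} + e'$ has degree strictly less than $\deg P$. Comparing the homogeneous parts of degree $\deg P$ gives $a'\overline{P} + b'\overline{Q_{k}} = 0$; since neither $\overline{P}$ nor $\overline{Q_{k}}$ vanishes and the second coordinate of $\lambda \circ (P, Q_{k})$ must be non-constant, both $a'$ and $b'$ are nonzero, so $\overline{Q_{k}} = \gamma\,\overline{P}$ with $\gamma := -a'/b' \in \Bbb{C}^{*}$. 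Then $\deg(Q_{k} - \gamma P) < \deg P$, and $f(T) := \sum_{j=0}^{k-1} c_{j} T^{n_{j}} + \gamma T$ fulfills (1). In every branch the top monomial of $f$ is $c_{0} T^{\deg Q/\deg P}$ with exponent at least $2$, hence $\deg f > 1$.
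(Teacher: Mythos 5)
Your proof is correct and takes essentially the same route as the paper's: the same peeling-off iteration that removes the leading form of $Q$ by subtracting $c_{j}P^{n_{j}}$, terminating by strict descent of degrees. The only (immaterial) difference is that you justify $\overline{Q_{j}}=c_{j}\overline{P}^{\,n_{j}}$ by citing Corollary \ref{cor_dim2_tame_d1_d2}(b)(i) (and (b)(iii) for the borderline case $\deg Q_{k}=\deg P>1$), whereas the paper derives the same identity from Lemma \ref{lem_f_g_alg_zalez} applied to the leading forms, whose Jacobian vanishes because $Jac(P,Q_{j})\in \Bbb{C}^{*}$.
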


\begin{proof}
Since $\deg Q>\deg P\geq 1,$ we have $\deg Q+\deg P>2$ and $Jac\left( 
\overline{P},\overline{Q}\right) =0$ (because $Jac\left( P,Q\right) \in \Bbb{%
C}^{*}$). By Lemma \ref{lem_f_g_alg_zalez}, 
\begin{equation*}
\overline{P}=\alpha h^{n_{1}}\qquad \overline{Q}=\beta h^{n_{2}}
\end{equation*}
for some $\alpha ,\beta \in \Bbb{C}^{*},n_{1},n_{2}\in \Bbb{N}^{*}$ and some
homogeneous polynomial $h\in \Bbb{C}\left[ X,Y\right] .$ Since $\deg 
\overline{P}|\deg \overline{Q},$ we have $n_{1}|n_{2}$ and so $\overline{Q}%
=c_{1}\overline{P}^{k_{1}}$ for some $c_{1}\in \Bbb{C}^{*}$ and $k_{1}=\frac{%
n_{2}}{n_{1}}.$ Now $\deg \left( Q-c_{1}P^{k_{1}}\right) <\deg Q,$ and if $%
\deg \left( Q-c_{1}P^{k_{1}}\right) <\deg P$ or $\deg \left(
Q-c_{1}P^{k_{1}}\right) =\deg P=1,$ then we are done. And, if $\deg \left(
Q-c_{1}P^{k_{1}}\right) >\deg P$ or $\deg \left( Q-c_{1}P^{k_{1}}\right)
=\deg P>1,$ then we can repeat the above arguments for $\overline{%
Q-c_{1}P^{k_{1}}}$ and $\overline{P}$ to obtain $c_{2}\in \Bbb{C}^{*}$ and $%
k_{2}<k_{1}$ such that $\overline{Q-c_{1}P^{k_{1}}}=c_{2}\overline{P}%
^{k_{2}}.$ Then, 
\begin{equation*}
\deg \left( Q-c_{1}P^{k_{1}}-c_{2}P^{k_{2}}\right) <\deg \left(
Q-c_{1}P^{k_{1}}\right) 
\end{equation*}
and we can proceed inductively.
\end{proof}

Now we can prove the following

\begin{proposition}
\label{Prop_trojk_minim}If $F\in \limfunc{Aut}\left( \Bbb{C}^{2}\right) ,$
then there is a number $l\in \Bbb{N}$ (including zero), affine automorphisms 
$L_{1},L_{2}$ of $\Bbb{C}^{2}$ and triangular automorphisms $T_{1},\ldots
,T_{l}$ of the forms 
\begin{eqnarray}
T_{i} &:&\Bbb{C}^{2}\ni \left( x,y\right) \mapsto \left( x,y+f_{i}(x)\right)
\in \Bbb{C}^{2}\qquad \text{for }i=1,3,\ldots   \label{Row_Prop_amalg_1} \\
T_{i} &:&\Bbb{C}^{2}\ni \left( x,y\right) \mapsto \left( x+f_{i}(y),y\right)
\in \Bbb{C}^{2}\qquad \text{for }i=2,4,\ldots   \label{Row_Prop_amalg_2}
\end{eqnarray}
with $\deg f_{i}>1,$ such that 
\begin{equation*}
F=L_{2}\circ T_{l}\circ \cdots \circ T_{1}\circ L_{1}.
\end{equation*}
Moreover, the number $l$ is unique, and one can require that $T_{i},$ $%
i=1,\ldots ,l$ are of the form (\ref{Row_Prop_amalg_1}) for even $i$ and of
the form (\ref{Row_Prop_amalg_2}) for odd $i.$
\end{proposition}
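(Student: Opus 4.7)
The plan is to prove existence by induction on $\deg F$ using Lemma \ref{lem_degP_degQ}, and to derive uniqueness of $l$ from the amalgamated product structure given by the Jung--van der Kulk theorem (Theorem \ref{tw_jung_van_der_Kulk}).

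For existence, I would induct on $\deg F = \max(\deg P, \deg Q)$, where $F = (P,Q)$. If $\deg F = 1$, then $F$ is affine and one takes $l = 0$, $L_2 = F$, $L_1 = \mathrm{id}_{\mathbb{C}^2}$. For the inductive step, assume $\deg F > 1$. After possibly composing on the right by the affine involution $S:(x,y) \mapsto (y,x)$ I may assume $\deg P \leq \deg Q$, and in fact $\deg P < \deg Q$ (since equal degrees are handled by Corollary \ref{cor_dim2_tame_d1_d2}(b)(iii), applying a suitable affine map to reduce to the strict-inequality case). By Lemma \ref{lem_degP_degQ} there exists $f \in \mathbb{C}[T]$ with $\deg f > 1$ such that the automorphism $\widetilde{F} := T \circ F$, where $T:(x,y) \mapsto (x, y - f(x))$, satisfies $\deg \widetilde F < \deg F$. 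Applying the inductive hypothesis to $\widetilde F$, writing $\widetilde F = L_2' \circ T_l' \circ \cdots \circ T_1' \circ L_1'$, one obtains $F = T^{-1} \circ L_2' \circ T_l' \circ \cdots \circ T_1' \circ L_1'$, and $T^{-1}$ is a triangular map of the form \eqref{Row_Prop_amalg_1}.

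Next I would arrange the alternating form of the $T_i$. Whenever two consecutive triangular maps in the resulting expression are of the same type (both \eqref{Row_Prop_amalg_1} or both \eqref{Row_Prop_amalg_2}), they can be combined into a single such map; whenever a triangular map ends up on the wrong side, one inserts $S \circ S = \mathrm{id}$ and absorbs one copy of $S$ into the neighboring affine factor (and conjugates the triangular map by $S$, which exchanges its type). This gives an expression $F = L_2 \circ T_l \circ \cdots \circ T_1 \circ L_1$ of the required form with $T_i$ alternating, and with $\deg f_i > 1$ for each $i$ (any $T_i$ with $\deg f_i \leq 1$ would be affine and could be absorbed into a neighboring affine factor, thereby producing two consecutive triangulars of the same type which are then collapsed; this process terminates since each step decreases $l$).

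For uniqueness of $l$, I would invoke the amalgamated product statement of Theorem \ref{tw_jung_van_der_Kulk}: $\mathrm{Aut}(\mathbb{C}^2) = \mathrm{Aff}(\mathbb{C}^2) *_{C} J(\mathbb{C}^2)$ with $C = \mathrm{Aff}(\mathbb{C}^2) \cap J(\mathbb{C}^2)$. Each $T_i$ lies in $J(\mathbb{C}^2) \setminus \mathrm{Aff}(\mathbb{C}^2)$ (since $\deg f_i > 1$), and by the alternating convention the conjugates $S T_i S$ for odd/even $i$ also lie in $J(\mathbb{C}^2) \setminus \mathrm{Aff}(\mathbb{C}^2)$ after absorbing $S$ factors. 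Thus any two decompositions of $F$ as in the statement yield two normal forms in the amalgamated product having the same number of non-$C$ factors from $J(\mathbb{C}^2)$, and the uniqueness clause of the amalgamated product forces $l$ to be the same.

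The main obstacle is the bookkeeping step that enforces the strict alternation between the two types of triangulars while keeping $\deg f_i > 1$ and without disturbing the two bracketing affine factors $L_1, L_2$; this requires carefully tracking how the swap $S$ propagates through a product of triangulars of mixed types, and verifying that merging/splitting operations always terminate. Once this is set up, uniqueness is essentially a direct translation from Definition of amalgamated product applied to the Jung--van der Kulk decomposition.
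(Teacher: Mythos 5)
Your overall strategy (induction via Lemma \ref{lem_degP_degQ} for existence, the amalgamated product for uniqueness of $l$) is the same as the paper's, and your uniqueness argument is essentially the one the paper gives. But the existence part, as you have organized it, has a genuine gap at exactly the point you flag as ``the main obstacle.'' After peeling off $T$ you apply the inductive hypothesis to $\widetilde F=T\circ F$ as a black box and obtain $F=T^{-1}\circ L_2'\circ T_l'\circ\cdots\circ T_1'\circ L_1'$, so an \emph{arbitrary} affine map $L_2'$ now sits between the new triangular factor $T^{-1}$ and $T_l'$. The two operations you propose --- merging consecutive triangulars of the same type, and inserting $S\circ S$ while conjugating --- cannot remove it. Indeed, if the first coordinate of $L_2'$ involves both variables, then $T^{-1}\circ L_2'$ cannot be rewritten as $(\text{affine})\circ(\text{triangular})$ at all: the first coordinate of $L''\circ T''$ with $T''$ of type (\ref{Row_Prop_amalg_1}) (resp. (\ref{Row_Prop_amalg_2})) and $\deg f>1$ is either of degree $>1$ or depends only on $x$ (resp. only on $y$), whereas the first coordinate of $T^{-1}\circ L_2'$ is a genuinely two--variable linear form. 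Dissolving $L_2'$ is possible, but it needs machinery you do not supply: a Bruhat-type splitting of $L_2'$ into a diagonal part, a swap, and affine factors of triangular shape, together with the commutation identities saying which of these pass through which type of triangular map (diagonal affine maps pass through both types; an affine map of a given triangular shape merges only with an adjacent triangular of the \emph{same} type; a general element of $\limfunc{Aff}\cap J$ does \emph{not} pass through a type-(\ref{Row_Prop_amalg_1}) factor), plus a termination argument for the resulting rewriting process.

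The paper sidesteps all of this by organizing the induction differently: it peels triangular factors off one end of $F$ repeatedly, never invoking the inductive hypothesis as a finished decomposition. After subtracting $f(F_1)$ from $F_2$ the degree ordering of the two coordinates is reversed, so the next application of Lemma \ref{lem_degP_degQ} automatically produces an elementary factor in the \emph{other} variable; the alternation of types is thus built in, no affine map ever appears in the interior of the word, and the only affine factors are the initial tie-breaking/swapping map and the affine remainder at the end. If you restructure your induction this way (a loop ``reduce until affine'' rather than an appeal to the statement for $\widetilde F$), your argument closes; as written, the cleanup step is a real missing piece, not mere bookkeeping. Two minor remarks: your use of Corollary \ref{cor_dim2_tame_d1_d2}(b)(iii) to break the tie $\deg P=\deg Q$ is legitimate but implicitly uses Theorem \ref{tw_jung_van_der_Kulk}; the paper handles that case directly from Lemma \ref{lem_f_g_alg_zalez}, which is cleaner.
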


\begin{proof}
Let $F=\left( F_{1},F_{2}\right) .$ If $\deg F_{1}=\deg F_{2}=1,$ then $F$
is an affine mapping and we have $F=L_{2}\circ L_{1}$ for $L_{2}=\limfunc{id}%
_{\Bbb{C}^{2}}$ and $L_{1}=F.$

If $\deg F_{1}=\deg F_{2}>1,$ then $Jac\left( \overline{F_{1}},\overline{%
F_{2}}\right) =0$ (because $Jac\left( F_{1},F_{2}\right) \in \Bbb{C}^{*}$).
Thus, by Lemma \ref{lem_f_g_alg_zalez} 
\begin{equation*}
\overline{F_{1}}=\alpha h^{n}\qquad \overline{F_{2}}=\beta h^{n}
\end{equation*}
for some $\alpha ,\beta \in \Bbb{C}^{*},n\in \Bbb{N}^{*}$ and some
homogeneous polynomial $h\in \Bbb{C}\left[ X,Y\right] .$ Let $L_{2}\left(
x,y\right) =\left( x+\frac{\alpha }{\beta }y,y\right) $ and 
\begin{equation*}
\left( G_{1},G_{2}\right) =L_{2}^{-1}\circ F.
\end{equation*}
Then $\deg G_{2}=\deg F_{2}$ (actually $G_{2}=F_{2}$) and $\deg G_{1}<\deg
G_{2}.$ Hence we can assume that $\deg F_{1}\neq \deg F_{2},$ and without
loss of generality that $\deg F_{1}<\deg F_{2}$ (if $\deg F_{1}>\deg F_{2},$
then for $\left( G_{1},G_{2}\right) =L_{2}^{-1}\circ F,$ where $L_{2}\left(
x,y\right) =\left( y,x\right) ,$ we have $\deg G_{1}<\deg G_{2}$).

By Lemma \ref{lem_degP_degQ}, we obtain a polynomial $f\in \Bbb{C}\left[
T\right] ,$ $\deg f>1,$ such that for $T_{1}\left( x,y\right) =\left(
x,y+f\left( x\right) \right) $ and $\left( G_{1},G_{2}\right)
=T_{1}^{-1}\circ F$ we have $\deg G_{2}<\deg G_{1}$ or $\deg G_{2}=\deg
G_{1}=1.$ In the second case $\left( G_{1},G_{2}\right) $ is an affine map
and for $L_{1}=\left( G_{1},G_{2}\right) $ we have $F=T_{1}\circ L_{1},$ so
we are done. And in the first case we can next time use Lemma \ref
{lem_degP_degQ} and proceed inductively.

Thus we can assume that $F=\widetilde{L}_{2}\circ \widetilde{T}_{1}\circ
\cdots \circ \widetilde{T}_{l}\circ \widetilde{L}_{1},$ where $\widetilde{L}%
_{1},\widetilde{L}_{2}\in \limfunc{Aff}\left( \Bbb{C}^{2}\right) $ and $%
\widetilde{T}_{i}$ are of the forms (\ref{Row_Prop_amalg_1}), (\ref
{Row_Prop_amalg_2}). Let us set 
\begin{eqnarray*}
T_{i} &=&\left\{ 
\begin{array}{ll}
\widetilde{T}_{l+1-i}, & \text{for odd }l, \\ 
L\circ \widetilde{T}_{l+1-i}\circ L, & \text{for even }l,
\end{array}
\right. \\
L_{1} &=&\left\{ 
\begin{array}{ll}
\widetilde{L}_{1}, & \text{for odd }l, \\ 
L\circ \widetilde{L}_{1}, & \text{for even }l,
\end{array}
\right. \qquad L_{2}=\left\{ 
\begin{array}{ll}
\widetilde{L}_{2}, & \text{for odd }l, \\ 
\widetilde{L}_{2}\circ L, & \text{for even }l,
\end{array}
\right.
\end{eqnarray*}
where $L(x,y)=(y,x).$ Then one can check that $F=L_{2}\circ T_{l}\circ
\cdots \circ T_{1}\circ L_{1}.$

To see that $l$ is unique it is enough to notice that $L\circ T_{j}\circ
L\in J\left( \Bbb{C}^{2}\right) \backslash \limfunc{Aff}\left( \Bbb{C}%
^{2}\right) ,$ $j=1,3,\ldots $ and $T_{j}\in J\left( \Bbb{C}^{2}\right)
\backslash \limfunc{Aff}\left( \Bbb{C}^{2}\right) ,$ $j=2,4,\ldots ,$ and so 
\begin{equation*}
F=\widehat{L}_{2}\circ \cdots \circ L\circ \left( L\circ T_{3}\circ L\right)
\circ L\circ T_{2}\circ L\circ \left( L\circ T_{1}\circ L\right) \circ
\left( L\circ L_{1}\right) ,
\end{equation*}
is the amalgamated representation of $F$ for a suitable choosen sets $\Phi $
and $\Psi $ (see Definition and Proposition ), where 
\begin{equation*}
\widehat{L}_{2}=\left\{ 
\begin{array}{ll}
\widetilde{L}_{2}, & \text{for even }l, \\ 
\widetilde{L}_{2}\circ L, & \text{for odd }l.
\end{array}
\right.
\end{equation*}

To see that the last statement holds true, one can write 
\begin{equation*}
F=\left( L_{2}\circ L\right) \circ \left( L\circ T_{l}\circ L\right) \circ
\cdots \circ \left( L\circ T_{1}\circ L\right) \circ \left( L\circ
L_{1}\right) .
\end{equation*}
\end{proof}

\begin{definition}
Let $F\in \limfunc{Aut}\left( \Bbb{C}^{2}\right) $ be a polynomial
automorphism. The number $l$ from Proposition \ref{Prop_trojk_minim} is
called length of $F$ and denoted $\limfunc{length}F.$
\end{definition}

In what follows we will use the following numerical object.

\begin{definition}
Let $k\in \Bbb{N}^{*}$ and let $k=p_{1}^{\alpha _{1}}\cdots p_{r}^{\alpha
_{r}}$ be its prime decomposition. Then by $l\left( k\right) $ we denote the
number $\alpha _{1}+\cdots +\alpha _{r}.$
\end{definition}

Obviously, we have $l\left( k_{1}k_{2}\right) =l\left( k_{1}\right) +l\left(
k_{2}\right) ,$ for all $k_{1},k_{2}\in \Bbb{N}^{*},$ and $l\left( k\right)
\geq 1$ for $k>1.$

\begin{theorem}
\label{tw_length_ndeg}Let $F\in \limfunc{Aut}\left( \Bbb{C}^{2}\right) .$
Then:\newline
(1) if $\limfunc{length}F=1,$ then $\limfunc{mdeg}F\in \left\{ \left(
1,d\right) ,\left( d,1\right) ,\left( d,d\right) \right\} ,$ where $1<d,$%
\newline
(2) if $\limfunc{length}F=2,$ then either $\limfunc{mdeg}F\in \left\{ \left(
d_{1},d_{2}\right) ,\left( d_{2},d_{1}\right) \right\} $ with $%
1<d_{1}<d_{2},d_{1}|d_{2}$ or $\limfunc{mdeg}F=\left( d,d\right) $ with $%
l\left( d\right) \geq 2$ (in particular $d>1\,$is a composite number),%
\newline
(3) if $\limfunc{length}F\geq 3,$ then either $\limfunc{mdeg}F\in \left\{
\left( d_{1},d_{2}\right) ,\left( d_{2},d_{1}\right) \right\} $ with $%
1<d_{1}<d_{2},d_{1}|d_{2},l\left( d_{1}\right) \geq \limfunc{length}F-1$ or $%
\limfunc{mdeg}F=\left( d,d\right) $ with $l\left( d\right) \geq \limfunc{%
length}F.$
\end{theorem}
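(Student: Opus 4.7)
The plan is to extract the canonical factorisation $F = L_2 \circ T_l \circ \cdots \circ T_1 \circ L_1$ from Proposition \ref{Prop_trojk_minim} (where $l = \limfunc{length} F$ and the $T_i$ alternate in form, with $e_i := \deg f_i \geq 2$), compute the multidegree of the triangular block explicitly, and then see which multidegrees an affine map on the left can produce. Right composition with $L_1$ preserves multidegree: its linear part is covered by Lemma \ref{lem_mdeg_linear_change}, and translation of the input never changes the degree of a coordinate polynomial. Hence it suffices to understand $\limfunc{mdeg}(L_2 \circ G)$ where $G = T_l \circ \cdots \circ T_1$.

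The main computation is $\limfunc{mdeg}(G)$, obtained by induction on~$l$. The base case is $\limfunc{mdeg}(T_1) = (1,e_1)$. For the inductive step one notes that, by the alternation in Proposition \ref{Prop_trojk_minim}, $T_{i+1}$ always modifies the coordinate that is smaller after $i$ steps by adding $f_{i+1}$ evaluated at the larger coordinate. So if the multidegree after $i$ steps is $(A_i,B_i)$ in some order with $A_i<B_i$ and $A_i\mid B_i$, then after one further step the modified coordinate has degree $\max(A_i,e_{i+1}B_i)=e_{i+1}B_i$, while the other is unchanged. Consequently $A_{i+1}=B_i$, $B_{i+1}=e_{i+1}B_i$, and induction gives
\[
\limfunc{mdeg}(G) \in \bigl\{(A,B),(B,A)\bigr\},\qquad A=e_1\cdots e_{l-1},\ B=e_1\cdots e_l,
\]
with $A\mid B$, $B/A=e_l\geq 2$, $l(A)\geq l-1$ and $l(B)\geq l$ (the ordering depending on the parity of $l$, but irrelevant to the conclusion).

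Next I analyse $L_2\circ G$. Writing $L_2(u,v)=(au+bv,cu+dv)+\text{const}$ with $ad-bc\neq 0$ (the constant being irrelevant for multidegree), each coordinate of $L_2\circ G$ is a linear combination $\alpha G_1+\beta G_2$. Since $\{\deg G_1,\deg G_2\}=\{A,B\}$ and $A<B$, such a combination has degree $B$ when its coefficient on the degree-$B$ summand is nonzero and degree $A$ otherwise; non-singularity of $L_2$ forbids both coordinates from dropping to degree $A$ simultaneously. Thus
\[
\limfunc{mdeg}(F)\in\bigl\{(A,B),(B,A),(B,B)\bigr\}.
\]
Reading off the three cases of the theorem is then immediate: for $l=1$ one has $A=1$ and $B=e_1>1$, giving (1); for $l=2$ one has $A=e_1>1$, $A\mid B$, $A<B$, and in the diagonal case $l(B)=l(e_1e_2)\geq 2$, giving (2); for $l\geq 3$ one has $l(A)\geq l-1$ in the off-diagonal cases and $l(B)\geq l$ in the diagonal case, giving (3).

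The one point that must be verified carefully is the alignment of the alternation of the $T_i$'s with the alternation of which coordinate holds the smaller degree; this is the engine that guarantees $e_{i+1}B_i$ strictly dominates $A_i$ at every inductive step, so that no hidden cancellation can spoil the $\max$ defining the new degree. Once the matching ``smaller coordinate after $i$ steps $=$ coordinate modified by $T_{i+1}$'' is checked at the base case, the rest is entirely automatic, and this is the sole technical point of the argument.
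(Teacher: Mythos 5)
Your proposal is correct and follows essentially the same route as the paper: extract the factorisation $F=L_2\circ T_l\circ\cdots\circ T_1\circ L_1$ from Proposition \ref{Prop_trojk_minim}, compute $\limfunc{mdeg}(T_l\circ\cdots\circ T_1\circ L_1)=\bigl(\prod_{j=1}^{l-1}\deg f_j,\ \prod_{j=1}^{l}\deg f_j\bigr)$ up to order, and then observe that the left affine factor can only permute the two degrees or raise the smaller to the larger. The only difference is presentational: you make explicit the induction (the coordinate modified by $T_{i+1}$ is always the currently smaller one, so $e_{i+1}B_i$ dominates and no cancellation occurs) and the nonsingularity argument for $L_2$, both of which the paper leaves as ``one can check.''
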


\begin{proof}
(1) Since $\limfunc{length}F=1,$ $F=L_{2}\circ T\circ L_{1},$ where $%
L_{1},L_{2}\in \limfunc{Aff}\left( \Bbb{C}^{2}\right) $ and $T$ is of the
form $T:\Bbb{C}^{2}\ni \left( x,y\right) \mapsto \left( x,y+f(x)\right) \in 
\Bbb{C}^{2},$ with $\deg f>1.$ Thus $\limfunc{mdeg}\left( T\circ
L_{1}\right) =\left( 1,d\right) ,\,$where $d=\deg f,$ and then one can easy
check that $\limfunc{mdeg}\left( L_{2}\circ T\circ L_{1}\right) \in \left\{
\left( 1,d\right) ,\left( d,1\right) ,\left( d,d\right) \right\} .$

(2) Since $\limfunc{length}F=2,$ $F=L_{2}\circ T_{2}\circ T_{1}\circ L_{1},$
where $L_{1},L_{2}\in \limfunc{Aff}\left( \Bbb{C}^{2}\right) $ and $%
T_{1},T_{2}$ are of the following forms 
\begin{eqnarray*}
T_{1} &:&\Bbb{C}^{2}\ni \left( x,y\right) \mapsto \left( x,y+f_{1}(x)\right)
\in \Bbb{C}^{2}, \\
T_{2} &:&\Bbb{C}^{2}\ni \left( x,y\right) \mapsto \left( x+f_{2}(y),y\right)
\in \Bbb{C}^{2},
\end{eqnarray*}
with $\deg f_{1},\deg f_{2}>1.$ Thus $\limfunc{mdeg}\left( T_{1}\circ
L_{1}\right) =\left( 1,\deg f_{1}\right) ,$ and then $\limfunc{mdeg}\left(
T_{2}\circ T_{1}\circ L_{1}\right) =\left( d_{2},d_{1}\right) ,$ where $%
d_{1}=\deg f_{1},$ $d_{2}=\deg f_{2}\cdot \deg f_{1}.$ Since $\deg
f_{1},\deg f_{2}>1,$ it follows that $l\left( d_{2}\right) =l\left( \deg
f_{1}\right) +l\left( \deg f_{2}\right) \geq 2.$ Now, one can easy see that $%
\limfunc{mdeg}\left( L_{2}\circ T_{2}\circ T_{1}\circ L_{1}\right) \in
\left\{ \left( d_{1},d_{2}\right) ,\left( d_{2},d_{1}\right) ,\left(
d_{2},d_{2}\right) \right\} .$

(3) Since $l=\limfunc{length}F\geq 3,$ $F=L_{2}\circ T_{l}\circ \cdots \circ
T_{1}\circ L_{1},$ where $L_{1},L_{2}\in \limfunc{Aff}\left( \Bbb{C}%
^{2}\right) $ and $T_{1},\ldots ,T_{l}$ are of the following forms 
\begin{equation*}
T_{i}:\Bbb{C}^{2}\ni \left( x,y\right) \mapsto \left( x+f_{i}(y),y\right)
\in \Bbb{C}^{2},
\end{equation*}
for even $i,$ and 
\begin{equation*}
T_{i}:\Bbb{C}^{2}\ni \left( x,y\right) \mapsto \left( x,y+f_{i}(x)\right)
\in \Bbb{C}^{2},
\end{equation*}
for odd $i,$ with $\deg f_{i}>1$ for $i=1,\ldots ,l.$ Now, one can easy
check that 
\begin{equation*}
\limfunc{mdeg}\left( T_{l}\circ \cdots \circ T_{1}\circ L_{1}\right)
=\left\{ 
\begin{array}{ll}
\left( \prod_{j=1}^{l}\deg f_{j},\prod_{j=1}^{l-1}\deg f_{j}\right) , & 
\text{for even }l, \\ 
\left( \prod_{j=1}^{l-1}\deg f_{j},\prod_{j=1}^{l}\deg f_{j}\right) , & 
\text{for odd }l.
\end{array}
\right.
\end{equation*}
Let 
\begin{equation*}
d_{2}=\prod_{j=1}^{l}\deg f_{j}\qquad \text{and\qquad }d_{1}=%
\prod_{j=1}^{l-1}\deg f_{j}.
\end{equation*}
Then, $\limfunc{mdeg}\left( T_{l}\circ \cdots \circ T_{1}\circ L_{1}\right)
=\left( d_{1},d_{2}\right) $ for odd $l,$ and $\limfunc{mdeg}\left(
T_{l}\circ \cdots \circ T_{1}\circ L_{1}\right) =\left( d_{2},d_{1}\right) $
for even $l.$

Since $\deg f_{i}>1,$ for $i=1,\ldots ,l,$ we have 
\begin{equation*}
l\left( d_{1}\right) \geq l\left( \deg f_{1}\right) +\cdots +l\left( \deg
f_{l-1}\right) \geq l-1
\end{equation*}
and 
\begin{equation*}
l\left( d_{2}\right) \geq l\left( \deg f_{1}\right) +\cdots +l\left( \deg
f_{l}\right) \geq l.
\end{equation*}
Of course, as in the previous case, we have 
\begin{equation*}
\limfunc{mdeg}\left( L_{2}\circ T_{l}\circ \cdots \circ T_{1}\circ
L_{1}\right) \in \left\{ \left( d_{1},d_{2}\right) ,\left(
d_{2},d_{1}\right) ,\left( d_{2},d_{2}\right) \right\} .
\end{equation*}
\end{proof}

\begin{theorem}
\label{tw_mdeg_length}Let $F\in \limfunc{Aut}\left( \Bbb{C}^{2}\right) $ be
arbitrary polynomial automorphism with $\limfunc{mdeg}F=\left(
d_{1},d_{2}\right) ,d_{1}\leq d_{2}.$ Then $\limfunc{length}F\leq \min
\left\{ l\left( d_{2}\right) ,l\left( d_{1}\right) +1\right\} .$
\end{theorem}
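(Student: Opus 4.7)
The plan is that this theorem follows by straightforward case analysis from the previous Theorem~\ref{tw_length_ndeg}, together with one small additional observation extracted from its proof. Let $\ell = \operatorname{length} F$ and write $\operatorname{mdeg} F = (d_1,d_2)$ with $d_1 \le d_2$. I will check the inequality $\ell \le \min\{l(d_2), l(d_1)+1\}$ according to the value of $\ell$.

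For $\ell = 0$ the automorphism is affine, so $(d_1,d_2) = (1,1)$ and the right-hand side equals $\min\{0,1\} = 0$. For $\ell = 1$, part~(1) of Theorem~\ref{tw_length_ndeg} forces $(d_1,d_2)$ to be either $(1,d)$ or $(d,d)$ for some $d > 1$; in both subcases $l(d) \ge 1$ handles the bound. For $\ell = 2$, part~(2) gives either $(d_1,d_2)$ with $1 < d_1 < d_2$ and $d_1 \mid d_2$, or $(d,d)$ with $l(d) \ge 2$; the second subcase is immediate, and in the first I will use the construction in the proof of Theorem~\ref{tw_length_ndeg}(2) (where $d_1 = \deg f_1$ and $d_2 = \deg f_1 \cdot \deg f_2$ with $\deg f_1, \deg f_2 > 1$) to conclude $l(d_1) \ge 1$ and $l(d_2) \ge 2$.

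The case $\ell \ge 3$ is analogous and is where the one piece of extra information is needed. Part~(3) of Theorem~\ref{tw_length_ndeg} directly yields $l(d_1) \ge \ell - 1$ in the unequal subcase and $l(d) \ge \ell$ in the equal subcase $(d,d)$. What is not stated explicitly in~(3) but is visible in its proof is that $d_2 = d_1 \cdot \deg f_\ell$ with $\deg f_\ell > 1$, so that
\[
l(d_2) \;=\; l(d_1) + l(\deg f_\ell) \;\ge\; l(d_1) + 1 \;\ge\; \ell.
\]
Combined with $l(d_1)+1 \ge \ell$, this gives $\min\{l(d_2), l(d_1)+1\} \ge \ell$ in the unequal subcase, while the equal subcase is again immediate from $l(d) \ge \ell$.

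The only mild obstacle is the tacit use of the factorization $d_2 = d_1 \cdot \deg f_\ell$ (which is not part of the statement of Theorem~\ref{tw_length_ndeg}(3)); I would address this by briefly recalling the formulas for $d_1$ and $d_2$ in terms of the $\deg f_j$ from the proof of Theorem~\ref{tw_length_ndeg}. With that in hand, the four cases together give the desired bound $\ell \le \min\{l(d_2), l(d_1)+1\}$ uniformly.
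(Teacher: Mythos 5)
Your proof is correct and follows the same route as the paper, which simply derives the theorem as a consequence of Theorem \ref{tw_length_ndeg}; your write-up just makes the case analysis explicit. The one "extra observation" you flag is actually unnecessary: in case (3) the statement already gives $d_{1}\mid d_{2}$ with $d_{1}<d_{2}$, so $l(d_{2})=l(d_{1})+l(d_{2}/d_{1})\geq l(d_{1})+1\geq \limfunc{length}F$ without appealing to the internal factorization $d_{2}=d_{1}\cdot \deg f_{l}$ from that theorem's proof.
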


\begin{proof}
This is a consequence of Theorem \ref{tw_length_ndeg}.
\end{proof}

\subsection{The case of length 1.}

Here we consider the situation with $\limfunc{length}F=1.$ Because of
Theorem \ref{tw_length_ndeg}, this simple situation is described by the
following result.

\begin{theorem}
\label{tw_mdeg_inv_length1}Let $F\in \limfunc{Aut}\left( \Bbb{C}^{2}\right) ,
$ $\limfunc{length}F=1$ and $\limfunc{mdeg}F\in \left\{ \left( 1,d\right)
,\left( d,d\right) \right\} ,$ with $1<d.$ Then 
\begin{equation*}
\limfunc{mdeg}F^{-1}\in \left\{ \left( 1,d\right) ,\left( d,1\right) ,\left(
d,d\right) \right\} .
\end{equation*}
\end{theorem}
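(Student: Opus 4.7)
The plan is to reduce this immediately to Proposition \ref{Prop_trojk_minim} together with Theorem \ref{tw_length_ndeg}(1). Because $\limfunc{length}F=1$, Proposition \ref{Prop_trojk_minim} gives a decomposition $F=L_{2}\circ T\circ L_{1}$, where $L_{1},L_{2}\in \limfunc{Aff}(\Bbb{C}^{2})$ and $T$ is a single triangular automorphism of the form $T(x,y)=(x,y+f(x))$ (or, after composing with the coordinate swap, the analogous form fixing $y$) with $\deg f>1$. Setting $d=\deg f$, Theorem \ref{tw_length_ndeg}(1) tells us that $\limfunc{mdeg}F$ must then lie in $\{(1,d),(d,1),(d,d)\}$, which in the normalized ordering of the hypothesis is exactly $\{(1,d),(d,d)\}$.

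The key observation is that inversion preserves the length-1 structure. Indeed,
\begin{equation*}
F^{-1}=L_{1}^{-1}\circ T^{-1}\circ L_{2}^{-1},
\end{equation*}
and $T^{-1}(x,y)=(x,y-f(x))$ is again a triangular automorphism of the same type and the same degree $d=\deg f>1$. So $F^{-1}$ admits a presentation of the form $\widetilde{L}_{2}\circ \widetilde{T}\circ \widetilde{L}_{1}$ with $\widetilde{L}_{i}\in \limfunc{Aff}(\Bbb{C}^{2})$ and $\widetilde{T}$ triangular with $\deg \widetilde{T}=d>1$. Hence $\limfunc{length}F^{-1}\leq 1$, and since $F^{-1}$ is not affine (its triangular part is nontrivial), $\limfunc{length}F^{-1}=1$.

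Applying Theorem \ref{tw_length_ndeg}(1) to $F^{-1}$ now yields $\limfunc{mdeg}F^{-1}\in \{(1,d'),(d',1),(d',d')\}$ where $d'>1$ is the degree of the triangular factor in the length-$1$ decomposition of $F^{-1}$. By the explicit decomposition above, that factor is $T^{-1}$, and $\deg T^{-1}=\deg f=d$, so $d'=d$. This gives the desired conclusion
\begin{equation*}
\limfunc{mdeg}F^{-1}\in \{(1,d),(d,1),(d,d)\}.
\end{equation*}

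There is essentially no difficult step here; the one thing worth writing out carefully is the verification that the degree of the triangular piece in the length-1 decomposition of $F^{-1}$ really equals $d$ (i.e.\ that no cancellation can reduce it). This is immediate because the composition with the two affine factors $\widetilde{L}_{1},\widetilde{L}_{2}$ leaves $\deg \widetilde{T}$ unchanged, and the degree of $T^{-1}$ is manifestly $\deg f = d$. Thus the proof is a direct inheritance argument: the length is invariant under inversion (obvious from Proposition \ref{Prop_trojk_minim}), and so is the degree of the unique triangular factor.
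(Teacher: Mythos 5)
Your proof is correct and follows essentially the same route as the paper: decompose $F=L_{2}\circ T\circ L_{1}$ via Proposition \ref{Prop_trojk_minim}, invert to get $F^{-1}=L_{1}^{-1}\circ T^{-1}\circ L_{2}^{-1}$, and note that the triangular factor $T^{-1}$ has the same degree $d=\deg f$. The paper phrases the final step as the direct computation $\limfunc{mdeg}\left( T^{-1}\circ L_{2}^{-1}\right) =\left( 1,d\right) $ followed by left composition with the affine map $L_{1}^{-1}$, rather than re-invoking Theorem \ref{tw_length_ndeg}(1) for $F^{-1}$, but this is only a cosmetic difference.
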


\begin{proof}
Since $\limfunc{length}F=1,$ $F=L_{2}\circ T\circ L_{1},$ where $T$ is a
triangular automorphism of the form $T:\Bbb{C}^{2}\ni \left( x,y\right)
\mapsto \left( x,y+f(x)\right) \in \Bbb{C}^{2},$ with $\deg f>1,$ and $%
L_{1},L_{2}\in \limfunc{Aff}\left( \Bbb{C}^{2}\right) .$ Let us notice that $%
\deg f=\deg T=\deg F=d.$ Thus $\limfunc{mdeg}\left( T^{-1}\circ
L_{2}^{-1}\right) =\left( 1,d\right) .$ Now, it is easy to see that 
\begin{equation*}
\limfunc{mdeg}F^{-1}=\limfunc{mdeg}\left( L_{1}^{-1}\,\circ T^{-1}\circ
L_{2}^{-1}\right) \in \left\{ \left( 1,d\right) ,\left( d,1\right) ,\left(
d,d\right) \right\} .
\end{equation*}
\end{proof}

The following two examples show that all possibilities described in the
above theorem are realized.

\begin{example}
Let $d\in \Bbb{N}\backslash \{0,1\}.$ Let us put 
\begin{equation*}
F_{a}=T,\qquad F_{b}=T\circ L_{b},\qquad \text{and\qquad }F_{c}=T\circ L_{c},
\end{equation*}
where $T\left( x,y\right) =\left( x,y+x^{d}\right) ,L_{b}\left( x,y\right)
=\left( y,x\right) $ and $L_{c}\left( x,y\right) =\left( x+y,y\right) .$ One
can check that 
\begin{equation*}
\limfunc{mdeg}F_{a}=\limfunc{mdeg}F_{b}=\limfunc{mdeg}F_{c}=\left(
1,d\right) 
\end{equation*}
and 
\begin{equation*}
\limfunc{mdeg}F_{a}^{-1}=\left( 1,d\right) ,\qquad \limfunc{mdeg}%
F_{b}^{-1}=\left( d,1\right) ,\qquad \limfunc{mdeg}F_{c}^{-1}=\left(
d,d\right) .
\end{equation*}
\end{example}

\begin{example}
Let $d\in \Bbb{N}\backslash \{0,1\}$ and put 
\begin{equation*}
F_{a}=L_{c}\circ T,\qquad F_{b}=L_{c}\circ T\circ L_{b},\qquad \text{%
and\qquad }F_{c}=L_{c}\circ T\circ L_{c},
\end{equation*}
where $T,L_{b}$ and $L_{c}$ are defined as in the previous example. One can
check that 
\begin{equation*}
\limfunc{mdeg}F_{a}=\limfunc{mdeg}F_{b}=\limfunc{mdeg}F_{c}=\left(
d,d\right) 
\end{equation*}
and 
\begin{equation*}
\limfunc{mdeg}F_{a}^{-1}=\left( 1,d\right) ,\qquad \limfunc{mdeg}%
F_{b}^{-1}=\left( d,1\right) ,\qquad \limfunc{mdeg}F_{c}^{-1}=\left(
d,d\right) .
\end{equation*}
\end{example}

\subsection{The case $\left( d_{1},d_{2}\right) $}

Here we investigate the situation with $\limfunc{mdeg}F=\left(
d_{1},d_{2}\right) ,$ $d_{1}\neq d_{2}$ and $\limfunc{length}F>1.$ Of
course, without loss of generality, we can assume that $d_{1}<d_{2}.$
Because of Theorem \ref{tw_length_ndeg}, the situation is described by the
following two theorems.

\begin{theorem}
\label{tw_mdeg_inv_length2}Let $F\in \limfunc{Aut}\left( \Bbb{C}^{2}\right)
, $ $\limfunc{length}F=2$ and $\limfunc{mdeg}F=\left( d_{1},d_{2}\right) ,$
with $1<d_{1}<d_{2},$ $d_{1}|d_{2}.$ Then 
\begin{equation*}
\limfunc{mdeg}F^{-1}\in \left\{ \left( d_{2},\frac{d_{2}}{d_{1}}\right)
,\left( \frac{d_{2}}{d_{1}},d_{2}\right) ,\left( d_{2},d_{2}\right) \right\}
.
\end{equation*}
\end{theorem}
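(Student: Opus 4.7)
The plan is to unwind the length-$2$ decomposition of $F$ guaranteed by Proposition \ref{Prop_trojk_minim}, invert it termwise, and recognise the result as a length-$2$ decomposition of $F^{-1}$ in the canonical form. The multidegree of $F^{-1}$ then comes out of the same degree count that was used in the proof of Theorem \ref{tw_length_ndeg}(2).

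First I would write $F = L_2 \circ T_2 \circ T_1 \circ L_1$ with $L_1,L_2 \in \limfunc{Aff}(\Bbb{C}^{2})$, $T_1(x,y) = (x, y+f_1(x))$, $T_2(x,y) = (x+f_2(y), y)$ and $\deg f_i > 1$. Tracing the degrees as in the proof of Theorem \ref{tw_length_ndeg}(2) gives $\limfunc{mdeg}(T_2 \circ T_1 \circ L_1) = (\deg f_1 \deg f_2, \deg f_1)$. Comparing with $\limfunc{mdeg} F = (d_1,d_2)$ forces $\deg f_1 = d_1$ and $\deg f_1 \deg f_2 = d_2$, hence $\deg f_2 = d_2/d_1$.

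Next I would invert termwise: $F^{-1} = L_1^{-1} \circ T_1^{-1} \circ T_2^{-1} \circ L_2^{-1}$, where $T_1^{-1}(x,y) = (x, y-f_1(x))$ and $T_2^{-1}(x,y) = (x-f_2(y), y)$. This almost has the shape required by Proposition \ref{Prop_trojk_minim}, except that the innermost triangular factor $T_2^{-1}$ acts on the first coordinate, which is the pattern for an even index rather than the odd index prescribed. I would correct this by inserting the swap $L(x,y) = (y,x)$ twice between each pair of factors and rewriting
\[
F^{-1} = (L_1^{-1} \circ L) \circ (L \circ T_1^{-1} \circ L) \circ (L \circ T_2^{-1} \circ L) \circ (L \circ L_2^{-1}).
\]
A direct computation yields $L \circ T_2^{-1} \circ L : (x,y) \mapsto (x, y - f_2(x))$ and $L \circ T_1^{-1} \circ L : (x,y) \mapsto (x - f_1(y), y)$, which are respectively of the odd-index and even-index forms required in Proposition \ref{Prop_trojk_minim}, with $\deg(-f_2) = d_2/d_1 > 1$ and $\deg(-f_1) = d_1 > 1$.

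This presents $F^{-1}$ in exactly the form of Proposition \ref{Prop_trojk_minim} with two triangular factors, so by the uniqueness part of that proposition $\limfunc{length} F^{-1} = 2$. Applying the degree count from the proof of Theorem \ref{tw_length_ndeg}(2) to this new decomposition, with the roles of $\deg f_1$ and $\deg f_2$ now played by $d_2/d_1$ and $d_1$ respectively, gives $d_1' = d_2/d_1$ and $d_2' = (d_2/d_1) \cdot d_1 = d_2$. Composing with the outer affine factor $L_1^{-1} \circ L$ can then only permute or collapse the two coordinate degrees, exactly as in part (2) of Theorem \ref{tw_length_ndeg}, yielding
\[
\limfunc{mdeg} F^{-1} \in \{(d_2/d_1,\, d_2),\; (d_2,\, d_2/d_1),\; (d_2,\, d_2)\},
\]
as claimed. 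The only delicate step is the conjugation-by-$L$ bookkeeping which restores the required alternation of triangular forms; the rest is mechanical degree arithmetic.
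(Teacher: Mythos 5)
Your proof is correct and follows essentially the same route as the paper: decompose $F=L_{2}\circ T_{2}\circ T_{1}\circ L_{1}$ via Proposition \ref{Prop_trojk_minim}, read off $\deg f_{1}=d_{1}$ and $\deg f_{2}=d_{2}/d_{1}$, invert termwise, and track the degrees of $T_{1}^{-1}\circ T_{2}^{-1}\circ L_{2}^{-1}$ before absorbing the outer affine factor. The only difference is your extra conjugation-by-swap bookkeeping to restore the canonical alternation; the paper skips this and computes $\limfunc{mdeg}\left( T_{1}^{-1}\circ T_{2}^{-1}\circ L_{2}^{-1}\right) =\left( d_{2},\frac{d_{2}}{d_{1}}\right)$ directly, which is harmless either way.
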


\begin{proof}
Since $\limfunc{length}F=2,$ $F=L_{2}\circ T_{2}\circ T_{1}\circ L_{1},$
where $T_{1},T_{2}$ are triangular (and not affine) automorphisms and $%
L_{1},L_{2}\in \limfunc{Aff}\left( \Bbb{C}^{2}\right) .$ We can assume that $%
T_{1}$ and $T_{2}$ are of the following form: 
\begin{eqnarray*}
T_{1} &:&\Bbb{C}^{2}\ni \left( x,y\right) \mapsto \left( x+f_{1}\left(
y\right) ,y\right) \in \Bbb{C}^{2}, \\
T_{2} &:&\Bbb{C}^{2}\ni \left( x,y\right) \mapsto \left( x,y+f_{2}\left(
x\right) \right) \in \Bbb{C}^{2}.
\end{eqnarray*}
Then, $\limfunc{mdeg}\left( T_{1}\circ L_{1}\right) =\left( \deg
f_{1},1\right) $ and $\limfunc{mdeg}\left( T_{2}\circ T_{1}\circ
L_{1}\right) =\left( \deg f_{1},\deg f_{2}\cdot \deg f_{1}\right) .$ Thus,
we have $\deg f_{1}=d_{1}$ and $\deg f_{2}=\frac{d_{2}}{d_{1}}.$ Now one can
easy check that 
\begin{equation*}
\limfunc{mdeg}\left( T_{2}^{-1}\circ L_{2}^{-1}\right) =\left( 1,\deg
f_{2}\right) =\left( 1,\frac{d_{2}}{d_{1}}\right)
\end{equation*}
and 
\begin{equation*}
\limfunc{mdeg}\left( T_{1}^{-1}\circ T_{2}^{-1}\circ L_{2}^{-1}\right)
=\left( \deg f_{2}\cdot \deg f_{1},\deg f_{2}\right) =\left( d_{2},\frac{%
d_{2}}{d_{1}}\right) .
\end{equation*}

Since $F^{-1}=L_{1}^{-1}\circ T_{1}^{-1}\circ T_{2}^{-1}\circ L_{2}^{-1},$
the result follows.
\end{proof}

The following example shows that all possibilities described in the above
theorem are realized.

\begin{example}
Let $d_{1},d_{2}\in \Bbb{N}$ be such that $1<d_{1}<d_{2},d_{1}|d_{2}.$ Put 
\begin{eqnarray*}
T_{1} &:&\Bbb{C}^{2}\ni \left( x,y\right) \mapsto \left(
x+y^{d_{1}},y\right) \in \Bbb{C}^{2}, \\
T_{2} &:&\Bbb{C}^{2}\ni \left( x,y\right) \mapsto \left( x,y+x^{\delta
}\right) \in \Bbb{C}^{2},
\end{eqnarray*}
where $\delta =\frac{d_{2}}{d_{1}},$ and 
\begin{equation*}
F_{a}=T_{2}\circ T_{1},\qquad F_{b}=T_{2}\circ T_{1}\circ L_{b},\qquad
F_{c}=T_{2}\circ T_{1}\circ L_{c},
\end{equation*}
where $L_{b}(x,y)=(y,x)$ and $L_{c}(x,y)=(x,y+x).$ One can check that 
\begin{equation*}
\limfunc{mdeg}F_{a}=\limfunc{mdeg}F_{b}=\limfunc{mdeg}F_{c}=\left(
d_{1},d_{2}\right)
\end{equation*}
and 
\begin{equation*}
\limfunc{mdeg}F_{a}^{-1}=\left( d_{2},\frac{d_{2}}{d_{1}}\right) ,\qquad 
\limfunc{mdeg}F_{b}^{-1}=\left( \frac{d_{2}}{d_{1}},d_{2}\right) ,\qquad 
\limfunc{mdeg}F_{c}^{-1}=\left( d_{2},d_{2}\right) .
\end{equation*}
\end{example}

\begin{theorem}
\label{tw_mdeg_inv_length3}Let $F\in \limfunc{Aut}\left( \Bbb{C}^{2}\right)
, $ $\limfunc{length}F\geq 3$ and $\limfunc{mdeg}F=\left( d_{1},d_{2}\right)
,$ with $1<d_{1}<d_{2},$ $d_{1}|d_{2}.$ Then 
\begin{equation*}
\limfunc{mdeg}F^{-1}\in \left\{ \left( d_{2},\frac{d_{2}}{a}\right) ,\left( 
\frac{d_{2}}{a},d_{2}\right) ,\left( d_{2},d_{2}\right) :a\in \mathcal{A}%
_{F}\right\} ,
\end{equation*}
where $\mathcal{A}_{F}=\left\{ a:1<a<d_{1},\text{ }a|d_{1},\text{ }l\left( 
\frac{d_{1}}{a}\right) \geq \limfunc{length}F-2\right\} .$
\end{theorem}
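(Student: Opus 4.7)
The plan is to imitate the analyses in Theorems~\ref{tw_length_ndeg}, \ref{tw_mdeg_inv_length1} and \ref{tw_mdeg_inv_length2}. By Proposition~\ref{Prop_trojk_minim}, I would write
\begin{equation*}
F = L_2 \circ T_l \circ T_{l-1} \circ \cdots \circ T_1 \circ L_1,
\end{equation*}
where $l = \limfunc{length} F \geq 3$, $L_1, L_2 \in \limfunc{Aff}(\Bbb{C}^2)$, $T_i$ is of the form $(x,y) \mapsto (x, y+f_i(x))$ for odd $i$ and of the form $(x,y) \mapsto (x+f_i(y), y)$ for even $i$, and $\deg f_i > 1$. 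Repeating the degree computation from the proof of Theorem~\ref{tw_length_ndeg}(3), the multidegree of $T_l \circ \cdots \circ T_1 \circ L_1$ equals $\bigl(\prod_{j=1}^{l-1} \deg f_j,\ \prod_{j=1}^{l} \deg f_j\bigr)$ for odd $l$ and the reverse pair for even $l$. Because the outer affine $L_2$ can only permute the two components or raise one coordinate to match the larger degree, the hypothesis $d_1 < d_2$ forces
\begin{equation*}
\prod_{j=1}^{l-1} \deg f_j = d_1, \qquad \prod_{j=1}^{l} \deg f_j = d_2.
\end{equation*}

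I next analyze $F^{-1} = L_1^{-1} \circ T_1^{-1} \circ T_2^{-1} \circ \cdots \circ T_l^{-1} \circ L_2^{-1}$. Each $T_j^{-1}$ is triangular of the same form as $T_j$. Starting from $L_2^{-1}$ (multidegree $(1,1)$) and working outwards: if the composition computed so far has multidegree $(p,q)$, then applying an odd-form $T_j^{-1}$ replaces $q$ by $(\deg f_j)\, p$, while applying an even-form $T_j^{-1}$ replaces $p$ by $(\deg f_j)\, q$ (no cancellation of leading coefficients can occur, since at each step the newly introduced term has strictly larger degree than the one being subtracted, by a factor of at least $\deg f_{j+1}\cdot \deg f_j \geq 4$). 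A direct induction then yields, independently of the parity of $l$,
\begin{equation*}
\limfunc{mdeg}(T_1^{-1} \circ \cdots \circ T_l^{-1} \circ L_2^{-1}) = \bigl(d_2/\deg f_1,\ d_2\bigr).
\end{equation*}
The parity independence comes from the fact that $T_1$---the outermost triangular factor in the inverse composition---is always of odd form, so the larger degree ends up in the second coordinate.

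Composing with the outer affine $L_1^{-1}$ and arguing as in Theorems~\ref{tw_mdeg_inv_length1} and \ref{tw_mdeg_inv_length2}, the possible values of $\limfunc{mdeg} F^{-1}$ are $\bigl(d_2/\deg f_1, d_2\bigr)$, $\bigl(d_2, d_2/\deg f_1\bigr)$, or $\bigl(d_2, d_2\bigr)$. It remains to verify $a := \deg f_1 \in \mathcal{A}_F$. From $a \cdot \prod_{j=2}^{l-1} \deg f_j = d_1$ I read off $a \mid d_1$ and $d_1/a = \prod_{j=2}^{l-1} \deg f_j$; since $l \geq 3$ this product has at least one factor, each exceeding $1$, so $1 < a < d_1$ and
\begin{equation*}
l(d_1/a) = \sum_{j=2}^{l-1} l(\deg f_j) \geq l-2 = \limfunc{length} F - 2,
\end{equation*}
as required.

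The main obstacle I anticipate is the bookkeeping in the inverse composition: although the $T_i$ and $T_i^{-1}$ both alternate in form with $i$, the outermost factor of the triangular part of $F^{-1}$ is $T_1^{-1}$ (always of odd form), whereas in the forward direction the outermost triangular is $T_l$ (whose form depends on the parity of $l$). This asymmetry collapses the two parity cases into a single formula, which is precisely what ensures that only one parameter $a = \deg f_1$ controls the multidegree of $F^{-1}$.
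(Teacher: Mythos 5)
Your proposal is correct and follows essentially the same route as the paper's proof: decompose $F=L_2\circ T_l\circ\cdots\circ T_1\circ L_1$ via Proposition \ref{Prop_trojk_minim}, read off $\prod_{j=1}^{l-1}\deg f_j=d_1$ and $\prod_{j=1}^{l}\deg f_j=d_2$, compute $\limfunc{mdeg}(T_1^{-1}\circ\cdots\circ T_l^{-1}\circ L_2^{-1})$ to isolate $a=\deg f_1$, and check $a\in\mathcal{A}_F$. The only differences are the (equally valid) parity convention for the $T_i$ and the fact that you spell out the non-cancellation induction that the paper leaves as ``one can check.''
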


\begin{proof}
Let $l=\limfunc{length}F.$ Then $F$ can be written in the following form 
\begin{equation*}
F=L_{2}\circ T_{l}\circ \cdots \circ T_{1}\circ L_{1},
\end{equation*}
where $T_{1},\ldots ,T_{l}$ are triangular (and not affine) automorphisms
and $L_{1},L_{2}\in \limfunc{Aff}\left( \Bbb{C}^{2}\right) .$ We can assume
that $T_{i}$ are of the following forms 
\begin{equation*}
T_{i}:\Bbb{C}^{2}\ni \left( x,y\right) \mapsto \left( x+f_{i}\left( y\right)
,y\right) \in \Bbb{C}^{2}
\end{equation*}
for odd $i,$ and 
\begin{equation*}
T_{i}:\Bbb{C}^{2}\ni \left( x,y\right) \mapsto \left( x,y+f_{i}\left(
x\right) \right) \in \Bbb{C}^{2}
\end{equation*}
for even $i.$ Now, one can check that: 
\begin{equation*}
\limfunc{mdeg}\left( T_{l}\circ \cdots \circ T_{1}\circ L_{1}\right)
=\left\{ 
\begin{array}{ll}
\left( \prod_{j=1}^{l}\deg f_{j},\prod_{j=1}^{l-1}\deg f_{j}\right) , & 
\text{for odd }l, \\ 
\left( \prod_{j=1}^{l-1}\deg f_{j},\prod_{j=1}^{l}\deg f_{j}\right) , & 
\text{for even }l.
\end{array}
\right.
\end{equation*}
In both cases we have 
\begin{equation*}
\prod_{j=1}^{l}\deg f_{j}=d_{2}\qquad \text{and\qquad }\prod_{j=1}^{l-1}\deg
f_{j}=d_{1}.
\end{equation*}

Let $a=\deg f_{1}.$ Since $T_{i}$ are not affine, $\deg f_{i}>1.$ Since also 
$l\geq 3$ (in other words, $l-1>1$), $a$ is a proper divisor of $d_{1}$ and $%
l\left( \frac{d_{1}}{a}\right) =l\left( \deg f_{2}\cdots \deg f_{l-1}\right)
\geq l-2.$

Now, one can check that 
\begin{equation*}
\limfunc{mdeg}\left( T_{1}^{-1}\circ \cdots \circ T_{l}^{-1}\circ
L_{2}^{-1}\right) =\left( \prod_{j=1}^{l}\deg f_{j},\prod_{j=2}^{l}\deg
f_{j}\right) =\left( d_{2},\frac{d_{2}}{a}\right) .
\end{equation*}
Since $F^{-1}=L_{1}^{-1}\circ T_{1}^{-1}\circ \cdots \circ T_{l}^{-1}\circ
L_{2}^{-1},$ the result follows.
\end{proof}

Also in this case all possibilities are realized, as the following example
shows.

\begin{example}
\label{exmpl_lengh>3_d1_d2}Let $d_{1},d_{2}\in \Bbb{N}$ be such that $%
1<d_{1}<d_{2},$ $d_{1}|d_{2},$ and let $l\leq l\left( d_{1}\right) +1$ be an
even number. Assume also that $a$ is a proper divisor of $d_{1}$ such that $%
l\left( \frac{d_{1}}{a}\right) \geq l-2.$ Take positive integers $%
a_{2},\ldots ,a_{l-1}$ such that 
\begin{equation*}
d_{1}=a\cdot a_{2}\cdots a_{l-1}.
\end{equation*}
Such integers exist, because $l\left( \frac{d_{1}}{a}\right) \geq l-2.$ Now
put: 
\begin{eqnarray*}
T_{1} &:&\Bbb{C}^{2}\ni \left( x,y\right) \mapsto \left( x+y^{a},y\right)
\in \Bbb{C}^{2}, \\
T_{2} &:&\Bbb{C}^{2}\ni \left( x,y\right) \mapsto \left(
x,y+x^{a_{2}}\right) \in \Bbb{C}^{2}, \\
T_{3} &:&\Bbb{C}^{2}\ni \left( x,y\right) \mapsto \left(
x+y^{a_{3}},y\right) \in \Bbb{C}^{2}, \\
&&\vdots \\
T_{l-1} &:&\Bbb{C}^{2}\ni \left( x,y\right) \mapsto \left(
x+y^{a_{l-1}},y\right) \in \Bbb{C}^{2}, \\
T_{l} &:&\Bbb{C}^{2}\ni \left( x,y\right) \mapsto \left( x,y+x^{\delta
}\right) \in \Bbb{C}^{2},
\end{eqnarray*}
where $\delta =\frac{d_{2}}{d_{1}}.$ Let us, also, set: 
\begin{equation*}
F_{a}=T_{l}\circ \ldots \circ T_{1},\qquad F_{b}=T_{l}\circ \ldots \circ
T_{1}\circ L_{b},
\end{equation*}
and 
\begin{equation*}
F_{c}=T_{l}\circ \ldots \circ T_{1}\circ L_{c},
\end{equation*}
where $L_{b}$ and $L_{c}$ are defined as in the rpevious example. One can
check that 
\begin{equation*}
\limfunc{mdeg}F_{a}=\limfunc{mdeg}F_{b}=\limfunc{mdeg}F_{c}=\left(
d_{1},d_{2}\right)
\end{equation*}
and 
\begin{equation*}
\limfunc{length}F=l.
\end{equation*}
It is also easy to see that: 
\begin{equation*}
\limfunc{mdeg}F_{a}^{-1}=\left( d_{2},\frac{d_{2}}{a}\right) ,\qquad 
\limfunc{mdeg}F_{b}^{-1}=\left( \frac{d_{2}}{a},d_{2}\right) ,\qquad 
\limfunc{mdeg}F_{c}^{-1}=\left( d_{2},d_{2}\right) .
\end{equation*}
\end{example}

In a similar way one can obtain an example when $l$ is odd.

The following example shows an application of Theorem \ref
{tw_mdeg_inv_length3}.

\begin{example}
Let $F\in \limfunc{Aut}\left( \Bbb{C}^{2}\right) $ be such that $\limfunc{%
mdeg}F=\left( 60,120\right) .$ Since $l\left( 60\right) =l\left( 2^{2}\cdot
3\cdot 5\right) =4,$ then $\limfunc{length}F\leq 5.$

If $\limfunc{length}F=3,$ then 
\begin{equation*}
\mathcal{A}_{F}=\left\{ 2,3,5,4,6,10,15,12,20,30\right\} ,
\end{equation*}
and so, by Theorem \ref{tw_mdeg_inv_length3}, 
\begin{eqnarray*}
\limfunc{mdeg}F^{-1} &\in &\{\left( 120,60\right) ,\left( 120,40\right)
,\left( 120,24\right) ,\left( 120,30\right) ,\left( 120,20\right) , \\
&&\left( 120,12\right) ,\left( 120,8\right) ,\left( 120,10\right) ,\left(
120,6\right) ,\left( 120,4\right) ,\left( 60,120\right) , \\
&&\left( 40,120\right) ,\left( 24,120\right) ,\left( 30,120\right) ,\left(
20,120\right) ,\left( 12,120\right) . \\
&&\left( 8,120\right) ,\left( 10,120\right) ,\left( 6,120\right) ,\left(
4,120\right) ,\left( 120,120\right) \}.
\end{eqnarray*}
If $\limfunc{length}F=4,$ then 
\begin{equation*}
\mathcal{A}_{F}=\left\{ 2,3,5,4,6,10,15\right\} ,
\end{equation*}
and so, by Theorem \ref{tw_mdeg_inv_length3}, 
\begin{eqnarray*}
\limfunc{mdeg}F^{-1} &\in &\{\left( 120,60\right) ,\left( 120,40\right)
,\left( 120,24\right) ,\left( 120,30\right) ,\left( 120,20\right) , \\
&&\left( 120,12\right) ,\left( 120,8\right) ,\left( 60,120\right) ,\left(
40,120\right) ,\left( 24,120\right) , \\
&&\left( 30,120\right) ,\left( 20,120\right) ,\left( 12,120\right) ,\left(
8,120\right) ,\left( 120,120\right) \}.
\end{eqnarray*}
If $\limfunc{length}F=5,$ then 
\begin{equation*}
\mathcal{A}_{F}=\left\{ 2,3,5\right\} ,
\end{equation*}
and so, by Theorem \ref{tw_mdeg_inv_length3}, 
\begin{eqnarray*}
\limfunc{mdeg}F^{-1} &\in &\{\left( 120,60\right) ,\left( 120,40\right)
,\left( 120,24\right) , \\
&&\left( 60,120\right) ,\left( 40,120\right) ,\left( 24,120\right) ,\left(
120,120\right) \}.
\end{eqnarray*}
Moreover, by the previous example, all above listed possibilities are
realized.
\end{example}

\subsection{The case $\left( d,d\right) $}

Using a similar arguments as in the proof of Theorem \ref
{tw_mdeg_inv_length3} one can prove the following

\begin{theorem}
Let $F\in \limfunc{Aut}\left( \Bbb{C}^{2}\right) ,$ $\limfunc{length}F\geq 2$
and $\limfunc{mdeg}F=\left( d,d\right) ,$ with $1<d.$ Then:\newline
\begin{equation*}
\limfunc{mdeg}F^{-1}\in \left\{ \left( d,\frac{d}{a}\right) ,\left( \frac{d}{%
a},d\right) ,\left( d,d\right) :a\in \mathcal{A}_{F}\right\} ,
\end{equation*}
where $\mathcal{A}_{F}=\left\{ a:1<a<d,\text{ }a|d,\text{ }l\left( \frac{d}{a%
}\right) \geq \limfunc{length}F-1\right\} .$
\end{theorem}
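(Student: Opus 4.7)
The plan is to follow the template of the proof of Theorem \ref{tw_mdeg_inv_length3}, making the appropriate adjustments for the equal-degree case. First I would invoke Proposition \ref{Prop_trojk_minim} to write
\begin{equation*}
F=L_{2}\circ T_{l}\circ \cdots \circ T_{1}\circ L_{1},
\end{equation*}
where $l=\limfunc{length}F\geq 2$, $L_{1},L_{2}\in \limfunc{Aff}(\Bbb{C}^{2})$, and each $T_{i}$ is triangular of alternating form (\ref{Row_Prop_amalg_1})/(\ref{Row_Prop_amalg_2}) with $\deg f_{i}>1$. Set $a=\deg f_{1}$. The tracking of degrees from the proof of Theorem \ref{tw_length_ndeg} gives $\limfunc{mdeg}(T_{l}\circ \cdots \circ T_{1}\circ L_{1})=(\prod_{j=1}^{l-1}\deg f_{j},\prod_{j=1}^{l}\deg f_{j})$ (or its swap), and since $\limfunc{mdeg}F=(d,d)$ the affine map $L_{2}$ must mix these so that $d=\prod_{j=1}^{l}\deg f_{j}$. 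Then $a\mid d$ with $d/a=\prod_{j=2}^{l}\deg f_{j}$, so $l(d/a)\geq l-1$; moreover $a\geq 2$ and $d/a\geq 2$ (from $l\geq 2$), giving $a\in \mathcal{A}_{F}$.

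Second, I would compute the multidegree of
\begin{equation*}
H:=T_{1}^{-1}\circ T_{2}^{-1}\circ \cdots \circ T_{l}^{-1}\circ L_{2}^{-1}
\end{equation*}
by induction. Writing $H_{k}=T_{l-k+1}^{-1}\circ \cdots \circ T_{l}^{-1}\circ L_{2}^{-1}$ and $(p_{k},q_{k})=\limfunc{mdeg}H_{k}$, we have $(p_{0},q_{0})=(1,1)$ and at each step one coordinate of $H_{k-1}$ is modified by adding a polynomial of degree $\deg f_{l-k+1}$ in the other coordinate. I would verify inductively, using $\deg f_{j}\geq 2$, that the newly created term strictly dominates the previous value of that coordinate (so no accidental cancellation occurs), yielding the closed form
\begin{equation*}
(p_{k},q_{k})=\bigl(\tprod_{j=1}^{k}\deg f_{l-j+1},\ \tprod_{j=1}^{k-1}\deg f_{l-j+1}\bigr)\text{ or its swap,}
\end{equation*}
with the parity of $k$ controlling which coordinate is larger. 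Specialising to $k=l$ yields $\{p_{l},q_{l}\}=\{d,d/a\}$, so $\limfunc{mdeg}H=(d,d/a)$ in both parity regimes (the coordinate that is modified by the final $T_{1}^{-1}$ ends up carrying the degree $d$).

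Third, since $F^{-1}=L_{1}^{-1}\circ H$ and $L_{1}^{-1}$ is affine, I would apply the standard case analysis (as in the proof of Theorem \ref{tw_length_ndeg}) on the coefficients of the linear part of $L_{1}^{-1}$: for a map of bidegree $(d,d/a)$ with $d>d/a$, composition with an invertible affine map gives a multidegree in
\begin{equation*}
\bigl\{(d,d/a),\ (d/a,d),\ (d,d)\bigr\},
\end{equation*}
depending on whether each row of the linear part of $L_{1}^{-1}$ has a nonzero coefficient against the $H$-coordinate of degree $d$. Since $a\in \mathcal{A}_{F}$, this is contained in the set displayed in the theorem, completing the proof.

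The main technical obstacle is the bookkeeping in the second step: one needs to certify that at every stage the induction hypothesis implies the newly introduced monomial genuinely strictly dominates, so that the recursion $(p_{k},q_{k})\mapsto (p_{k+1},q_{k+1})$ is valid and no leading-term cancellation occurs. This is routine once one records that each $\deg f_{j}\geq 2$ and that $p_{k-1},q_{k-1}\geq 1$, but it must be stated carefully because the argument is parity-dependent and the roles of the two coordinates swap from step to step.
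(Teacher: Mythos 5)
Your proposal is correct and follows exactly the route the paper intends: the paper proves this theorem by the remark that one argues "as in the proof of Theorem \ref{tw_mdeg_inv_length3}", and your adaptation — identifying $d=\prod_{j=1}^{l}\deg f_{j}$ from the equal-degree constraint, setting $a=\deg f_{1}$, computing $\limfunc{mdeg}\left( T_{1}^{-1}\circ \cdots \circ T_{l}^{-1}\circ L_{2}^{-1}\right) =\left( d,\frac{d}{a}\right) $ by the same no-cancellation induction, and finishing with the affine case analysis for $L_{1}^{-1}$ — is precisely that argument. The only difference is that you spell out the inductive degree bookkeeping that the paper leaves as "one can check", which is a welcome addition but not a departure.
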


Also in this case all described possibilities are realized, as the following
example shows (this example is a modification of the example given after
Theorem \ref{tw_mdeg_inv_length3}).

\begin{example}
Let $d\in \Bbb{N}$ and $l\geq 2$ be a even number such that let $l\leq
l\left( d\right) .$ Assume, also, that $a$ is a proper divisor of $d$ such
that $l\left( \frac{d}{a}\right) \geq l-1.$ Take positive integers $%
a_{2},\ldots ,a_{l}$ such that 
\begin{equation*}
d=a\cdot a_{2}\cdots a_{l}.
\end{equation*}
Such integers exist, because $l\left( \frac{d}{a}\right) \geq l-1.$ Let $%
T_{1},\ldots ,T_{l-1}$ be defined as in Example \ref{exmpl_lengh>3_d1_d2}
and put 
\begin{equation*}
T_{l}:\Bbb{C}^{2}\ni \left( x,y\right) \mapsto \left( x,y+x^{a_{l}}\right)
\in \Bbb{C}^{2}.
\end{equation*}
Let us also set: 
\begin{equation*}
F_{a}=L\circ T_{l}\circ \ldots \circ T_{1},\qquad F_{b}=L\circ T_{l}\circ
\ldots \circ T_{1}\circ L_{b},
\end{equation*}
and 
\begin{equation*}
F_{c}=L\circ T_{l}\circ \ldots \circ T_{1}\circ L_{c},
\end{equation*}
where $L_{b}\left( x,y\right) =\left( y,x\right) ,L_{c}\left( x,y\right)
=\left( x,y+x\right) $ and $L\left( x,y\right) =\left( x+y,y\right) .$ Then
one can check that 
\begin{equation*}
\limfunc{mdeg}F_{a}=\limfunc{mdeg}F_{b}=\limfunc{mdeg}F_{c}=\left(
d,d\right) ,\qquad \limfunc{length}F=l,
\end{equation*}
and 
\begin{equation*}
\limfunc{mdeg}F_{a}^{-1}=\left( d,\frac{d}{a}\right) ,\qquad \limfunc{mdeg}%
F_{b}^{-1}=\left( \frac{d}{a},d\right) ,\qquad \limfunc{mdeg}%
F_{c}^{-1}=\left( d,d\right) .
\end{equation*}
\end{example}

\vspace{1cm}

\textsc{Marek Kara\'{s}\newline
Instytut Matematyki\newline
Uniwersytetu Jagiello\'{n}skiego\newline
ul. \L ojasiewicza 6}\newline
\textsc{30-348 Krak\'{o}w\newline
Poland\newline
} e-mail: Marek.Karas@im.uj.edu.pl


\begin{thebibliography}{99}
\bibitem{Abhyankar-Moh}  S.Abhyankar, T.Moh, {\textsl{Embeddings of the line
in the plane, }}J. Reine Angew. Math., 276 (1975), 148-166.

\bibitem{Alperin}  R.Alperin, {\textsl{Homology of the group of
automorphismsof }}$k\left[ X,Y\right] ,$ {J.\ Pure\ Appl.\ Algebra}, 15
(1979), 105-115.

\bibitem{Applegate}  H. Applegate, H. Onishi, {\textsl{The Jacobian
Conjecture in two variables, }J.\ Pure\ Appl.\ Algebra\ 37\ (1985)\ 215-227.}

\bibitem{BabaNakai}  K. Baba, Y. Nakai, {\textsl{A generalization of Magnus
theorem,}} Osaka J. Math., 14 (1977), 403-409.

\bibitem{Craighero}  P. Craighero, {\textsl{A result on }}$m${\textsl{-Flats
in }}$A_{k}^{n},$ Rend. Sem. Mat. Univ. Padova, 75 (1986), 39-46.

\bibitem{Dicks}  W. Dicks, {\textsl{Automorphisms of the polynomial ring in
two variabless, }}Publ. Sec. Math. Univ. Autonoma Barcelona, 27
(1983),135-153.

\bibitem{van den Essen}  A. van den Essen, {\textsl{Polynomial Automorphisms
and the Jacobian Conjecture, }}Birkhauser Verlag, Basel-Boston-Berlin (2000).

\bibitem{EssMak-Lim}  A. van den Essen, L. Makar-Limanov, R. Willems, 
\textsl{R}{\textsl{emarks on Shestakov-Umirbaev, }}Report 0414, Radboud
University of Nijmegen, Toernooiveld, 6525 ED Nijmegen, The Netherlands,
2004.

\bibitem{Jung}  H.W.E. Jung, {\textsl{Uber ganze birationale
Transformationen der Ebene,}} J. reine angew. Math. 184 (1942), 161-174.

\bibitem{Brauer}  A. Brauer, {\textsl{On a problem on partitions, }}Amer. J.
Math. 64 (1942), 299-312,

\bibitem{Gutwirth}  A.Gutwith, {\textsl{An inequality for certain pencils of
plane curves, }}Proc. AMS, 12 (1961), 631-638.

\bibitem{jel1}  {Jelonek Z.,\textsl{The Extension of Regular and Rational
Embedding}, Math. Ann. 277 (1987) 113-120.}

\bibitem{jel2}  Jelonek Z., {\textsl{A Note About the Extension of
Polynomial Embeddings, }}Bull. Pol. As. Sci., 43 no. 3 (1995) 239-244.

\bibitem{jel3}  Jelonek Z., {\textsl{Testing sets for properness of
polynomial mappings, }}Math. Ann. 315 (1999) 1-35.

\bibitem{kalim}  Kaliman S., {\textsl{Extension of isomorphisms between
affine algebraic subverieties of }}$k^{n}$ {\textsl{to automorphisms of }}$%
k^{n},$ Proc. of AMS 113 no. 2 (1991) 325-334.

\bibitem{Kambayashi}  T.Kambayashi, \textsl{A}{\textsl{utomorphism group of
a polynomial ring and algebraic group actions on affine space, }J.\ Pure\
Appl.\ Algebra, 60 (1979), 439-451. }

\bibitem{Karas}  M. Kara\'{s}, {\textsl{There is no tame automorphism of }}$%
\Bbb{C}^{3}$ {\textsl{with multidegree }}$(3,4,5),$ Proc. Am.Math. Soc.,
139, no. 3 (2011) 769-775.

\bibitem{Karas2}  M. Kara\'{s}, {\textsl{Tame automorphisms of }}$\Bbb{C}%
^{3} $ {\textsl{with multidegree of the form }}$(p_{1},p_{2},d_{3}),$ Bull.
Pol. As. Sci., to appear.

\bibitem{Karas3}  M. Kara\'{s}, {\textsl{Tame automorphisms of }}$\Bbb{C}%
^{3} $ {\textsl{with multidegree of the form }}$(3,d_{2},d_{3}),$ {J.\ Pure\
Appl.\ Algebra, 214 (2010) 2144-2147. }

\bibitem{Karas Zygad}  M. Kara\'{s}, J. Zyga\l o, {\textsl{On multidegree of
tame and wild automorphisms of }}$\Bbb{C}^{3},$ {J.\ Pure\ Appl.\ Algebra,
to appear.}

\bibitem{Kulk}  W. van der Kulk, {\textsl{On polynomial rings in two
variables,}} Nieuw Archief voor Wiskunde (3) 1 (1953), 33-41.

\bibitem{Kuroda1}  S. Kuroda, {\textsl{A generalization of the
Shestakov-Umirbaev inequality, }}Tokyo J. Math. 32 (2009), no. 1, 247-251.

\bibitem{Kuroda2}  S. Kuroda, {\textsl{Automorphisms of a polynomial ring
which admit reductions of type I, }}Publ. Res. Inst.Sci. 45 (2009), no. 3,
907-917.

\bibitem{Kuroda3}  S. Kuroda, {\textsl{Shestakov-Umirbaev reductions and
Bagata's conjecture on a polynomial automorphism, }}arXiv:0801.0117v2
[math.AC] 14 Oct 2008.

\bibitem{S.Lang}  S. Lang, {\textsl{Algebra}},

\bibitem{MacKayWng1}  J. MacKay, S.S.-S. Wang, {\textsl{An inversion formula
for two polynomials in two variables, }J.\ Pure\ Appl.\ Algebra\ 40\ (1986)\
245-257.}

\bibitem{MacKayWng2}  J. MacKay, S.S.-S. Wang, {\textsl{An elementary proof
of the automorphism theorem for the polynomial ring in two variables, }J.\
Pure\ Appl.\ Algebra\ 52\ (1988)\ 91-102.}

\bibitem{MacKayWng3}  J. MacKay, S.S.-S. Wang, {\textsl{On the inversion
formula for two polynomials in two variables, }J.\ Pure\ Appl.\ Algebra\ 52\
(1988)\ 103-119.}

\bibitem{McKayWngMoh}  J. MacKay, S.S.-S. Wang, {\textsl{On face polynomial, 
}J.\ Pure\ Appl.\ Algebra\ 52\ (1988)\ 101-125.}

\bibitem{Magnus}  A. Magnus, {\textsl{On polynomial solution of a
differential equations, }}Math. Scand., 3 (1955), 255-260.

\bibitem{Makar-Lim}  L. Makar-Limanov, {\textsl{On autmorphism of certain
algebas, }}Ph.D. thesis, Moscow State University, 1970.

\bibitem{scotishbook}  R.\ Mauldin (ed.), {\textsl{The Scottish Book:
Mathematics from the Scottish Cafe, }}Birkh\TEXTsymbol{\backslash}%
\{''\}auser, Boston-Basel-Stuttgart (1979).

\bibitem{Moh}  T. Moh, {\textsl{On the global Jacobian Conjecture and the
configuration of roots, }}J. Reine Angew. Math., 340 (1983), 140-212.

\bibitem{Nagata}  M. Nagata, {\textsl{On Automorphism Group of }}$k[x,y],$
Lectures in Mathematocs, Department of Mathematics, Kyoto University, Vol.
5, Kinokuniya Book-Store Co. Ltd., Tokyo, 1972.

\bibitem{Nagata1}  M. Nagata, {\textsl{Two dimensional jacobian conjecture, }%
}Proceedings of 3rd KIT Mathematics Workshop (Taejon), (M. H. Kim and K. H.
Ko, eds), Korean Institute of Technology, 1988, pp. 77-98.

\bibitem{Nagata2}  M. Nagata, {\textsl{Some remarks on the two-dimensional
Jacobian Conjecture,}} Chin. J. Math., 17 (1988), 1-7.

\bibitem{Nagata3}  M. Nagata, {\textsl{Revised version of both \cite{Nagata1}
and \cite{Nagata2}.}}

\bibitem{Pinchuk}  S. Pinchuk, {\textsl{A counterexample to the real
Jacobian Conjecture, }}Math. Zeitschrift, 217 (1994), 1-4.

\bibitem{ploski}  A. P\l oski, {\textsl{Algebraic dependence of polynomial
automorphisms, }}Bull. Pol. Acad. Sci., 34 (1986) No. 11, 653-659.

\bibitem{Rentschler}  R. Rentschler, {\textsl{Operationes du group additif
sur le plan, }}C.R.Acad. Sci. Paris, 267 (1968), 384-387.

\bibitem{RuskWiniar}  K. Rusek, T. Winiarski, {\textsl{Polynomial
automorphisms of }}$\Bbb{C}^{n},$ Univ. Iagell. Acta Math., 24 (1984),
143-149.

\bibitem{Rusek}  K. Rusek, {\textsl{Polynomial automorphisms, }}preprint
456, Institute of Mathematics, Polish Academy of Sciences, IMPAN,
\'{S}niadeckich 8,P.O.Box 137,00-950 Warsaw, Poland, May 1989.

\bibitem{Segree}  B.Segree, {\textsl{Forme differentziali e loro integrali, }%
}Vol. II, Docet, Roma, 1956.

\bibitem{Shafarevich}  I.R. Shafarevich, {\textsl{On some infinite
dimensional groups, }}Rend. Math. e Appl., 25 (1966), 208-212.

\bibitem{smale}  S. Smale, \textit{Mathematical problems for the next
century, }Math. Intelligencer, 20 No. 2 (1998) 7-15.

\bibitem{smith}  M. K. Smith, {\textsl{Stably tame automorphisms, }J.\ Pure\
Appl.\ Algebra\ 58\ (1989)\ 209-212.}

\bibitem{sh umb1}  I.P. Shestakov, U.U. Umirbayev, {\textsl{The Nagata
automorphism is wild,}} Proc. Natl. Acad. Sci. USA 100 (2003),12561-12563.

\bibitem{sh umb2}  I.P. Shestakov, U.U. Umirbayev, {\textsl{The tame and the
wild automorphisms of polynomial rings in three variables,} J. Amer. Math.
Soc. 17 (2004), 197-227.}

\bibitem{sh umb3}  I.P. Shestakov, U.U. Umirbayev, {\textsl{Poisson brackets
and two-generated subalgebras of rings of polynomials,} J. Amer. Math. Soc.
17 (2004), 181-196.}

\bibitem{srinivas}  Srinivas V., \textsl{On the embedding dimension of the
affine variety, }{Math. Ann. 289 (1991) 125-132}

\bibitem{Umirbaev Yu}  U.U. Umirbaev, J-T. Yu, {\textsl{The Strong nagata
conjecture,}} Proc. Natl. Acad. Sci. USA 101 (2004) 4352-4355.

\bibitem{Zygadlo}  J. Zygad\l o, {\textsl{On multidegrees of polynomial
automorphisms of }}$\Bbb{C}^{3}$, arXiv:0903.5512v1 [math.AC] 31 Mar 2009.
\end{thebibliography}
\end{document}